
\documentclass{amsart}

\usepackage{ae,aecompl}
\usepackage{chngcntr}
\usepackage{graphicx}
\usepackage[all,cmtip]{xy}
\usepackage{amsmath, amscd}
\usepackage{amsthm}
\usepackage{amssymb}
\usepackage{amsfonts}
\usepackage{qsymbols}
\usepackage{latexsym}
\usepackage{mathrsfs}
\usepackage{cite}
\usepackage{color}
\usepackage{url}
\usepackage{enumerate}
\usepackage[utf8]{inputenc}
\usepackage[T1]{fontenc}

\usepackage{verbatim}

\usepackage[colorlinks, citecolor = blue]{hyperref}

\usepackage[colorinlistoftodos,prependcaption,textsize=small]{todonotes}

\allowdisplaybreaks

\setlength{\unitlength}{1cm}
\setcounter{secnumdepth}{2}

\newcommand {\abs}[1]{\lvert#1\rvert}

\newcommand {\C}{{\mathbb C}}

\newcommand {\ud}{\mathrm{d}}

\newcommand {\Ellr}{L^{r}}

\newcommand {\F}{{\mathcal{F}}}

\newcommand {\ind}{{\mathbf{1}}}

\newcommand{\lb}{\langle}
\newcommand{\rb}{\rangle}

\newcommand {\calL}{{\mathcal{L}}}

\newcommand {\N}{{{\mathbb N}}}

\newcommand{\one}{{{\bf 1}}}

\newcommand {\R}{{\mathbb R}}

\newcommand {\Schw}{\mathcal{S}}
\newcommand {\Sch}{\mathcal{S}}
\newcommand {\T}{{\mathbb T}}

\newcommand {\Z}{{{\mathbb Z}}}

\newcommand {\vanish}[1]{\relax}

\newcommand{\wh}{\widehat}

\newtheorem{theorem}{Theorem}[section]
\newtheorem{lemma}[theorem]{Lemma}
\newtheorem{proposition}[theorem]{Proposition}
\newtheorem{corollary}[theorem]{Corollary}

\theoremstyle{definition}
\newtheorem{definition}[theorem]{Definition}
\newtheorem{remark}[theorem]{Remark}
\newtheorem{example}[theorem]{Example}

\renewcommand{\asymp}{\eqsim}

\numberwithin{equation}{section}
\protected\def\ignorethis#1\endignorethis{}
\let\endignorethis\relax

\title[Extensions of the vector-valued Hausdorff-Young inequalities]
{Extensions of the vector-valued Hausdorff-Young inequalities}

\author{Oscar Dominguez}
\address{Departamento de An\'alisis Matem\'atico, Facultad de Matem\'aticas\\ Universidad Complutense de Madrid\\ Plaza de Ciencias 3\\ 28040 Madrid\\ Spain}
\email{oscar.dominguez@ucm.es}

\author{Mark Veraar}
\address{Delft Institute of Applied
Mathematics\\
Delft University of Technology\\
P.O.~Box 5031\\
2628 CD Delft\\
The Netherlands}
\email{M.C.Veraar@tudelft.nl}

\keywords{vector-valued Fourier transform, Banach space, Fourier type, (co)type, Hausdorff--Young, Hardy--Littlewood, Paley, Pitt, Bochkarev, Zygmund, limiting interpolation}

\subjclass[2010]{Primary: 46B20; Secondary: 42B05, 42B10}


\thanks{The first author is supported in part by MTM2017-84058-P (AEI/FEDER, UE).
The second author is supported by the VIDI subsidy 639.032.427 of the Netherlands Organisation for Scientific Research (NWO). Part of the work was done during the visit of the first author to the Isaac Newton Institute for Mathematical Sciences, Cambridge, EPSCR Grant no EP/K032208/1.}

\begin{document}
\begin{abstract}
In this paper we study the vector-valued analogues of several inequalities for the Fourier transform. In particular, we consider the inequalities of Hausdorff--Young, Hardy--Littlewood, Paley, Pitt, Bochkarev and Zygmund. The Pitt inequalities include the Hausdorff--Young and Hardy--Littlewood inequalities and state that the Fourier transform is bounded from $L^p(\mathbb{R}^d,|\cdot|^{\beta p})$ into $L^q(\mathbb{R}^d,|\cdot|^{-\gamma q})$ under certain condition on $p,q,\beta$ and $\gamma$. Vector-valued analogues are derived under geometric conditions on the underlying Banach space such as Fourier type and related geometric properties. Similar results are derived for $\mathbb{T}^d$ and $\mathbb{Z}^d$ by a transference argument. We prove sharpness of our results by providing elementary examples on $\ell^p$-spaces. Moreover, connections with Rademacher (co)type are discussed as well.
\end{abstract}

\maketitle

\setcounter{tocdepth}{1}
\tableofcontents

\section{Introduction}\label{sec:introduction}

Weighted norm inequalities for Fourier transforms are of central interest in harmonic analysis. In particular, working with power weights, the classical Pitt's inequality \cite{Pitt37} (see also \cite{Stein56}),
\begin{equation}\label{PittScalarintroseries}
\|(\widehat{f}(n))\|_{\ell^q(\Z^d, (|n| + 1)^{-\gamma q})} \leq C \|f\|_{L^p(\T^d, |\cdot|^{\beta p})}, \quad 1 < p \leq q < \infty,
\end{equation}
holds if and only if
\begin{equation}\label{PittAssumpScalarintro}
\max\left\{0, d \left(\tfrac{1}{p} + \tfrac{1}{q} - 1\right) \right\} \leq \gamma < \tfrac{d}{q} \quad \text{and} \quad \beta - \gamma = d \Big(1- \tfrac{1}{p} - \tfrac{1}{q}\Big).
\end{equation}
Here $C$ denotes a positive constant which depends on $p,q,\beta,\gamma,d$ and $f(x) = \sum_{n\in \Z^d}\widehat{f}(n) e^{ 2\pi i n\cdot x}$.  One of the beauties of Pitt's inequalities is that they contain Plancherel's theorem ($p=q=2, \,\beta = \gamma = 0$), the Hausdorff--Young inequalities ($q=p' \geq 2, \, \beta=\gamma=0$) and the Hardy--Littlewood inequalities ($p=q \leq 2, \,\beta=0$, or $p=q \geq 2, \, \gamma=0$).
Similarly, the Pitt's inequality for the Fourier transform
\begin{equation*}
	\widehat{f}(\xi) = \F(\xi)= \int_{\R^d} f(t) e^{-2 \pi i t \cdot \xi} \, \ud t, \quad \xi \in \R^d,
\end{equation*}
that is,
\begin{equation}\label{PittScalarintro}
\|\widehat{f}\|_{L^q(\R^d, |\cdot|^{-\gamma q})} \leq C \|f\|_{L^p(\R^d, |\cdot|^{\beta p})}, \quad 1 < p \leq q < \infty,
\end{equation}
 is valid if and only if the conditions given in (\ref{PittAssumpScalarintro}) hold.

Extensions of Pitt's inequalities have been studied in many papers. Below we mention only some of them. A version for more general orthonormal sequences has been obtained in \cite{Stein56}.
Versions of Pitt's inequality with more general weights and generalized Fourier transforms have for instance been studied in \cite{Benedetto-Heinig03, BHJ, DeCarliGorbachevTikhonov, Heinig84, LifTik,StroWhe, GorbachevIvanovTikhonov}. There is special interest in the sharp constants $C$ in \eqref{PittScalarintroseries} and \eqref{PittScalarintro} in mathematical physics (see e.g.\ \cite{Beckner75, Beck95,Beckner08,Beck12,Eil,Herbst,Yaf} and references therein). In particular, there is a connection with the uncertainty principle and with Hardy-Rellich estimates which are used in the theory of elliptic PDEs.

In this paper we will study vector-valued analogues of Pitt's inequality. The special case $\beta=\gamma=0$ and $q=p'\geq 2$ is extensively studied in the literature. It turns out that even in this special case \eqref{PittScalarintro} does not hold if $f$ takes values in an arbitrary Banach space $X$. In \cite{Pee69} Peetre states that a Banach space $X$ has Fourier type $p \in [1,2]$ if \eqref{PittScalarintro} holds for $\beta=\gamma=0$ and $q=p'$. It turns out that this notion becomes more restrictive if $p$ is larger, and if $p=2$ this property characterizes Hilbert spaces (see \cite{Kwa}). Fourier type has been used in many parts of analysis. For instance in interpolation theory \cite{Pee69, CwikelSagher, SuWe}, Fourier multiplier theory \cite{BuKim,Girardi-Weis03,Hyt04,Rozendaal-Veraar17a}, evolution equations \cite{ArBaHiNe, ChTom, Rozendaal-Veraar17c, WeWro}, and geometry of Banach spaces \cite{HyNeVeWe16, HyNeVeWe2, PietschHis, Pietsch-Wenzel98}.

Dealing with the case $p=q \in (1, 2]$, Parcet, Soria and Xu obtained in \cite{ParcetSoriaXu} the following vector-valued Fourier inequality. Assume that $X$ is of Fourier type $p$. Then
 \begin{equation}\label{PittPSX}
\|(\widehat{f}(n))\|_{\ell^p(\Z, (|n| + 1)^{-(\alpha/p - \alpha + 1) p}; X)} \leq C \|f\|_{L^p(\T;X)}, \quad 1 < \alpha < 2.
\end{equation}
In particular, this inequality becomes useful in \cite{ParcetSoriaXu} to show the vector-valued analogue of the `little Carleson theorem', which provides a weak version of a question raised by Rubio de Francia \cite{RubioDeFrancia} on pointwise convergence of Fourier series taking values in UMD Banach spaces. It was shown by Hyt\"onen and Lacey \cite[Proposition 2.1]{HytonenLacey} that UMD is a necessary condition for the convergence problem. There, they also give a positive answer to the conjecture of Rubio de Francia for the class of \emph{intermediate UMD spaces} (see \cite[Theorem 1.1]{HytonenLacey}), but the general UMD case is still unsolved at the time of this writing. Another example of Pitt's inequality with $1 < p \leq q=2$ was derived by Rozendaal and the second author in \cite{Rozendaal-Veraar17a} in order to study sharpness in vector-valued multiplier theorems.

However, apart from the above mentioned special cases, the complete characterization of vector-valued Pitt's inequalities still remains open. Therefore, the first goal of the present paper is to study these inequalities in further detail to fill the gap in the existing literature. Namely, we establish the following
\begin{theorem}\label{thm:Pitt2intro}
Let $X$ be of Fourier type $p_0 \in (1, 2]$. Let $1 < p \leq q < \infty$ and $\beta, \gamma \geq 0$. Assume that
\begin{equation}\label{thm:Pitt2intro1}
\max\left\{0, d \left(\tfrac{1}{\min\{p, p_0\}} + \tfrac{1}{q} - 1\right) \right\} < \gamma < \tfrac{d}{q} \quad \text{and} \quad \beta - \gamma = d \Big(1- \tfrac{1}{p} - \tfrac{1}{q}\Big).
\end{equation}
Then, we have
\begin{equation}\label{thm:Pitt2intro2}
\|\widehat{f}\|_{L^q(\R^d, |\cdot|^{-\gamma q};X)} \leq C \|f\|_{L^p(\R^d, |\cdot|^{\beta p};X)}.
\end{equation}
Furthermore, if $\gamma = \max\left\{0, d \left(\tfrac{1}{\min\{p, p_0\}} + \tfrac{1}{q} - 1\right) \right\}$ then \eqref{thm:Pitt2intro2} holds true if one of the following conditions is satisfied:
\begin{enumerate}[\upshape(i)]
\item \label{thm:Pitt2intro3} $p = q \in (1, p_0) \cup (p_0',\infty), \quad p_0 \neq 2$;
\item \label{thm:Pitt2intro3*} $p=q \in (1, \infty), \quad p_0=2$;
\item \label{thm:Pitt2intro4} $p < q, \quad p \in (1, p_0] \cup [p_0',\infty)$;
\item \label{thm:Pitt2intro5}$p < q, \quad p \in (p_0, p_0'), \quad p_0' \leq q$.
\end{enumerate}
\end{theorem}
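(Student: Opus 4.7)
My approach rests on the factorization $|\xi|^{-\gamma}\widehat{f}(\xi) = c\,\widehat{I_\gamma f}(\xi)$, where $I_\gamma=(-\Delta)^{-\gamma/2}$ denotes the Riesz potential. This reduces \eqref{thm:Pitt2intro2} to bounding $\|\widehat{I_\gamma f}\|_{L^q(\R^d;X)}$, which I plan to handle by first applying the vector-valued Hausdorff--Young inequality and then the Stein--Weiss inequality for $I_\gamma$. Since $I_\gamma$ acts by scalar convolution, its weighted mapping properties transfer verbatim to $X$-valued functions; the only ingredient sensitive to the geometry of $X$ is the Fourier transform itself, for which Fourier type $p_0$ supplies $\widehat{\cdot}\colon L^{r}(\R^d;X) \to L^{r'}(\R^d;X)$ for every $r \in [1,p_0]$. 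Setting $r:=\min\{p,p_0\}$, and noting that the scaling $\beta-\gamma=d(1-1/p-1/q)$ in \eqref{thm:Pitt2intro1} is precisely the homogeneity of $I_\gamma$, the open Stein--Weiss conditions match exactly the condition \eqref{thm:Pitt2intro1}.

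\textbf{The open range.} Concretely, when $q \ge p_0'$ one applies HY at exponent $q' \le p_0$ to pass from $\widehat{I_\gamma f}$ back to $I_\gamma f \in L^{q'}(\R^d;X)$, and then the (scalar) Stein--Weiss theorem for $I_\gamma$ estimates this by $\|f\|_{L^p(\R^d,|\cdot|^{\beta p};X)}$; the scaling identity guarantees that the exponents align. When $q < p_0'$ and $p \le p_0$, the argument is symmetric: $X$ has Fourier type $p$, so HY is used directly on $f$ and Stein--Weiss is applied on the spatial side. Together these two regimes cover \eqref{thm:Pitt2intro1} in the strict-inequality case, and the $\min\{p,p_0\}$ in the lower bound of \eqref{thm:Pitt2intro1} appears exactly when $p > p_0$, since the best available Fourier type then drops from $p$ to $p_0$.

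\textbf{Endpoint cases and main obstacle.} At the boundary $\gamma=\max\{0,d(1/r+1/q-1)\}$ the Stein--Weiss inequality degenerates, so a finer argument is required. Cases (\ref{thm:Pitt2intro3}) and (\ref{thm:Pitt2intro4}) correspond to $p\notin(p_0,p_0')$, where $X$ (or $X^*$, by duality) has full Fourier type $p$, reducing matters to the scalar Pitt endpoint handled by standard real-interpolation / Lorentz-space arguments. Case (\ref{thm:Pitt2intro3*}) reduces via Plancherel to the Hilbert-valued setting. The main obstacle is case (\ref{thm:Pitt2intro5}): here $p\in(p_0,p_0')$ forces reliance on Fourier type at the strictly smaller exponent $p_0$, while $q\ge p_0'$ still gives a tight relation $\gamma=d(1/p_0+1/q-1)$ at the endpoint. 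Reaching this critical value demands a limiting real interpolation between the $L^{p_0}\to L^{p_0'}$ bound and a weak-type weighted endpoint, carefully aligning the weights, Lorentz indices, and Fourier-type exponent---precisely where the ``limiting interpolation'' machinery advertised in the paper's keywords should enter.
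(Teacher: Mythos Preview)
Your Stein--Weiss/Riesz-potential factorization is indeed the key mechanism, and it matches the paper's Theorem~\ref{thm:Pitt2} Case~1. But the claim that your two regimes cover the entire open range \eqref{thm:Pitt2intro1} is incorrect.

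In your Case~A ($q\ge p_0'$), after Hausdorff--Young at exponent $q'$ you need Stein--Weiss for $I_\gamma:L^p(\R^d,|\cdot|^{\beta p};X)\to L^{q'}(\R^d;X)$, which in Lemma~\ref{lem:SobolevEmb} forces $p\le q'$, i.e.\ $q\le p'$. So Case~A only yields $p_0'\le q\le p'$ (hence $p\le p_0$). Your Case~B ($q<p_0'$, $p\le p_0$) cannot be ``symmetric'': applying HY at exponent $p$ leaves you needing Stein--Weiss from $L^{p'}$ to $L^q(|\cdot|^{-\gamma q})$ on the frequency side, which again requires $p'\le q$---but $p\le p_0$ gives $p'\ge p_0'>q$, so this fails outright. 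In particular your scheme covers neither the diagonal $p=q$ nor the ranges $p\le p_0,\ p<q<p'$ and $p_0<p\le q<p_0'$.

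The paper closes these gaps by a different route. The diagonal $p=q$ (Theorem~\ref{thm:Pitt}) is obtained by first proving that Fourier type $p_0$ implies \emph{HL type $p$ and HL cotype $p'$} for every $p<p_0$ (Proposition~\ref{prop:WeakFourier}), then complex-interpolating between these two weighted estimates. The remaining off-diagonal open cases are then filled by complex interpolation between the diagonal and the Stein--Weiss case (Theorem~\ref{thm:Pitt2} Case~2), duality (Case~3 and Theorem~\ref{thm:Pitt3}), and a further interpolation (Theorem~\ref{thm:Pitt4}). Your proposal is missing the HL (co)type input entirely, and without it the diagonal case---and everything built on it---does not go through.

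Finally, you have misidentified the difficulty of the endpoints. Case~\eqref{thm:Pitt2intro5} is not the obstacle: since $q\ge p_0'$ forces the endpoint $\gamma=0$, it follows by straightforward duality from Theorem~\ref{thm:Pitt2} (see Theorem~\ref{thm:Pitt3}); no limiting interpolation is involved. The genuinely delicate endpoints are \eqref{thm:Pitt2intro3}, which are exactly the HL type/cotype statements, and these rest on the Marcinkiewicz-type argument in Proposition~\ref{prop:HLprop} together with Proposition~\ref{prop:WeakFourier}. The limiting interpolation machinery in the paper's keywords is used for the Zygmund and Bochkarev inequalities in Section~\ref{sec:Paley}, not for Theorem~\ref{thm:Pitt2intro}.
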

If none of the conditions \eqref{thm:Pitt2intro3}-\eqref{thm:Pitt2intro5} holds, then in general the endpoint case $\gamma = \max\left\{0, d \left(\tfrac{1}{\min\{p, p_0\}} + \tfrac{1}{q} - 1\right) \right\}$ in \eqref{thm:Pitt2intro2} fails to be true. This will be shown by giving elementary counterexamples in $\ell^r$-spaces.

Some comments are in order. The conditions \eqref{thm:Pitt2intro1} show the prominent role played by the geometry of $X$ (i.e., $p_0$) in Pitt's inequalities. This is illustrated by the fact that $\max\left\{0, d \left(\tfrac{1}{\min\{p, p_0\}} + \tfrac{1}{q} - 1\right) \right\} \to \frac{d}{q} -$ as $p_0 \to 1+$, and so the range of $\gamma$ for which \eqref{thm:Pitt2intro2} holds becomes more restrictive. The best possible scenario occurs if $X= \C$ (or more generally, if $X$ is a Hilbert space) where $\max\left\{0, d \left(\tfrac{1}{\min\{p, 2\}} + \tfrac{1}{q} - 1\right) \right\} = \max\left\{0, d \left(\tfrac{1}{p} + \tfrac{1}{q} - 1\right) \right\}$ and hence, Theorem \ref{thm:Pitt2intro} covers classical Pitt's inequalities \eqref{PittScalarintro} in the full range of parameters \eqref{PittAssumpScalarintro}. However, in a general Banach space $X$, the endpoint Pitt's inequality \eqref{thm:Pitt2intro2} with $\gamma = \max\left\{0, d \left(\tfrac{1}{\min\{p, p_0\}} + \tfrac{1}{q} - 1\right) \right\}$ may be false. In fact, its validity depends on the relations between the integrability parameters $p$ and $q$ and the geometric conditions on $X$ (i.e., $p_0$). More precisely, if $p \in (1, p_0) \cup (p_0',\infty)$ then the endpoint Pitt's inequality holds true for all $q \geq p$. However, this is not the case in the range $p \in (p_0, p_0')$ where the interrelations between $q$ and $p_0$ are now important. Namely, if $q \geq p_0'$ then the limiting cases of Pitt's inequalities are fulfilled, but this is not true in the remainder cases (i.e., $q \in (p, p_0')$). Such an interesting phenomenon is new and can not observed in the classical setting.

Given the Hilbert space characterization mentioned before \cite{Kwa}, it is rather surprising that in the above result, outside of the Hilbert space setting ($p_0 < 2$), the case $p=q=2$ is allowed as long as one considers $d (\tfrac{1}{p_0} -\tfrac{1}{2}) < \beta = \gamma < \tfrac{d}{2}$.

Theorem \ref{thm:Pitt2intro} is stated for Banach spaces with nontrivial Fourier type. It is well known that many Banach spaces satisfy this assumption. For example the following conditions are equivalent
\begin{enumerate}[(a)]
\item\label{eq:Ftype} $X$ has nontrivial Fourier type;
\item $X$ is $B$-convex;
\item\label{eq:Kconvex} $X$ is $K$-convex;
\item\label{eq:Rtype} $X$ has nontrivial (Rademacher) type;
\item\label{eq:Wtype} $X$ has nontrivial Walsh type.
\end{enumerate}
Moreover, $X$ has nontrivial Fourier type if one of following conditions holds:
\begin{enumerate}[(a)]
\setcounter{enumi}{6}
\item $X$ is UMD;
\item $X$ is uniformly convex;
\item $X$ is superreflexive.
\end{enumerate}
The deep results \eqref{eq:Rtype} $\Rightarrow$ \eqref{eq:Ftype} and \eqref{eq:Kconvex} are due to Bourgain \cite{Bourg88} and Pisier \cite{Pis82}, respectively. For details on these results and on the remaining implications we refer to the monographs \cite{HyNeVeWe16,HyNeVeWe2,Pietsch-Wenzel98,Pis16}.

The counterparts of Theorem \ref{thm:Pitt2intro} for the Fourier transform on the groups $\Z^d$ and $\T^d$ will be deduced by transference (see Corollary \ref{cor:Pitt2Periodic}). In particular, if $p = q = p_0$ and $1 < \alpha < 2$, then we obtain the following improvement of \eqref{PittPSX},
\begin{equation}\label{PittPSX2}
\|(\widehat{f}(n))\|_{L^p(\Z, (|n| + 1)^{-( \alpha/p - \alpha + 1) p};X)} \leq C \|f\|_{L^p(\T, |\cdot|^{(2 - \alpha) p/p'};X)}.
\end{equation}
Furthermore, \eqref{PittPSX2} fails to be true in the limiting cases $\alpha =1, 2$.

In order to prove Theorem \ref{thm:Pitt2intro} one cannot directly extend the interpolation method of \cite{Stein56} unless $p_0=2$. Instead we will use the following techniques. Firstly the extension of the Hardy--Littlewood--Sobolev inequalities due to Stein--Weiss plays an important role to obtain the sharp cases of Pitt's inequality.
Secondly the connection between Fourier type and other forms of (co)type such as Paley and Hardy--Littlewood (co)type is heavily used. Paley (co)type for arbitrary orthonormal sequences was introduced in \cite{GaKaKo01}. The classical inequalities due to Hardy and Littlewood, namely,
\begin{equation*}
\|\widehat{f}\|_{L^{p}(\R^d, |\cdot|^{-d(2-p)})} \leq C \|f\|_{L^p(\R^d)}, \quad 1 < p \leq 2,
\end{equation*}
lead us to introduce the Hardy-Littlewood type of Banach spaces (see Definition \ref{def:type}) and investigate its properties. This concept can be regarded as an extension to Lebesgue spaces of the notion of Hardy type due to Blasco and Pelczynski \cite{BlascoPelczynski}, which characterizes the validity of the vector-valued extension of the classical Hardy inequality
 \begin{equation*}
\|(\widehat{f}(n))\|_{\ell^{1}(\Z, (|n|+1)^{-1})} \leq C \|f\|_{H^1(\T)}.
\end{equation*}
Here, $H^1(\T)$ is the atomic Hardy space of periodic functions.

 We hope that Theorem \ref{thm:Pitt2intro}, besides its intrinsic interest, will find further applications in any of the above topics related to analysis in Banach spaces.

Our second goal is to study Hausdorff-Young type inequalities for Banach spaces. The classical Zygmund inequality \cite[p. 158]{Zygmund} claims that
\begin{equation}\label{ZYGIntro}
 \sum_{n=1}^\infty (1 + \log n)^b \wh{f}^\ast(n)\frac{1}{n} \leq C \int_{\T} |f(t)| (\log^+|f(t)|)^{b+1} \, dt, \quad b + 1>0.
\end{equation}
Here, $\log^+(t) = \log(2+t)$ and $(\wh{f}^\ast(n))$ denotes the non-increasing rearrangement of the sequence $(\wh{f}(n))$. In Theorem \ref{ThmZygmund} we show that the validity of the $X$-valued version of the Zygmund inequality \eqref{ZYGIntro} characterizes the nontrivial Fourier type of $X$. To be more precise, we provide a more general result which works with the vector-valued extensions of \eqref{ZYGIntro} to integrable functions in Lorentz-Zygmund spaces, as well as their discrete counterparts. To illustrate our approach, we would like to single out the particular case of Orlicz norms, which is of special interest. Namely, the following statements are equivalent:
	\begin{enumerate}[\upshape(i)]
	\item\label{Kol} $X$ has nontrivial Fourier type,
	\item\label{Kol1} the Zygmund inequality holds
	\begin{equation*}
 \sum_{n=1}^\infty (1 + \log n)^b \wh{f}^\ast(n)\frac{1}{n} \leq C \int_{\T^d} \|f(t)\|_X (\log^+\|f(t)\|_X)^{b+1} \, \ud t, \quad b + 1>0,
\end{equation*}
	\item\label{Kol2} the end-point Zygmund inequality holds
	\begin{equation*}
		\sum_{n=1}^\infty \wh{f}^\ast(n) \frac{1}{n (1+\log n)} \leq C \int_{\T^d} \|f(t)\|_X (\log^+ \log^+ \|f(t)\|_X) \, \ud t,
	\end{equation*}
	\item\label{Kol3} the Zygmund inequality for sequences holds
	\begin{equation*}
		\int_0^1 (1 + |\log t|)^b f^\ast(t) \frac{dt}{t} \leq C  \sum_{n \in \Z^d} \|c_n\|_X \left(\log^{+} \left(\frac{1}{\|c_n\|_X}\right) \right)^{b+1}, \quad b+1 < 0,
	\end{equation*}
	where $f(t) = \sum_{n \in \Z^d} c_n e^{2 \pi i n \cdot t}, \, t \in \T^d$.
	\end{enumerate}

Note that the inequality given in \eqref{Kol2} involves the vector-valued counterpart of $L(\log \log L)(\T^d)$. This space plays a distinguished role in the study of  convergence problems of Fourier series in part due to the Kolmogorov's fundamental example \cite{Kolmogorov23} of a function in $L(\log \log L)(\T^d)$ with almost everywhere divergent Fourier series.

As mentioned, the equivalence between \eqref{Kol}--\eqref{Kol3} is a particular case of Theorem \ref{ThmZygmund}. Its proof is given in Section \ref{ProofThmZygmund} and relies on limiting interpolation techniques (see Section \ref{SectionLimInt}) together with the Pisier's result that the type of a Banach space can be characterized as the fact that it contains almost isometric copies of finite dimensional spaces (see Lemma \ref{lem:Pisiertype}).

As an application of our previous results, we are able to obtain quantitative versions of vector-valued Zygmund-type inequalities. In particular, Parcet, Soria and Xu \cite[Theorem 2]{ParcetSoriaXu} showed that if $X$ has nontrivial Fourier type and
\begin{equation*}
 	\int_\T \|f(t)\|_X (\log^+ \|f(t)\|_X)^b \, dt \leq \rho
 \end{equation*}
for some $b > 0$, then there exist positive constants $a(\rho, b)$ and $A(\rho, b)$ such that
\begin{equation}\label{ParcetSoriaXu+}
	\sum_{n=-\infty}^\infty \text{exp} \Big(-a(\rho,b) \|\wh{f}(n)\|_X^{-\frac{1}{b}} \Big) \leq A(\rho,b).
\end{equation}
This result is not only interesting itself but also plays a crucial role in the proof of the `little Carleson theorem' for UMD Banach spaces (see \cite[Theorem 1]{ParcetSoriaXu}). Here, we extend \eqref{ParcetSoriaXu+} in several directions. Namely, we obtain its counterparts for the broader scale of periodic Lorentz-Zygmund spaces as well as its discrete counterparts (see Theorem \ref{ThmZygmundExp}) and Lorentz-Zygmund spaces on $\R^d$ (cf. Section \ref{SectionAdditional2}). Furthermore, our method enables us to show \eqref{ParcetSoriaXu+} for arbitrary uniformly bounded orthonormal systems (see Corollary \ref{Cor7.19}). Note that in general it is unknown wether nontrivial (Rademacher) type implies nontrivial type for an arbitrary uniformly bounded orthonormal system (see \cite[Section 8.3]{GaKaKoTo98}).

Limiting interpolation also allows us to show the vector-valued version of the Bochkarev's inequality. In  \cite{Herz}, Herz showed that
\begin{equation*}
\F : L^{2,q}(\T^d) \to \ell^{2,r}(\Z^d) \quad \text{if and only if} \quad q \leq 2 \leq r.
\end{equation*}
In other words, apart from trivial cases, the Fourier transform does not map Lorentz spaces $L^{2,q}(\T^d)$ to $\ell^{2,q}(\Z^d)$. However, Bochkarev \cite{Bochkarev97, Bochkarev} proved that this obstruction can be overcome with the help of additional logarithmic decays of the sequence of Fourier of coefficients. More precisely, if $q > 2$ then
\begin{equation}\label{BochkarevIntro}
	\wh{f}^\ast (n) \leq C n^{-1/2} (1 + \log n)^{1/2 - 1/q} \|f\|_{L^{2,q}(\T)}, \quad n \in \N.
\end{equation}
Further related results can be found in \cite[Corollary 5.2]{Sinnamon}.

Assume that $X$ has Fourier type $p_0$ and let $q > p_0$. Then, we show that (see Corollary \ref{cor:Bochkarev})
	\begin{equation}\label{BochkarevIntro2}
	\wh{f}^\ast (n) \leq C n^{-1/p_0'} (1 + \log n)^{1/p_0 - 1/\max\{p_0',q\}} \|f\|_{L^{p_0,q}(\T^d;X)}.
\end{equation}
In particular, if $X = \C$ then we recover \eqref{BochkarevIntro}. In fact, we will obtain a stronger estimate than \eqref{BochkarevIntro2} in terms of Lorentz-Zygmund norms (see Theorem \ref{thm:PaleyExtreme}).

This paper is organized as follows:
\begin{itemize}
\item Section \ref{sec:prel} contains several preliminaries on interpolation and duality.
\item In Section \ref{sec:HYPHL} we define the notions of Fourier, Paley and Hardy--Littlewood (co)type and discuss their relations.
\item In Section \ref{sec:trans} a general result is obtained which allows to transfer estimates for the Fourier transform between the groups $\T^d$, $\Z^d$ and $\R^d$. We also present several counterexamples to prove sharpness of results of Section \ref{sec:HYPHL}.
\item In Section \ref{Pitt's inequality} we give the proof of Theorem \ref{thm:Pitt2intro}. Moreover, sharpness of the results is obtained on the class of $\ell^p$-spaces.
\item In Section \ref{sec:Radtype} some relations between Rademacher, Fourier, Paley, Hardy--Littlewood (co)type and Pitt's inequalities are obtained.
\item In Section \ref{sec:Paley} we characterize the validity of the Zygmund inequality for Banach spaces and we show the vector-valued analogue of the Bochkarev's inequality.
\end{itemize}

\subsection*{Notation}
As usual, $\R^d$ denotes the $d$-dimensional real Euclidean space, $\T^d =  [-\frac{1}{2}, \frac{1}{2}]^d$ is the $d$-dimensional torus, $\Z^d$ is the lattice of all points in $\R^d$ with integer-valued components, $\N$ is the collection of all natural numbers and $\C$ is the complex plane. We use $B_r$ to denote the Euclidian ball in $\R^d$ centered at the origin with radius $r > 0$.

We denote nonzero Banach spaces over $\C$ by $X$. Given two Banach spaces $X_0, X_1$, we write $X_0 \hookrightarrow X_1$ if $X_0 \subseteq X_1$ and the natural embedding from $X_0$ into $X_1$ is continuous.

For $1 \leq p \leq \infty$, the number $p'$ is given by $\frac{1}{p} + \frac{1}{p'}=1$.

 We use the notation $A \lesssim_q B$ whenever $A \leq C B$ and $C$ is a positive constant which only depends on the parameter $q$. Similarly, we write $A\eqsim_q B$ if both $A\lesssim_q B$ and $B\lesssim_q A$.

\section{Preliminaries}\label{sec:prel}

\subsection{Function spaces}\label{Sec: FunctionSpaces}
Let $\Schw(\R^d;X)$ denote the space of $X$-valued Schwartz functions on $\R^d$. We write $\Schw(\R^d) = \Schw(\R^d;\C)$. The space of vector-valued tempered distributions is defined by $\Schw'(\R^d;X) = \calL(\Schw(\R^d),X)$, the space of all bounded linear operators from $\Schw(\R^d)$ to $X$. The Fourier transform is denoted by $\F$ and we use the normalization
\[\wh{f}(\xi) = \F (f)(\xi) = \int_{\R^d} f(t) e^{-2\pi i t\cdot \xi} \, \ud t, \ \ \  f\in L^1(\R^d;X).\]
Recall that $\F$ acts continuously on $\Schw(\R^d;X)$ and $\Schw'(\R^d;X)$.

Let $(S,\Sigma,\mu)$  be a measure space with $\sigma$-finite positive measure $\mu$. For a weight function $w:S\to (0,\infty)$ which is integrable on sets in $\Sigma$ of finite measure and $p\in [1, \infty)$, we consider the Bochner spaces $L^{p}(S,w;X)$ formed by all (equivalence classes of) strongly $\mu$-measurable functions $f: S \to X$ having a finite norm
\begin{equation}\label{Bochner}
\|f\|_{L^{p}(S,w;X)}= \Big(\int \limits _{S} \|f(s)\|_{X} ^{p} w(s)\, \ud \mu(s)\Big)^{1/p}.
\end{equation}
For $p=\infty$ we let $L^{\infty}(S,w;X) = L^{\infty}(S;X)$ denote the functions which are essentially bounded and set
\[\|f\|_{L^{\infty}(S,w;X)} = \text{ess.}\sup_{s\in S} \|f(s)\|_{X}.\]
In the special case $\omega \equiv 1$ we simply write $L^p(S;X)$ and $L^p(S, w) = L^p(S, w; \C)$. If $S=\Z^d$ then we shall use the notation $\ell^p(\Z^d, w; X)$.

If $f : S \to X$ then by $f^\ast(t),\, 0 < t < \mu(S)$, we mean the \emph{non-increasing rearrangement} of the scalar-valued function $\|f(s)\|_X, \, s \in S$. For $1 \leq p \leq \infty, 1 \leq q \leq \infty$ and $-\infty < b < \infty$, the \emph{Lorentz-Zygmund space} $L^{p,q} (\log L)^b(S;X)$ is the class of all $\mu$-strongly measurable functions $f : S \to X$ such that
\begin{equation}\label{DefLZ}
	\|f\|_{L^{p,q}(\log L)^b(S;X)} = \Big(\int_0^{\mu(S)} (t^{1/p} (1 + |\log t|)^b f^\ast(t))^q \,\frac{dt}{t} \Big)^{1/q} < \infty
\end{equation}
(with the usual modification if $p=\infty$ and/or $q=\infty$). See \cite{BennettRudnick, EdmundsEvans}. If $p=\infty$ then the case of interest is $b + 1/q < 0 \, (b \leq 0 \text{ if } q=\infty)$. Otherwise, it is easy to check that the Lorentz-Zygmund space $L^{\infty,q}(\log L)^b(S;X)$ becomes trivial, in the sense that it contains the zero function only.  Note that if $b=0$ in $L^{p,q}(\log L)^b(S;X)$ then we obtain the \emph{Lorentz spaces} $L^{p,q}(S;X)$ and if, in addition, $p=q$ then we get $L^p(S;X)$. If $p=q$ and $b \neq 0$ then we recover the \emph{Zygmund spaces} $L^p(\log L)^b(S;X)$. If $X = \C$ then we simply write $L^{p,q}(\log L)^b(S)$. In the special case $S=\Z^d$, we shall use the standard notation $\ell^{p,q}(\log \ell)^b(\Z^d;X)$, and we will use the following equivalent quasi-norm on this space:
\[\|(x_n)_{n\in \Z^d}\|_{\ell^{p,q}(\log \ell)^b(\Z^d;X)} =  \Big(\sum_{k=1}^\infty (k^{1/p} (1 + \log k)^b x_k^*)^q \frac{1}{k} \Big)^{1/q},\]
where $(x_k^*)_{k\geq 1}$ denotes the nonincreasing rearrangement of $(x_n)_{n\in \Z^d}$.

\subsection{Interpolation of Bochner spaces}\label{SectionIB}
Let $(X_0, X_1)$ be a Banach couple. If $0 < \theta < 1$ and $1 \leq q \leq \infty$, we shall denote by $(X_0, X_1)_{\theta,q}$ the \emph{real interpolation space}. The \emph{complex interpolation space} is denoted by $[X_0, X_1]_\theta$. See \cite{Bennett-Sharpley88, Triebel95}.

For the convenience of the reader, we collect some well-known interpolation formulas for Bochner spaces.

\begin{lemma}[{\cite[Section 1.18.5]{Triebel95}}]\label{lem:interpolationWeighted}
Let $1 \leq p_0, p_1 < \infty$ and $0 < \theta < 1$. Assume that $1/p = (1-\theta)/p_0 + \theta/p_1$ and $w = w_0^{(1-\theta) p/p_0} w_1^{\theta p/p_1}$. Then,
	\begin{equation*}
		[L^{p_0}(S, w_0;X), L^{p_1}(S, w_1;X)]_\theta = L^p(S, w;X).
	\end{equation*}
\end{lemma}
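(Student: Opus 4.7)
The plan is to prove the two containments separately, by constructing an appropriate analytic family of $X$-valued functions adapted to the weights in each direction.

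For the embedding $L^p(S, w; X) \hookrightarrow [L^{p_0}(S, w_0; X), L^{p_1}(S, w_1; X)]_\theta$, I would take $f \in L^p(S, w; X)$, write $f(s) = g(s) u(s)$ with $g(s) = \|f(s)\|_X$ and $u(s) \in X$ a unit vector on $\{g > 0\}$, and set $\beta(z) := (1-z)/p_0 + z/p_1$, so that $\beta(\theta) = 1/p$. I would then introduce the analytic function on the closed strip $0 \le \Real z \le 1$ given by
\begin{equation*}
F(z)(s) = g(s)^{p\beta(z)}\, w(s)^{\beta(z)}\, w_0(s)^{-(1-z)/p_0}\, w_1(s)^{-z/p_1}\, u(s).
\end{equation*}
The hypothesis $w^{1/p} = w_0^{(1-\theta)/p_0} w_1^{\theta/p_1}$ combined with $p\beta(\theta) = 1$ shows $F(\theta) = f$. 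At $\Real z = 0$ the modulus of the scalar prefactor collapses to $g(s)^{p/p_0}(w(s)/w_0(s))^{1/p_0}$, hence
\begin{equation*}
\|F(it)\|_{L^{p_0}(S, w_0; X)}^{p_0} = \int_S g^p w \, \ud\mu = \|f\|_{L^p(S, w; X)}^p,
\end{equation*}
and the analogous calculation on $\Real z = 1$ yields the same value for $\|F(1+it)\|_{L^{p_1}(S, w_1; X)}^{p_1}$. This delivers the bound $\|f\|_{[\cdot,\cdot]_\theta} \le \|f\|_{L^p(S, w; X)}$.

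For the reverse inclusion, given $f$ in the interpolation space and any admissible analytic $F$ with $F(\theta) = f$ and boundary norm bounds $\|F(it)\|_{L^{p_0}(S,w_0;X)} \le M_0$ and $\|F(1+it)\|_{L^{p_1}(S,w_1;X)} \le M_1$, I would apply the multiplicative transformation
\begin{equation*}
\tilde{F}(z)(s) := w_0(s)^{(1-z)/p_0}\, w_1(s)^{z/p_1}\, F(z)(s).
\end{equation*}
A short computation gives $|\tilde{F}(it)(s)|_X = w_0(s)^{1/p_0} |F(it)(s)|_X$ and the analogue on $\Real z = 1$, so the boundary norms of $\tilde{F}$ in the \emph{unweighted} Bochner spaces satisfy $\|\tilde{F}(it)\|_{L^{p_0}(S;X)} \le M_0$ and $\|\tilde{F}(1+it)\|_{L^{p_1}(S;X)} \le M_1$. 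Invoking the classical Calder\'on identity $[L^{p_0}(S;X), L^{p_1}(S;X)]_\theta = L^p(S;X)$ for unweighted Bochner spaces, applied to $\tilde{F}(\theta) = w^{1/p} f$, yields $\|f\|_{L^p(S, w; X)} \le M_0^{1-\theta} M_1^\theta$; taking the infimum over admissible $F$ closes the argument.

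The main technical obstacle is confirming that the $F$ constructed in the first step is genuinely admissible for the complex method: bounded and continuous with values in $L^{p_0}(S, w_0; X) + L^{p_1}(S, w_1; X)$ on the closed strip, analytic on the interior. I would handle this in the standard way, first restricting to simple $f$ (so that $F(z)$ is a finite linear combination of analytic scalar factors times fixed vectors), and then extending to general $f$ via density of simple functions in $L^p(S, w; X)$; since $z$ enters only through scalar exponentials, the analyticity check stays entirely in the scalar realm and no genuinely vector-valued subtlety arises.
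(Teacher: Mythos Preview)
The paper does not actually prove this lemma; it simply cites it from \cite[Section 1.18.5]{Triebel95} and moves on. Your proposal supplies a full proof via the standard Calder\'on construction, and the argument is correct: the analytic family $F(z)$ you write down does satisfy $F(\theta)=f$ with the right boundary norms, and the reduction of the reverse inclusion to the unweighted identity via the multiplier $w_0^{(1-z)/p_0} w_1^{z/p_1}$ is the classical trick.

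One small remark on the second direction: you flag the admissibility issue for $F$ in the first step but not for $\tilde F$ in the second. The multiplier $w_0(s)^{(1-z)/p_0} w_1(s)^{z/p_1}$ can be unbounded in $s$, so you should also verify that $\tilde F$ lands in the admissible class $\mathcal F(L^{p_0}(S;X), L^{p_1}(S;X))$ for the unweighted couple, or else run the three-lines argument directly on $\int_S \langle \tilde F(z)(s), h(s)\rangle\,\ud\mu(s)$ against simple $h$. This is routine (and the same density-to-simple-functions reduction handles it), but it deserves the same sentence of acknowledgment you gave the first direction.
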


\begin{lemma}[{\cite[Section 1.18.6]{Triebel95}}]\label{lem:interpolationLebesgue}
Let $1 \leq p_0, p_1 \leq \infty, p_0 \neq p_1, 1 \leq q \leq \infty, 0 < \theta < 1$, and $1/p = (1-\theta)/p_0 + \theta/p_1$. Then,
	\begin{equation*}
		(L^{p_0}(S;X), L^{p_1}(S;X))_{\theta, q} = L^{p,q}(S;X).
	\end{equation*}
\end{lemma}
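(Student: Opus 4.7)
The plan is to reduce everything to the scalar case via the Peetre $K$-functional. First, I would compute
\[
K(t, f; L^{p_0}(S;X), L^{p_1}(S;X)) = \inf_{f = f_0 + f_1}\bigl( \|f_0\|_{L^{p_0}(S;X)} + t \|f_1\|_{L^{p_1}(S;X)} \bigr).
\]
The key observation is that each Bochner norm depends on $f$ only through the scalar function $s \mapsto \|f(s)\|_X$. Hence for any $\lambda > 0$, the truncation
\[
f_0(s) = f(s)\, \chi_{\{\|f(s)\|_X > \lambda\}}, \qquad f_1 = f - f_0
\]
is admissible and its pieces have Bochner norms equal to the $L^{p_i}(S)$-norms of the corresponding truncations of $\|f(\cdot)\|_X$. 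Conversely, any vector decomposition dominates an analogous scalar decomposition pointwise. Since in the scalar case the infimum is (up to a constant) attained by a truncation, one concludes
\[
K(t, f; L^{p_0}(S;X), L^{p_1}(S;X)) \eqsim K(t, \|f(\cdot)\|_X; L^{p_0}(S), L^{p_1}(S)).
\]

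Next, I would invoke Holmstedt's formula for the scalar couple: with $\alpha = (1/p_0 - 1/p_1)^{-1}$ (and the obvious interpretation if $p_1 = \infty$),
\[
K(t, g; L^{p_0}(S), L^{p_1}(S)) \eqsim \left( \int_0^{t^\alpha} g^*(s)^{p_0}\, ds \right)^{1/p_0} + t \left( \int_{t^\alpha}^\infty g^*(s)^{p_1}\, ds \right)^{1/p_1}.
\]
Applied with $g = \|f(\cdot)\|_X$, whose non-increasing rearrangement is by definition $f^*$ in the notation of Section \ref{Sec: FunctionSpaces}, this yields an explicit expression for the couple $K$-functional purely in terms of $f^*$.

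Finally, I would insert this expression into the definition of the real interpolation norm, change variables $t = u^{1/\alpha}$, and invoke a standard Hardy inequality to absorb the two summands into a single rearrangement integral, arriving at
\[
\|f\|_{(L^{p_0}(S;X), L^{p_1}(S;X))_{\theta, q}}^q \eqsim \int_0^{\mu(S)} \bigl( u^{1/p} f^*(u) \bigr)^q \, \frac{du}{u} = \|f\|_{L^{p,q}(S;X)}^q,
\]
where $1/p = (1-\theta)/p_0 + \theta/p_1$. The main obstacle is the first step: carefully justifying that, up to a universal constant, the infimum defining the vector-valued $K$-functional is attained by a decomposition depending only on $\|f(\cdot)\|_X$. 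The subsequent two steps are then purely scalar and follow the classical treatment of real interpolation of Lebesgue spaces in \cite{Bennett-Sharpley88}.
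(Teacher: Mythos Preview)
The paper does not give its own proof of this lemma; it is simply quoted from \cite[Section 1.18.6]{Triebel95} as a known interpolation formula. Your proposal is a correct outline of the standard textbook argument: reduce the vector-valued $K$-functional to the scalar one for $\|f(\cdot)\|_X$ via truncation, apply the Holmstedt-type formula for the scalar couple, and then compute the $(\theta,q)$-norm to land in $L^{p,q}$. This is precisely the route taken in the references the paper relies on (Triebel, Bennett--Sharpley), so there is nothing to compare---your sketch is the classical proof the citation points to.
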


\begin{lemma}[{\cite[Section 1.18.4]{Triebel95}}]\label{lem:interpolationLebesgue2}
Let $1 \leq p_0, p_1 < \infty, 0 < \theta < 1$, and $1/p = (1-\theta)/p_0 + \theta/p_1$. Let $(X_0, X_1)$ be a Banach couple. Then,
\begin{equation*}
	(L^{p_0}(S;X_0), L^{p_1}(S;X_1))_{\theta,p} = L^p(S; (X_0, X_1)_{\theta,p})
\end{equation*}
and
\begin{equation*}
	[L^{p_0}(S;X_0), L^{p_1}(S;X_1)]_{\theta} = L^p(S; [X_0, X_1]_{\theta}).
\end{equation*}
\end{lemma}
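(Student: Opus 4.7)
I would prove both identities by first establishing them on the dense subspace of $X_0 \cap X_1$-valued simple functions of finite-measure support, where all four norms admit explicit tensor-type expressions, and then extending by density using the completeness of all four spaces involved.

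For the complex identity I would use the Calder\'on construction of analytic functions on the strip $\mathbb{S} = \{z \in \C : 0 < \Real z < 1\}$. For the inclusion $L^p(S; [X_0, X_1]_\theta) \hookrightarrow [L^{p_0}(S;X_0), L^{p_1}(S;X_1)]_\theta$, given a simple $f$, on each level set choose an admissible analytic extension $G_s(z)$ realizing $f(s)$ at $z=\theta$ with boundary norms within $1+\eps$ of $\|f(s)\|_{[X_0,X_1]_\theta}$, and then form the scalar-weighted extension
\[F(z)(s) = G_s(z) \cdot \left(\frac{\|f(s)\|_{[X_0,X_1]_\theta}}{\|f\|_{L^p(S;[X_0,X_1]_\theta)}}\right)^{\alpha(z-\theta)}, \qquad \alpha = p\bigl(\tfrac{1}{p_1} - \tfrac{1}{p_0}\bigr),\]
the exponent $\alpha$ being chosen so that raising the boundary norm to the power $p_j$ and integrating in $s$ produces a uniform $L^{p_j}(S;X_j)$-bound by the relation $\tfrac{1}{p} = \tfrac{1-\theta}{p_0} + \tfrac{\theta}{p_1}$. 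The reverse inclusion is symmetric: given admissible $F : \overline{\mathbb{S}} \to L^{p_0}(S;X_0) + L^{p_1}(S;X_1)$ with $F(\theta)=f$, the pointwise Calder\'on convexity $\|f(s)\|_{[X_0,X_1]_\theta} \leq M_0(s)^{1-\theta} M_1(s)^\theta$, where $M_j(s) = \sup_{t\in\R}\|F(j+it)(s)\|_{X_j}$, combined with H\"older's inequality at exponents $(p_0/((1-\theta)p), p_1/(\theta p))$ recovers the $L^p(S;[X_0,X_1]_\theta)$-norm.

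For the real identity I would split on the exponents. If $p_0=p_1=p$, the clean Fubini identity
\[\inf_{f=f_0+f_1}\bigl(\|f_0\|_{L^p(S;X_0)}^p + t^p \|f_1\|_{L^p(S;X_1)}^p\bigr) = \int_S \inf_{f(s)=g_0+g_1}\bigl(\|g_0\|_{X_0}^p + t^p \|g_1\|_{X_1}^p\bigr)\,d\mu(s)\]
is obtained by measurably selecting the pointwise minimizers; the left-hand side is equivalent (up to a $p$-dependent constant) to $K(t,f;\cdot)^p$, so integrating against $t^{-\theta p - 1}\,dt$ and applying Fubini yields the identification. For $p_0 \neq p_1$ the pointwise infimum no longer decouples, and I would instead use the discrete $J$-method: represent $f(s)=\sum_k u_k(s)$ with the $u_k(s)$ adapted to the $J$-functional of $f(s)$ at dyadic scales shifted by a measurable factor depending on $\|f(s)\|$ (the analog of the scalar rescaling used in the complex case), then balance the two $L^{p_j}(S;X_j)$-norms of each $u_k$ against the $L^p$-integration in $s$ via the same H\"older relation.

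The main obstacle is the case $p_0 \neq p_1$ of the real identity, where the decomposition must be measurable in $s$ and simultaneously adapted to the two mismatched Lebesgue exponents; this is precisely the reason for beginning with $X_0 \cap X_1$-valued simple functions, for which the measurable selection is automatic, and then obtaining the general case by a density argument exploiting the mutual density of $X_0 \cap X_1$-simple functions in all four spaces.
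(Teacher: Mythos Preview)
The paper does not prove this lemma: it is stated with a citation to \cite[Section 1.18.4]{Triebel95} and used as a black box throughout. So there is no paper proof to compare your sketch against.

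That said, your plan is broadly the standard one, but one step as written would not go through. In the reverse inclusion for the complex identity, you set $M_j(s) = \sup_{t\in\R}\|F(j+it)(s)\|_{X_j}$ and then want to combine $\|f(s)\|_{[X_0,X_1]_\theta} \le M_0(s)^{1-\theta}M_1(s)^\theta$ with H\"older. The problem is that admissibility of $F$ only gives $\sup_t \|F(j+it)\|_{L^{p_j}(S;X_j)} < \infty$, which says nothing about $\|M_j\|_{L^{p_j}(S)}$; the pointwise supremum can easily fail to be in $L^{p_j}$. Moreover, for a general admissible $F$ valued in $L^{p_0}(S;X_0)+L^{p_1}(S;X_1)$ it is not automatic that $z\mapsto F(z)(s)$ is analytic for a.e.\ $s$. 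The usual remedies are either to restrict to a dense class of admissible $F$ that are finite sums $\sum_k \varphi_k(z)\, g_k$ with $g_k$ simple and $\varphi_k$ scalar analytic (for which the pointwise three-lines argument is legitimate and the boundary $L^{p_j}$-norms integrate correctly), or to run the reverse inclusion by duality against $(X_0^*\cap X_1^*)$-valued simple functions. Either fix is routine, but the argument as you stated it has a gap at exactly this point.
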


As in \cite[Proposition 2.4.23]{HyNeVeWe16} one sees that $\Sch(\R^d;X)$ is dense in $L^p(\R^d;X)\cap L^q(\R^d;X)$ for all $1\leq p\leq q<\infty$. By Lemma \ref{lem:interpolationLebesgue}, the same holds for the Lorentz spaces $L^{p,r}(\R^d;X)$ for $p,r\in [1, \infty)$ (see \cite[Theorem 1.6.2]{Triebel95}).
The same density result holds for the weighted $L^p$-spaces with $A_{\infty}$-weights (see \cite[Lemma 3.5]{LinVerLp}). Moreover, we will also use the more elementary fact that the step functions are dense in each of these spaces.

\subsection{Norming subspaces and duality}
As usual, we denote by $X^*$ the dual space of $X$ with $\lb x, x^* \rb = x^*(x), \, x \in X, \, x^* \in X^*$. We say that a set $Y\subseteq X^*$ is {\em norming} for $X$ if $\sup_{x^*\in Y\setminus \{0\}}\frac{\lb x, x^*\rb}{\|x^*\|_{X^*}} = \|x\|_X$.

For later use, we shall need some duality assertions for vector-valued function spaces.
\begin{lemma}\label{lem:duality0}
Let $(S, \Sigma, \mu)$ be a $\sigma$-finite measure space and $w:S\to (0,\infty)$ be measurable. Let $p\in (1, \infty)$ and let $Y\subseteq X^*$ be a normed closed subspace of $X^*$ which is norming for $X$. Then $L^{p'}(S,w^{-\frac{1}{p-1}};Y)$ is norming for $L^p(S,w;X)$ with respect to the duality pairing
\[\lb f, g\rb = \int_{S} \lb f(s), g(s) \rb \,\ud\mu(s).\]
Moreover, the subspace of $Y$-valued simple functions in $L^{p'}(S,w^{-\frac{1}{p-1}};Y)$ is also norming for $L^p(S,w;X)$.
\end{lemma}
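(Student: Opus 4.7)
The plan is to establish both claims in parallel, with the upper bound being immediate from H\"older's inequality and the lower bound by approximation and an explicit Hölder-extremal witness. For the upper bound, my first step is to factor
$\|f(s)\|_X \|g(s)\|_Y = \bigl(\|f(s)\|_X\, w(s)^{1/p}\bigr)\bigl(\|g(s)\|_Y\, w(s)^{-1/p}\bigr)$
and apply scalar H\"older with exponents $p$ and $p'$, using the identity $p'/p = 1/(p-1)$. This yields $|\langle f,g\rangle| \leq \|f\|_{L^p(S,w;X)} \|g\|_{L^{p'}(S,w^{-1/(p-1)};Y)}$, showing the pairing is well-defined and $\sup_g |\langle f,g\rangle| \leq \|f\|_{L^p(S,w;X)}$.

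For the reverse inequality, fix $f$ with $\|f\|_{L^p(w;X)}>0$ and $\varepsilon \in (0,1)$. The plan is to first reduce to $X$-valued simple functions by density (standard for $\sigma$-finite measure spaces): choose $\tilde f = \sum_{k=1}^N x_k \mathbf{1}_{A_k}$ with disjoint $A_k$ of finite measure and $\|f - \tilde f\|_{L^p(w;X)} < \varepsilon$. Next, I use the norming hypothesis on $Y$ coordinate-wise: for each $k$ with $x_k \neq 0$, pick $y_k^* \in Y$ with $\|y_k^*\|_Y \leq 1$ and $\langle x_k, y_k^*\rangle \geq (1-\varepsilon)\|x_k\|_X$, and set $y^*(s) = \sum_k y_k^*\, \mathbf{1}_{A_k}(s)$, a $Y$-valued simple function of norm at most one. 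The candidate witness is
$$g(s) = C\,\|\tilde f(s)\|_X^{p-1}\, w(s)\, y^*(s), \qquad C = \|\tilde f\|_{L^p(w;X)}^{-(p-1)}.$$
Direct computation using $p'(p-1) = p$ and $p' - p'/p = 1$ gives $\|g\|_{L^{p'}(S,w^{-1/(p-1)};Y)} \leq 1$ and $\langle \tilde f, g\rangle \geq (1-\varepsilon)\|\tilde f\|_{L^p(w;X)}$. Combining with the already-proved H\"older bound applied to $|\langle f - \tilde f, g\rangle|$ yields $\langle f, g\rangle \geq \|f\|_{L^p(w;X)} - O(\varepsilon)$, and letting $\varepsilon \to 0$ proves the first assertion.

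For the refinement that $Y$-valued simple functions already form a norming subset, I observe that the $g$ just constructed is typically not itself simple because it carries the scalar factor $w(s)$. However, $g$ is $Y$-valued and strongly measurable (a product of a simple $Y$-valued function with a scalar measurable one) and belongs to $L^{p'}(S,w^{-1/(p-1)};Y)$, a Bochner space in which $Y$-valued simple functions are dense by the standard approximation argument. Approximating $g$ in norm by a simple function and rescaling then preserves the near-norming estimate. The main (and really only) subtle point in the whole argument is exactly this: the H\"older-extremal choice of $g$ involves the weight multiplicatively and so cannot be a step function unless one refines the partition to make $w$ essentially constant on each piece; sidestepping that refinement by the final density argument in the Bochner space is the cleanest route and is what I expect to require the most care in writing out.
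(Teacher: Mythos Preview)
Your proof is correct. The paper takes a different, much shorter route: it quotes the unweighted case from \cite[Proposition 1.3.1]{HyNeVeWe16} and then reduces the weighted statement to the unweighted one via the isometry $f\mapsto w^{1/p}f$ from $L^p(S,w;X)$ onto $L^p(S;X)$ (with the corresponding map $g\mapsto w^{1/p}g$ on the dual side, which is an isometry of $L^{p'}(S;Y)$ onto $L^{p'}(S,w^{-1/(p-1)};Y)$ and preserves the pairing). The final density statement is dispatched in one word.

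Your argument is essentially what the cited reference does in the unweighted case, carried out directly with the weight present; this buys self-containment at the cost of length. The paper's reduction is slicker and makes clear that the weight plays no genuine role. One minor comment: in your construction the factor $\|\tilde f(s)\|_X^{p-1}$ is already constant on each $A_k$, so the only obstruction to $g$ being simple is indeed the factor $w(s)$, exactly as you note; your density workaround is fine, but you could also simply absorb $w$ into the measure from the start (i.e.\ work in $L^p(S,w\,\ud\mu;X)$ versus $L^{p'}(S,w\,\ud\mu;Y)$ with the pairing $\int \langle f,g\rangle\,w\,\ud\mu$), which is another way of seeing the paper's reduction.
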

\begin{proof}
The unweighted case is contained in \cite[Proposition 1.3.1]{HyNeVeWe16}. The weighted case can be derived as a consequence by applying the unweighted case to $w^{1/p} f$ (see (\ref{Bochner})). The final statement follows by density.
\end{proof}

\begin{lemma}\label{lem:dualitynew}
	Let $(S,\Sigma, \mu)$ be a $\sigma$-finite nonatomic measure space. Let $1 < p, q < \infty$ and let $Y\subseteq X^*$ be a normed closed subspace which is norming for $X$.
Then,
\begin{equation}\label{dualLorentznew}
		\|f\|_{L^{p,q}(S; X)}  \asymp  \sup \left\{\Big|\int_{S} \lb  f(s), g(s)\rb  \,\ud \mu (s)\Big| : \|g\|_{L^{p',q'}(S;Y)} \leq 1\right\}.
	\end{equation}
\end{lemma}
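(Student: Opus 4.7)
The plan is to establish the two inequalities underlying the asymptotic equivalence separately, treating the trivial direction via Hölder's inequality for Lorentz spaces and the nontrivial direction by reducing to the scalar Lorentz duality plus a norming/selection argument.

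\medskip

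\noindent\textbf{Upper bound on the supremum.} First I would observe that for any $g\in L^{p',q'}(S;Y)$,
\begin{equation*}
	\Big|\int_{S}\langle f(s),g(s)\rangle\,\ud\mu(s)\Big|\le \int_{S}\|f(s)\|_{X}\,\|g(s)\|_{Y}\,\ud\mu(s).
\end{equation*}
Applying the scalar Hölder inequality for Lorentz spaces (valid for $1<p,q<\infty$ with Lorentz-space constant depending only on $p,q$; see \cite[Theorem~4.5, Ch.~IV]{Bennett-Sharpley88}) gives
\begin{equation*}
	\int_{S}\|f(s)\|_{X}\,\|g(s)\|_{Y}\,\ud\mu(s)\ \lesssim_{p,q}\ \|f\|_{L^{p,q}(S;X)}\,\|g\|_{L^{p',q'}(S;Y)},
\end{equation*}
which yields the ``$\gtrsim$'' half of \eqref{dualLorentznew}.

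\medskip

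\noindent\textbf{Lower bound: reduction to the scalar case.} For the other direction I would invoke the classical scalar Lorentz duality: for $1<p,q<\infty$ one has
\begin{equation*}
	\|\,\|f\|_{X}\,\|_{L^{p,q}(S)}\ \asymp_{p,q}\ \sup\Big\{\int_{S}\|f(s)\|_{X}\,h(s)\,\ud\mu(s)\ :\ h\ge0,\ \|h\|_{L^{p',q'}(S)}\le1\Big\}
\end{equation*}
(see \cite[Theorem~4.7, Ch.~IV]{Bennett-Sharpley88}). Hence, for any fixed $\varepsilon>0$, one can pick such a nonnegative scalar $h$ with $\int_{S}\|f(s)\|_{X}\,h(s)\,\ud\mu(s)\ge (1-\varepsilon)c_{p,q}\|f\|_{L^{p,q}(S;X)}$.

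\medskip

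\noindent\textbf{Lower bound: lifting to the vector-valued setting.} To promote $h$ to a $Y$-valued test function I would first assume $f$ is a $Y$-compatible step function, i.e.\ $f=\sum_{i=1}^{N}x_{i}\,\mathbf{1}_{A_{i}}$ with pairwise disjoint sets $A_{i}\in\Sigma$ of finite measure and $x_{i}\in X$. Using the norming property of $Y$, for each $i$ choose $y_{i}^{*}\in Y$ with $\|y_{i}^{*}\|_{Y}\le1$ and $\langle x_{i},y_{i}^{*}\rangle\ge(1-\varepsilon)\|x_{i}\|_{X}$. Define
\begin{equation*}
	g(s)\ :=\ h(s)\sum_{i=1}^{N}y_{i}^{*}\,\mathbf{1}_{A_{i}}(s),\qquad s\in S.
\end{equation*}
Then $g$ is strongly measurable with values in $Y$, $\|g(s)\|_{Y}\le h(s)$ pointwise, so $\|g\|_{L^{p',q'}(S;Y)}\le\|h\|_{L^{p',q'}(S)}\le1$, and
\begin{equation*}
	\int_{S}\langle f(s),g(s)\rangle\,\ud\mu(s)\ \ge\ (1-\varepsilon)\int_{S}\|f(s)\|_{X}\,h(s)\,\ud\mu(s)\ \ge\ (1-\varepsilon)^{2}c_{p,q}\|f\|_{L^{p,q}(S;X)}.
\end{equation*}
For an arbitrary $f\in L^{p,q}(S;X)$ I would then use that $X$-valued step functions are dense in $L^{p,q}(S;X)$ (recorded in Section~\ref{SectionIB}, relying on nonatomicity and $\sigma$-finiteness of $\mu$), approximate $f$ in norm by such step functions, and pass to the limit, letting $\varepsilon\downarrow0$.

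\medskip

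\noindent\textbf{Main obstacle.} The only genuinely nontrivial issue is selecting the functionals $y_{i}^{*}\in Y$ in a way that is jointly measurable in $s$; this is circumvented entirely by reducing first to step functions $f$, where only finitely many selections are needed and measurability is automatic. Once that reduction is in place, all remaining steps are routine density and limiting arguments, and no measurable-selection theorem is required.
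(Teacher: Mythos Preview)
Your proposal is correct and follows essentially the same route as the paper's proof: H\"older for Lorentz spaces gives the easy direction, and for the converse one reduces to simple $f$, invokes the scalar Lorentz duality from \cite[Chapter~4, Theorem~4.7]{Bennett-Sharpley88} to obtain a nonnegative scalar $h$, and then uses the norming property of $Y$ piecewise on the values of $f$ to build the $Y$-valued test function $g = h\sum_i y_i^*\,\mathbf{1}_{A_i}$. The only cosmetic differences are that the paper uses the fixed constant $\tfrac12$ instead of $(1-\varepsilon)$ and states the reduction to simple functions as ``without loss of generality'' rather than spelling out the density-plus-continuity argument you sketch at the end.
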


Some comments are in order. The restriction to nonatomic measure spaces already appears in the scalar case (see \cite[Theorem 4.7]{Bennett-Sharpley88}). On the other hand, unlike in Lemma \ref{lem:duality0},  \eqref{dualLorentznew} does not hold with equality in general (see \cite[Theorem 4.4]{BarzaKolyadaSoria}).

\begin{proof}[Proof of Lemma \ref{lem:dualitynew}]
The result is well known to experts. For convenience of the reader we give a simple proof which reducing the problem to the scalar case.
Let $f \in L^{p,q}(S;X)$ and let $g \in L^{p',q'}(S;Y)$ be such that $\|g\|_{L^{p',q'}(S;Y)} \leq 1$. Applying the H\"older inequality for Lorentz spaces $L^{p,q}(S)$ (see \cite[Theorem 3.5]{ONeil63}), we get
\begin{align*}
	\Big|\int_{S} \lb  f(s), g(s)\rb  \, \ud \mu(s)\Big| & \leq \int_{S} \|f(s)\|_X \|g(s)\|_{X^\ast} \, \ud \mu(s) \\
	& \leq \|f\|_{L^{p,q}(S;X)} \|g\|_{L^{p',q'}(S;X^\ast)} \leq  \|f\|_{L^{p,q}(S;X)}.
\end{align*}
This prove the estimate $\geq$ in \eqref{dualLorentznew}.
To prove the converse estimate, we may assume without loss of generality that $f \in L^{p,q}(S;X)$ is a simple function, that is, $f = \sum_{n=1}^N \ind_{A_n} \otimes x_n$, where the $A_n\subseteq S$ are measurable disjoint sets of finite measure and all $x_n \in X$ non-zero. Define $u(s) = \|f(s)\|_{X}$. By construction, the function $u \in L^{p,q}(S)$ with $\|u\|_{ L^{p,q}(S)} = \|f\|_{L^{p,q}(S;X)}$. Accordingly, by \cite[Chapter 4, Theorem 4.7, page 220]{Bennett-Sharpley88} there exists $v \in L^{p',q'}(S), v \geq 0,$ with $\|v\|_{L^{p',q'}(S)} \leq 1$ satisfying that
\begin{equation*}
	\|u\|_{ L^{p,q}(S)} \asymp \Big|\int_{S} u(s) v(s) \, \ud \mu(s) \Big|.
\end{equation*}
Since $Y$ is norming for $X$, for each $n \in \{1, \ldots, N\}$ there exists $x_n^* \in Y$ of norm one such that $\lb  x_n, x_n^\ast\rb  \geq \frac12 \|x_n\|_{X}$. Define $g(s) = \sum_{n=1}^N \ind_{A_n}(s) v(s) x_n^*$. Note that $\|g\|_{L^{p',q'}(S;Y)} \leq \|v\|_{L^{p',q'}(S)} \leq 1$ because $\|g(s)\|_{Y} \leq v(s)$, and, in addition, we have
\begin{equation*}
	\lb  f(s), g(s)\rb  = v(s) \sum_{n=1}^N \lb  x_n, x_n^\ast\rb  \ind_{A_n}(s) \asymp v(s) \sum_{n=1}^N \|x_n\|_{X} \ind_{A_n}(s) = v(s) u (s).
\end{equation*}
Therefore,
\begin{equation*}
	 \|f\|_{L^{p,q}(S;X)} \asymp \Big|\int_{S} \lb  f(s), g(s)\rb   \, \ud \mu(s) \Big|.
\end{equation*}
This finishes the proof of \eqref{dualLorentznew}.
\end{proof}

For later use, we shall also need the characterizations of the dual spaces of the vector-valued sequence spaces $\ell^p(\Z^d,w;X)$ and $\ell^{p,q}(\Z^d;X)$ given in Lemma \ref{lem:duality3} below. This is essentially known in the literature, but we could not find a reference for the particular result which we need.

As usual, if $(X_n)$ is a sequence of Banach spaces, we denote by $\ell^p(\Z^d; X_n)$ the space formed by all sequences $(x_n)$ with $x_n \in X_n$ such that
\begin{equation*}
 \|(x_n)\|_{\ell^p(\Z^d; X_n)} = \left(\sum_{n \in \Z^d} \|x_n\|_{X_n}^p\right)^{1/p} < \infty.
 \end{equation*}
 It is folklore that for all $p\in[1, \infty)$
 \begin{equation}\label{dualVectorSequences}
   (\ell^p(\Z^d;X_n))^\ast = \ell^{p'}(\Z^d;X_n^\ast)
   \end{equation}
   isometrically.

\begin{lemma}\label{lem:duality3}
	Let $1 < p, q < \infty$ and $w : \Z^d \to (0,\infty)$. Then, we have
	\begin{equation}\label{dualWeightedSequences}
		(\ell^p(\Z^d, w ; X))^\ast = \ell{^{p'}}(\Z^d, w^{-\frac{1}{p-1}};X^\ast) \quad (\text{isometry of norms})
	\end{equation}
	and
	\begin{equation}\label{dualLorentzSequences}
		(\ell^{p,q}(\Z^d;X))^\ast = \ell^{p',q'}(\Z^d;X^\ast) \quad (\text{equivalence of norms}).
	\end{equation}
\end{lemma}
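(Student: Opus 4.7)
The plan is to prove the two dualities in turn, reducing each to a more basic ingredient already supplied in the excerpt.

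For the weighted identity \eqref{dualWeightedSequences}, I would reduce directly to the unweighted duality \eqref{dualVectorSequences}. The map $M_w\colon \ell^p(\Z^d,w;X) \to \ell^p(\Z^d;X)$ defined by $(M_w x)_n = w(n)^{1/p} x_n$ is a bijective isometry. By \eqref{dualVectorSequences} applied to the constant family $X_n = X$, every continuous linear functional on $\ell^p(\Z^d;X)$ is uniquely represented by some $y \in \ell^{p'}(\Z^d;X^\ast)$ via $z \mapsto \sum_n \lb z_n, y_n\rb$, with $\|y\|_{\ell^{p'}(\Z^d;X^\ast)}$ equal to the dual norm. Pulling back through $M_w$ and substituting $\widetilde y_n = w(n)^{1/p} y_n$, the corresponding functional on $\ell^p(\Z^d,w;X)$ becomes $x \mapsto \sum_n \lb x_n, \widetilde y_n\rb$. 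The norm identity
\begin{equation*}
	\sum_n \|y_n\|_{X^\ast}^{p'} = \sum_n w(n)^{-p'/p}\,\|\widetilde y_n\|_{X^\ast}^{p'},
\end{equation*}
together with $p'/p = 1/(p-1)$, delivers the claimed isometric identification.

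For the Lorentz identity \eqref{dualLorentzSequences}, the obstruction mentioned after Lemma \ref{lem:dualitynew} --- that $\Z^d$ is atomic --- rules out a direct scalarization in the style of that proof. Instead I would invoke real interpolation. Pick exponents $1 < p_0 < p < p_1 < \infty$ and $\theta \in (0,1)$ with $1/p = (1-\theta)/p_0 + \theta/p_1$. Applying Lemma \ref{lem:interpolationLebesgue} to $S = \Z^d$ equipped with counting measure,
\begin{equation*}
	\ell^{p,q}(\Z^d;X) = \bigl(\ell^{p_0}(\Z^d;X),\, \ell^{p_1}(\Z^d;X)\bigr)_{\theta,q}.
\end{equation*}
The finitely supported sequences $c_{00}(\Z^d;X)$ lie in and are dense in $\ell^{p_j}(\Z^d;X)$ for $j=0,1$, so the duality theorem for the real method (for instance \cite[Theorem 1.11.2]{Triebel95}) yields
\begin{equation*}
	\bigl(\ell^{p,q}(\Z^d;X)\bigr)^\ast = \bigl(\ell^{p_0}(\Z^d;X)^\ast,\, \ell^{p_1}(\Z^d;X)^\ast\bigr)_{\theta,q'}.
\end{equation*}
The first part of the lemma (with $w \equiv 1$) identifies each endpoint dual as $\ell^{p_j'}(\Z^d;X^\ast)$. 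Since $1/p' = (1-\theta)/p_0' + \theta/p_1'$, a second application of Lemma \ref{lem:interpolationLebesgue} then gives $\ell^{p',q'}(\Z^d;X^\ast)$ with equivalent norms, matching the statement.

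The only step requiring any thought is verifying the density hypothesis underlying the interpolation-duality theorem, but this is immediate from the standard fact that $c_{00}(\Z^d;X)$ is dense in every $\ell^r(\Z^d;X)$ with $r<\infty$, which covers our choice of finite endpoints $p_0,p_1$. All other steps are mechanical assembly of the cited results, with part one providing the input needed to close part two.
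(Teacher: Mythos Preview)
Your proposal is correct and follows essentially the same approach as the paper: for \eqref{dualWeightedSequences} both reduce to the unweighted duality \eqref{dualVectorSequences} (the paper rephrases the weighted space as $\ell^p(\Z^d;X_n)$ with $X_n = w(n)^{1/p}X$, which is equivalent to your multiplication isometry $M_w$), and for \eqref{dualLorentzSequences} both invoke the real-interpolation duality theorem \cite[Theorem 1.11.2]{Triebel95} with endpoints $\ell^{p_0}(\Z^d;X)$ and $\ell^{p_1}(\Z^d;X)$ and then apply Lemma \ref{lem:interpolationLebesgue} twice. Your explicit mention of the density hypothesis via $c_{00}(\Z^d;X)$ is a welcome addition but does not change the overall structure.
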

\begin{proof}
	We start by showing \eqref{dualWeightedSequences}. Note that  $\ell^p(\Z^d,w;X) = \ell^p(\Z^d; X_n)$ with $X_n = w(n)^{1/p} X$. Applying now \eqref{dualVectorSequences}, we easily derive \eqref{dualWeightedSequences} because $(\lambda X)^\ast = \lambda^{-1} X^\ast$ (with equality of norms) for any $\lambda > 0$.
	
	In order to prove \eqref{dualLorentzSequences} we will make use of the well-known interpolation formula
	\begin{equation}\label{interpolationDuality}
		(A_0, A_1)_{\theta,q}^\ast = ((A_0)^\ast, (A_1)^\ast)_{\theta,q'},
	\end{equation}
	whenever $A_0 \cap A_1$ is dense in $A_0$ and $A_1$ (see \cite[Theorem 1.11.2]{Triebel95}). Let $1 < p_0 < p < p_1 < \infty$. We define $\theta \in (0,1)$ such that $\frac{1}{p} = \frac{1-\theta}{p_0} + \frac{\theta}{p_1}$. Then, by Lemma \ref{lem:interpolationLebesgue}, (\ref{interpolationDuality}) and (\ref{dualVectorSequences}), we have
	\begin{align*}
		(\ell^{p,q}(\Z^d;X))^\ast & = ((\ell^{p_0}(\Z^d;X))^\ast, (\ell^{p_1}(\Z^d;X))^\ast)_{\theta,q'} = (\ell^{p_0'}(\Z^d;X^\ast), \ell^{p_1'}(\Z^d;X^\ast))_{\theta, q'} \\
		&= \ell^{p',q'}(\Z^d;X^\ast).
	\end{align*}
	The proof is finished.
\end{proof}

\subsection{Technical estimates}

The following vector-valued extension of the Stein--Weiss theorem on fractional integration  holds (see \cite{Beckner08} for the scalar case).
\begin{lemma}[Stein--Weiss]\label{lem:SobolevEmb}
Let $1<u\leq v<\infty$, $0<\lambda<d$, $a<\frac{d}{v}$, $b<\frac{d}{u'}$, $a+b\geq 0$ and $\frac{d}{v}+\frac{d}{u'} = \lambda+a+b$. Then there exists a constant $C > 0$ such that for all $g\in L^u(\R^d,|\cdot|^{b u};X)$,
\begin{align*}
\||\cdot|^{-\lambda}*g\|_{L^v(\R^d,|\cdot|^{-a v};X)} \leq C \|g\|_{L^u(\R^d,|\cdot|^{b u};X)}.
\end{align*}
\end{lemma}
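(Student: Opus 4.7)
The plan is to reduce this to the classical scalar Stein--Weiss inequality by exploiting the positivity of the convolution kernel $|\cdot|^{-\lambda}$, combined with the triangle inequality for Bochner integrals.

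First I would observe that, for $g\in L^u(\R^d,|\cdot|^{bu};X)$, the Bochner triangle inequality gives the pointwise bound
\begin{equation*}
\bigl\|(|\cdot|^{-\lambda}*g)(x)\bigr\|_X = \left\|\int_{\R^d} |x-y|^{-\lambda} g(y)\,\ud y\right\|_X \leq \int_{\R^d} |x-y|^{-\lambda} \|g(y)\|_X\,\ud y
\end{equation*}
for almost every $x\in\R^d$ for which the convolution is well-defined. Thus, writing $G(y):=\|g(y)\|_X$, we obtain the pointwise majorization $\|(|\cdot|^{-\lambda}*g)(x)\|_X \leq (|\cdot|^{-\lambda}*G)(x)$.

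Next I would invoke the scalar Stein--Weiss theorem (see \cite{Beckner08}) which, under the assumptions $1<u\leq v<\infty$, $0<\lambda<d$, $a<d/v$, $b<d/u'$, $a+b\geq 0$ and $d/v+d/u'=\lambda+a+b$, yields the constant $C=C(d,u,v,\lambda,a,b)>0$ such that
\begin{equation*}
\bigl\||\cdot|^{-\lambda}*G\bigr\|_{L^v(\R^d,|\cdot|^{-av})} \leq C\,\|G\|_{L^u(\R^d,|\cdot|^{bu})}
\end{equation*}
for all nonnegative measurable $G$. Noting that $\|G\|_{L^u(\R^d,|\cdot|^{bu})}=\|g\|_{L^u(\R^d,|\cdot|^{bu};X)}$, it is now enough to take the weighted $L^v$-norm of the pointwise estimate from the first step, obtaining
\begin{equation*}
\bigl\||\cdot|^{-\lambda}*g\bigr\|_{L^v(\R^d,|\cdot|^{-av};X)} \leq \bigl\||\cdot|^{-\lambda}*G\bigr\|_{L^v(\R^d,|\cdot|^{-av})} \leq C\,\|g\|_{L^u(\R^d,|\cdot|^{bu};X)}.
\end{equation*}

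There is essentially no hard step here, since the argument is just scalar majorization; the only minor care is to verify that the convolution $(|\cdot|^{-\lambda}*g)(x)$ defines a Bochner-measurable $X$-valued function for almost every $x$. This follows because the scalar Stein--Weiss bound applied to $G$ already gives that $(|\cdot|^{-\lambda}*G)(x)<\infty$ for a.e.\ $x$, hence $y\mapsto |x-y|^{-\lambda}g(y)$ is Bochner-integrable for such $x$ (by strong measurability of $g$ together with absolute integrability of $y\mapsto|x-y|^{-\lambda}\|g(y)\|_X$), so the vector-valued convolution is well-defined and strongly measurable in $x$ by standard Fubini-type arguments. By density of step functions in $L^u(\R^d,|\cdot|^{bu};X)$ (cf.\ Section \ref{SectionIB}), one can in any case reduce to the case of simple $g$, for which all integrals are manifestly finite, and extend by the a priori bound.
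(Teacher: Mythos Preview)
Your proof is correct and follows essentially the same approach as the paper: both reduce to the scalar Stein--Weiss inequality via positivity of the kernel $|\cdot|^{-\lambda}$. The paper invokes a general extension theorem for positive operators (\cite[Theorem 2.1.3]{HyNeVeWe16}), whereas you spell out the pointwise domination $\|(|\cdot|^{-\lambda}*g)(x)\|_X\leq (|\cdot|^{-\lambda}*\|g\|_X)(x)$ by hand, which is exactly the content of that theorem in this instance.
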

\begin{proof}
The result in the scalar case follows from \cite{Beckner08} for $g\in \Schw(\R^d)$ and by density for $g\in L^u(\R^d,|\cdot|^{b u})$. Therefore, $g\mapsto |\cdot|^{-\lambda}*g$ defines a bounded linear mapping from $L^u(\R^d,|\cdot|^{b u})$ into $L^v(\R^d,|\cdot|^{-a v})$ of norm $\leq C$. By positivity this mapping extends to the $X$-valued setting with the same norm estimate (see \cite[Theorem 2.1.3]{HyNeVeWe16}).
\end{proof}

\section{Inequalities of Hausdorff--Young, Paley, and Hardy-Littlewood}\label{sec:HYPHL}

We start by introducing the notions of type and cotype that we will work with in order to prove our main result Theorem \ref{thm:Pitt2intro}.

\begin{definition}\label{def:type}
	Let $1 < p \leq 2\leq q<\infty$ and let $d\in \N$. A Banach space $X$ is said to have
\begin{enumerate}[(i)]
 \item\label{it:type1} {\em Fourier type $p$ on $\R^d$}  if there is a constant $C\geq 0$ such that
\begin{equation*}
		\|\widehat{f}\|_{L^{p'}(\R^d;X)} \leq C \|f\|_{L^p(\R^d;X)}
	\end{equation*}
\item\label{it:type2} {\em Paley type $p$ on $\R^d$} if there is a constant $C\geq 0$ such that
	\begin{equation*}
		\|\widehat{f}\|_{L^{p',p}(\R^d;X)} \leq C \|f\|_{L^p(\R^d;X)}
	\end{equation*}
\item\label{it:type3} {\em HL type $p$ on $\R^d$} if there is a constant $C\geq 0$ such that
	\begin{equation*}
\|\widehat{f}\|_{L^{p}(\R^d, |\cdot|^{-d(2-p)};X)} \leq C \|f\|_{L^p(\R^d;X)}
	\end{equation*}
 \item\label{it:cotype1} {\em Fourier cotype $q$ on $\R^d$} if there is a constant $C\geq 0$ such that
	\begin{equation*}
		\|\widehat{f}\|_{L^{q}(\R^d;X)} \leq C \|f\|_{L^{q'}(\R^d;X)}
	\end{equation*}
	\item\label{it:cotype2} {\em Paley cotype $q$ on $\R^d$} if there is a constant $C\geq 0$ such that
	\begin{equation*}
	\|\widehat{f}\|_{L^{q}(\R^d;X)} \leq C \|f\|_{L^{q',q}(\R^d;X)}
	\end{equation*}
	\item\label{it:cotype3} {\em HL cotype $q$ on $\R^d$} if there is a constant $C\geq 0$ such that
	\begin{equation*}
\|\widehat{f}\|_{L^{q}(\R^d;X)} \leq C \|f\|_{L^q(\R^d, |\cdot|^{d(q-2)};X)}
	\end{equation*}
	\end{enumerate}
for all $f \in \Schw(\R^d;X)$. Whenever it is clear that the dimension $d$ is fixed we leave out the additional `on $\R^d$'.
\end{definition}

Instead of considering the Schwartz class, one could consider step functions $f:\R^d\to X$ to introduce all notions given in Definition \ref{def:type}. By density the definition of Fourier type $p$ can equivalently be stated as $\F$ defines a bounded linear mapping $\F:L^p(\R^d;X)\to L^{p'}(\R^d;X)$. The same holds for the other parts of the definition. In the above definition HL stands for Hardy--Littlewood. Trivially, $X$ has Fourier cotype $q$ if and only if $X$ has Fourier type $q'$. This definition is only added for notational convenience.

It is well known that the scalar-valued estimates (i.e., $X=\C$) given in Definition \ref{def:type} hold for all $p\in (1, 2]$ and $q\in [2, \infty)$. Moreover, this extends to the Hilbert space setting (see \cite[Theorem 2.1.9]{HyNeVeWe16}). In the scalar case the estimates \eqref{it:type1}, \eqref{it:type2}, \eqref{it:type3} are called the Hausdorff-Young, Paley, and Hardy-Littlewood inequalities, respectively, and this explains the terminology in the above definition. Even in the scalar-valued setting, the Paley and the Hardy-Littlewood inequalities in $L^p(\R^d)$ only hold true when $1 < p \leq 2$, and their converse estimates only remain valid in $L^q(\R^d)$ with $2 \leq q < \infty$. The estimates \eqref{it:type1} and \eqref{it:cotype1} also (trivially) hold for $p=1$ and $q=\infty$ for any Banach space $X$.

\begin{remark}\label{RemarkTypeCotype}\
\begin{enumerate}[(i)]	
\item\label{RemarkTypeCotype1} It is simple to check that Fourier (co)type is independent of the dimension $d$ (see \cite[Proposition 2.4.11]{HyNeVeWe16}). However, it is still unclear wether a similar result holds for Paley and HL (co)type. See Remark \ref{RemarkDimensionTrans} below.

\item It is worthwhile mentioning that the notion of Paley cotype $q$ for Fourier transform given in Definition \ref{def:type}\eqref{it:cotype2} differs from the same concept introduced in \cite[Definition 2.6(2)]{GaKaKo01} for Fourier series. More specifically, the definition of Paley cotype $q$ for Fourier transform given here corresponds to the notion of strong Paley cotype* $q$ for Fourier series of \cite[Definition 2.12(2)]{GaKaKo01}.

\item If $p=2$, then all six notions Fourier/Paley/HL type and cotype, coincide. Moreover, by Kwapie\'n's theorem \cite{Kwa} this holds if and only if $X$ is isomorphic to a Hilbert space (see also \cite[Theorem 7.3.5]{HyNeVeWe2}).
    \end{enumerate}
\end{remark}

Next we show that there exist some duality results between the notions of type and cotype introduced in Definition \ref{def:type}.

\begin{proposition}\label{prop:duality}
	Let $1 < p < 2$. Then
	 	\begin{enumerate}[\upshape(i)]
		\item\label{DualF} $X$ has Fourier type $p$ if and only if $X^\ast$ has Fourier cotype $p'$.
	 \item\label{DualP} $X$ has Paley type $p$ if and only if $X^\ast$ has Paley cotype $p'$.
	 \item\label{DualHL} $X$ has HL type $p$ if and only if $X^\ast$ has HL cotype $p'$.
	 \end{enumerate}
	 The corresponding results also hold true if we exchange the roles of $X$ and $X^\ast$.
\end{proposition}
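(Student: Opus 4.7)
The plan is to establish all three equivalences by a single duality argument, built on the Parseval identity $\int_{\R^d} \langle \widehat{f}(x), g(x)\rangle\,\ud x = \int_{\R^d} \langle f(x), \widehat{g}(x)\rangle\,\ud x$ valid for Schwartz functions, combined with the Bochner/Lorentz duality statements collected in Lemmas \ref{lem:duality0} and \ref{lem:dualitynew}. In each case, the key observation is that the space on the left of the defining type inequality is dualized exactly to the space on the right of the defining cotype inequality (with $X$ replaced by $X^*$), so the cotype inequality becomes equivalent to the type inequality after testing against suitable Schwartz densities.

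I would execute (i) first as a template. Assuming $X$ has Fourier type $p$, fix $g \in \Sch(\R^d; X^*)$. Since $X$ is norming for $X^*$, Lemma \ref{lem:duality0} (with $w\equiv 1$) gives
\begin{equation*}
\|\widehat{g}\|_{L^{p'}(\R^d;X^*)} = \sup\Bigl\{\Bigl|\int_{\R^d}\langle f(x), \widehat{g}(x)\rangle\,\ud x\Bigr| : f \in L^p(\R^d;X),\ \|f\|_{L^p(X)} \le 1\Bigr\},
\end{equation*}
and by density the sup may be restricted to $f \in \Sch(\R^d;X)$. Parseval and the Fourier type $p$ hypothesis give
\begin{equation*}
\Bigl|\int_{\R^d}\langle f, \widehat{g}\rangle\,\ud x\Bigr| = \Bigl|\int_{\R^d}\langle \widehat{f}, g\rangle\,\ud x\Bigr| \le \|\widehat{f}\|_{L^{p'}(X)} \|g\|_{L^p(X^*)} \le C\|f\|_{L^p(X)}\|g\|_{L^p(X^*)},
\end{equation*}
which yields Fourier cotype $p'$ of $X^*$. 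The converse direction is symmetric: given Fourier cotype $p'$ of $X^*$, dualize $\|\widehat{f}\|_{L^{p'}(X)}$ against $L^p(\R^d;X^*)$ (using that $X^*$ is norming for $X$), apply Parseval, and invoke the cotype bound on $\widehat{g}$.

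For parts (ii) and (iii) the same recipe works once the dual norm identifications are verified. In (iii), Lemma \ref{lem:duality0} applied with the weight $w(x) = |x|^{-d(2-p)}$ shows that the dual of $L^p(\R^d, |\cdot|^{-d(2-p)}; X)$ pairs with $L^{p'}(\R^d, |\cdot|^{d(p'-2)}; X^*)$, since $-d(2-p)\cdot\bigl(-\tfrac{1}{p-1}\bigr) = d(p'-2)$ after writing $p' - 2 = \tfrac{2-p}{p-1}$; this is exactly the weight appearing in HL cotype $p'$. In (ii) the relevant identification is $(L^{p',p}(\R^d;X))^* \asymp L^{p, p'}(\R^d;X^*)$, which is the content of Lemma \ref{lem:dualitynew} (applicable since $\R^d$ is $\sigma$-finite and nonatomic). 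Parseval then connects the type and cotype inequalities in each case just as in (i). The symmetric statement, with the roles of $X$ and $X^*$ exchanged, follows by literally the same argument (or by applying the above to $X^*$ together with the fact that $X$ embeds into $X^{**}$ isometrically, which suffices for these inequalities on Schwartz functions).

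The only mild technical point is the use of density of Schwartz functions in $L^p(\R^d;X)$ and in the weighted/Lorentz spaces that arise, which is standard (cf.\ the discussion after Lemma \ref{lem:interpolationLebesgue2}), and the fact that Lemma \ref{lem:dualitynew} provides an equivalence of norms rather than equality; the latter is harmless as it only affects constants. I do not anticipate a genuine obstacle, but the step requiring slightly more care is (ii), where one must make sure the Parseval pairing against an $L^{p,p'}(X^*)$-function is well-defined; this is ensured by approximating a general $g \in L^{p,p'}(\R^d;X^*)$ by simple functions (again a norming subclass) and passing to the limit.
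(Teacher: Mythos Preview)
Your proposal is correct and follows essentially the same approach as the paper: both arguments hinge on the Parseval identity \eqref{FourierCom}, H\"older's inequality (in the Lorentz version for (ii)), and the norming/duality statements of Lemmas \ref{lem:duality0} and \ref{lem:dualitynew}. The only cosmetic differences are that the paper works with simple functions rather than Schwartz functions and, for (i), simply cites \cite[Proposition 2.4.16]{HyNeVeWe16} instead of writing out the argument you sketch.
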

\begin{proof}
\eqref{DualF}: This follows from \cite[Proposition 2.4.16]{HyNeVeWe16}.
	
\eqref{DualP}: Assume first that $X$ has Paley type $p$. Let $f : \R^d \to X$ and $g: \R^d \to X^\ast$ be simple functions. Using the equality
	\begin{equation}\label{FourierCom}
		\int_{\R^d} \lb  f(x), \widehat{g}(x)\rb \, \ud x = \int_{\R^d} \lb  \widehat{f}(x), g(x)\rb  \, \ud x
	\end{equation}
	of the Fourier transform and H\"older's inequality (see \cite[Theorem 3.5]{ONeil63}), we obtain
	\begin{align*}
		\Big|\int_{\R^d} \lb  f(x), \widehat{g}(x)\rb  \, \ud x\Big| &
\leq \|\widehat{f}\|_{L^{p',p}(\R^d;X)} \|g\|_{L^{p,p'}(\R^d;X^\ast)}
		\leq C \|f\|_{L^p(\R^d;X)}  \|g\|_{L^{p,p'}(\R^d;X^\ast)}
	\end{align*}
	where we have used that $X$ has Paley type $p$ in the last inequality. Taking the supremum over all  simple functions $f$ with $\|f\|_{L^p(\R^d;X)} \leq 1$ and using Lemma \ref{lem:duality0}, we derive that
	\begin{equation*}
	\|\widehat{g}\|_{L^{p'}(\R^d;X^\ast)} \leq C \|g\|_{L^{p,p'}(\R^d;X^\ast)},
	\end{equation*}
	which means that $X^\ast$ has Paley cotype $p'$.
	
	Conversely, suppose that $X^\ast$ has Paley cotype $p'$. Let $f \in L^{p}(\R^d;X)$. By \eqref{FourierCom}, we have
	\begin{equation*}
		\Big| \int_{\R^d} \lb  \widehat{f}(x), g(x)\rb  \,\ud x\Big| \leq C \|f\|_{L^{p}(\R^d;X)} \|g\|_{L^{p,p'}(\R^d;X^\ast)}.
	\end{equation*}
	Taking the supremum over all simple functions $g \in L^{p,p'}(\R^d;X^\ast)$ for which $\|g\|_{L^{p,p'}(\R^d;X^\ast)} \leq 1$ and using Lemma \ref{lem:dualitynew},
it follows that $X$ has Paley type $p$.
	
	It remains to show \eqref{DualHL}. Assume first that $X$ has HL type $p$. Then, applying again \eqref{FourierCom} and H\"older's inequality, we obtain
	\begin{align*}
		\Big|\int_{\R^d} \lb  f(x), \widehat{g}(x)\rb  \, \ud x \Big|  & \leq \|\widehat{f}\|_{L^p(\R^d, |\cdot|^{-d(2-p)};X)} \|g\|_{L^{p'}(\R^d, |\cdot|^{d(p'-2)};X^\ast)}  \\
		& \leq C \|f\|_{L^p(\R^d;X)} \|g\|_{L^{p'}(\R^d, |\cdot|^{d(p'-2)};X^\ast)}.
	\end{align*}
	Taking the supremum over all simple functions $f \in L^p(\R^d;X)$ with $\|f\|_{L^p(\R^d;X)} \leq 1$, it follows from Lemma \ref{lem:duality0} that
$X^\ast$ has HL cotype $p'$. The converse direction may be proved similarly.
\end{proof}

Next we study the relationships between Fourier, Paley, and HL type (as well as, their cotype counterparts). We first state some elementary relations.

\begin{proposition}\label{prop:relationsPFHL}
	Let $1 < p < 2 < q < \infty$.
	\begin{enumerate}[\upshape(i)]
	 \item\label{it:relationsPFHL1} If $X$ has Paley type $p$, then $X$ has Fourier type $p$ and HL type $p$.
	 \item \label{it:relationsPFHL2} If $X$ has Paley cotype $q$, then $X$ has Fourier cotype $q$ and HL cotype $q$.
	 \end{enumerate}
\end{proposition}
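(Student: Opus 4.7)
The plan is to reduce each implication to a pair of scalar function-space embeddings applied to the nonnegative function $g(x) = \|\widehat{f}(x)\|_X$. Since every norm in Definition~\ref{def:type} is the norm of $g$ in a suitable scalar function space on $\R^d$ (for the Lorentz and weighted Lebesgue spaces this is immediate from \eqref{Bochner} and \eqref{DefLZ}), part (i) reduces to the two embeddings $L^{p',p}(\R^d) \hookrightarrow L^{p'}(\R^d)$ and $L^{p',p}(\R^d) \hookrightarrow L^p(\R^d, |\cdot|^{-d(2-p)})$, while part (ii) reduces to $L^{q'}(\R^d) \hookrightarrow L^{q',q}(\R^d)$ and $L^q(\R^d, |\cdot|^{d(q-2)}) \hookrightarrow L^{q',q}(\R^d)$.

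The two Fourier-(co)type embeddings come for free: since $p \leq 2 \leq p'$ and $q' \leq 2 \leq q$, they are nothing but the standard monotonicity of the Lorentz scale in the second index, $L^{a,b_1}(\R^d) \hookrightarrow L^{a,b_2}(\R^d)$ whenever $b_1 \leq b_2$.

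For the two weighted embeddings I would invoke H\"older's inequality for Lorentz spaces, as in \cite[Theorem 3.5]{ONeil63}. A direct computation of the non-increasing rearrangement shows that $|\cdot|^{d(p-2)/p}$ lies in $L^{p/(2-p),\infty}(\R^d)$ and that $|\cdot|^{-d(q-2)/q}$ lies in $L^{q/(q-2),\infty}(\R^d)$. Writing
\[
\|g\|_{L^p(\R^d,|\cdot|^{-d(2-p)})}^p = \bigl\||\cdot|^{d(p-2)/p}\, g\bigr\|_{L^p(\R^d)}^p
\]
and applying H\"older with Lorentz parameters $(p/(2-p),\infty)$ and $(p',p)$, whose reciprocals sum to $(1/p, 1/p)$, yields the HL-type embedding. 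Dually, the factorisation $g = \bigl(g\,|\cdot|^{d(q-2)/q}\bigr) \cdot |\cdot|^{-d(q-2)/q}$ combined with H\"older at parameters $(q,q)$ and $(q/(q-2),\infty)$, summing to $(q',q)$, gives the HL-cotype embedding.

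The main obstacle is nothing more than bookkeeping: one has to verify that the Lorentz indices produced by H\"older's inequality line up exactly with the target spaces and that the power weights really sit in the claimed weak Lebesgue spaces. Once these routine identifications are made, both parts of the proposition follow in a few lines. As a sanity check, one could instead prove (i) directly and then obtain (ii) from Proposition~\ref{prop:duality}, but the direct embedding argument above feels cleaner and treats the two parts on equal footing.
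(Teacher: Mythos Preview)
Your argument is correct and takes essentially the same route as the paper. Two small remarks: your opening line says the scalar embeddings are applied to $g=\|\widehat f\|_X$, but for (ii) they must be applied to $\|f\|_X$ (the embeddings there concern the domain of $\F$, not the target---the rest of your write-up already has this right); and where you invoke O'Neil's H\"older inequality for the weighted embeddings, the paper instead uses the Hardy--Littlewood rearrangement inequalities \eqref{HL} and \eqref{HLConverse} directly---a packaging difference only, since O'Neil's theorem rests on the same bound.
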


In order to show Proposition \ref{prop:relationsPFHL} we will make use of the following Hardy-Littlewood rearrangement inequalities: Let $f$ and $w$ be non-negative measurable functions on $\R^d$, then
\begin{equation}\label{HL}
	\int_{\R^d} f(x) w(x) \, \ud x \leq \int_0^\infty f^\ast(t) w^\ast(t) \,dt
\end{equation}
and
\begin{equation}\label{HLConverse}
	\int_0^\infty f^\ast(t) \frac{1}{(1/w)^\ast(t)} \, dt \leq \int_{\R^d} f(x) w(x) \, \ud x.
\end{equation}
For a proof of the classical inequality \eqref{HL} we refer e.g.~to \cite[Chapter 2, Theorem 2.2, page 44]{Bennett-Sharpley88}. On the other hand, a more general version of \eqref{HLConverse} can be found in \cite[Corollary 2.5]{Heinig84} (see also \cite[(2.4)]{Benedetto-Heinig03}).

\begin{proof}[Proof of Proposition \ref{prop:relationsPFHL}]
	 It is clear that Paley type $p$ implies Fourier type $p$ because $L^{p',p}(\R^d;X) \hookrightarrow L^{p'}(\R^d;X)$ (see, e.g., \cite[Chapter 4, Proposition 4.2, page 217]{Bennett-Sharpley88}). On the other hand, using \eqref{HL} and basic properties for rearrangements, we get
	 \begin{equation}\label{emb}
	 	\int_{\mathbb{R}^d} |\xi|^{-d(2-p)} \|\wh{f}(\xi)\|_X^p \, \ud \xi \leq C \int_0^\infty t^{p/p'} (\|\wh{f}(\cdot)\|_X)^\ast(t)^p \, \frac{dt}{t}
	 \end{equation}
	 where we have used that $p < 2$. Hence, if $X$ has Paley type $p$ then
	  \begin{equation*}
	 	\int_{\mathbb{R}^d} |\xi|^{-d(2-p)} \|\wh{f}(\xi)\|_X^p\, \ud \xi \leq C \|f\|_{L^p(\mathbb{R}^d;X)}^p,
	 \end{equation*}
	 that is, $X$ has HL type $p$. The proof of \eqref{it:relationsPFHL1} is finished.
	
	 Let us prove \eqref{it:relationsPFHL2}. Assume that $X$ has Paley cotype $q$. Since $q' < q$, we have $L^{q'}(\R^d;X) \hookrightarrow L^{q',q}(\R^d;X)$. Consequently, $X$ has Fourier cotype $q$.
	
	 In order to show that $X$ has HL cotype $q$, it suffices to prove
	 \begin{equation}\label{emb2}
	 	\|f\|_{L^{q',q}(\R^d;X)} \leq C \|f\|_{L^q(\R^d, |\cdot|^{d(q-2)};X)} \quad \text{for all} \quad f \in L^q(\R^d, |\cdot|^{d(q-2)};X).
	 \end{equation}
	 The proof of this estimate is a simple application of \eqref{HLConverse} with $w(x) = |x|^{d(q-2)}$ for $q > 2$. 	An alternative proof of \eqref{emb2} can be obtained from \eqref{emb} by using duality (see Lemmas \ref{lem:duality0} and \ref{lem:dualitynew}).
\end{proof}

\begin{proposition}\label{prop:WeakFourier}
	Assume that $X$ has Fourier type $p_0 \in (1,2]$. Then
	\begin{enumerate}[\upshape(i)]
		\item\label{it:weakFtype} $X$ has Fourier type $p$ for any $p \in (1, p_0)$.
		\item\label{it:weakPtype} $X$ has Paley type $p$ for any $p \in (1,p_0)$ and Paley cotype $q$ for any $q \in (p_0', \infty)$.
		\item\label{it:weakHtype} $X$ has HL type $p$ for any $p \in (1,p_0)$ and HL cotype $q$ for any $q \in (p_0', \infty)$.
	\end{enumerate}
\end{proposition}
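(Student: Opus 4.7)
The plan is to derive all three parts from real interpolation applied to two endpoint Fourier bounds. I would first establish part (ii) --- the Paley type and cotype statements --- and then obtain parts (i) and (iii) as immediate corollaries via Proposition~\ref{prop:relationsPFHL}.

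To prove (ii), I would start from the two endpoint mappings $\F : L^{1}(\R^d;X) \to L^{\infty}(\R^d;X)$ (trivial, with norm $1$) and $\F : L^{p_0}(\R^d;X) \to L^{p_0'}(\R^d;X)$ (the Fourier type hypothesis). Applying the real interpolation functor $(\cdot,\cdot)_{\theta,r}$ with $\theta \in (0,1)$ and $r \in [1,\infty]$, and identifying the interpolation spaces on both sides via Lemma~\ref{lem:interpolationLebesgue}, one obtains a bounded operator
\begin{equation*}
\F : L^{s,r}(\R^d;X) \to L^{t,r}(\R^d;X), \qquad \frac{1}{s} = 1 - \frac{\theta}{p_0'}, \quad \frac{1}{t} = \frac{\theta}{p_0'}.
\end{equation*}
For Paley type $p \in (1,p_0)$, I would choose $\theta = p_0'/p' \in (0,1)$ and $r = p$, which forces $s = p$ and $t = p'$, hence $\F : L^{p}(\R^d;X) \to L^{p',p}(\R^d;X)$. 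For Paley cotype $q \in (p_0',\infty)$, I would choose $\theta = p_0'/q \in (0,1)$ and $r = q$, which forces $s = q'$ and $t = q$, hence $\F : L^{q',q}(\R^d;X) \to L^{q}(\R^d;X)$.

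With (ii) in hand, parts (i) and (iii) are immediate consequences of Proposition~\ref{prop:relationsPFHL}. Indeed, part \eqref{it:relationsPFHL1} of that proposition states that Paley type $p$ implies both Fourier type $p$ and HL type $p$, giving (i) and the type half of (iii); and part \eqref{it:relationsPFHL2} states that Paley cotype $q$ implies HL cotype $q$, giving the remaining cotype half of (iii).

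No serious obstacle arises; the argument is essentially bookkeeping of the interpolation indices. What is worth emphasising is that the real interpolation identity $L^{r,r} = L^r$, valid only when the second index of the Lorentz space matches the integrability exponent, is precisely what lets us read off the sharper Lorentz targets $L^{p',p}$ (for type) and the sharper Lorentz sources $L^{q',q}$ (for cotype), rather than the weaker Hausdorff--Young--type conclusions one would obtain from complex interpolation between the same two endpoints.
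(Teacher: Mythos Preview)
Your proof is correct and is essentially identical to the paper's own argument: both interpolate the trivial $L^1\to L^\infty$ bound with the Fourier type hypothesis $L^{p_0}\to L^{p_0'}$ via real interpolation (Lemma~\ref{lem:interpolationLebesgue}) to obtain $\F:L^{p,r}\to L^{p',r}$, then specialise $r=p$ for Paley type and $r=q$ for Paley cotype, and finally deduce (i) and (iii) from Proposition~\ref{prop:relationsPFHL}. The only cosmetic difference is that the paper reads off Fourier type~$p$ directly from the Lorentz embedding $L^{p',p}\hookrightarrow L^{p'}$, whereas you cite Proposition~\ref{prop:relationsPFHL}\eqref{it:relationsPFHL1}, whose proof is precisely that embedding.
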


	The statement \eqref{it:weakFtype} of the previous proposition is well known (see, e.g., \cite[page 111]{HyNeVeWe16}). While the first part of \eqref{it:weakPtype} was already established in \cite[page 220]{Konig91} for the Fourier coefficient map on $\T$. For convenience, we shall include here the proofs of these assertions.

\begin{proof}[Proof of Proposition \ref{prop:WeakFourier}]
Note that
	\begin{equation}\label{3.11}
		\F : L^{p_0}(\R^d;X) \to L^{p_0'}(\R^d;X) \ \ \text{and} \ \ 		\F : L^{1}(\R^d;X) \to L^{\infty}(\R^d;X).
	\end{equation}
Now given any $1 < p < p_0$, there is $\theta \in (0,1)$ with $1/p = 1-\theta + \theta/p_0$. Let $1 \leq r \leq \infty$. Interpolating the  boundedness estimates of the operator $\F$ given in \eqref{3.11} with parameters $(\theta,r)$ and applying Lemma \ref{lem:interpolationLebesgue}, we get
\begin{equation}\label{Konig}
		\F: L^{p,r}(\R^d;X) \to L^{p',r}(\R^d;X).
	\end{equation}
	In particular, putting $r = p$ we obtain the first part of \eqref{it:weakPtype}. In addition, since $p < p'$ we have that $L^{p',p}(\R^d;X) \hookrightarrow L^{p'}(\R^d;X)$, and thus,
\eqref{it:weakFtype} follows as well.

	On the other hand, suppose that $q > p_0'$. If we interpolate \eqref{3.11} but now with the parameters $(\theta, q) = (p_0'/q,q)$, we obtain
		$\F: L^{q',q}(\R^d;X) \to L^{q}(\R^d;X)$,
	which means that $X$ has Paley cotype $q$. This finishes the proof of \eqref{it:weakPtype}.
	
	Finally, \eqref{it:weakHtype} is an immediate consequence of \eqref{it:weakPtype} and Proposition \ref{prop:relationsPFHL}.
	
\end{proof}

\begin{remark}\label{rem:limit}
	It will be shown in Examples \ref{ex:Paley} and \ref{ex:PaleyCotype} below that the statements given in Proposition \ref{prop:WeakFourier}\eqref{it:weakPtype} and \eqref{it:weakHtype} are no longer true in the extreme cases $p = p_0$ and $q=p_0'$.
\end{remark}

In light of Proposition \ref{prop:WeakFourier}\eqref{it:weakFtype}, if $X$ has Fourier type $p$ then $X$ has Fourier type $r$ for all $r \in [1,p]$. Accordingly we study such a question for the notions of HL and Paley (co)type.

\begin{proposition}\label{prop:HLprop}
	Let $p\in (1, 2]$ and $q\in [2, \infty)$. We have
	\begin{enumerate}[\upshape(i)]
\item\label{it:HLtypepr} If $X$ has HL type $p$, then $X$ has HL type $r$ for all $1 < r \leq p \leq 2$.
\item\label{it:HLcotypeqr}  If $X$ has HL cotype $q$, then $X$ has HL cotype $r$ for all $2 \leq q \leq r < \infty$.
\item\label{it:Paleytypepr} If $X$ has Paley type $p$, then $X$ has Paley type $r$ for all $1<r\leq p \leq2$.
\item\label{it:Paleycotypeqr} If $X$ has Paley cotype $q$, then $X$ has Paley cotype $r$ for all $2 \leq q \leq r < \infty$.
	\end{enumerate}
\end{proposition}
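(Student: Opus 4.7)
The four assertions split by duality into two pairs: (ii) follows from (i) and (iv) follows from (iii) via the type/cotype duality of Proposition~\ref{prop:duality} (used both in the direction $X\to X^{\ast}$ and, by exchanging the roles of $X$ and $X^{\ast}$, in the reverse). So it suffices to prove the type assertions (i) and (iii).

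For (iii), the plan is to apply real interpolation to the operator $\F$ acting on $X$-valued functions. On one end one has the trivial Hausdorff--Young bound $\F:L^{1}(\R^{d};X)\to L^{\infty}(\R^{d};X)$; on the other, the Paley type~$p$ hypothesis $\F:L^{p}(\R^{d};X)\to L^{p',p}(\R^{d};X)$. For $r\in(1,p)$ pick $\theta=\frac{p(r-1)}{r(p-1)}\in(0,1)$, so that $1/r=(1-\theta)+\theta/p$; a brief algebraic check also gives $p'/\theta=r'$. Lemma~\ref{lem:interpolationLebesgue} (together with its standard extension covering the real interpolation of $L^{\infty}$ with a Lorentz space in the second argument) yields $(L^{1}(X),L^{p}(X))_{\theta,r}=L^{r}(X)$ and $(L^{\infty}(X),L^{p',p}(X))_{\theta,r}=L^{r',r}(X)$. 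Real interpolation therefore produces $\F:L^{r}(X)\to L^{r',r}(X)$, which is exactly Paley type~$r$.

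For (i), I plan to carry out the same interpolation scheme but with the HL type~$p$ estimate $\F:L^{p}(X)\to L^{p}(\R^{d},|\cdot|^{-d(2-p)};X)$ in place of the Paley bound. With the same choice of $\theta$, the complex interpolation formula of Lemma~\ref{lem:interpolationWeighted} (applied in both source and target, with the target weight coming entirely from the second endpoint) delivers the intermediate estimate
\[
\F:L^{r}(X)\;\longrightarrow\;L^{r(p-1)/(r-1)}\bigl(\R^{d},|\cdot|^{-d(2-p)};X\bigr).
\]
The main obstacle is to upgrade this to the HL type~$r$ bound $\F:L^{r}(X)\to L^{r}(\R^{d},|\cdot|^{-d(2-r)};X)$, which has both a different integrability exponent and a different weight. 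The plan is to close the gap by a radial change of variable: setting $s=r(p-1)/(r-1)$, the identity $d(p-1)/s=d(r-1)/r$ (a direct calculation) transforms the interpolation target on radially non-increasing representatives exactly into the HL type~$r$ target, after changing variable $\rho\mapsto(\rho/c)^{1/(d(p-1))}$ in the radial integral. The passage from the radially non-increasing case to general functions is then effected by applying the Hardy--Littlewood rearrangement inequality \eqref{HL} to the power weight $|\cdot|^{-d(2-r)}$, exactly in the manner of the embedding \eqref{emb} used in the proof of Proposition~\ref{prop:relationsPFHL}, and finishing by density of the Schwartz class in $L^{r}(\R^{d};X)$.
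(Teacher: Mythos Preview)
Your treatment of (ii), (iv) by duality and of (iii) by real interpolation between the trivial $L^1\to L^\infty$ bound and the Paley type $p$ bound is correct. For (iii) the paper takes a slightly more indirect route (Paley type $p$ $\Rightarrow$ Fourier type $p$ via Proposition~\ref{prop:relationsPFHL}, then Fourier type $p$ $\Rightarrow$ Paley type $r$ for $r<p$ via Proposition~\ref{prop:WeakFourier}), but your direct interpolation is equally valid.

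The argument for (i) has a genuine gap in the ``upgrade'' step. Even granting the complex interpolation with an $L^\infty$ endpoint (which Lemma~\ref{lem:interpolationWeighted} as stated does not cover), the resulting bound $\F:L^r(X)\to L^{s}(\R^d,|\cdot|^{-d(2-p)};X)$ with $s=r(p-1)/(r-1)$ does \emph{not} imply the HL type $r$ bound. Writing $h(\xi)=\|\widehat f(\xi)\|_X\,|\xi|^{d/r'}$ and $d\nu(\xi)=|\xi|^{-d}\,d\xi$, one checks that
\[
\|\widehat f\|_{L^r(|\cdot|^{-d(2-r)};X)}=\|h\|_{L^r(\nu)},\qquad
\|\widehat f\|_{L^s(|\cdot|^{-d(2-p)};X)}=\|h\|_{L^s(\nu)},
\]
and since $\nu$ is an infinite measure and $s>r$, there is no embedding $L^s(\nu)\hookrightarrow L^r(\nu)$; for instance $h=\one_{\{1<|\xi|<R\}}$ gives $\|h\|_{L^r(\nu)}\asymp(\log R)^{1/r}$ versus $\|h\|_{L^s(\nu)}\asymp(\log R)^{1/s}$. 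Your radial change of variable only shows that the two weighted spaces share the same power-weight scaling, not that one embeds in the other; the Hardy--Littlewood inequality \eqref{HL} then lands you in the Lorentz space $L^{r',r}$, which is \emph{smaller} than the $L^{r',s}$ space your interpolation controls, so the inequality goes the wrong way.

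The paper's device is to observe that HL type $p$ is exactly boundedness of $Tf:=|\cdot|^{d}\widehat f$ from $L^p(\R^d;X)$ into $L^p(\R^d,|\cdot|^{-2d};X)$, where the target weight $|\cdot|^{-2d}$ is the \emph{same for every $p$}. One then checks that $T$ is of weak type $(1,1)$ with respect to the measure $|\xi|^{-2d}\,d\xi$ (using $\|\widehat f\|_\infty\le\|f\|_1$), and the vector-valued Marcinkiewicz theorem interpolates between this and the strong $(p,p)$ bound to give HL type $r$ for every $r\in(1,p)$.
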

\begin{proof}
\eqref{it:HLtypepr}: The fact that $X$ has HL type $p$ means that the map $f \to |\xi|^d \widehat{f}$ is bounded from $L^p(\R^d;X)$ into $L^p(\R^d, |\cdot|^{-2d};X)$. Furthermore, working with the measure $\mu$ given by $d \mu(\xi) = |\xi|^{-2 d} \ud \xi$ we have for $t > 0$,
	\begin{align*}
		\mu \{\xi \in \R^d : |\xi|^d \|\widehat{f}(\xi)\|_X > t\} &= \int_{ \{\xi \in \R^d: |\xi|^d \|\widehat{f}(\xi)\|_X > t\}} |\xi|^{-2 d} \, \ud \xi \\
		& \leq \int_{\left\{\xi \in \R^d : |\xi|^d \geq C t \|f\|_{L^1(\R^d;X)}^{-1}\right\}}|\xi|^{-2 d} \, \ud \xi \\
		& \leq C \, \frac{\|f\|_{L^1(\R^d;X)}}{t}
	\end{align*}
	where the first inequality is an immediate consequence of the fact that the Fourier transform of a function $f \in L^1(\R^d;X)$ is bounded. Hence, we have shown that the map $f \to |\xi|^d \widehat{f}$ is weak (1,1)-bounded. Given $1 < r < p$, we choose $\theta \in (0,1)$ such that $1/r = 1-\theta + \theta/p$, and applying now the vector-valued version of the Marcinkiewicz interpolation theorem (see \cite[Theorem 2.2.3]{HyNeVeWe16}) we obtain that
	\begin{equation*}
		f \in L^r(\R^d;X) \to |\xi|^d \widehat{f} \in L^r(\R^d, |\cdot|^{-2 d} ; X)
	\end{equation*}
	or equivalently, $X$ has HL type $r$. The proof of \eqref{it:HLtypepr} is completed.
	
\eqref{it:HLcotypeqr}: This follows from \eqref{it:HLtypepr} and duality (see Proposition \ref{prop:duality}\eqref{DualHL}).

\eqref{it:Paleytypepr}: Fix $1<r<p$. By Proposition \ref{prop:relationsPFHL}\eqref{it:relationsPFHL1} $X$ has Fourier type $p$ and hence by Proposition \ref{prop:WeakFourier}\eqref{it:weakPtype} $X$ has Paley type $r$.
	
\eqref{it:Paleycotypeqr}: This can be proved in the same way as \eqref{it:Paleytypepr} or by using duality again.
\end{proof}

\begin{proposition}\label{prop:LpXPaley}
Let $(S,\Sigma,\mu)$ be a $\sigma$-finite measure space.
Let $p\in (1, 2]$ and $q\in [2, \infty)$.
\begin{enumerate}[\upshape(i)]
\item\label{it:LpXPaley1} If $X$ has Paley type $p$, then $L^p(S;X)$ has Paley type $p$.
\item\label{it:LpXPaley2} If $X$ has Paley cotype $q$, then $L^q(S;X)$ has Paley cotype $q$.
\end{enumerate}
The same holds for HL (co)type.
\end{proposition}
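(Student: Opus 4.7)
The plan is to separate the statement into the HL cases and the Paley cases. The HL assertions (both type and cotype) are routine applications of Fubini: the weight $|\xi|^{\pm d(q-2)}$ depends only on the $\R^d$-variable, and $L^p(\R^d,|\cdot|^{-d(2-p)};L^p(S;X))$ coincides isometrically with the weighted Bochner space $L^p(\R^d\times S,|\xi|^{-d(2-p)};X)$. So one applies HL type (respectively cotype) of $X$ fiber-wise in $s$, interchanges the order of integration, and regroups the integrals. No new ideas beyond Fubini are needed here.

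For Paley type $p$, the target $L^{p',p}(\R^d;L^p(S;X))$ is a genuine mixed-norm space, and Fubini alone is not enough. My key tool would be the Lorentz power rule
\begin{equation*}
\|G\|_{L^{p',p}(\R^d)}^{\,p} \;=\; \|G^p\|_{L^{p'/p,1}(\R^d)}
\end{equation*}
applied to $G(\xi):=\|\wh f(\xi,\cdot)\|_{L^p(S;X)}$. By Fubini, $G(\xi)^p=\int_S H(\xi,s)\,\ud\mu(s)$ with $H(\xi,s):=\|\wh f(\xi,s)\|_X^p\ge 0$. Since $p\in(1,2]$ gives $p'/p\ge 1$, the space $L^{p'/p,1}(\R^d)$ is, up to equivalence of quasi-norms, a Banach function space (e.g.\ via the $f^{**}$-norm), so Minkowski's integral inequality lets me pull the $\ud\mu(s)$-integral outside the Lorentz norm. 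Undoing the power rule inside this integral and invoking Paley type $p$ of $X$ for almost every $s$ finishes the bound, delivering a constant depending only on the Paley type constant of $X$ and on $p$.

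For Paley cotype $q$ a direct Minkowski-type argument would require a reverse inequality in $L^{q'/q,1}(\R^d)$ with $q'/q\le 1$, outside the Banach function space range, so I would instead dualize. Proposition~\ref{prop:duality}\eqref{DualP} turns Paley cotype $q$ of $X$ into Paley type $q'$ of $X^*$; the Paley type part just proved upgrades this to Paley type $q'$ of $L^{q'}(S;X^*)$; a second application of Proposition~\ref{prop:duality}\eqref{DualP} yields Paley cotype $q$ of $(L^{q'}(S;X^*))^*$. Finally, Lemma~\ref{lem:duality0} identifies $L^q(S;X)$ as an isometric closed subspace of $(L^{q'}(S;X^*))^*$, and Paley cotype is inherited by closed subspaces since for $f\in\Sch(\R^d;Z)$ with $Z\subseteq Y$ closed both $f$ and $\wh f$ remain $Z$-valued with matching norms. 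The same duality route gives the HL cotype conclusion, although in that case the direct Fubini proof is already available.

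The main obstacle is the Paley type step: it demands nontrivial interaction between the Lorentz norm on $\R^d$ and the $L^p$-norm on $S$ in a regime where they do not commute. The Lorentz power rule, combined with the inequality $p'/p\ge 1$, is what reduces the problem to classical Minkowski, and this is precisely where the hypothesis $p\le 2$ enters essentially; the cotype statement then rides on duality, sidestepping the non-Banach nature of $L^{q'/q,1}(\R^d)$.
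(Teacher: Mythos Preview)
Your argument is correct, but it differs from the paper's route in two places.

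For Paley type, both proofs ultimately establish the same mixed-norm Minkowski embedding
\[
L^p(S; L^{p',p}(\R^d;X)) \hookrightarrow L^{p',p}(\R^d; L^p(S;X)),
\]
after which one simply applies Paley type of $X$ fibrewise in $s$ and uses Fubini. The paper obtains this embedding by real interpolation: pick $p<p_0<p'<p_1$, use ordinary Minkowski $L^p(S;L^{p_i}(\R^d;X))\hookrightarrow L^{p_i}(\R^d;L^p(S;X))$, and interpolate with second index $p$ via Lemmas~\ref{lem:interpolationLebesgue} and~\ref{lem:interpolationLebesgue2}. You instead use the Lorentz power rule $\|G\|_{L^{p',p}}^p=\|G^p\|_{L^{p'/p,1}}$ and Minkowski's integral inequality in the Banach function space $L^{p'/p,1}(\R^d)$. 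Your approach is more hands-on and makes the role of the hypothesis $p\le 2$ (equivalently $p'/p\ge 1$) completely transparent; the paper's interpolation argument is slicker and avoids any discussion of equivalent Banach norms on $L^{r,1}$.

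For Paley cotype, the paper simply repeats the type argument with the embedding reversed: since $q'<q$, interpolating the ordinary Minkowski inclusions $L^{p_i}(\R^d;L^q(S;X))\hookrightarrow L^q(S;L^{p_i}(\R^d;X))$ for $p_i\le q$ gives $L^{q',q}(\R^d;L^q(S;X))\hookrightarrow L^q(S;L^{q',q}(\R^d;X))$, and then one applies Paley cotype of $X$ in each fibre. Your duality detour through $L^{q'}(S;X^*)$ is correct (the embedding $L^q(S;X)\hookrightarrow (L^{q'}(S;X^*))^*$ is indeed isometric by Lemma~\ref{lem:duality0} with $Y=X^*$, and Paley cotype passes to closed isometric subspaces), but it is more circuitous than necessary; the paper's direct argument keeps the type and cotype proofs symmetric and avoids appealing to bidual embeddings.

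For HL (co)type you and the paper agree: Fubini suffices.
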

A more general result holds for Fourier (co)type (see \cite[Example 2.4.14]{HyNeVeWe16}).
\begin{proof}
\eqref{it:LpXPaley1}:  Firstly, we claim that
\begin{equation}\label{MinkGen}
	L^p(S; L^{p',p}(\R^d;X)) \hookrightarrow L^{p',p}(\R^d; L^p(S;X)).
\end{equation}
This is obvious if $p=2$. Assume $p < 2$ and so, $p' > p$. Let $p < p_0 < p' < p_1$ and $\theta \in (0,1)$ such that $\frac{1}{p'} = \frac{1-\theta}{p_0} + \frac{\theta}{p_1}$. Applying Minkowski's inequality, $L^p(S; L^{p_i}(\R^d;X)) \hookrightarrow L^{p_i}(\R^d; L^p(S;X)), \, i=0, 1$. Therefore, by Lemmas \ref{lem:interpolationLebesgue} and \ref{lem:interpolationLebesgue2},
\begin{align*}
	L^p(S; L^{p',p}(\R^d;X)) & = L^p(S; (L^{p_0}(\R^d;X), L^{p_1}(\R^d;X))_{\theta, p} ) \\
	& = (L^p(S; L^{p_0}(\R^d;X)), L^p(S; L^{p_1}(\R^d;X)))_{\theta,p} \\
	& \hookrightarrow (L^{p_0}(\R^d; L^p(S;X)), L^{p_1}(\R^d; L^p(S;X)))_{\theta,p} \\
	& = L^{p',p}(\R^s; L^p(S;X)).
\end{align*}

It follows from (\ref{MinkGen}) and the assumption that $X$ has Paley type $p$ that

\begin{align*}
\|\wh{f}\|_{L^{p',p}(\R^d;L^p(S;X))} & \leq C \, \|\wh{f}\|_{L^p(S;L^{p',p}(\R^d;X)}
\\ & \leq C \, \|f\|_{L^p(S;L^{p}(\R^d;X))}  = C \, \|f\|_{L^{p}(\R^d;L^p(S;X))}.
\end{align*}

The assertion \eqref{it:LpXPaley2} can be proved in a similar way.

The proofs of the corresponding statements for HL (co)type are easier and left to the reader.
\end{proof}

It is well known that the space $L^p(S)$ with $p\in (1, \infty)$ has Fourier type $\min\{p,p'\}$ (or equivalently, Fourier cotype $\max\{p,p'\}$). This also follows from Propositions \ref{prop:relationsPFHL} and \ref{prop:LpXPaley}. Therefore, Propositions \ref{prop:LpXPaley} and \ref{prop:WeakFourier} lead to the following example.
\begin{example}\label{ex:PaleyLp}
Let $(S, \Sigma,\mu)$ be a $\sigma$-finite measure space. Let $p\in (1, 2]$ and $q\in [2, \infty)$. Then
\begin{enumerate}
\item $L^p(S)$ has Paley/HL type $p$ and Paley/HL cotype $r$ for any $r\in (p',\infty)$.
\item $L^q(S)$ has Paley/HL cotype $q$ and  Paley/HL type $r$ for any $r\in (1,q')$.
\end{enumerate}
In Remark \ref{rem:Lpsharp} below we will see that these statements are optimal.
\end{example}

\begin{remark}\label{rem:interFourierPaley}
Motivated by the Hausdorff-Young type inequality
\begin{equation}\label{Calderon}
\F : L^{p,q}(\R^d) \to L^{p',q}(\R^d), \quad 1 < p < 2,\quad 1 \leq q \leq \infty,
\end{equation}
(see \cite{Calderon} and \cite{Herz}), one could also study the boundedness properties of the Fourier transform as a mapping from $\F:L^{p,r}(\R^d;X)\to L^{p',s}(\R^d;X), \, 1 \leq r \leq s \leq \infty$. Note that Fourier type $p$ corresponds to $r=p, \, s=p'$ and Paley type $p$ corresponds to $r=s=p$.
The ranges $p\leq r\leq s\leq p'$ can be considered as intermediate notions between Fourier type and Paley type. Moreover, the ranges $r\in [1, p]$ and $s\in [p', \infty]$ are also interesting since they lead to a wider class of spaces than Fourier type $p$.

In particular in similar way as in Proposition \ref{prop:LpXPaley}  one can prove the following. If $X$ has Paley type $p$ and $r\in [p,p']$, then setting $Y = L^r(S;X)$ one has $\F:L^{p}(\R^d;Y)\to L^{p',r}(\R^d;Y)$. We leave the details to the reader.
\end{remark}

We close this section with interpolation properties of the notions introduced in Definition \ref{def:type}. It is well known that Fourier type behaves well under interpolation (see \cite[Proposition 2.4.17]{HyNeVeWe16}). To be more precise, let $(X_0,X_1)$ be a Banach couple such that $X_i$ has Fourier type $p_i, \, i=0,1,$ with $1 \leq p_0 \leq p_1 \leq 2$. Let $0 < \theta < 1$ and $1/p = (1-\theta)/p_0 + \theta/p_1$. Then, $(X_0,X_1)_{\theta,p}$ and $[X_0,X_1]_\theta$ both are of Fourier type $p$. The corresponding assertions for Paley (co)type and the real interpolation method also hold true; see \cite[Corollaries 5.3 and 5.9]{GaKaKo01}.

Next we show the stability properties of the HL (co)type under real and complex interpolation.

\begin{proposition}
	Let $(X_0,X_1)$ be a Banach spaces such that $X_i$ has HL type $p_i, i=0,1,$ with $1 < p_0 \leq p_1 \leq 2$. Let $0 < \theta < 1$ and $1/p = (1-\theta)/p_0 + \theta/p_1$. Then, $(X_0,X_1)_{\theta,p}$ and $[X_0,X_1]_\theta$ both are of HL type $p$.
	
	The corresponding result for HL cotype also holds true.
\end{proposition}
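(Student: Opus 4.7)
The plan is to mimic the standard interpolation argument for Fourier type (cf.\ \cite[Proposition 2.4.17]{HyNeVeWe16}) and adapt it to the weighted setting. Since HL type $p$ is precisely the boundedness of $\F: L^p(\R^d;X) \to L^p(\R^d, |\cdot|^{-d(2-p)};X)$, the problem reduces to interpolating this mapping between the couples $(L^{p_0}(\R^d;X_0), L^{p_1}(\R^d;X_1))$ and $(L^{p_0}(\R^d, w_0; X_0), L^{p_1}(\R^d, w_1; X_1))$ where $w_i = |\cdot|^{-d(2-p_i)}$.

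First I would verify the crucial numerical identity that makes the HL scaling compatible with interpolation. Using the formula $w = w_0^{(1-\theta)p/p_0} w_1^{\theta p/p_1}$ from Lemma \ref{lem:interpolationWeighted}, a short computation gives
\begin{equation*}
-dp\left[\tfrac{(2-p_0)(1-\theta)}{p_0} + \tfrac{(2-p_1)\theta}{p_1}\right] = -dp\left[\tfrac{2}{p} - 1\right] = -d(2-p),
\end{equation*}
so $w(\xi) = |\xi|^{-d(2-p)}$, i.e.\ the HL-type weight at exponent $p$ is recovered exactly.

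Next I would establish the combined interpolation formulas
\begin{equation*}
[L^{p_0}(\R^d, w_0; X_0), L^{p_1}(\R^d, w_1; X_1)]_\theta = L^p(\R^d, w; [X_0, X_1]_\theta)
\end{equation*}
and, for the real method with the specific parameter $p$,
\begin{equation*}
(L^{p_0}(\R^d, w_0; X_0), L^{p_1}(\R^d, w_1; X_1))_{\theta,p} = L^p(\R^d, w; (X_0, X_1)_{\theta,p}).
\end{equation*}
These arise by combining Lemma \ref{lem:interpolationWeighted} (varying weight, fixed Banach target) with Lemma \ref{lem:interpolationLebesgue2} (fixed weight, varying Banach target), together with the isometric reweighting $L^{p_i}(\R^d, w_i; X_i) \cong L^{p_i}(\R^d, d\mu_i; X_i)$ for $d\mu_i = w_i \, d\xi$. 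For the source couple the unweighted case of Lemma \ref{lem:interpolationLebesgue2} applies directly. Applying the complex (resp.\ real with parameter $p$) interpolation functor to $\F$, which is bounded between the two couples by hypothesis, then yields boundedness of $\F: L^p(\R^d; Y) \to L^p(\R^d, |\cdot|^{-d(2-p)}; Y)$ for $Y = [X_0, X_1]_\theta$ or $Y = (X_0, X_1)_{\theta, p}$, which is exactly the desired HL type $p$.

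The HL cotype statement follows by the same scheme, with source and target weights interchanged and $q_i \geq 2$; alternatively, when the couple is reflexive, one can invoke Proposition \ref{prop:duality}\eqref{DualHL} together with the standard fact that both interpolation methods commute with duality (see \eqref{interpolationDuality}). The main technical obstacle is the combined interpolation of weighted Bochner spaces with \emph{simultaneously} varying weight, exponent, and Banach target: neither Lemma \ref{lem:interpolationWeighted} nor Lemma \ref{lem:interpolationLebesgue2} covers this directly, so a careful iteration of the two---or a direct verification from the definitions of the $[\,\cdot\,]_\theta$ and $(\,\cdot\,)_{\theta,p}$ functors via the reweighting isomorphism---is needed to glue them.
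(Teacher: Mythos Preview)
Your outline is correct in spirit, but the paper takes a cleaner route that sidesteps precisely the obstacle you flag at the end. Rather than interpolating $\F$ between weighted target spaces whose weights $|\cdot|^{-d(2-p_i)}$ vary with $i$, the paper first rewrites HL type $p$ as boundedness of the single operator
\[
T f(\xi) = |\xi|^{d}\,\widehat f(\xi), \qquad T: L^{p}(\R^d;X)\to L^{p}(\R^d,|\cdot|^{-2d};X),
\]
so that the target weight $|\cdot|^{-2d}$ is \emph{independent of $p$}. Then Lemma~\ref{lem:interpolationLebesgue2} applies directly (the weight being absorbed into a fixed measure), and no ``gluing'' of Lemmas~\ref{lem:interpolationWeighted} and~\ref{lem:interpolationLebesgue2} is needed. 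Your reweighting $L^{p_i}(\R^d,w_i;X_i)\cong L^{p_i}(\R^d,\ud\mu_i;X_i)$ is tautological (these are the same space), so it does not help; what actually resolves the issue is the multiplication isomorphism $g\mapsto |\cdot|^{d} g$, which simultaneously carries each $L^{p_i}(\R^d,|\cdot|^{-d(2-p_i)};X_i)$ onto $L^{p_i}(\R^d,|\cdot|^{-2d};X_i)$ --- and that is exactly the paper's trick. Your approach can be completed (the combined weighted/vector-valued interpolation formula you need is indeed true), but the paper's reformulation via $T$ is both shorter and avoids invoking results beyond the lemmas already stated.
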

\begin{proof}
	First we make the following elementary observation: the space $X$ has HL type $p$ if and only if
	\begin{equation*}
		T : L^p(\R^d; X) \to L^p(\R^d, |\cdot|^{-2 d};X), \quad T f (x) = |x|^d \widehat{f}(x).
	\end{equation*}
	
	By assumptions, we have
	\begin{equation*}
		T : L^{p_i}(\R^d; X_i) \to L^{p_i}(\R^d, |\cdot|^{-2 d};X_i), \quad i = 0, 1,
	\end{equation*}
	and then
	\begin{equation}\label{17}
		T : (L^{p_0}(\R^d;X_0) , L^{p_1}(\R^d;X_1) )_{\theta,p} \to (L^{p_0}(\R^d, |\cdot|^{-2 d};X_0), L^{p_1}(\R^d, |\cdot|^{-2 d};X_1))_{\theta,p}.
	\end{equation}
	Since (see Lemma \ref{lem:interpolationLebesgue2})
	\begin{equation*}
		(L^{p_0}(\R^d;X_0) , L^{p_1}(\R^d;X_1) )_{\theta,p} = L^p(\R^d; (X_0,X_1)_{\theta, p})
	\end{equation*}
	and
	\begin{equation*}
		 (L^{p_0}(\R^d, |\cdot|^{-2 d};X_0), L^{p_1}(\R^d, |\cdot|^{-2 d};X_1))_{\theta,p} = L^p(\R^d, |\cdot|^{-2 d}; (X_0, X_1)_{\theta,p}),
	\end{equation*}
	it follows from (\ref{17}) that $T : L^p(\R^d; (X_0,X_1)_{\theta, p}) \to L^p(\R^d, |\cdot|^{-2 d};(X_0,X_1)_{\theta, p}).$ Hence, $(X_0,X_1)_{\theta, p}$ has HL type $p$. Working with the complex interpolation method one can proceed in the same way.
	
	The proof for HL cotype is similar and left to the reader.
\end{proof}

\section{Transference principles and examples}\label{sec:trans}

The periodic Fourier transform of $f \in L^1(\T^d;X)$ is given by
\begin{equation*}
	\widehat{f}(n) = \F(f)(n) = \int_{\T^d} f(x) e^{-2\pi i x \cdot n} \, \ud x, \quad n \in \Z^d.
\end{equation*}

It is well known that the notion of Fourier type given in Definition \ref{def:type}\eqref{it:type1} can equivalently be introduced through the periodic Fourier transform. To be more precise, the following transference principle
holds.

\begin{lemma}\label{lem:ClassicalTrans}
	Let $1 < p \leq 2$. The following assertions are equivalent:
	\begin{enumerate}[\upshape(i)]
		\item $\F$ extends to a bounded operator from $L^p(\R^d;X)$ into $L^{p'}(\R^d;X)$.
		\item $\F$ extends to a bounded operator from $\ell^p(\Z^d;X)$ into $L^{p'}(\T^d;X)$.
		\item $\F$ extends to a bounded operator from $L^p(\T^d;X)$ into $\ell^{p'}(\Z^d;X)$.
	\end{enumerate}
\end{lemma}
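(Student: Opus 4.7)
The plan is to deduce (i) $\iff$ (iii) and (i) $\iff$ (ii) separately via dilation and Riemann-sum arguments; the remaining equivalence (ii) $\iff$ (iii) then follows by transitivity. Throughout I reduce to Schwartz functions on $\R^d$ and finitely supported sequences on $\Z^d$ and extend by density using the statements collected in Section \ref{SectionIB}.

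For (i) $\iff$ (iii): given $f \in \Schw(\R^d;X)$ and $R$ large, I would form $f^R(x) = f(Rx)\ind_{[-\frac12,\frac12]^d}(x) \in L^p(\T^d;X)$; a change of variables (together with a Schwartz tail estimate to absorb the truncation) yields $\widehat{f^R}(n) = R^{-d}\widehat f(n/R)$ up to errors $O_N(R^{-N})$ and $\|f^R\|_{L^p(\T^d;X)} = R^{-d/p}\|f\|_{L^p(\R^d;X)}$. Inserting into (iii) produces
\[ \Big(R^{-d}\sum_{n \in \Z^d} \|\widehat f(n/R)\|_X^{p'}\Big)^{1/p'} \leq C\,\|f\|_{L^p(\R^d;X)}, \]
and the left-hand side is a Riemann sum of mesh $1/R$ for $\|\widehat f\|_{L^{p'}(\R^d;X)}^{p'}$, so letting $R \to \infty$ gives (i). For the converse, zero-extend $f \in L^p(\T^d;X)$ to $F = f\ind_{[-\frac12,\frac12]^d} \in L^p(\R^d;X)$; then $\widehat F(n) = \widehat f(n)$, $\|F\|_{L^p(\R^d;X)} = \|f\|_{L^p(\T^d;X)}$, and $\widehat F$ is entire of exponential type $\leq \pi$ in each variable. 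Combining (i) with the vector-valued Plancherel--Polya sampling inequality
\[ \Big(\sum_{n\in\Z^d}\|\widehat F(n)\|_X^{p'}\Big)^{1/p'} \lesssim \|\widehat F\|_{L^{p'}(\R^d;X)} \]
(reducible to the scalar version by Hahn--Banach on a norming subset of $X^*$) yields (iii).

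For (i) $\iff$ (ii): fix once and for all $\psi \in \Ccinf(\R^d)$ with $\supp\psi \subseteq (-\tfrac12,\tfrac12)^d$ and $\sum_{k\in\Z^d}|\widehat\psi(\xi+k)|^{p'} \geq c > 0$ uniformly in $\xi$ (any nonnegative bump with $\psi(0) > 0$, suitably normalized, will do). For (i) $\Rightarrow$ (ii), given finitely supported $(c_n) \in \ell^p(\Z^d;X)$ and $R$ large, the function $F_R(x) = \sum_n c_n\psi(Rx-n)$ has disjoint-support translates, whence $\|F_R\|_{L^p(\R^d;X)} = R^{-d/p}\|\psi\|_p\|(c_n)\|_{\ell^p(\Z^d;X)}$, while a direct computation yields $\widehat F_R(\xi) = R^{-d}\widehat\psi(\xi/R)g(\xi/R)$ with $g(\eta) = \sum_n c_n e^{-2\pi i n\cdot\eta}$. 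Applying (i), rescaling $\eta = \xi/R$, and periodizing the integral $\int_{\R^d}|\widehat\psi(\eta)|^{p'}\|g(\eta)\|^{p'}\,\ud\eta$ over $\Z^d$ using the lower bound produces $\|g\|_{L^{p'}(\T^d;X)} \lesssim \|(c_n)\|_{\ell^p(\Z^d;X)}$, which is (ii). For (ii) $\Rightarrow$ (i), apply (ii) to $c_n = R^{-d}f(n/R)$ for $f \in \Schw(\R^d;X)$; Poisson summation rewrites the resulting trigonometric polynomial as $h_R(x) = \sum_{m\in\Z^d}\widehat f(R(m-x))$, and Schwartz decay shows that for $R$ large only the $m=0$ summand $\widehat f(-Rx)$ contributes nontrivially. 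Combined with the Riemann-sum estimates $\|(c_n)\|_{\ell^p(\Z^d;X)} \asymp R^{-d/p'}\|f\|_{L^p(\R^d;X)}$ and $\|h_R\|_{L^{p'}(\T^d;X)} \asymp R^{-d/p'}\|\widehat f\|_{L^{p'}(\R^d;X)}$, the common factor $R^{-d/p'}$ cancels and (i) emerges in the limit $R \to \infty$.

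The main obstacle is the vector-valued Plancherel--Polya inequality invoked in (i) $\Rightarrow$ (iii); beyond that, every step is routine change-of-variables, Poisson-summation, and Riemann-sum bookkeeping, with the necessary density statements (of $\Schw(\R^d;X)$ in $L^p(\R^d;X)$ and of finitely supported sequences in $\ell^p(\Z^d;X)$) already recorded in Section \ref{SectionIB}.
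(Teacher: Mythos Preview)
The paper does not prove this lemma, citing instead \cite[Proposition 2.4.20]{HyNeVeWe16}; but its own Proposition \ref{prop:WeightedTrans} proves a weighted generalization, and the method there (specialized to $\beta=\gamma=0$, $q=p'$) differs from yours. There, (i) $\iff$ (ii) is proved using step functions $f=\sum_k \ind_{[-\frac12,\frac12]^d}(\cdot+k)\,x_k$ together with the periodization bounds \eqref{aux1}--\eqref{eq:propgineq} for the sinc kernel $g=\F(\ind_{[-\frac12,\frac12]^d})$, and (i) $\iff$ (iii) is then obtained by \emph{duality} (Lemma \ref{lem:duality0} and \eqref{dualWeightedSequences}), avoiding any sampling inequality altogether. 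Your Riemann-sum/Poisson-summation route is a legitimate alternative and arguably more direct; the duality route has the advantage that it bypasses the one step you yourself flag as the main obstacle.

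One genuine correction: the vector-valued Plancherel--Polya inequality you invoke in (i) $\Rightarrow$ (iii) does \emph{not} reduce to the scalar case ``by Hahn--Banach on a norming subset of $X^*$''. Applying a single $x^*$ to $\widehat F$ yields only $\sum_n|\langle\widehat F(n),x^*\rangle|^{p'}\le C\|\langle\widehat F,x^*\rangle\|_{L^{p'}}^{p'}$, which bounds the wrong quantity; and selecting a different norming functional $x_n^*$ at each lattice point destroys the scalar entire-function structure needed to apply the scalar inequality. The vector-valued estimate is nonetheless true, via the reproducing identity $\widehat F=\widehat F*\varphi$ for any Schwartz $\varphi$ with $\widehat\varphi\equiv1$ on $[-\tfrac12,\tfrac12]^d$: this gives the pointwise bound $\|\widehat F(n)\|_X\le\int\|\widehat F(y)\|_X\,|\varphi(n-y)|\,\ud y$, and H\"older together with $\sup_y\sum_n|\varphi(n-y)|<\infty$ finishes the job. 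With this fix your argument goes through.
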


The proof of Lemma \ref{lem:ClassicalTrans} may be found in \cite[Proposition 2.4.20]{HyNeVeWe16}.

The aim of this section is to show the corresponding transference principles for the notions of Paley type and HL type given in Definition \ref{def:type}\eqref{it:type2} and \eqref{it:type3}, respectively.

We first deal with the transference principle for HL type. In fact, we shall show below a more general result which can be applied to the Pitt's inequality
\begin{equation*}
	\|\widehat{f}\|_{L^q(\R^d, |\cdot|^{-\gamma q};X)} \leq C \|f\|_{L^p(\R^d, |\cdot|^{\beta p};X)}
\end{equation*}
where $1 < p \leq q < \infty$,
	\begin{equation}\label{AssumpPitt}
 \max\left\{0, d \left(\frac{1}{p} + \frac{1}{q} - 1\right) \right\} \leq \gamma < \frac{d}{q} \quad \text{and} \quad \beta - \gamma = d \Big(1- \frac{1}{p} - \frac{1}{q}\Big).
	\end{equation}
	The validity of such an inequality will be studied in detail in Section \ref{Pitt's inequality}.

\begin{proposition}\label{prop:WeightedTrans}
	Let $ 1 < p , q < \infty$ and $\beta, \gamma \geq 0$ with $\beta - \gamma = d \Big(1- \frac{1}{p} - \frac{1}{q}\Big)$. The following assertions are equivalent:
	\begin{enumerate}[\upshape(i)]
		\item \label{it:TPR} $\F$ extends to a bounded operator from $L^p(\R^d, |\cdot|^{\beta p};X)$ into $L^{q}(\R^d, |\cdot|^{-\gamma q};X)$.
		\item \label{it:TPZ} $\F$ extends to a bounded operator from $\ell^p(\Z^d, (|n| + 1)^{\beta p};X)$ into $L^{q}(\T^d, |\cdot|^{-\gamma q};X)$.
		\item \label{it:TPT} $\F$ extends to a bounded operator from $L^p(\T^d, |\cdot|^{\beta p};X)$ into $\ell^{q}(\Z^d, (|n|+1)^{-\gamma q};X)$.
	\end{enumerate}
\end{proposition}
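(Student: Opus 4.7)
The key observation is the scaling relation $\beta - \gamma = d(1 - 1/p - 1/q)$: under the dilation $F_R(x) := F(x/R)$ one computes $\|F_R\|_{L^p(\R^d, |\cdot|^{\beta p}; X)} = R^{d/p + \beta} \|F\|_{L^p(\R^d, |\cdot|^{\beta p}; X)}$ and $\|\wh{F_R}\|_{L^q(\R^d, |\cdot|^{-\gamma q}; X)} = R^{d - d/q + \gamma} \|\wh{F}\|_{L^q(\R^d, |\cdot|^{-\gamma q}; X)}$, and the scaling exponents coincide exactly when the hypothesis on $\beta - \gamma$ holds. This scale invariance is what permits transference between $\R^d$, $\T^d$, and $\Z^d$. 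My plan is to establish (i)~$\Leftrightarrow$~(iii) directly via periodic extension and a weighted sampling inequality, and then obtain (i)~$\Leftrightarrow$~(ii) by duality through Lemmas \ref{lem:duality0} and \ref{lem:duality3}. Density reduces the verifications to Schwartz functions on $\R^d$, trigonometric polynomials on $\T^d$, and finitely supported sequences on $\Z^d$.

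For (i)~$\Rightarrow$~(iii), given $f$ on $\T^d$, extend by zero to $F = f \cdot \mathbf{1}_{\T^d}$ on $\R^d$. Then $\|F\|_{L^p(\R^d,|\cdot|^{\beta p};X)} = \|f\|_{L^p(\T^d,|\cdot|^{\beta p};X)}$ and $\wh{F}(n) = \wh{f}(n)$ for $n \in \Z^d$. Applying (i) bounds $\wh{F}$ in the weighted $L^q(\R^d; X)$. Since $F$ has support in $[-\tfrac12,\tfrac12]^d$, $\wh{F}$ is entire of exponential type, and a vector-valued weighted Plancherel--Polya sampling inequality yields $\|\wh{F}|_{\Z^d}\|_{\ell^q(\Z^d,(|n|+1)^{-\gamma q};X)} \lesssim \|\wh{F}\|_{L^q(\R^d,|\cdot|^{-\gamma q};X)}$. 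This combines the pointwise Bernstein-type estimate $\|\wh{F}(n)\|_X^q \lesssim \int_{n+[-1/4,1/4]^d} \|\wh{F}(\xi)\|_X^q \, d\xi$ (for bandlimited functions) with the comparison $(|n|+1)^{-\gamma q} \asymp |\xi|^{-\gamma q}$ uniformly on that cube for $n \neq 0$; the $n=0$ term is absorbed using smoothness of $\wh{F}$ together with local integrability of $|\xi|^{-\gamma q}$ near the origin. Conversely, for (iii)~$\Rightarrow$~(i), given $F \in \Sch(\R^d;X)$ apply (iii) to the periodized rescaling $g_N(y) = N^{-d} \sum_{k \in \Z^d} F((y+k)/N)$, whose Fourier coefficients are the samples $\wh{F}(n/N)$. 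The scaling identity converts the weighted $\ell^q$-sum on the left into a Riemann sum approximating $\|\wh{F}\|_{L^q(\R^d,|\cdot|^{-\gamma q};X)}$ as $N \to \infty$, while the right-hand side is bounded by $\|F\|_{L^p(\R^d,|\cdot|^{\beta p};X)}$ via the triangle inequality and rapid decay of $F$.

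The equivalence (i)~$\Leftrightarrow$~(ii) follows by duality. Via Lemmas \ref{lem:duality0} and \ref{lem:duality3}, the adjoint of a bounded weighted Fourier transform between $L^p$-spaces on $G \in \{\R^d, \T^d, \Z^d\}$ is a Fourier transform between $L^{p'}$-spaces on the dual group $\wh{G}$ with reciprocal weights and $X$ replaced by a norming subspace of $X^*$, and the Euclidean Fourier transform is essentially self-adjoint. Chaining (ii) for $X$ with parameters $(p,q,\beta,\gamma)$ $\Leftrightarrow$ (iii) for a norming subspace of $X^*$ with parameters $(q', p', \gamma, \beta)$ (the scaling condition being preserved under this passage) $\Leftrightarrow$ (i) for that subspace (by the already-established Step 1) $\Leftrightarrow$ (i) for $X$ with the original parameters (again by weighted duality on $\R^d$) completes the circle. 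The main technical obstacle is the weighted Plancherel--Polya sampling inequality in Step 1: ensuring the Bernstein-type bound passes smoothly to the vector-valued weighted context, and treating the origin where the weight $|\xi|^{-\gamma q}$ is singular. Once that ingredient is in place, everything else is bookkeeping driven by the scaling identity.
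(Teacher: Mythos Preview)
Your overall structure is a legitimate alternative to the paper's: you attack (i)~$\Leftrightarrow$~(iii) directly and deduce (i)~$\Leftrightarrow$~(ii) by duality, whereas the paper does the reverse --- it proves (i)~$\Leftrightarrow$~(ii) by testing on step functions supported on translates of the unit cube (so that $\wh{f}$ factors as $\mathrm{sinc}\times$trigonometric polynomial, and a periodization identity for the sinc handles the weighted comparison), and then obtains (i)~$\Leftrightarrow$~(iii) by duality. The paper's route is more elementary and fully explicit; your route is more conceptual but leans on a weighted vector-valued Plancherel--P\'olya inequality and a limiting argument, neither of which is entirely standard.

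There is, however, a concrete error in your (iii)~$\Rightarrow$~(i) step: with $g_N(y) = N^{-d}\sum_k F((y+k)/N)$ one computes $\wh{g_N}(n) = \wh{F}(Nn)$, not $\wh{F}(n/N)$. As written the samples spread out rather than refine, and the Riemann-sum argument collapses. The intended periodization should dilate in the opposite direction, e.g.\ $g_N(y) = \sum_k F(N(y+k))$, for which $\wh{g_N}(n) = N^{-d}\wh{F}(n/N)$; then the scaling relation $d/p+\beta = d - d/q + \gamma$ makes the two sides of (iii) scale identically and the limit $N\to\infty$ recovers (i). You should also note that the Riemann-sum convergence near $\xi=0$ (where the weight is singular) needs $\gamma q < d$; outside that range (i) and (ii) are both vacuously false anyway, so the equivalence is trivial there.

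For (i)~$\Rightarrow$~(iii), the ``Bernstein-type'' local estimate you invoke is not quite the right mechanism in the weighted vector-valued setting. What actually works cleanly is the convolution identity $\wh{F} = \wh{F}*\wh{\psi}$ (for any $\psi$ equal to $1$ on the support of $F$), followed by H\"older and the kernel bound $\sum_n (|n|+1)^{-\gamma q}|\wh{\psi}(n-\eta)| \lesssim |\eta|^{-\gamma q}$, which one checks by splitting into $|n-\eta|\leq|\eta|/2$ and its complement. This replaces the subharmonicity argument and handles the $n=0$ term uniformly. With these two fixes your scheme goes through; the paper's step-function proof simply avoids both issues by construction.
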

\begin{proof}
Set $g(\xi) = \F (\ind_{[-\frac{1}{2}, \frac{1}{2}]^d})(\xi)= \prod_{n=1}^d \frac{\sin(\pi \xi_n)}{\pi \xi_n}$.
Then
\begin{align}\label{aux1}
\sum_{m \in \Z^d} |g(\xi + m)|^q |\xi + m|^{-\gamma q}  |\xi|^{\gamma q} &\geq |g(\xi)|^q \geq C, \quad \xi \in [-\tfrac{1}{2}, \tfrac{1}{2}]^d,
\\
\label{eq:propgineq}
\sum_{m \in \Z^d} |g(\xi + m)|^q |\xi + m|^{-\gamma q}  |\xi|^{\gamma q} &\leq  \sum_{m \in \Z^d}  |g(\xi + m)|^q \leq C, \quad \xi \in [-\tfrac{1}{2}, \tfrac{1}{2}]^d,
\end{align}
where the last estimate follows from periodicity and $q>1$.

	\eqref{it:TPR} $\implies$ \eqref{it:TPZ}:
	For $x_k \in X, |k| \leq n$, we define
	\begin{equation*}
		f (t) = \sum_{|k| \leq n} \ind_{[-\frac{1}{2}, \frac{1}{2}]^d} (t + k) x_k, \quad t \in \R^d.
	\end{equation*}
We can write
	\begin{equation}\label{prop:WeightedTrans1new2}
		\widehat{f}(\xi) = g(\xi) \sum_{|k| \leq n} e_k(\xi) x_k, \quad \xi \in \R^d,
	\end{equation}
	 where $e_k(\xi) = e^{2 \pi i k \cdot \xi}$. Elementary computations yield that
	\begin{align}
		\|\widehat{f}\|_{L^{q}(\R^d, |\cdot|^{-\gamma q};X)}^q & = \int_{\R^d} |g(\xi)|^q |\xi|^{-\gamma q} \Big\| \sum_{|k| \leq n} e_k(\xi) x_k\Big\|_X^q \, \ud \xi \nonumber \\
		& = \sum_{m \in \Z^d} \int_{[-\frac{1}{2}, \frac{1}{2}]^d -m} |g(\xi)|^q |\xi|^{-\gamma q} \Big\| \sum_{|k| \leq n} e_k(\xi) x_k\Big\|_X^q \, \ud \xi \nonumber\\
		& = \sum_{m \in \Z^d} \int_{[-\frac{1}{2}, \frac{1}{2}]^d} |g(\xi + m)|^q |\xi + m|^{-\gamma q} \Big\| \sum_{|k| \leq n} e_k(\xi) x_k\Big\|_X^q \, \ud \xi \label{prop:WeightedTrans1}.
	\end{align}
From \eqref{aux1} and \eqref{prop:WeightedTrans1} we obtain the estimate
	\begin{equation*}
			\|\widehat{f}\|_{L^{q}(\R^d, |\cdot|^{-\gamma q};X)}^q \geq C \int_{[-\frac{1}{2}, \frac{1}{2}]^d} |\xi|^{-\gamma q} \Big\| \sum_{|k| \leq n} e_k(\xi) x_k\Big\|_X^q \, \ud \xi.
	\end{equation*}
	On the other hand, we have
	\begin{equation*}
		\|f\|_{L^p(\R^d, |\cdot|^{\beta p};X)}^p = \sum_{|k| \leq n} \int_{[-\frac{1}{2}, \frac{1}{2}]^d -k} |t|^{\beta p} \|x_k\|_X^p \, \ud t \asymp \sum_{|k| \leq n} \|x_k\|_X^p (1 + |k|)^{\beta p}.
	\end{equation*}
	Applying now that \eqref{it:TPR} holds true, we arrive at
	\begin{equation*}
		\Big\| \sum_{|k| \leq n} e_k x_k\Big\|_{L^q(\T^d, |\cdot|^{-\gamma q};X)} \leq C \|(x_k)_{|k| \leq n}\|_{\ell^p(\Z^d, (1 + |k|)^{\beta p};X)},
	\end{equation*}
	which implies \eqref{it:TPZ} by a density argument.
	
	\eqref{it:TPZ} $\implies$ \eqref{it:TPR}: To prove \eqref{it:TPR}, we first observe that it suffices to check that the inequality
	\begin{equation}\label{prop:WeightedTrans3new}
		\|\widehat{f}\|_{L^{q}(\R^d, |\cdot|^{-\gamma q};X)} \leq C \|f\|_{L^p(\R^d, |\cdot|^{\beta p};X)}
	\end{equation}
	holds for all functions $f$ given by
	\begin{equation*}
		f (t) =  \sum_{|k| \leq n} \ind_{[-\frac{1}{2}, \frac{1}{2}]^d} (t a^{-1} + k) x_k, \quad t \in \R^d, \quad a > 0.
	\end{equation*}
	We first remark that
	\begin{equation}\label{aux4}
		\widehat{f}(\xi) = a^{d} g(a \xi) \sum_{|k| \leq n} e_k(a \xi) x_k, \quad \xi \in \R^d.
	\end{equation}
	Then, by a change of variables, we have
		\begin{align}
		\|\widehat{f}\|_{L^{q}(\R^d, |\cdot|^{-\gamma q};X)} & = a^{d+\gamma -d/q} \left(\int_{\R^d} |g(\xi)|^q |\xi|^{-\gamma q} \Big\| \sum_{|k| \leq n} e_k(\xi) x_k\Big\|_X^q \, \ud \xi \right)^{1/q} \nonumber \\
		& \hspace{-1.5cm}=  a^{d+\gamma -d/q} \left(\int_{[-\frac{1}{2}, \frac{1}{2}]^d} \sum_{m \in \Z^d}  |g(\xi + m)|^q |\xi + m|^{-\gamma q} \Big\| \sum_{|k| \leq n} e_k(\xi) x_k\Big\|_X^q \, \ud \xi \right)^{1/q}. \label{aux2}
	\end{align}
	Inserting \eqref{eq:propgineq} into \eqref{aux2} we get
	\begin{equation}\label{prop:WeightedTrans3}
		\|\widehat{f}\|_{L^{q}(\R^d, |\cdot|^{-\gamma q};X)} \leq C  a^{d+\gamma -d/q} \left(\int_{[-\frac{1}{2}, \frac{1}{2}]^d} |\xi|^{-\gamma q} \Big\| \sum_{|k| \leq n} e_k(\xi) x_k\Big\|_X^q \, \ud \xi\right)^{1/q}.
	\end{equation}
	Furthermore, we have
	\begin{align}
			\|f\|_{L^p(\R^d, |\cdot|^{\beta p};X)} & =  \left(\sum_{|k| \leq n} \int_{[-\frac{a}{2}, \frac{a}{2}]^d -k a} |t|^{\beta p} \|x_k\|_X^p  \, \ud t \right)^{1/p}  \nonumber \\
			& \geq C a^{ \beta + d/p} \left(\sum_{|k| \leq n} \|x_k\|_X^p (1 + |k|)^{\beta p} \right)^{1/p} \nonumber \\
			& = C a^{ d + \gamma -d/q} \left(\sum_{|k| \leq n} \|x_k\|_X^p (1 + |k|)^{\beta p} \right)^{1/p} .\label{aux3}
	\end{align}
	By the assumption, \eqref{prop:WeightedTrans3} and \eqref{aux3}, we easily derive the desired estimate \eqref{prop:WeightedTrans3new}.
	
	\eqref{it:TPR} $\iff$ \eqref{it:TPT}: By Lemma \ref{lem:duality0} and \eqref{dualWeightedSequences}, the boundedness result
	\begin{equation}\label{prop:WeightedTrans4}
		\F : L^p(\R^d, |\cdot|^{\beta p};X) \to L^{q}(\R^d, |\cdot|^{-\gamma q};X)
	\end{equation}
	turns out to be equivalent to
	\begin{equation}\label{prop:WeightedTrans5}
		\F : L^{q'}(\R^d, |\cdot|^{\gamma q'};X^\ast) \to L^{p'}(\R^d, |\cdot|^{-\beta p'};X^\ast),
	\end{equation}
	while
	 \begin{equation}\label{prop:WeightedTrans6}
		\F:  \ell^{q'}(\Z^d, (|n|+1)^{\gamma q'};X^\ast) \to L^{p'}(\T^d, |\cdot|^{-\beta p'};X^\ast)
	\end{equation}
	can be characterized in terms of the boundedness estimate
		\begin{equation}\label{prop:WeightedTrans7}
		\F: L^p(\T^d, |\cdot|^{\beta p};X) \to \ell^{q}(\Z^d, (|n|+1)^{-\gamma q};X).
	\end{equation}
	Furthermore, we have already shown that \eqref{prop:WeightedTrans5} and \eqref{prop:WeightedTrans6} are equivalent, which yields the equivalence between the statements \eqref{prop:WeightedTrans4} and \eqref{prop:WeightedTrans7}.
\end{proof}

Specializing Proposition \ref{prop:WeightedTrans} to the case $p=q \leq 2, \beta =0$, and $\gamma = d(2/p - 1)$, we obtain the following characterizations of the notion of HL type.

\begin{corollary}\label{cor:HLTrans}
	Let $1 < p \leq 2$. Then the following assertions are equivalent:
	\begin{enumerate}[\upshape(1)]
		\item $X$ has HL type $p$.
		\item $\F$ extends to a bounded operator from $\ell^p(\Z^d;X)$ into $L^{p}(\T^d, |\cdot|^{- d(2-p)};X)$.
		\item $\F$ extends to a bounded operator from $L^p(\T^d;X)$ into $\ell^{p}(\Z^d, (|n|+1)^{-d(2-p)};X)$.
	\end{enumerate}
\end{corollary}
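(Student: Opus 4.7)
The plan is to deduce Corollary \ref{cor:HLTrans} as a direct specialization of Proposition \ref{prop:WeightedTrans}, and essentially no further work should be required beyond matching parameters.

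First I would unpack the definition of HL type $p$ from Definition \ref{def:type}\eqref{it:type3}: it states exactly that
\[
\F : L^{p}(\R^{d};X) \to L^{p}\bigl(\R^{d},|\cdot|^{-d(2-p)};X\bigr)
\]
is bounded. To put this in the form used in Proposition \ref{prop:WeightedTrans}, I would set $q=p$, $\beta=0$, and choose $\gamma$ so that $-\gamma q = -d(2-p)$, i.e.\ $\gamma = d(2-p)/p \geq 0$ (this uses $1<p\leq 2$). I then need to verify the compatibility condition $\beta-\gamma = d(1-1/p-1/q)$ imposed in the hypothesis of Proposition \ref{prop:WeightedTrans}; indeed, with these choices the right-hand side equals $d(1-2/p) = -d(2-p)/p$, which matches $\beta-\gamma = -d(2-p)/p$.

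Having checked the admissibility of the parameters, I would invoke the equivalence \eqref{it:TPR} $\iff$ \eqref{it:TPZ} $\iff$ \eqref{it:TPT} of Proposition \ref{prop:WeightedTrans} and simply read off the three statements. With $\beta=0$, the left-hand weight $(|n|+1)^{\beta p}$ in \eqref{it:TPZ} collapses to the constant $1$, giving $\ell^{p}(\Z^{d};X)$, while the range becomes $L^{p}(\T^{d},|\cdot|^{-\gamma p};X) = L^{p}(\T^{d},|\cdot|^{-d(2-p)};X)$, which is item (2) of the corollary. Similarly \eqref{it:TPT} becomes $\F: L^{p}(\T^{d};X) \to \ell^{p}(\Z^{d},(|n|+1)^{-d(2-p)};X)$, which is item (3). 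Meanwhile \eqref{it:TPR} is precisely the defining inequality of HL type $p$, giving item (1).

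Because the corollary is a direct specialization, there is no genuine obstacle: the only thing to be careful about is tracking the weight exponents (the factor of $p$ arising from the power of the weight versus the power in the norm) and ensuring that $\gamma \geq 0$ holds in the permitted range $1<p\leq 2$. No new ingredients beyond Proposition \ref{prop:WeightedTrans} need to be introduced.
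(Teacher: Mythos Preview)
Your proposal is correct and matches the paper's approach exactly: the corollary is stated in the paper as an immediate specialization of Proposition~\ref{prop:WeightedTrans} with $p=q\leq 2$, $\beta=0$, and $\gamma=d(2/p-1)$, precisely as you have done. Your parameter check is accurate and nothing further is needed.
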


The corresponding result for HL cotype also follows from Proposition \ref{prop:WeightedTrans} by taking $p=q \geq 2, \gamma =0$, and $\beta = d(1 - 2/q)$. It reads as follows.

\begin{corollary}\label{cor:HLCoTrans}
	Let $2 \leq q < \infty$. Then the following assertions are equivalent:
	\begin{enumerate}[\upshape(i)]
		\item $X$ has HL cotype $q$.
		\item $\F$ extends to a bounded operator from $\ell^q(\Z^d, (|n|+1)^{d(q-2)};X)$ into $L^{q}(\T^d;X)$.
		\item $\F$ extends to a bounded operator from $L^q(\T^d, |\cdot|^{d(q-2)};X)$ into $\ell^{q}(\Z^d;X)$.
	\end{enumerate}
\end{corollary}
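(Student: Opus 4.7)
The statement is a direct specialization of Proposition \ref{prop:WeightedTrans}, which has already done essentially all the work. The plan is to simply match parameters and read off the three equivalences.

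First I would set $p = q$ (with $q \geq 2$), $\gamma = 0$, and $\beta = d(1 - 2/q)$. Then the compatibility condition $\beta - \gamma = d(1 - \frac{1}{p} - \frac{1}{q})$ required by Proposition \ref{prop:WeightedTrans} becomes $\beta = d(1 - 2/q)$, which holds by our choice. Since $\beta \geq 0$ iff $q \geq 2$, and $\gamma = 0 \geq 0$ is automatic, all hypotheses of the proposition are satisfied.

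Next I would translate each of the three boundedness statements \eqref{it:TPR}--\eqref{it:TPT} of Proposition \ref{prop:WeightedTrans} under this choice of parameters. Statement \eqref{it:TPR} becomes
\[
\F : L^q(\R^d, |\cdot|^{d(q-2)};X) \to L^q(\R^d;X),
\]
which is precisely the definition of HL cotype $q$ on $\R^d$ (Definition \ref{def:type}\eqref{it:cotype3}), yielding (i). Statement \eqref{it:TPZ} becomes
\[
\F : \ell^q(\Z^d, (|n|+1)^{d(q-2)};X) \to L^q(\T^d;X),
\]
which is (ii). Finally, statement \eqref{it:TPT} becomes
\[
\F : L^q(\T^d, |\cdot|^{d(q-2)};X) \to \ell^q(\Z^d;X),
\]
which is (iii).

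Since Proposition \ref{prop:WeightedTrans} gives the equivalence of its three statements, the three assertions (i)--(iii) of the corollary are equivalent. There is no real obstacle here: the entire content is contained in the weighted transference result, and the only task is the bookkeeping of parameters. One could alternatively derive this corollary directly from Corollary \ref{cor:HLTrans} by duality (via Proposition \ref{prop:duality}\eqref{DualHL} and Lemmas \ref{lem:duality0} and \ref{lem:duality3}), which is the same strategy used in the proof of \eqref{it:TPR} $\iff$ \eqref{it:TPT} inside Proposition \ref{prop:WeightedTrans}.
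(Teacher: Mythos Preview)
Your proposal is correct and matches the paper's approach exactly: the paper states that the corollary follows from Proposition \ref{prop:WeightedTrans} by taking $p=q \geq 2$, $\gamma = 0$, and $\beta = d(1 - 2/q)$, which is precisely the parameter specialization you carry out.
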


Next we prove a transference result for Paley (co)type between $\R^d$ and $\T^d$.

\begin{proposition}\label{prop:PTrans}
	Let $1 < p \leq 2$. Then the following assertions are equivalent:
	\begin{enumerate}[\upshape(i)]
		\item \label{it:TPPalR}$X$ has Paley type $p$.
		\item \label{it:TPPalZ}$\F$ extends to a bounded operator from $\ell^p(\Z^d;X)$ into $L^{p',p}(\T^d;X)$.
		\item \label{it:TPPalT}$\F$ extends to a bounded operator from $L^p(\T^d;X)$ into $\ell^{p',p}(\Z^d;X)$.
	\end{enumerate}
\end{proposition}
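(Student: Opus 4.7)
The proof parallels that of Proposition \ref{prop:WeightedTrans}, but because the target space is the Lorentz space $L^{p',p}$ rather than a weighted $L^p$, the transfer of norms between $\R^d$ and $\T^d$ must be carried out through distribution functions instead of pointwise weight comparisons. I will establish \eqref{it:TPPalR} $\Rightarrow$ \eqref{it:TPPalZ} and \eqref{it:TPPalZ} $\Rightarrow$ \eqref{it:TPPalR} directly, and obtain \eqref{it:TPPalR} $\Leftrightarrow$ \eqref{it:TPPalT} by duality.

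For \eqref{it:TPPalR} $\Rightarrow$ \eqref{it:TPPalZ} I use the same test function $f(t) = \sum_{|k| \leq n} \ind_{[-\frac{1}{2},\frac{1}{2}]^d}(t + k) x_k$ as in Proposition \ref{prop:WeightedTrans}, so that $\widehat{f}(\xi) = g(\xi) F(\xi)$ with $g$ the sinc product and $F(\xi) = \sum_{|k| \leq n} x_k e^{2\pi i k \cdot \xi}$ extended $\Z^d$-periodically. The key observation is the uniform lower bound $|g(\xi)| \geq c := (2/\pi)^d$ for $\xi \in [-\frac{1}{2},\frac{1}{2}]^d$, which by periodicity of $\|F(\cdot)\|_X$ yields, for every $t > 0$,
\[
|\{\xi \in \T^d : \|F(\xi)\|_X > t/c\}| \leq |\{\xi \in \R^d : |g(\xi)|\,\|F(\xi)\|_X > t\}|.
\]
Passing to decreasing rearrangements and inserting into the definition of the Lorentz norm then gives $\|F\|_{L^{p',p}(\T^d;X)} \leq c^{-1} \|\widehat{f}\|_{L^{p',p}(\R^d;X)}$. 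Combining with \eqref{it:TPPalR} and the identity $\|f\|_{L^p(\R^d;X)} = \|(x_k)\|_{\ell^p(\Z^d;X)}$, density of trigonometric polynomials finishes the proof.

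For \eqref{it:TPPalZ} $\Rightarrow$ \eqref{it:TPPalR} I exploit the density in $L^p(\R^d;X)$ of scaled step functions $f_a(t) = \sum_{|k| \leq n} \ind_{[-\frac{a}{2},\frac{a}{2}]^d}(t + a k) x_k$ (with $a > 0$, $n \in \N$). A direct computation shows $\widehat{f_a}(\xi) = a^d g(a\xi) F(a\xi)$, and both $\|f_a\|_{L^p(\R^d;X)}$ and $\|\widehat{f_a}\|_{L^{p',p}(\R^d;X)}$ scale as $a^{d/p}$; hence the desired estimate reduces to
\[
\|g F\|_{L^{p',p}(\R^d;X)} \leq C \|F\|_{L^{p',p}(\T^d;X)}
\]
for every $\Z^d$-periodic trigonometric polynomial $F$. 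To prove this reduced estimate I partition $\R^d = \bigsqcup_{m \in \Z^d} Q_m$ with $Q_m := [-\frac{1}{2},\frac{1}{2}]^d + m$, set $G_m := \sup_{\xi \in Q_m} |g(\xi)|$, and write $u_m := |g|\,\|F\|_X \ind_{Q_m}$. The identity $\|u\|_{L^{p',p}(\R^d)}^p = p' \int_0^\infty s^{p-1} |\{u > s\}|^{p/p'} \, ds$, together with the subadditivity of $t \mapsto t^{p/p'}$ on $[0,\infty)$ (valid since $p \leq 2$, so $p/p' \leq 1$), yields for the disjointly supported family $(u_m)$ the bound $\|\sum_m u_m\|_{L^{p',p}(\R^d)}^p \leq \sum_m \|u_m\|_{L^{p',p}(\R^d)}^p$. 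A change of variables in the distribution function together with periodicity give $\|u_m\|_{L^{p',p}(\R^d)} \leq G_m \|F\|_{L^{p',p}(\T^d;X)}$, and since $|g(\xi)| \lesssim \prod_j (1 + |\xi_j|)^{-1}$ one has $G_m \lesssim \prod_j (1 + |m_j|)^{-1}$, so $\sum_m G_m^p < \infty$ for $p > 1$.

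Finally, \eqref{it:TPPalR} $\Leftrightarrow$ \eqref{it:TPPalT} is obtained by duality exactly as in the last step of Proposition \ref{prop:WeightedTrans}: Lemma \ref{lem:dualitynew} shows that \eqref{it:TPPalR} is equivalent to the Paley-cotype-type estimate $\F : L^{p,p'}(\R^d;X^*) \to L^{p'}(\R^d;X^*)$, while Lemmas \ref{lem:duality0} and \ref{lem:duality3} identify \eqref{it:TPPalT} with $\F : \ell^{p,p'}(\Z^d;X^*) \to L^{p'}(\T^d;X^*)$; these two dual statements are equivalent by an entirely symmetric version of the two steps above (with the roles of the integrability and Lorentz exponents interchanged), applied to $X^*$. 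The main obstacle of the whole argument is the reduced multiplier-type inequality $\|g F\|_{L^{p',p}(\R^d;X)} \leq C \|F\|_{L^{p',p}(\T^d;X)}$, which rests on the happy coincidence that $p > 1$ secures summability of $G_m^p$ while $p \leq 2$ secures the subadditivity of $t^{p/p'}$.
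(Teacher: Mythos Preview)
Your proof is correct, but it takes a route genuinely different from the paper's. The paper cites the equivalence \eqref{it:TPPalZ} $\Leftrightarrow$ \eqref{it:TPPalT} from \cite[Theorem 3.2]{GaKaKo01} and only argues \eqref{it:TPPalR} $\Leftrightarrow$ \eqref{it:TPPalT}. For \eqref{it:TPPalR} $\Rightarrow$ \eqref{it:TPPalT} the paper reverses your choice of test function: it takes $f = gF$ on $\R^d$ so that $\widehat f$ is the step function $\sum_k \ind_{[-\frac12,\frac12]^d}(\cdot+k)x_k$, which makes the Lorentz norm $\|\widehat f\|_{L^{p',p}(\R^d;X)} \asymp \|(x_k)\|_{\ell^{p',p}(\Z^d;X)}$ an explicit computation, while $\|f\|_{L^p(\R^d;X)} \leq C\|F\|_{L^p(\T^d;X)}$ follows from the pointwise bound $\sum_m|g(\cdot+m)|^p \leq C$. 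For \eqref{it:TPPalT} $\Rightarrow$ \eqref{it:TPPalR} the paper dualizes immediately to land in an $L^{p'}$-target estimate (where periodic decomposition needs no subadditivity) and then tests on step functions. By contrast, you prove \eqref{it:TPPalR} $\Leftrightarrow$ \eqref{it:TPPalZ} directly and self-containedly: your \eqref{it:TPPalR} $\Rightarrow$ \eqref{it:TPPalZ} via the distribution-function lower bound is clean, and your \eqref{it:TPPalZ} $\Rightarrow$ \eqref{it:TPPalR} introduces a new ingredient---the subadditivity of $t \mapsto t^{p/p'}$ for $p\leq 2$, which yields $\|\sum_m u_m\|_{L^{p',p}}^p \leq \sum_m \|u_m\|_{L^{p',p}}^p$ for disjointly supported pieces. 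This is a direct argument that the paper avoids by passing through duality. Your \eqref{it:TPPalR} $\Leftrightarrow$ \eqref{it:TPPalT} step is then essentially the paper's own duality argument (and, as you implicitly note, the ``symmetric'' version is actually easier, since the target is plain $L^{p'}$). In short: the paper's route is shorter given the cited reference and the clever placement of the step function on the $\widehat f$ side; your route is self-contained, and the subadditivity trick gives a structural reason tied to $p\leq 2$ that is not visible in the paper's proof.
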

 \begin{proof}
 The equivalence between \eqref{it:TPPalZ} and \eqref{it:TPPalT}
 was already shown in \cite[Theorem 3.2]{GaKaKo01}. Then, it suffices to show that \eqref{it:TPPalR} can be characterized in terms of \eqref{it:TPPalT}.

 	\eqref{it:TPPalR} $\implies$ \eqref{it:TPPalT}: Let
	\begin{equation*}
		f(\xi) = g(\xi) \sum_{|k| \leq n} e_k(\xi) x_k, \quad \xi \in \R^d,
	\end{equation*}
	where $x_k \in X$ for $|k| \leq n$, and $g(\xi) = \F (\ind_{[-\frac{1}{2}, \frac{1}{2}]^d})(\xi)$ (see \eqref{prop:WeightedTrans1new2}). We have
	\begin{align}
		\|f\|_{L^{p}(\R^d;X)}^p & = \int_{\R^d} |g(\xi)|^p \Big\| \sum_{|k| \leq n} e_k(\xi) x_k\Big\|_X^p \, \ud \xi \nonumber \\
		& = \sum_{m \in \Z^d} \int_{[-\frac{1}{2}, \frac{1}{2}]^d} |g(\xi + m)|^p \Big\| \sum_{|k| \leq n} e_k(\xi) x_k\Big\|_X^p \, \ud \xi \nonumber\\
		& \leq C  \int_{[-\frac{1}{2}, \frac{1}{2}]^d} \Big\| \sum_{|k| \leq n} e_k(\xi) x_k\Big\|_X^p \, \ud \xi \label{PTrans1}
	\end{align}
	where in the last step we have used \eqref{eq:propgineq}

On the other hand, it is clear that
	\begin{equation*}
		\widehat{f} (t) = \sum_{|k| \leq n} \ind_{[-\frac{1}{2}, \frac{1}{2}]^d} (t + k) x_k, \quad t \in \R^d.
	\end{equation*}
	Let $M_n$ be the number of $k \in \Z^d$ such that $|k| \leq n$, and denote by $(x_j^\ast)_{j=1}^{M_n}$ the non-increasing rearrangement of the sequence $(\|x_k\|_X)_{|k| \leq n}$. Note that
	\begin{equation*}
		\widehat{f}^\ast(t) = \sum_{j=1}^{M_n} x_j^\ast \ind_{[j-1,j)}(t), \quad t > 0,
	\end{equation*}
	and so,
	\begin{equation}\label{PTrans2}
		\|\widehat{f}\|_{L^{p',p}(\R^d;X)}^p = \sum_{j=1}^{M_n} \int_{j-1}^j t^{p/p'-1} \widehat{f}^\ast(t)^p dt \asymp \sum_{j=1}^{M_n} (j^{1/p'} x_j^\ast)^p \frac{1}{j}.
	\end{equation}
	Hence, it follows from \eqref{it:TPPalR} together with the estimates \eqref{PTrans1} and \eqref{PTrans2} that
	\begin{equation*}
		\left(\sum_{j=1}^{M_n} (j^{1/p'} x_j^\ast)^p \frac{1}{j}\right)^{1/p} \leq C  \left\| \sum_{|k| \leq n} e_k x_k\right\|_{L^p(\T^d;X)}.
	\end{equation*}
	We conclude \eqref{it:TPPalT} by using a density argument.
	
	\eqref{it:TPPalT} $\implies$ \eqref{it:TPPalR}: It turns out that \eqref{it:TPPalT} is equivalent to say
	$\F$ extends to a bounded operator from $\ell^{p,p'}(\Z^d;X^\ast)$ into $L^{p'}(\T^d;X^\ast)$, that is,
	\begin{equation}\label{aux5}
		\Big\|\sum_{|k| \leq n} e_k(\xi) y_k\Big\|_{L^{p'}(\T^d;X^\ast)} \leq C \|(y_k)_{|k| \leq n}\|_{\ell^{p,p'}(\Z^d;X^\ast)}.
	\end{equation}
	This is an immediate consequence of Lemma \ref{lem:duality0} and \eqref{dualLorentzSequences}.
	
	  By density and Proposition \ref{prop:duality}(ii) it is enough to prove that the following inequality
	\begin{equation}\label{aux6}
		\|\widehat{f}\|_{L^{p'}(\R^d;X^\ast)} \leq C \|f\|_{L^{p,p'}(\R^d;X^\ast)}
	\end{equation}
	holds for all $f$ given by
	\begin{equation*}
		f (t) = \sum_{|k| \leq n} \ind_{[-\frac{1}{2}, \frac{1}{2}]^d} (t a^{-1} + k) y_k, \quad t \in \R^d, \quad a > 0,
	\end{equation*}
	where $y_k \in X^\ast, |k| \leq n,$ are arbitrary.
	Clearly, we have
	\begin{equation*}
		\widehat{f}(\xi) = a^d g(a \xi) \sum_{|k| \leq n} e_k(a \xi) y_k, \quad \xi \in \R^d,
	\end{equation*}
	with $g(\xi) = \F (\ind_{[-\frac{1}{2}, \frac{1}{2}]^d})(\xi)$ (see \eqref{aux4}). Therefore, setting $(y_j^\ast)_{j=1}^{M_n}$ the non-increasing rearrangement of the sequence $(\|y_k\|_{X^\ast})_{|k| \leq n}$ (recall that $M_n$ denotes the cardinality of the set $\{k \in \Z^d : |k| \leq n\}$), we obtain
	\begin{equation}\label{PTrans2new}
		\|f\|_{L^{p,p'}(\R^d;X^\ast)}^{p'} = \sum_{j=1}^{M_n} \int_{(j-1) a^d }^{j a^d} t^{p'/p-1} \widehat{f}^\ast(t)^{p'} dt \asymp a^{d p'/p}\sum_{j=1}^{M_n} (j^{1/p} y_j^\ast)^{p'} \frac{1}{j}.
	\end{equation}
	On the other hand by \eqref{eq:propgineq}, we have
	\begin{align}
		\|\widehat{f}\|_{L^{p'}(\R^d;X^\ast)}^{p'} & = a^{d p'/p} \int_{\R^d} |g(\xi)|^{p'} \Big\| \sum_{|k| \leq n} e_k(\xi) y_k\Big\|_{X^\ast}^{p'} \, \ud \xi \nonumber \\
		& = a^{d p'/p} \sum_{m \in \Z^d} \int_{[-\frac{1}{2}, \frac{1}{2}]^d} |g(\xi + m)|^{p'} \Big\| \sum_{|k| \leq n} e_k(\xi) y_k\Big\|_{X^\ast}^{p'}  \, \ud \xi \nonumber\\
		& \leq C a^{d p'/p}  \int_{[-\frac{1}{2}, \frac{1}{2}]^d} \Big\| \sum_{|k| \leq n} e_k(\xi) y_k\Big\|_{X^\ast}^{p'} \, \ud \xi \label{PTrans1new}
	\end{align}
	
	According to  \eqref{PTrans1new}, \eqref{aux5} and \eqref{PTrans2new} we get
	\begin{align*}
		\|\widehat{f}\|_{L^{p'}(\R^d;X^\ast)} &\leq C a^{d/p} \Big\| \sum_{|k| \leq n} e_k(\xi) y_k \Big\|_{L^{p'}(\T^d;X^\ast)} \leq C a^{d/p} \|(y_k)_{|k| \leq n}\|_{\ell^{p,p'}(\Z^d;X^\ast)} \\
		& \leq C \|f\|_{L^{p,p'}(\R^d;X^\ast)}.
	\end{align*}	
	This gives the proof of the desired estimate \eqref{aux6}.

 \end{proof}

 As a consequence of Proposition \ref{prop:PTrans} by applying Proposition \ref{prop:duality}\eqref{DualP} together with Lemmas \ref{lem:duality0} and \ref{lem:dualitynew}, we obtain the transference principle for Paley cotype.

\begin{proposition}\label{prop:PCoTrans}
	Let $2 \leq q < \infty$. Then the following assertions are equivalent:
	\begin{enumerate}[\upshape(1)]
		\item $X$ has Paley cotype $q$.
		\item $\F$ extends to a bounded operator from $\ell^{q',q}(\Z^d;X)$ into $L^{q}(\T^d;X)$.
		\item $\F$ extends to a bounded operator from $L^{q',q}(\T^d;X)$ into $\ell^{q}(\Z^d;X)$.
	\end{enumerate}
\end{proposition}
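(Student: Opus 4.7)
The plan is to reduce Proposition \ref{prop:PCoTrans} to the Paley-type version already proved (Proposition \ref{prop:PTrans}) applied to $X^{*}$, and then to dualise each transference statement using the norming-subspace lemmas \ref{lem:duality0} and \ref{lem:dualitynew}.

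First, by the exchanged form of Proposition \ref{prop:duality}\eqref{DualP}, the statement \textit{``$X$ has Paley cotype $q$''} is equivalent to \textit{``$X^{*}$ has Paley type $q'$''}. Applying Proposition \ref{prop:PTrans} to $X^{*}$ with $p=q'$, this is in turn equivalent to each of
\begin{enumerate}[(a)]
\item $\F : \ell^{q'}(\Z^{d};X^{*}) \to L^{q,q'}(\T^{d};X^{*})$ is bounded,
\item $\F : L^{q'}(\T^{d};X^{*}) \to \ell^{q,q'}(\Z^{d};X^{*})$ is bounded.
\end{enumerate}
It therefore suffices to show that (a) $\iff$ (3) and (b) $\iff$ (2). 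Both are purely duality arguments which I will sketch for one implication; the others are symmetric.

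To obtain (a) $\Rightarrow$ (3), take a trigonometric polynomial $f$ with values in $X$ (a dense class in $L^{q',q}(\T^{d};X)$) and invoke Lemma \ref{lem:duality0} to write
\[
\|\widehat{f}\|_{\ell^{q}(\Z^{d};X)} \;=\; \sup \Bigl|\sum_{n\in\Z^{d}} \bigl\langle \widehat{f}(n), g(n)\bigr\rangle\Bigr|,
\]
the supremum being over finitely supported $X^{*}$-valued sequences $g$ with $\|g\|_{\ell^{q'}(\Z^{d};X^{*})}\leq 1$. Fubini yields
\[
\sum_{n\in\Z^{d}} \bigl\langle \widehat{f}(n), g(n)\bigr\rangle \;=\; \int_{\T^{d}} \bigl\langle f(x), h(x)\bigr\rangle\,\ud x, \qquad h(x):=\sum_{n\in\Z^{d}} g(n)\,e^{-2\pi i n\cdot x},
\]
and H\"older's inequality for Lorentz spaces (\cite[Theorem 3.5]{ONeil63}) combined with hypothesis (a) gives
\[
\Bigl|\int_{\T^{d}} \langle f,h\rangle \,\ud x\Bigr| \leq \|f\|_{L^{q',q}(\T^{d};X)}\|h\|_{L^{q,q'}(\T^{d};X^{*})} \leq C\|f\|_{L^{q',q}(\T^{d};X)}\|g\|_{\ell^{q'}(\Z^{d};X^{*})}.
\]
Taking the supremum over $g$ yields (3). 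The converse (3) $\Rightarrow$ (a) runs the same computation in reverse, this time using Lemma \ref{lem:dualitynew} to dualise $L^{q,q'}(\T^{d};X^{*})$ via the norming subspace $L^{q',q}(\T^{d};X)\subseteq L^{q',q}(\T^{d};X^{**})$. The equivalence (b) $\iff$ (2) is handled identically with the roles of $\T^{d}$ and $\Z^{d}$ swapped, using Lemma \ref{lem:dualitynew} on $\T^{d}$ to dualise $L^{q}(\T^{d};X)$ and Lemma \ref{lem:duality0} together with \eqref{dualLorentzSequences} on $\Z^{d}$.

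The only delicate point is purely bookkeeping: duality for Lorentz spaces is equivalence of norms rather than equality (cf.\ \eqref{dualLorentzSequences} and the comment after Lemma \ref{lem:dualitynew}), so all identities involving norms should be read up to absolute multiplicative constants. Since Paley (co)type is itself defined only up to a constant, this plays no role, and the entire argument is a straightforward four-corner diagram chase between $X$ on one side and $X^{*}$ on the other.
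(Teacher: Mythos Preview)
Your proposal is correct and follows exactly the approach indicated by the paper: reduce to Proposition \ref{prop:PTrans} for $X^{*}$ via Proposition \ref{prop:duality}\eqref{DualP}, then transfer back to $X$ using the duality Lemmas \ref{lem:duality0} and \ref{lem:dualitynew} (and \eqref{dualLorentzSequences} for the sequence side). The paper states only these ingredients without spelling out the computation, so your write-up in fact supplies the details that the paper omits.
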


\begin{remark}\label{RemarkDimensionTrans}
	As already mentioned in Remark \ref{RemarkTypeCotype}\eqref{RemarkTypeCotype1}, it is well known that transference principles with respect to dimension hold for Fourier type. It would be desirable to know wether these transference results still hold true for HL (co)type and Paley (co)type. Next we get some partial results. In view of Propositions \ref{prop:PTrans} and \ref{prop:PCoTrans}, it is not hard to check that Paley (co)type $p$ for dimension $d > 1$ implies $d-i$-dimensional Paley (co)type $p$ for $i = 1, \ldots, d-1$. On the other hand, it is easily seen from definition that HL (co)type $p$ in the $1$-dimensional setting implies HL (co)type $p$ in the $d$-dimensional setting.
\end{remark}

In the rest of this section, we will use the above transference results to obtain simple examples which prove the sharpness of Paley/HL (co)type of the $\ell^p$-spaces (see Example \ref{ex:PaleyLp}).

\begin{example}\label{ex:Paley}
Let $1 < p < 2$. It is well known that the space $X =\ell{^{p'}}(\Z^d)$ has Fourier type $p$. In particular, $X$ has Paley/HL type $q$ for any $q < p$ (see Proposition  \ref{prop:WeakFourier}). However, as can be seen in \cite[page 220]{Konig91} and \cite[Proposition 4.6]{GaKaKo01}, one can construct a function $f : \T^d \to X$ such that $f \in L^p(\T^d; X)$ but its sequence of Fourier coefficients $(\widehat{f}(n))$ does not belong to $\ell^{p',p}(\Z^d; X)$. Therefore, in virtue of Proposition \ref{prop:PTrans}, we derive that $X$ is not of Paley type $p$. We will improve this by showing that $X$ does not have HL type $p$ (see Proposition \ref{prop:relationsPFHL}). Indeed, let $f : \T^d \to X$ defined by
	\begin{equation*}
		f(t) = (|n|^{-d/p'} (1 + \log |n|)^{-\varepsilon} e^{2 \pi i n\cdot t} \one_{n\neq 0}), \quad t \in \T^d,
	\end{equation*}
	where $1/p' < \varepsilon < 1/p$. We have
	\begin{equation*}
		\|f\|_{L^p(\T^d;X)} \asymp \left(\sum_{n\neq 0} (1 + \log |n|)^{-\varepsilon p'} \frac{1}{|n|^d}\right)^{1/p'} < \infty.
	\end{equation*}
	On the other hand, since $\|\widehat{f}(n)\|_X = |n|^{-d/p'} (1 + \log |n|)^{-\varepsilon}$ for $n \neq 0$, we get
	\begin{equation*}
		\|\widehat{f}\|_{\ell^p(\Z^d,(|n|+1)^{-d(2-p)}; X)} \eqsim \left(\sum_{n\neq 0} (1 + \log |n|)^{-\varepsilon p} \frac{1}{|n|^d} \right)^{1/p} = \infty.
	\end{equation*}
	Then, according to Corollary \ref{cor:HLTrans}, $X$ does not have HL type $p$. It is worth mentioning that this example reveals some further features of the notions of Paley type and HL type. Firstly, in general, the indices
	\begin{equation*}
		\sup \{p : X \quad \text{has Paley type} \quad  p\} \quad \text{and} \quad \sup \{p : X \quad \text{has HL type} \quad  p\}
	\end{equation*}
	are not attained. Recall that the corresponding result for Fourier type is already known (see \cite[page 222]{Konig91}). Secondly, Proposition \ref{prop:WeakFourier} does not hold in the limiting case $p=p_0$ (see Remark \ref{rem:limit}).
\end{example}

\begin{example}\label{ex:Paley2}
	Let $X = \ell^p(\Z^d), \,1 < p < 2$. By Example \ref{ex:PaleyLp} $X$ has Paley type $p$. Alternatively this follows from \cite[Theorem 4.2]{GaKaKo01} and Proposition \ref{prop:PTrans}.
Then, Proposition \ref{prop:relationsPFHL} implies that $X$ has Fourier type $p$ (see e.g.~\cite{Konig91}) and HL type $p$. Moreover, all these statements are sharp. More specifically, it is known that $X$ has exactly Fourier type $p$. Consequently, $p$ is the best possible index $q$ such that $X$ has Paley type $q$. Otherwise, one may apply Proposition \ref{prop:relationsPFHL} in order to derive that $X$ has Fourier type $q > p$, which is not true. Next we show that $X$ has no better HL type than $p$. We will proceed by contradiction. Let us assume that $X$ has HL type $q$ for some $q > p$. By Proposition \ref{prop:duality}, this is equivalent to saying that $X^\ast = \ell^{p'}(\Z^d)$ has HL cotype $q'$, that is,
	\begin{equation*}
		\|\F f\|_{\ell^{q'}(\Z^d;X^\ast)} \leq C \|f\|_{L^{q'}(\T^d, |\cdot|^{d(q'-2)};X^\ast)}
	\end{equation*}
	(see Corollary \ref{cor:HLCoTrans}). However, this inequality is not true which yields the desired contradiction. Indeed, let
	\begin{equation*}
		f(t) = (|n|^{-\varepsilon} e^{2 \pi i n\cdot t} \one_{n\neq 0}), \quad t \in \T^d,
	\end{equation*}
	where $d/p' < \varepsilon < d/q'$. We have
	\begin{equation*}
		 \|f\|_{L^{q'}(\T^d, |\cdot|^{d(q'-2)};X^\ast)} \asymp \left(\sum_{n\neq 0} |n|^{-\varepsilon p'}\right)^{1/p'} < \infty,
	\end{equation*}
	but
	\begin{equation*}
		\|\wh{f}\|_{\ell^{q'}(\Z^d;X^\ast)} =  \left(\sum_{n\neq 0}|n|^{-\varepsilon q'}\right)^{1/q'} = \infty.
	\end{equation*}
\end{example}

As a consequence of the preceding examples and Proposition \ref{prop:duality} we obtain the corresponding results for cotype.
\begin{example}\label{ex:PaleyCotype}
	The space $X = \ell^p(\Z^d), \,1 < p < 2$, is of Fourier type exactly $p$, Paley/HL cotype $q$ for any $q > p'$. Moreover, $X$ has neither Paley cotype $p'$ nor HL cotype $p'$.
\end{example}

\begin{example}\label{ex:PaleyCotype2}
	Let $X = \ell^p(\Z^d), \, 2 < p < \infty$. The space $X$ is of Paley/HL cotype $p$ and this is optimal.
\end{example}

\begin{remark}\label{rem:Lpsharp}
If $S$ in Example \ref{ex:PaleyLp} does not consist of finitely many atoms, then the example is optimal. Indeed, this follows from the above examples since $L^p(S)$ contains an isometric copy of $\ell^p(\Z^d)$.
\end{remark}

\section{Pitt's inequality for the vector-valued Fourier transform}\label{Pitt's inequality}

In this section we are concerned with weighted inequalities for the vector-valued Fourier transform. More precisely, we will prove that under some geometric conditions on $X$ the Pitt's inequality obeys
\begin{equation}\label{PittGeneral}
	\|\widehat{f}\|_{L^q(\R^d, |\cdot|^{-\gamma q};X)} \leq C \|f\|_{L^p(\R^d, |\cdot|^{\beta p};X)}
\end{equation}
for certain indices $p, q, \gamma$, and $\beta$.

Let us recall from the introduction that the Pitt's inequality in the scalar case (i.e., $X=\C$) claims that if $1 < p \leq q < \infty$ and
	\begin{equation}\label{PittAssumpScalar}
\max\left\{0, d \left(\frac{1}{p} + \frac{1}{q} - 1\right) \right\} \leq \gamma < \frac{d}{q} \quad \text{and} \quad \beta - \gamma = d \Big(1- \frac{1}{p} - \frac{1}{q}\Big),
	\end{equation}
	then
	\begin{equation}\label{PittScalar}
	\|\widehat{f}\|_{L^q(\R^d, |\cdot|^{-\gamma q})} \leq C \|f\|_{L^p(\R^d, |\cdot|^{\beta p})}.
\end{equation}
Notice that the conditions \eqref{PittAssumpScalar} can be rewritten in terms of $\beta$ as
\begin{equation*}
 \max\left\{ 0, d \left(1 - \frac{1}{p} - \frac{1}{q}\right)\right\} \leq \beta <\frac{d}{p'}  \quad \text{and} \quad \beta - \gamma = d \Big(1- \frac{1}{p} - \frac{1}{q}\Big).
 \end{equation*}
 Further, since
 \begin{equation*}
 	\max\left\{0, d \left(\frac{1}{p} + \frac{1}{q} - 1\right) \right\} = \max\left\{0, d \left(\frac{1}{\min\{2, p\}} + \frac{1}{q} - 1\right) \right\},
 \end{equation*}
 the conditions on $\gamma$ given in (\ref{PittAssumpScalar}) read as
 	\begin{equation}\label{PittAssumpScalar*}
\max\left\{0, d \left(\frac{1}{\min\{2, p\}} + \frac{1}{q} - 1\right) \right\} \leq \gamma < \frac{d}{q}.
	\end{equation}

By general results for vector-valued extensions the scalar version \eqref{PittScalar} implies that \eqref{PittGeneral} holds for the same set of parameters \eqref{PittAssumpScalar} if $X$ is a Hilbert space (see \cite[Theorem 2.1.9]{HyNeVeWe16}).

Let us briefly explain our strategy to prove Theorem \ref{thm:Pitt2intro}. First we will focus on the extreme case $p=q$ (see Theorem \ref{thm:Pitt}). Then, we shall deal with the case $p < q$. Here, we shall distinguish three possible situations according to the fact that
\begin{equation*}
\max\left\{0, d \left(\tfrac{1}{\min\{p, p_0\}} + \tfrac{1}{q} - 1\right) \right\} < \gamma < \frac{d}{q}
\end{equation*}
holds if and only if one of the following conditions holds
\begin{enumerate}[\upshape(i)]
\item\label{it:Pittintroi} $p \in (1,p_0] \cup [p_0',\infty)$, and $\max\left\{0, d \left(\frac{1}{p} + \frac{1}{q} - 1\right) \right\} < \gamma < \frac{d}{q}$,
\item\label{it:Pittintroii} $p \in (p_0, p_0')$, $p_0' \leq q$, and $0 < \gamma < \frac{d}{q}$,
\item\label{it:Pittintroiii}  $p \in (p_0, p_0')$, $q < p_0'$, and $d\left(\frac{1}{p_0} + \frac{1}{q} - 1\right) < \gamma < \frac{d}{q} $.
\end{enumerate}
The cases \eqref{it:Pittintroi}, \eqref{it:Pittintroii} and \eqref{it:Pittintroiii} will be shown in Theorems \ref{thm:Pitt2}, \ref{thm:Pitt3} and \ref{thm:Pitt4}, respectively.

\begin{theorem}\label{thm:Pitt}
	Let $X$ be of Fourier type $p_0 \in (1,2]$. Assume that
	\begin{equation*}
	1 < q < \infty, \ \max\left\{0, d \left(\frac{1}{\min\{p_0,q\}} + \frac{1}{q} -1 \right) \right\} < \gamma < \frac{d}{q} \quad \text{and} \quad \beta - \gamma = d - \frac{2 d}{q}.
	\end{equation*}
	Then,
	\begin{equation}\label{PittVectorThm}
	\|\widehat{f}\|_{L^q(\R^d, |\cdot|^{-\gamma q};X)} \leq C \|f\|_{L^q(\R^d, |\cdot|^{\beta q};X)}.
\end{equation}
\end{theorem}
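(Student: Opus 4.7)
My plan is to reduce Pitt (case $p=q$) to the HL cotype of $X$ provided by Proposition \ref{prop:WeakFourier}\eqref{it:weakHtype} together with Stein-Weiss (Lemma \ref{lem:SobolevEmb}), splitting the argument by the position of $q$ relative to the Fourier-type exponents. In the easy range $q>p_0'$, $X$ has HL cotype $q$. I will introduce the Riesz potential $I_\gamma f = c_\gamma |\cdot|^{\gamma-d}*f$ and use the pointwise identity $|\xi|^{-\gamma}\widehat f(\xi)=(2\pi)^\gamma\widehat{I_\gamma f}(\xi)$, so that
\[
\|\widehat f\|_{L^q(\R^d,|\cdot|^{-\gamma q};X)} = (2\pi)^\gamma \|\widehat{I_\gamma f}\|_{L^q(\R^d;X)} \le C\,\|I_\gamma f\|_{L^q(\R^d,|\cdot|^{d(q-2)};X)}
\]
by HL cotype $q$ applied to $I_\gamma f$. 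Then Stein-Weiss with $u=v=q$, $\lambda=d-\gamma$, $a=d(2/q-1)$, $b=\beta$ will bound the right-hand side by $C\|f\|_{L^q(\R^d,|\cdot|^{\beta q};X)}$; its hypotheses follow immediately from $0<\gamma<d/q$ and $\beta-\gamma=d-2d/q$, in particular $a+b=\gamma>0$ and $b<d/q'$.

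For $q<p_0$, I will use duality. Since $X^*$ has Fourier type $p_0$ (Proposition \ref{prop:duality}\eqref{DualF}), the previous step yields Pitt for $X^*$ at the dual exponent $q'>p_0'$ with the weights swapped, and the duality argument from the proof of Proposition \ref{prop:duality} (via $\int\langle f,\widehat g\rangle=\int\langle\widehat f,g\rangle$, weighted H\"older, and Lemma \ref{lem:duality0}) will then transfer this back to $X$. Under the swap $(q,\beta,\gamma)\leftrightarrow(q',\gamma,\beta)$, the admissible range $\gamma>d(2/q-1)$ corresponds exactly to $\beta\in(0,d/q')$ at $q'$. For the middle range $p_0\le q\le p_0'$, I will complex-interpolate via Lemma \ref{lem:interpolationWeighted}: pick $q_0>p_0'$, $q_1<p_0$ and $\theta\in(0,1)$ with $1/q=(1-\theta)/q_0+\theta/q_1$, together with $\gamma_0\in(0,d/q_0)$ and $\gamma_1\in(d(2/q_1-1),d/q_1)$ satisfying $\gamma=(1-\theta)\gamma_0+\theta\gamma_1$, and set $\beta_i=\gamma_i+d-2d/q_i$. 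The previous two steps then deliver Pitt at $(q_i,\beta_i,\gamma_i)$ for $i=0,1$, and Lemma \ref{lem:interpolationWeighted} produces Pitt at $(q,\beta,\gamma)$ with $\beta=(1-\theta)\beta_0+\theta\beta_1$ and automatic $\beta-\gamma=d-2d/q$.

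The main point I expect to have to verify is that the interpolation in the middle case actually reaches every admissible $\gamma\in(d(1/p_0+1/q-1),d/q)$. Using $\theta=(1/q-1/p_0')/(2/p_0-1)$, a direct computation shows that as $q_0\to p_0'{+}$ and $q_1\to p_0{-}$ with $\gamma_0\to 0^+$ and $\gamma_1\to (d(2/p_0-1))^+$, the convex combination $(1-\theta)\gamma_0+\theta\gamma_1$ approaches $\theta\cdot d(2/p_0-1)=d(1/p_0+1/q-1)$, matching the lower boundary; sending instead $\gamma_0\to d/p_0'$ and $\gamma_1\to d/p_0$ attains the upper boundary $d/q$. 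A continuity argument will then produce admissible parameters for every interior $(q,\gamma)$ in the middle range.
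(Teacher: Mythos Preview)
Your proposal is correct, but it takes a genuinely different route from the paper's proof.

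The paper argues as follows. For each $1<p<p_0$, Proposition~\ref{prop:WeakFourier}\eqref{it:weakHtype} gives both HL type $p$ and HL cotype $p'$ for $X$, i.e.\ the two endpoint estimates
\[
\F:L^p(\R^d;X)\to L^p(\R^d,|\cdot|^{-d(2-p)};X),\qquad
\F:L^{p'}(\R^d,|\cdot|^{d(p'-2)};X)\to L^{p'}(\R^d;X).
\]
A single complex interpolation between these via Lemma~\ref{lem:interpolationWeighted} yields \eqref{PittVectorThm} for every $q\in(p,p')$ with $\gamma=d(1/q-1/p')$. Letting $p$ range over $(1,\min\{q,q',p_0\})$ then sweeps out the full admissible interval for $\gamma$ in each of the three regimes $q<p_0$, $p_0\le q\le p_0'$, $q>p_0'$. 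No Stein--Weiss, no duality.

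Your argument instead uses HL cotype $q$ plus Stein--Weiss (Lemma~\ref{lem:SobolevEmb}) for $q>p_0'$, transfers to $q<p_0$ by duality, and then complex-interpolates between these two outer ranges to fill in $p_0\le q\le p_0'$. This works: the Stein--Weiss hypotheses check out (your $a=d(2/q-1)$ is negative for $q>p_0'\ge 2$, but Lemma~\ref{lem:SobolevEmb} only requires $a<d/v$ and $a+b\ge 0$), and the middle-range interpolation does cover every admissible $\gamma$, since for fixed $q_0>p_0'$, $q_1<p_0$ the achievable $\gamma$'s form the interval $(\theta d(2/q_1-1),\,d/q)$, whose lower endpoint tends to $d(1/p_0+1/q-1)$ as $(q_0,q_1)\to(p_0',p_0)$.

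The paper's route is lighter: it avoids both Stein--Weiss and the duality step, and handles all three $q$-regimes with a single interpolation template. Your route imports heavier machinery (Lemma~\ref{lem:SobolevEmb} is not used elsewhere in the proof of Theorem~\ref{thm:Pitt} in the paper; it only enters later for the off-diagonal Theorem~\ref{thm:Pitt2}), and the density/consistency issues for $\widehat{I_\gamma f}$ when $I_\gamma f\notin\Schw(\R^d;X)$ need a word of justification. On the other hand, your approach makes the role of HL cotype at the actual exponent $q$ more transparent in the easy range.
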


\begin{remark}
(i) We will see in Remark \ref{rem:SharpnessPitt} below that (\ref{PittVectorThm}) may be false in the extreme case $\gamma =  \max\left\{0, d \left(\frac{1}{\min\{p_0,q\}} + \frac{1}{q} -1 \right) \right\}$.

(ii) In the special case that $X = \C$ in Theorem \ref{thm:Pitt} ($p_0=2$) we recover the classical inequality \eqref{PittScalar} for $p=q$ under the assumptions \eqref{PittAssumpScalar}.
\end{remark}

\begin{proof}[Proof of Theorem \ref{thm:Pitt}]
The proof is divided into a number of steps.

\textsc{Step 1:} First we shall prove the following result:  Let $1 < p < p_0$. Then,
	\begin{equation}\label{aux7}
	\|\widehat{f}\|_{L^q(\R^d, |\cdot|^{-\gamma q};X)} \leq C \|f\|_{L^q(\R^d, |\cdot|^{\beta q};X)}
\end{equation}
where
\begin{equation}\label{aux8}
	q \in (p, p'), \quad \beta = d \Big(\frac{1}{p}-\frac{1}{q}\Big),  \quad \text{and}  \quad \gamma = d \Big(\frac{1}{q}-\frac{1}{p'}\Big).
\end{equation}

By Proposition \ref{prop:WeakFourier}\eqref{it:weakHtype}, we have
	\begin{equation*}
		\F : L^p(\R^d;X) \to L^p(\R^d, |\cdot|^{-d(2-p)};X) \ \ \text{and} \ \ \F: L^{p'}(\R^d, |\cdot|^{d(p'-2)};X) \to L^{p'}(\R^d;X).
	\end{equation*}
	Let $p < q < p'$. Then, we define $\theta \in (0,1)$ such that $1/q = (1-\theta)/p + \theta/p'$ and apply complex interpolation and Lemma \ref{lem:interpolationWeighted} to get
that $\F$ maps the space
	\begin{equation*}
		[ L^p(\R^d;X), L^{p'}(\R^d, |\cdot|^{d(p'-2)};X)]_\theta = L^q(\R^d, |\cdot|^{\beta q};X)
	\end{equation*}
boundedly into
	\begin{equation*}
		[L^p(\R^d, |\cdot|^{-d(2-p)};X)  , L^{p'}(\R^d;X)]_\theta = L^q(\R^d, |\cdot|^{-\gamma q};X).
	\end{equation*}
Hence, we obtain the desired estimate \eqref{aux7} with \eqref{aux8}.

\textsc{Step 2:} Suppose that $p_0 \leq q \leq p_0'$. According to Step 1, the inequality \eqref{PittVectorThm} holds true if $ \beta -\gamma=  d -\frac{2 d}{q}$ and, in addition,
\begin{equation*}
	 \gamma = d \left(\frac{1}{q} - \frac{1}{p'}\right) \quad \text{for any} \quad 1 < p < p_0,
\end{equation*}
or equivalently,
\begin{equation*}
	\max\left\{0, d \left(\frac{1}{\min\{p_0,q\}} + \frac{1}{q} -1 \right) \right\} = d\left(\frac{1}{q} - \frac{1}{p_0'}\right) < \gamma < \frac{d}{q}.
\end{equation*}

\textsc{Step 3:}  We treat the case $q < p_0$. Applying Step 1 we get the inequality \eqref{PittVectorThm} with the constraint $ \beta -\gamma=  d -\frac{2 d}{q}$ for any
\begin{equation*}
	 \gamma = d \left(\frac{1}{q} - \frac{1}{p'}\right) \quad \text{with} \quad 1 < p < q.
\end{equation*}
Note that the latter condition can be written as
\begin{equation*}
	\max\left\{0, d \left(\frac{1}{\min\{p_0,q\}} + \frac{1}{q} -1 \right) \right\}  = d \left(\frac{1}{q} - \frac{1}{q'}\right)< \gamma < \frac{d}{q} \quad \text{and} \quad q' = \max\{p_0',q,q'\}.
\end{equation*}

\textsc{Step 4:}  We deal with the case $q > p_0'$. Take any $1 < p < q'$. Note that, in particular, $p < p_0$. Then, by Step 1, we obtain the desired inequality under the assumptions $ \beta -\gamma=  d -\frac{2 d}{q}$ and $\gamma = d \left(\frac{1}{q} - \frac{1}{p'}\right)$. Since $q = \max\{p_0',q,q'\}$, a simple change of variables yields the desired result if $\beta -\gamma=  d -\frac{2 d}{q}$ and $\max\left\{0, d \left(\frac{1}{\min\{p_0,q\}} + \frac{1}{q} -1 \right) \right\} =0 < \gamma < \frac{d}{q}$.

\end{proof}

\begin{remark}\label{rem:SharpnessPitt}
	 In sharp contrast with the scalar and Hilbert space case (i.e. $p_0 = 2$), working with a Banach space $X$ which is not isomorphic to a Hilbert space (i.e.\ $p_0\in [1, 2)$) the inequality \eqref{PittVectorThm} may not be true in the extreme case $\gamma = \max\left\{0, d \left(\frac{1}{\min\{p_0,q\}} + \frac{1}{q} -1 \right) \right\}$. As we will show below, its validity depends on the relationships between $q$ and the Fourier type $p_0$ of $X$. Let us distinguish three possible cases:

{\em Case $q < p_0$:} Thus, $\gamma = d \left(\frac{1}{q}-\frac{1}{q'}\right) = d\left(\frac{2}{q}-1\right)$, and $\beta = 0$. Note that \eqref{PittVectorThm} holds true because $X$ has HL type $q$ (see Proposition \ref{prop:WeakFourier}\eqref{it:weakHtype}).

{\em Case $q > p_0'$:} Thus, $\gamma = 0$, and $\beta = d \left(1-\frac{2}{q}\right) $. Note that \eqref{PittVectorThm} holds true because $X$ has HL cotype $q$ (see Proposition \ref{prop:WeakFourier}\eqref{it:weakHtype}).

{\em Case $q \in [p_0, p_0'], \, p_0 \neq 2$:} Then the extreme case $\gamma = \max\left\{0, d \left(\frac{1}{\min\{p_0,q\}} + \frac{1}{q} -1 \right) \right\}  = d\Big(\frac{1}{q} - \frac{1}{p_0'}\Big)$ cannot be attained. Assume first that $q = p_0 \neq 2$. Therefore, the validity of the inequality \eqref{PittVectorThm} (with $\gamma = d (2/p_0 - 1)$ and so, $\beta = 0$) is equivalent to saying that $X$ has HL type $p_0$ whenever $X$ has Fourier type $p_0$. As it was shown in Example \ref{ex:Paley}, this latter statement is not true. Suppose now that the inequality \eqref{PittVectorThm} holds for $q=p_0' \neq 2$ with $\gamma = 0$ (and so, $\beta = d(1-2/p_0')$), which means that $X$ has HL cotype $p_0'$ whenever $X$ has Fourier type $p_0$. This is not true as can be seen in Example \ref{ex:PaleyCotype}. Finally, we deal with the case $q \in (p_0, p_0')$ and $\gamma = d\Big(\frac{1}{p_0} + \frac{1}{q} - 1\Big)$ (and thus, $\beta = d(1/p_0 -1/q)$). Under these assumptions on the parameters, the inequality \eqref{PittVectorThm} is not true. Indeed, let $X = \ell{^{p_0'}}(\Z^d)$ and
	\begin{equation*}
		f(t) = (|n|^{-d/p_0'} (1 + \log |n|)^{-\varepsilon} e^{2 \pi i n\cdot t} \one_{n\neq 0}), \quad t \in \T^d,
	\end{equation*}
	where $1/p_0' < \varepsilon < 1/q$. Clearly, we have $\|(\widehat{f}(n))\|_X = |n|^{-d/p_0'} (1 + \log |n|)^{-\varepsilon}$ for $n \neq 0$. Then,
	\begin{equation*}
		\|f\|_{L^q(\T^d, |\cdot|^{(1/p_0-1/q)dq};X)} \asymp \left(\sum_{n\neq 0} (1 + \log |n|)^{-\varepsilon p_0'} \frac{1}{|n|^d}\right)^{1/p_0'} < \infty
	\end{equation*}
	and
	\begin{equation*}
		\|(\widehat{f}(n))\|_{\ell^q(\Z^d, (1+|n|)^{-(1/p_0 +1/q-1) dq};X)} \eqsim \left(\sum_{n\neq 0} (1 + \log |n|)^{-\varepsilon q} \frac{1}{|n|^d}\right)^{1/q} = \infty.
	\end{equation*}
	Finally, by Proposition \ref{prop:WeightedTrans} we derive that the inequality \eqref{PittVectorThm} is not satisfied.
	\end{remark}

Next we study the Pitt's inequality \eqref{PittGeneral} for integrability parameters $p$ and $q$ with $p < q$. Our first result in this direction reads as follows.

\begin{theorem}\label{thm:Pitt2}
	Let $X$ be of Fourier type $p_0 \in (1, 2]$. Assume that
		\begin{equation*}
	p \in (1, p_0] \cup [p_0',\infty), \quad p < q < \infty,
	\end{equation*}	
	and
	\begin{equation}\label{thm:Pitt2Assumptions}
	\max\left\{0, d \left(\frac{1}{p} + \frac{1}{q} - 1\right) \right\} \leq \gamma < \frac{d}{q} , \quad \beta - \gamma = d \Big(1- \frac{1}{p} - \frac{1}{q}\Big).
	\end{equation}	
	 Then, we have
		\begin{equation}\label{PittVectorThm2}
	\|\widehat{f}\|_{L^q(\R^d, |\cdot|^{-\gamma q};X)} \leq C \|f\|_{L^p(\R^d, |\cdot|^{\beta p};X)}.
\end{equation}
\end{theorem}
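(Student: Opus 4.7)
My plan is to combine the Hausdorff--Young inequality (available because $X$ has Fourier type $p_0 \geq p$, by Proposition~\ref{prop:WeakFourier}\eqref{it:weakFtype}) with the vector-valued Stein--Weiss estimate of Lemma~\ref{lem:SobolevEmb}. The argument splits into three cases.

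\textbf{Case 1: $p \in (1, p_0]$ and $q \geq p'$.} For $f \in \Schw(\R^d;X)$, write $g = |\cdot|^\beta f \in L^p(\R^d;X)$ and use the tempered-distribution identity $\F(|\cdot|^{-\beta}) = c_\beta |\cdot|^{\beta-d}$ (valid for $0 < \beta < d$) to obtain $\widehat{f} = c_\beta\, |\cdot|^{-(d-\beta)} \ast \widehat{g}$. Apply Lemma~\ref{lem:SobolevEmb} with $u = p'$, $v = q$, $\lambda = d-\beta$, $a = \gamma$, $b = 0$; the defining relation $d/v + d/u' = \lambda + a + b$ is precisely the Pitt constraint $\beta - \gamma = d(1 - 1/p - 1/q)$, and the remaining hypotheses ($1 < p' \leq q$, $0 \leq \gamma < d/q$, $0 < \beta < d$) all hold by assumption. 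Chaining with the Hausdorff--Young bound $\|\widehat{g}\|_{L^{p'}(X)} \leq C \|g\|_{L^p(X)}$ completes the case. The degenerate $\beta = 0$ situation forces $q = p'$ and reduces to Hausdorff--Young itself.

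\textbf{Case 2: $p \in (1, p_0)$ and $p < q < p'$.} Here the hypothesis forces $\gamma \geq d(1/p + 1/q - 1) > 0$. Complex interpolation (Lemma~\ref{lem:interpolationWeighted}) between the Hardy--Littlewood endpoint $\F \colon L^p(X) \to L^p(\R^d, |\cdot|^{-d(2-p)};X)$ (available from Proposition~\ref{prop:WeakFourier}\eqref{it:weakHtype} because $p < p_0$) and the Hausdorff--Young bound $\F \colon L^p(X) \to L^{p'}(X)$ yields, for every $u \in [p, p']$, the auxiliary estimate
\[
\|\widehat{g}\|_{L^u(\R^d, |\cdot|^{d(1 - 1/u - 1/p)u};X)} \leq C \|g\|_{L^p(X)}.
\]
Rerunning the Case~1 argument with this estimate in place of Hausdorff--Young (so $u$ replaces $p'$ and $b = d(1 - 1/u - 1/p) \leq 0$ in Stein--Weiss) yields the Pitt inequality whenever $\gamma \geq d(1/p + 1/u - 1)$. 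The choice $u = q$ delivers the full admissible range for $\beta > 0$, while the limiting case $\beta = 0$ (where Stein--Weiss degenerates) is precisely the auxiliary estimate at $u = q$.

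\textbf{Case 3: $p \in [p_0', \infty)$.} Since $p \geq p_0' \geq 2$, one automatically has $q > p \geq p'$, so $q > p'$. By Lemma~\ref{lem:duality0} and Parseval's identity, the desired Pitt estimate for $X$ is equivalent to the Pitt estimate for $X^*$ at the dual parameters $(\tilde p, \tilde q, \tilde\beta, \tilde\gamma) = (q', p', \gamma, \beta)$. As $X^*$ has Fourier type $p_0$ as well (Proposition~\ref{prop:duality}\eqref{DualF}) and the dual parameters satisfy $\tilde p = q' < p_0$, $\tilde p < \tilde q = p' \leq p_0 < \tilde p'$, this is precisely an instance of Case~2 applied to $X^*$.

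\textbf{Main obstacle.} The critical step is the intermediate range $p < q < p'$ of Case~2, where neither direct Stein--Weiss~+~Hausdorff--Young nor its dual delivers the estimate and the interpolated Hardy--Littlewood bound is indispensable. The most delicate point is the boundary $p = p_0$, where the Hardy--Littlewood inequality can fail (cf.\ Example~\ref{ex:Paley}); its treatment requires an additional interpolation against the diagonal Pitt estimates of Theorem~\ref{thm:Pitt} at $(p_0, p_0)$ and $(p_0', p_0')$, carefully bookkeeping the weight exponents.
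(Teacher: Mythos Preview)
Your approach matches the paper's: Stein--Weiss combined with Hausdorff--Young for $q\geq p'$ (Case~1), complex interpolation with an HL-type/diagonal estimate for $p<q<p'$ (Case~2), and duality for $p\geq p_0'$ (Case~3). In Case~2 you organise the interpolation slightly differently---you fix the domain $L^p(\R^d;X)$ and interpolate the weighted targets $L^p(|\cdot|^{-d(2-p)};X)$ and $L^{p'}(\R^d;X)$, whereas the paper interpolates the $q=p'$ Pitt estimate (from Case~1) directly with the $q=p$ diagonal estimate from Theorem~\ref{thm:Pitt}---but the two arguments use the same ingredients and are equivalent.

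The one loose end is the boundary $p=p_0$ with $p_0<q<p_0'$, which you correctly flag as the delicate point but only sketch. Your proposed remedy via Theorem~\ref{thm:Pitt} at $(p_0,p_0)$ is exactly what the paper does: that theorem supplies $\F:L^{p_0}(\R^d,|\cdot|^{\beta p_0};X)\to L^{p_0}(\R^d,|\cdot|^{-\mu p_0};X)$ for every $0<\beta<d/p_0'$, and interpolating this against the Case~1 estimate at $q=p_0'$ covers $p_0<q<p_0'$. (The additional reference to $(p_0',p_0')$ is not needed.) You should simply execute this step rather than defer it.
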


\begin{remark}\
\begin{enumerate}[\upshape(i)]
	\item In case that $X = \C$ in Theorem \ref{thm:Pitt2} (and so, $p_0 = 2$) we get the Pitt's inequalities \eqref{PittScalar} in the full range of parameters \eqref{PittAssumpScalar}. The same holds if $X$ is a Hilbert space.
	\item We will see in Remark \ref{rem:SharpnessPitt2} below that the assumption $p \in (1, p_0] \cup [p_0',\infty)$ is indeed necessary in Theorem \ref{thm:Pitt2}.
	\end{enumerate}
\end{remark}

\begin{proof}[Proof of Theorem \ref{thm:Pitt2}]
	We shall break up the proof into several cases.
	
	\textsc{Case 1:} $p \leq p_0$ and $p '\leq q$. By Lemma \ref{lem:SobolevEmb} applied with $u=p'$, $v = q$, $a = \gamma$, $b =0$, $\lambda =d - \beta$ and to the function $g = (-\Delta)^{\beta/2} \wh f$ (see \cite[Section 6.1]{Grafakos09}) we have that
	\begin{align*}
		\|\wh f\|_{L^q(\R^d, |\cdot|^{-\gamma q};X)}  & = c_{d,\beta}\||\cdot|^{-\lambda}*g\|_{L^q(\R^d, |\cdot|^{-\gamma q};X)}
\\ & \leq C\|g\|_{L^{p'}(\R^d;X)}
\leq C \|\F^{-1} g \|_{L^{p}(\R^d;X)} = (2\pi)^{\beta} C \|f\|_{L^p(\R^d, |\cdot|^{\beta p};X)}
	\end{align*}
	where we have also used that $X$ is of Fourier type $p \leq p_0$ (see Proposition \ref{prop:WeakFourier}).

		\textsc{Case 2:} $p \leq p_0$ and $p < q < p'$. Using Case 1, we know that
			\begin{equation}\label{3.25}
	\|\widehat{f}\|_{L^{p'}(\R^d, |\cdot|^{-\beta p'};X)} \leq C \|f\|_{L^p(\R^d, |\cdot|^{\beta p};X)}
\end{equation}
for any $0 \leq \beta < d/p'$. On the other hand, by Theorem \ref{thm:Pitt}, we have
		\begin{equation}\label{3.26}
	\|\widehat{f}\|_{L^{p}(\R^d, |\cdot|^{-\mu p};X)} \leq C \|f\|_{L^p(\R^d, |\cdot|^{\beta p};X)}
\end{equation}
whenever $0 \leq \beta < d/p'$ and $\beta - \mu = d(1-2/p)$. Choose $\theta \in (0,1)$, so that $1/q = (1-\theta)/p' + \theta/p$ and we interpolate between \eqref{3.25} and \eqref{3.26} (see Lemma \ref{lem:interpolationWeighted}) in order to get that
\begin{equation}
	\|\widehat{f}\|_{L^{q}(\R^d, |\cdot|^{-\gamma q};X)} \leq C \|f\|_{L^p(\R^d, |\cdot|^{\beta p};X)}
\end{equation}
with $\beta - \gamma = d(1 - 1/p -1/q)$ and $0 \leq \beta < d/p'$ (or equivalently, $d (1/p + 1/q -1) = \max\left\{0, d \left(1/p + 1/q-1 \right) \right\} \leq \gamma < d/q$).

Cases 1 and 2 together give \eqref{PittVectorThm2} for $p \leq p_0$ and $p < q$.

\textsc{Case 3:} $p_0' \leq p < q$. We will apply duality from the previous cases to extend the inequality  \eqref{PittVectorThm2} to the range of values $p_0' \leq p < q$. By assumptions $X^\ast$ has Fourier cotype $p_0'$ (see Proposition \ref{prop:duality}\eqref{DualF}) and $q' < p' \leq p_0$, and therefore
\begin{equation}\label{3.28}
	\F: L^{q'}(\R^d, |\cdot|^{\gamma q'};X^\ast) \to L^{p'}(\R^d, |\cdot|^{-\beta p'};X^\ast)
\end{equation}
with $d (1/p' +1/q'-1) = \max\{0, d(1/p' + 1/q' -1)\} \leq\beta < d/p'$ and $-\beta + \gamma = d(1-1/p'-1/q')$. Let $f:\R^d \to X$ and $g:\R^d \to X^\ast$ be simple functions. Then, applying H\"older's inequality, \eqref{3.28} and \eqref{FourierCom}, we get
\begin{align*}
	\Big|\int_{\R^d} \lb \widehat{f}(x),g(x)\rb  \, \ud x\Big| &  \leq \int_{\R^d} \|f(x)\|_X \|\widehat{g}(x)\|_{X^\ast} \, \ud x \\
	& \leq \|f\|_{L^p(\R^d, |\cdot|^{\beta p};X)} \|\widehat{g}\|_{L^{p'}(\R^d, |\cdot|^{-\beta p'};X^\ast)} \\
	& \leq C  \|f\|_{L^p(\R^d, |\cdot|^{\beta p};X)}  \|g\|_{L^{q'}(\R^d, |\cdot|^{\gamma q'};X^\ast)}.
\end{align*}
By taking the supremum over all simple functions $g:\R^d \to X^\ast$ such that $ \|g\|_{L^{q'}(\R^d, |\cdot|^{\gamma q'};X^\ast)} \leq 1$ and applying Lemma \ref{lem:duality0}, we derive at
\begin{equation*}
	\|\widehat{f}\|_{L^q(\R^d, |\cdot|^{-\gamma q}, X)} \leq C  \|f\|_{L^p(\R^d, |\cdot|^{\beta p};X)}
\end{equation*}
whenever $0 = \max\{0, d(1/p + 1/q -1)\} \leq \gamma < d/q$ and $\beta - \gamma = d (1 - 1/p -1/q)$.

\end{proof}

It turns out that the validity of the Pitt's inequality \eqref{PittVectorThm2} for a Banach space $X$ of Fourier type $p_0$ becomes more intricate when $p \in (p_0, p_0')$. More precisely, the ranges of $\gamma$ and $\beta$ for which \eqref{PittVectorThm2} holds will depend on the relations between the integrability parameter $q$ and $p_0'$. Our next result treats the case $p_0' \leq q$.

\begin{theorem}\label{thm:Pitt3}
	Let $X$ be of Fourier type $p_0 \in (1, 2]$. Assume that
		\begin{equation}\label{thm:Pitt3Assumptions}
	p \in (p_0, p_0'), \quad p_0' \leq q < \infty,
	\end{equation}	
	and
	\begin{equation}\label{thm:Pitt3Assumptions2}
	0 \leq \gamma < \frac{d}{q} , \quad \beta - \gamma = d \Big(1- \frac{1}{p} - \frac{1}{q}\Big).
	\end{equation}	
	 Then, we have
		\begin{equation*}
	\|\widehat{f}\|_{L^q(\R^d, |\cdot|^{-\gamma q};X)} \leq C \|f\|_{L^p(\R^d, |\cdot|^{\beta p};X)}.
\end{equation*}
\end{theorem}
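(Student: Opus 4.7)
The strategy is to fix $\gamma$ with $0 \leq \gamma < d/q$, establish the inequality at the two endpoints $p=p_0$ and $p=p_0'$ by applying Theorem \ref{thm:Pitt2}, and then recover the intermediate range $p \in (p_0, p_0')$ by complex interpolation in the scale of weighted Lebesgue spaces, using Lemma \ref{lem:interpolationWeighted}.

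\textbf{Endpoint estimates.} Define
\begin{equation*}
\beta_0 = \gamma + d\Big(1 - \tfrac{1}{p_0} - \tfrac{1}{q}\Big), \qquad \beta_1 = \gamma + d\Big(1 - \tfrac{1}{p_0'} - \tfrac{1}{q}\Big).
\end{equation*}
Since $q \geq p_0' \geq 2 \geq p_0$, both $1/p_0 + 1/q$ and $1/p_0' + 1/q$ do not exceed $1$, so the threshold $\max\{0, d(1/p + 1/q - 1)\}$ appearing in \eqref{thm:Pitt2Assumptions} vanishes at both $p=p_0$ and $p=p_0'$. The hypothesis $0 \leq \gamma < d/q$ therefore places $(p_0,q,\gamma,\beta_0)$ and $(p_0',q,\gamma,\beta_1)$ in the admissible range of Theorem \ref{thm:Pitt2} (invoked with $p = p_0 \in (1, p_0]$ and with $p = p_0' \in [p_0', \infty)$, respectively). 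Consequently,
\begin{equation*}
\F : L^{p_0}(\R^d, |\cdot|^{\beta_0 p_0}; X) \to L^q(\R^d, |\cdot|^{-\gamma q}; X)
\end{equation*}
and
\begin{equation*}
\F : L^{p_0'}(\R^d, |\cdot|^{\beta_1 p_0'}; X) \to L^q(\R^d, |\cdot|^{-\gamma q}; X)
\end{equation*}
are both bounded.

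\textbf{Interpolation step.} Choose $\theta \in (0,1)$ satisfying $1/p = (1-\theta)/p_0 + \theta/p_0'$. By Lemma \ref{lem:interpolationWeighted}, the complex interpolation space $[L^{p_0}(\R^d, |\cdot|^{\beta_0 p_0}; X), L^{p_0'}(\R^d, |\cdot|^{\beta_1 p_0'}; X)]_\theta$ equals $L^p(\R^d, |\cdot|^{\tilde{\beta} p}; X)$ with
\begin{equation*}
\tilde{\beta} = (1-\theta)\beta_0 + \theta\beta_1 = \gamma + d\Big(1 - \tfrac{1}{p} - \tfrac{1}{q}\Big) = \beta,
\end{equation*}
while the target space, being interpolated with itself, is unchanged. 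Complex interpolation of the two endpoint bounds therefore yields exactly the desired estimate
\begin{equation*}
\F : L^p(\R^d, |\cdot|^{\beta p}; X) \to L^q(\R^d, |\cdot|^{-\gamma q}; X).
\end{equation*}

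\textbf{Main obstacle.} There is no serious difficulty: the entire argument hinges on checking that the condition $q \geq p_0'$ forces the lower endpoint for $\gamma$ in Theorem \ref{thm:Pitt2} to vanish at both $p = p_0$ and $p = p_0'$, so that the full range $\gamma \in [0, d/q)$ from \eqref{thm:Pitt3Assumptions2} is available at each endpoint. Once this observation is in place the proof is a clean two-step invocation of Theorem \ref{thm:Pitt2} followed by weighted $L^p$-interpolation; the density of $\Schw(\R^d;X)$ in each of the weighted spaces involved (see Section \ref{SectionIB}) guarantees that the complex interpolation is applied to genuinely compatible operators.
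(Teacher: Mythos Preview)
Your argument is different from the paper's: the paper proves the result in one stroke by applying Theorem~\ref{thm:Pitt2} to $X^{*}$ with the dual parameters $(q',p',\gamma,\beta)$ (noting that $q'\le p_0$ and $q'<p'$ always hold) and then dualising via Lemma~\ref{lem:duality0}. Your route via complex interpolation between the endpoints $p=p_0$ and $p=p_0'$ is a natural alternative and works cleanly whenever $q>p_0'$.

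There is, however, a genuine gap at the boundary value $q=p_0'$, which is allowed by \eqref{thm:Pitt3Assumptions}. In that case your right endpoint becomes $p=p_0'=q$, and Theorem~\ref{thm:Pitt2} does not apply because it requires the strict inequality $p<q$. For $\gamma>0$ you can patch this by using the diagonal Theorem~\ref{thm:Pitt} at the right endpoint instead. But for $\gamma=0$ the needed endpoint estimate is exactly HL cotype $p_0'$, and Remark~\ref{rem:SharpnessPitt} together with Example~\ref{ex:PaleyCotype} show that this can fail for a space of Fourier type $p_0$ (take $X=\ell^{p_0}(\Z^d)$). So the case $q=p_0'$, $\gamma=0$ is out of reach of your interpolation scheme and requires a different idea. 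The paper's duality argument avoids this obstruction because the source exponent $q'\le p_0$ is always strictly smaller than the target exponent $p'>p_0$, so Theorem~\ref{thm:Pitt2} applies uniformly across the whole range including $q=p_0'$.
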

\begin{proof}
	By assumptions and Proposition \ref{prop:duality} we have that $X^\ast$ has Fourier type $p_0$ and
	\begin{equation*}
	 q' \leq p_0 < p', \quad 0 < d \left(\frac{1}{q'} + \frac{1}{p'} -1\right)\leq \beta < \frac{d}{p'}, \quad \gamma - \beta = d \left(1 - \frac{1}{q'} - \frac{1}{p'}\right).
	 \end{equation*}
	 Hence, applying Theorem \ref{thm:Pitt2} we obtain
		\begin{equation*}
	\|\widehat{f}\|_{L^{p'}(\R^d, |\cdot|^{-\beta p'};X^\ast)} \leq C \|f\|_{L^{q'}(\R^d, |\cdot|^{\gamma q'};X^\ast)}.
\end{equation*}
Dualising with $L^p(\R^d, |\cdot|^{\beta p};X)$ and $L^q(\R^d, |\cdot|^{-\gamma q};X)$ (see Lemma \ref{lem:duality0}), this is seen to be equivalent to the following estimate
\begin{equation*}
	\|\widehat{f}\|_{L^q(\R^d, |\cdot|^{-\gamma q};X)} \leq C \|f\|_{L^p(\R^d, |\cdot|^{\beta p};X)}.
\end{equation*}
\end{proof}

It remains to investigate the Pitt's inequality
	\begin{equation}\label{PittVectorThm2New}
	\|\widehat{f}\|_{L^q(\R^d, |\cdot|^{-\gamma q};X)} \leq C \|f\|_{L^p(\R^d, |\cdot|^{\beta p};X)}
\end{equation}
 for
 \begin{equation*}
  p \in (p_0,p_0') \quad \text{and} \quad p < q < p_0'.
 \end{equation*}
   In such a situation, we will see that $p_0$ plays a key role. In fact, we will show in Theorem \ref{thm:Pitt4} below that \eqref{PittVectorThm2New} holds if $d\left(\frac{1}{p_0} + \frac{1}{q} - 1\right)  < \gamma < \frac{d}{q}$. Furthermore, we will prove in Remark \ref{rem:SharpnessPitt2} below that this result is sharp, i.e., \eqref{PittVectorThm2New} fails to be true if $\gamma \leq d\left(\frac{1}{p_0} + \frac{1}{q} - 1\right)$.

\begin{theorem}\label{thm:Pitt4}
	Let $X$ be of Fourier type $p_0 \in (1, 2]$. Assume that
		\begin{equation*}
	p \in (p_0, p_0'), \quad p < q < p_0',
	\end{equation*}	
	and
	\begin{equation}\label{thm:Pitt4Assumptions2}
	d\left(\frac{1}{p_0} + \frac{1}{q} - 1 \right) < \gamma < \frac{d}{q} , \quad \beta - \gamma = d \Big(1- \frac{1}{p} - \frac{1}{q}\Big).
	\end{equation}	
	 Then, we have
		\begin{equation}\label{aux13}
	\|\widehat{f}\|_{L^q(\R^d, |\cdot|^{-\gamma q};X)} \leq C \|f\|_{L^p(\R^d, |\cdot|^{\beta p};X)}.
\end{equation}
\end{theorem}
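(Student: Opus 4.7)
The plan is to derive the inequality \eqref{aux13} by complex interpolation between two Pitt-type estimates already established earlier in this section: the off-diagonal estimate at $(p_0,q)$ furnished by Theorem \ref{thm:Pitt2}, and the diagonal estimate at $(q,q)$ furnished by Theorem \ref{thm:Pitt}.

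First, I would apply Theorem \ref{thm:Pitt2} with its parameter ``$p$'' taken to be our $p_0$ (which is admissible since $p_0 \in (1, p_0]$) and with the same $q$. Since $q < p_0'$ forces $d(1/p_0 + 1/q - 1) > 0$, the admissible range of $\gamma$ there is $d(1/p_0 + 1/q - 1) \le \gamma < d/q$, which our hypothesis certainly meets. This delivers
\begin{equation*}
\|\widehat{f}\|_{L^q(\R^d, |\cdot|^{-\gamma q};X)} \le C_0 \|f\|_{L^{p_0}(\R^d, |\cdot|^{\beta_0 p_0};X)}, \quad \beta_0 := \gamma + d\Bigl(1 - \tfrac{1}{p_0} - \tfrac{1}{q}\Bigr).
\end{equation*}
Next, Theorem \ref{thm:Pitt} applied to the index $q \in (p_0, p_0')$ (so $\min\{p_0, q\} = p_0$), together with our strict lower bound $\gamma > d(1/p_0 + 1/q - 1)$, gives
\begin{equation*}
\|\widehat{f}\|_{L^q(\R^d, |\cdot|^{-\gamma q};X)} \le C_1 \|f\|_{L^{q}(\R^d, |\cdot|^{\beta_1 q};X)}, \quad \beta_1 := \gamma + d\Bigl(1 - \tfrac{2}{q}\Bigr).
\end{equation*}

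Now I would choose $\theta \in (0,1)$ satisfying $1/p = (1-\theta)/p_0 + \theta/q$, which is possible precisely because $p_0 < p < q$, and apply complex interpolation to these two mappings. On the target side both spaces coincide with $L^q(\R^d, |\cdot|^{-\gamma q}; X)$, so the interpolated target is unchanged. On the domain side, Lemma \ref{lem:interpolationWeighted} yields
\begin{equation*}
[L^{p_0}(\R^d, |\cdot|^{\beta_0 p_0}; X), L^{q}(\R^d, |\cdot|^{\beta_1 q}; X)]_\theta = L^{p}(\R^d, |\cdot|^{\tilde\beta p}; X),
\end{equation*}
where $\tilde\beta = (1-\theta)\beta_0 + \theta \beta_1$. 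A direct computation, using the defining relation for $\theta$, simplifies $\tilde\beta$ to $\gamma + d(1 - 1/p - 1/q) = \beta$, producing exactly \eqref{aux13}.

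The only mildly technical point is the Banach-couple compatibility needed to invoke complex interpolation for the weighted spaces; this is routine since all spaces involved embed continuously into $L^1_{\mathrm{loc}}(\R^d;X)$ and contain the Schwartz class as a common dense subspace on which $\F$ is defined. The \emph{conceptual} heart of the argument is the selection of the two endpoints: the Fourier-type-$p_0$ hypothesis on $X$ can be activated only at the off-diagonal index $p_0$ (endpoint 1) or via the diagonal statement of Theorem \ref{thm:Pitt} at $q$ (endpoint 2), and the strict inequality $\gamma > d(1/p_0 + 1/q - 1)$ is precisely the condition that makes both endpoint estimates simultaneously valid and thus allows us to reach the entire open range $p_0 < p < q < p_0'$.
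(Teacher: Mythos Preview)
Your argument is correct, and it differs from the paper's route in a useful way. The paper fixes the \emph{domain} exponent at $p$ and interpolates the \emph{target} exponent between $p$ and $p_0'$, invoking Theorem~\ref{thm:Pitt} at $(p,p)$ and Theorem~\ref{thm:Pitt3} at $(p,p_0')$; because the weight exponent $\gamma$ then depends on the interpolation parameter, an auxiliary parameter $\eta\in(0,1)$ is introduced and one must check that the resulting $\gamma = (1-\theta)\gamma_0+\theta\gamma_1$ sweeps out the full open interval as $\eta$ varies. You instead fix the \emph{target} space $L^q(\R^d,|\cdot|^{-\gamma q};X)$ with the desired $\gamma$ once and for all, and interpolate the \emph{domain} exponent between $p_0$ and $q$ via Theorems~\ref{thm:Pitt2} and~\ref{thm:Pitt}. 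This is cleaner: no auxiliary parameter, no endpoint asymptotics, and the verification that $\tilde\beta=\beta$ is a one-line computation. The paper's approach has the mild structural advantage that its two endpoints both live at the given $p$, so the domain side needs no exponent interpolation; but your choice of endpoints exploits more directly that the hypothesis $\gamma>d(1/p_0+1/q-1)$ is exactly what makes both your endpoint estimates available simultaneously.
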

\begin{proof}

Let $0< \eta < 1$ be arbitrary and define
	\begin{equation*}
		\gamma_0 = (1 - \eta) d \left(\frac{1}{p_0} +\frac{1}{p} -  1 \right) + \frac{\eta d}{p}  \quad \text{and} \quad \gamma_1 = \frac{\eta d}{p_0'}.
	\end{equation*}
	In particular, we have $0 < d \left(\frac{1}{p_0} +  \frac{1}{p} -1 \right) < \gamma_0 < \frac{d}{p}$ and $0 < \gamma_1 < \frac{d}{p_0'}$. Applying Theorem \ref{thm:Pitt} we get
	\begin{equation}\label{aux8b}
		\F : L^p(\R^d, |\cdot|^{\beta_0 p};X) \to L^p(\R^d, |\cdot|^{-\gamma_0 p};X), \quad \beta_0 - \gamma_0 = d \left(1 - \frac{2}{p}\right) .
	\end{equation}
	On the other hand, invoking Theorem \ref{thm:Pitt3} we derive
		\begin{equation}\label{aux9}
		\F : L^p(\R^d, |\cdot|^{\beta_1 p};X) \to L^{p_0'}(\R^d, |\cdot|^{-\gamma_1 p_0'};X), \quad \beta_1 - \gamma_1 = d \left(1 - \frac{1}{p} - \frac{1}{p_0'}\right) .
	\end{equation}
	Since $q \in (p,p_0')$ there exists $\theta \in (0,1)$ such that $\frac{1}{q} = \frac{1-\theta}{p} + \frac{\theta}{p_0'}$. Then, applying the interpolation property of the complex interpolation method to \eqref{aux8b} and \eqref{aux9} and Lemma \ref{lem:interpolationWeighted}, one gets that
$\F$ maps
\begin{equation}\label{aux10}
[L^p(\R^d, |\cdot|^{\beta_0 p};X) , L^p(\R^d, |\cdot|^{\beta_1 p};X)]_\theta  = L^p(\R^d, |\cdot|^{\beta p};X)
\end{equation}
into the space
\begin{equation}\label{aux12}
[ L^p(\R^d, |\cdot|^{-\gamma_0 p};X) , L^{p_0'}(\R^d, |\cdot|^{-\gamma_1 p_0'};X)]_\theta = L^q(\R^d, |\cdot|^{-\gamma q};X),
	\end{equation}
where $\beta = (1-\theta)\beta_0+ \theta \beta_1$ and $\gamma= (1-\theta)\gamma_0 + \theta \gamma_1$. Then the second part of \eqref{thm:Pitt4Assumptions2} holds. Moreover, we have that $\gamma = (1-\theta)\gamma_0 + \theta \gamma_1$ satisfies
\begin{align*}
\gamma & \nearrow (1-\theta) \frac{d}{p} +   \theta \frac{d}{p_0'} = \frac{d}{q} & \text{as $\eta \nearrow1$},
\\ \gamma & \searrow (1-\theta) d \left(\frac{1}{p_0} +  \frac{1}{p} - 1\right) = d\left(\frac{1}{p_0} + \frac{1}{q} - 1\right)  & \text{as $\eta \searrow0$}.
\end{align*}
This completes the proof of the result for $\gamma$ near the endpoints as we can take $\eta$ close to one and zero respectively.
\end{proof}

Keeping notation from Theorem \ref{thm:Pitt4}, we now construct a counterexample showing that \eqref{aux13} is far from being true if $p_0 < p < q < p_0'$, and $\gamma \leq d\left(\frac{1}{p_0} + \frac{1}{q} - 1 \right)$.

\begin{remark}\label{rem:SharpnessPitt2}
	In general, Theorem \ref{thm:Pitt4} is not true if $\gamma \leq d\left(\frac{1}{p_0} + \frac{1}{q} - 1 \right)$. Let $p \in (p_0, p_0')$ and $p \leq q < p_0'$. Suppose furthermore that $\gamma \leq d\left(\frac{1}{p_0} + \frac{1}{q} - 1 \right)$ and $ \beta - \gamma = d \Big(1- \frac{1}{p} - \frac{1}{q}\Big)$. Then, we claim that
		\begin{equation}\label{rem:SharpnessPitt2.1}
		 \F : L^p(\R^d, |\cdot|^{\beta p};X) \to L^q(\R^d, |\cdot|^{-\gamma q};X) \quad \text{does not hold true}
		\end{equation}
		for all $X$ having Fourier type $p_0$. Indeed, put $X = \ell{^{p_0'}}(\Z^d)$ and define $f:\T^d\to X$ by
	\begin{equation}\label{aux14}
		f(t) = (a_n e^{2 \pi i n\cdot t}), \quad t \in \T^d,
	\end{equation}
	where $(a_n)_{n \in \Z^d}$ is a scalar sequence to be specified below. Firstly, if $\gamma < d\left(\frac{1}{p_0} + \frac{1}{q} - 1 \right)$ then we set
	\begin{equation*}
	 a_n = |n|^{-\varepsilon} \one_{n\neq 0}
	 \end{equation*}
	 where $d \left(1 - \frac{1}{p_0}\right) < \varepsilon < -\gamma + \frac{d}{q}$.
	  We have
	\begin{equation*}
		\|f\|_{L^p(\T^d, |\cdot|^{\beta p}; \ell{^{p_0'}}(\Z^d))} \asymp \left(\sum_{n\neq 0} |n|^{-\varepsilon p_0'}\right)^{1/p_0'} < \infty.
	\end{equation*}
	On the other hand,
	\begin{equation*}
		\|(\widehat{f}(n))\|_{\ell^q(\Z^d, (|n|+1)^{-\gamma q}; \ell{^{p_0'}}(\Z^d))} \eqsim \left(\sum_{n\neq 0} |n|^{-(\gamma +  \varepsilon) q} \right)^{1/q} = \infty.
	\end{equation*}
	Suppose now that $\gamma = d\left(\frac{1}{p_0} + \frac{1}{q} - 1 \right)$. A counterexample is given by the function $f$ (see \eqref{aux14}) with
	\begin{equation*}
	 a_n = |n|^{-d (1 - 1/p_0)} (1 + \log |n|)^{-\eta} \one_{n\neq 0}, \quad 1/p_0' < \eta < 1/q,
	 \end{equation*}
	 because
	 \begin{equation*}
		\|f\|_{L^p(\T^d, |\cdot|^{\beta p}; \ell{^{p_0'}}(\Z^d))} \asymp \left(\sum_{n\neq 0}(1 + \log |n|)^{-\eta p_0'} \frac{1}{|n|^d}\right)^{1/p_0'} < \infty
	\end{equation*}
	and
	\begin{equation*}
		\|(\widehat{f}(n))\|_{\ell^q(\Z^d, (|n|+1)^{-\gamma q}; \ell{^{p_0'}}(\Z^d))} = \left(\sum_{n\neq 0} (1 + \log |n|)^{-\eta q} \frac{1}{|n|^d} \right)^{1/q} = \infty.
	\end{equation*}
	According to Proposition \ref{prop:WeightedTrans}, we have shown the desired assertion \eqref{rem:SharpnessPitt2.1}.
\end{remark}

Using Proposition \ref{prop:WeightedTrans}, we get the periodic counterpart of Pitt's inequality obtained in Theorem \ref{thm:Pitt2intro}.

\begin{corollary}\label{cor:Pitt2Periodic}
Let $X$ be of Fourier type $p_0 \in (1, 2]$. Let $1 < p \leq q < \infty$ and $\beta, \gamma \geq 0$. Assume that
\begin{equation*}
\max\left\{0, d \left(\tfrac{1}{\min\{p, p_0\}} + \tfrac{1}{q} - 1\right) \right\} < \gamma < \tfrac{d}{q} \quad \text{and} \quad \beta - \gamma = d \Big(1- \tfrac{1}{p} - \tfrac{1}{q}\Big).
\end{equation*}
	 Then, we have
		\begin{equation}\label{cor:Pitt2Periodic*}
	\Big\|t\mapsto \sum_{n \in \Z^d} e^{2\pi i t \cdot n} x_n \Big\|_{L^q(\T^d, |\cdot|^{-\gamma q};X)} \leq C \|(x_n)\|_{\ell^p(\Z^d, (|n| + 1)^{\beta p};X)}
\end{equation}
and
	\begin{equation}\label{cor:Pitt2Periodic**}
	\|(\widehat{f}(n))\|_{\ell^q(\Z^d, (|n| + 1)^{-\gamma q};X)} \leq C \|f\|_{L^p(\T^d, |\cdot|^{\beta p};X)}.
\end{equation}
Furthermore, if $\gamma = \max\left\{0, d \left(\tfrac{1}{\min\{p, p_0\}} + \tfrac{1}{q} - 1\right) \right\}$ then \eqref{cor:Pitt2Periodic*} and \eqref{cor:Pitt2Periodic**} hold true if one of the following conditions is satisfied:
\begin{enumerate}[\upshape(i)]
\item \label{thm:Pitt2Periodic3} $p = q \in (1, p_0) \cup (p_0',\infty), \quad p_0 \neq 2$;
\item \label{thm:Pitt2Periodic3*} $p=q \in (1, \infty), \quad p_0=2$;
\item \label{thm:Pitt2Periodic4} $p < q, \quad p \in (1, p_0] \cup [p_0',\infty)$;
\item \label{thm:Pitt2Periodic5}$p < q, \quad p \in (p_0, p_0'), \quad p_0' \leq q$.
\end{enumerate}
If none of the conditions \eqref{thm:Pitt2Periodic3}-\eqref{thm:Pitt2Periodic5} holds, then \eqref{cor:Pitt2Periodic*} and \eqref{cor:Pitt2Periodic**} fail to be true.
\end{corollary}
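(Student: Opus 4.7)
The plan is to deduce the periodic/discrete version as a direct consequence of the Euclidean result Theorem \ref{thm:Pitt2intro} combined with the transference principle Proposition \ref{prop:WeightedTrans}. For the positive direction, observe that Theorem \ref{thm:Pitt2intro} gives
\begin{equation*}
\F : L^p(\R^d, |\cdot|^{\beta p}; X) \to L^q(\R^d, |\cdot|^{-\gamma q}; X)
\end{equation*}
precisely under the condition
\[
\max\left\{0, d \left(\tfrac{1}{\min\{p, p_0\}} + \tfrac{1}{q} - 1\right) \right\} < \gamma < \tfrac{d}{q}, \qquad \beta - \gamma = d \Big(1- \tfrac{1}{p} - \tfrac{1}{q}\Big),
\]
as well as under the various endpoint configurations \eqref{thm:Pitt2Periodic3}--\eqref{thm:Pitt2Periodic5}. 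Since Proposition \ref{prop:WeightedTrans} asserts the equivalence of boundedness of $\F$ between the three settings $(\R^d,\R^d)$, $(\ell^p_w(\Z^d), L^q_w(\T^d))$ and $(L^p_w(\T^d), \ell^q_w(\Z^d))$ whenever $\beta-\gamma = d(1-1/p-1/q)$, both \eqref{cor:Pitt2Periodic*} and \eqref{cor:Pitt2Periodic**} follow immediately.

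For the sharpness (failure) assertions, I would argue by contrapositive via the same transference equivalence: if \eqref{cor:Pitt2Periodic*} or \eqref{cor:Pitt2Periodic**} held at a forbidden endpoint, then so would the $\R^d$-version, contradicting the sharpness of Theorem \ref{thm:Pitt2intro} which the paper establishes through explicit $\ell^{p_0'}$-valued counterexamples. Concretely, the counterexamples from Remark \ref{rem:SharpnessPitt} and Remark \ref{rem:SharpnessPitt2} are already given in the periodic $\T^d$ form, namely $f(t) = (a_n e^{2\pi i n\cdot t})_{n\in \Z^d}$ taking values in $X = \ell^{p_0'}(\Z^d)$ with appropriate choices
\begin{equation*}
a_n = |n|^{-d/p_0'} (1+\log|n|)^{-\varepsilon}\one_{n\neq 0} \quad \text{or} \quad a_n = |n|^{-\varepsilon}\one_{n\neq 0}
\end{equation*}
depending on the excluded regime of $(p,q,\gamma)$. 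These functions already witness failure of \eqref{cor:Pitt2Periodic*}/\eqref{cor:Pitt2Periodic**} directly, without needing to pass back through the transference step.

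There is no real obstacle: the structure of the argument is entirely mechanical once Theorem \ref{thm:Pitt2intro} and Proposition \ref{prop:WeightedTrans} are in place. The only minor bookkeeping point is to match the four endpoint cases \eqref{thm:Pitt2Periodic3}--\eqref{thm:Pitt2Periodic5} with the parallel cases \eqref{thm:Pitt2intro3}--\eqref{thm:Pitt2intro5} of Theorem \ref{thm:Pitt2intro}, and to verify for the failure assertion that in each remaining sub-regime the relevant counterexample from Remarks \ref{rem:SharpnessPitt} and \ref{rem:SharpnessPitt2} applies; these are organized exactly along the cases $p=q$ inside $(p_0,p_0')$, and $p<q$ with $p\in(p_0,p_0')$ and $q<p_0'$, which together exhaust the complement of \eqref{thm:Pitt2Periodic3}--\eqref{thm:Pitt2Periodic5}.
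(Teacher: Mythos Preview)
Your proposal is correct and matches the paper's approach exactly: the paper states the corollary with only the one-line justification ``Using Proposition \ref{prop:WeightedTrans}, we get the periodic counterpart of Pitt's inequality obtained in Theorem \ref{thm:Pitt2intro}.'' Your observation that the counterexamples in Remarks \ref{rem:SharpnessPitt} and \ref{rem:SharpnessPitt2} are already written in the periodic setting (and hence directly establish the failure assertion without any additional transference) is precisely how the paper organizes the sharpness argument.
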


\begin{remark}
In \cite{Stein56} classical Pitt's inequality has been extended to other orthonormal systems. It would be interesting to prove a version of Corollary \ref{cor:Pitt2Periodic} in this generality as well. Note however that in the vector-valued situation it seems unclear how this is connected to the Fourier type of the space $X$ (see \cite[Section 8.3]{GaKaKoTo98} and \cite[Section 6.5]{Pietsch-Wenzel98}).
\end{remark}

\section{Connections to Rademacher type and cotype}\label{sec:Radtype}

Let $(\varepsilon_{n})_{n\in \N}$ be a sequence of independent random variables on a probability space $(\Omega,\mathbb{P})$ such that each $\varepsilon_n$ is uniformly distributed on $\{z\in\C\mid \abs{z}=1\}$. Such random variables are called \emph{(complex) Rademacher or Steinhaus random variables}. Equivalently, one can use the usual real Rademacher random variables below (see \cite{HyNeVeWe2} for details).

Let $(\varepsilon_{n})_{n\in\N}$ be a Rademacher sequence on a probability space $(\Omega,\mathbb{P})$. For $p\in[1,2]$, $X$ is said to have \emph{type} $p$ if there exists a constant $C\geq 0$ such that for all finite choices $x_{1},\ldots, x_{m}\in X$,
\begin{align}\label{eq:type}
\Big(\mathbb{E}\Big\|\sum_{n=1}^{m} \varepsilon_{n}x_{n}\Big\|_X^{2}\Big)^{1/2}\leq C\Big(\sum_{n=1}^{m}\|x_{n}\|_X^{p}\Big)^{1/p}.
\end{align}
For $q\in[2,\infty]$, $X$ is said to have \emph{cotype} $q$ if there exists a constant $C\geq 0$ such that for all finite choices $x_{1},\ldots, x_{m}\in X$,
\begin{align}\label{eq:cotype}
\Big(\sum_{n=1}^{m}\|x_{n}\|_X^{q}\Big)^{1/q}\leq C\Big(\mathbb{E}\Big\|\sum_{n=1}^{m}
\varepsilon_{n}x_{n}\Big\|_X^{2}\Big)^{1/2},
\end{align}
with the obvious modification for $q=\infty$. By the Kahane-Khintchine inequalities (see, e.g., \cite[Theorem 11.1]{DiJaTo95} and \cite[Theorem 6.2.4]{HyNeVeWe2}) the $L^2(\Omega;X)$ norm (with the exception of $q=\infty$) in \eqref{eq:type} and \eqref{eq:cotype} can be replaced by any $L^r(\Omega;X)$-norm with $r\in [1, \infty)$.

We further say that $X$ has \emph{nontrivial type} if $X$ has type $p$ for some $p\in(1,2]$, and \emph{finite cotype} if $X$ has cotype $q$ for some $q\in[2,\infty)$.

Each Banach space $X$ has type $p=1$ and cotype $q=\infty$. If $X$ has type $p$ and cotype $q$ then $X$ has type $r$ for all $r\in[1,p]$ and cotype $s$ for all $s\in[q,\infty]$. A Banach space $X$ is isomorphic to a Hilbert space if and only if $X$ has type $p=2$ and cotype $q=2$, by Kwapie\'n's theorem (see \cite[Theorem 7.4.1]{Albiac-Kalton06}).

Let $X$ be a Banach space, $r\in[1,\infty)$ and let $(S, \Sigma, \mu)$ be a measure space. If $X$ has type $p\in(1,2]$ and cotype $q\in[2,\infty)$ then $\Ellr(S;X)$ has type $\min\{p,r\}$ and cotype $\max\{q,r\}$ (see \cite[Theorem 11.12]{DiJaTo95}).

By a result of Bourgain \cite{Bourg88},
\begin{equation}\label{B}
 X \ \text{has nontrivial type if and only if} \ X \ \text{has nontrivial Fourier type}
 \end{equation}
(see also \cite[Section 5.6.30]{Pietsch-Wenzel98}).

Further details on type and cotype can be found in \cite{Albiac-Kalton06}, \cite{DiJaTo95}, \cite{HyNeVeWe2}, \cite[Section 9.2]{Lindenstrauss-Tzafriri79} and \cite{Pietsch-Wenzel98}.

\subsection{Connections with Hardy--Littlewood type}

A Banach space with Fourier type $p\in[1,2]$ has type $p$ and cotype $p'$ (see \cite[Proposition 7.3.6]{HyNeVeWe2}). We show in Theorem \ref{thm:HLimpliestypecotype} below that for HL type and cotype a variant of this result holds. At first sight it might be surprising that unlike in the Fourier (co)type case one has an improvement on the type or cotype.
\begin{theorem}\label{thm:HLimpliestypecotype}
Let $p_0\in (1, 2)$ and $q_0\in (2, \infty)$. Then the following assertions hold:
\begin{enumerate}[\upshape(i)]
\item\label{it:HLimpliestypecotype1} If $X$ has HL cotype $q_0$, then $X$ has cotype $q_0$ and type $p$ for some $p\in (q_0', 2]$.
\item\label{it:HLimpliestypecotype2} If $X$ has HL type $p_0$, then $X$ has type $p_0$ and cotype $q$ for some $q\in [2, p_0')$.
\end{enumerate}
\end{theorem}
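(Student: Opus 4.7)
Parts (i) and (ii) are related by Proposition \ref{prop:duality}, but since the Rademacher type--cotype duality requires $K$-convexity, neither part is a formal consequence of the other. The plan is to prove both, splitting each into a \emph{direct step} (the weaker conclusion in the pair) and a \emph{strict improvement step}.

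\emph{Step 1 (cotype $q_0$ for (i); type $p_0$ for (ii)).} For part (i), the transference Corollary \ref{cor:HLCoTrans} combined with the fact that the weight $|t|^{q_0-2}$ is bounded on $\T$ (since $q_0>2$) gives $\F\colon L^{q_0}(\T;X)\to \ell^{q_0}(\Z;X)$. Testing against the lacunary polynomial $f(t)=\sum_n x_n e^{2\pi i 2^n t}$, whose only nonzero Fourier coefficients are $\widehat f(2^n)=x_n$, yields $(\sum_n \|x_n\|_X^{q_0})^{1/q_0} \leq C\|f\|_{L^{q_0}(\T;X)}$. By Pisier's classical Sidon comparison for the lacunary sequence $(2^n)$, valid in every Banach space, $\|f\|_{L^{q_0}(\T;X)} \eqsim \big(\E\|\sum_n \varepsilon_n x_n\|_X^{q_0}\big)^{1/q_0}$ with independent Steinhaus variables $\varepsilon_n$, establishing cotype $q_0$. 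For part (ii), the dual transference Corollary \ref{cor:HLTrans} has a target weight $(1+|n|)^{-d(2-p_0)}$ that \emph{decays} in $|n|$, so the one-dimensional lacunary test fails. I instead pass to $\T^N$ (using Remark \ref{RemarkDimensionTrans} to lift HL type to higher dimensions) and test with $f(t_1,\ldots,t_N)=\sum_n x_n e^{2\pi i t_n}$, whose Fourier spectrum lies at the unit basis vectors $e_n\in\Z^N$ (with $|e_n|=1$); by Fubini, $\|f\|_{L^{p_0}(\T^N;X)} = \big(\E\|\sum_n \varepsilon_n x_n\|_X^{p_0}\big)^{1/p_0}$. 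Absorbing the resulting dimensional factor $2^{N(2-p_0)/p_0}$ into the HL type constant on $\T^N$, via the $L^{p_0}$-inheritance Proposition \ref{prop:LpXPaley} and a tensor-product argument, produces type $p_0$ with a dimension-free constant.

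\emph{Step 2 (strict improvement).} For part (i), Proposition \ref{prop:HLprop}(ii) gives $X$ HL cotype $r$ for every $r\ge q_0$, so by Proposition \ref{prop:duality}(iii), $X^*$ has HL type $r'$ for every $r'\in(1,q_0']$. Applying the type direction of Step 1 to $X^*$ delivers nontrivial Rademacher type on $X^*$. By Bourgain's theorem (recorded in \eqref{B}), $X^*$ has nontrivial Fourier type and is thus $K$-convex; since $K$-convexity is self-dual, so is $X$. Combining $K$-convexity of $X$ with the cotype $q_0$ from Step 1, Pisier's $K$-convexity theorem together with the Maurey--Pisier characterization of type on $K$-convex spaces forces $X$ to have Rademacher type $p$ for some $p\in(q_0',2]$. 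The strict improvement in part (ii) follows by a mirror argument using Proposition \ref{prop:HLprop}(i) and Proposition \ref{prop:duality}(iii) in the opposite direction, bootstrapping via the cotype case of Step 1 on $X^*$.

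\emph{Main obstacle.} The principal technical difficulty is the type direction of Step 1 (the direct step of part (ii) and, via duality on $X^*$, the engine of the strict improvement in part (i)). Unlike the cotype case, the transferred HL type estimate on $\T$ has a decaying target weight that cannot be absorbed, so the simple one-dimensional lacunary Sidon test is unavailable. The multidimensional $\T^N$ remedy works only if the dimensional factor $2^{N(2-p_0)/p_0}$ can be cancelled by the HL type constant on $\T^N$; controlling this is the heart of the matter and hinges on the $L^{p_0}$-inheritance Proposition \ref{prop:LpXPaley} together with a careful tensor-product bookkeeping of constants. Once this dimensional absorption is achieved, the remaining $K$-convexity arguments and invocation of Pisier's theorem proceed along standard lines.
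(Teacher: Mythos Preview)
There are two genuine gaps.

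\textbf{Step 1, type $p_0$ for (ii).} The obstacle you describe is a phantom created by choosing the wrong transference form. Statement~(3) of Corollary~\ref{cor:HLTrans} maps $L^{p_0}(\T;X)\to\ell^{p_0}(\Z,(|n|+1)^{-(2-p_0)};X)$, but that inequality runs in the cotype direction and is useless for type. Use statement~(2) instead, $\F:\ell^{p_0}(\Z;X)\to L^{p_0}(\T,|\cdot|^{-(2-p_0)};X)$: now the weight sits on $\T$ and is bounded \emph{below} (since $|t|\le 1/2$ and $p_0<2$), so $\|g\|_{L^{p_0}(\T;X)}\lesssim\|g\|_{L^{p_0}(\T,|\cdot|^{-(2-p_0)};X)}\le C\|(y_n)\|_{\ell^{p_0}(\Z;X)}$. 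Applied to $g(t)=\sum_{n}\varepsilon_n x_n e^{2\pi i nt}$ and averaged over $\Omega$, this gives type $p_0$ immediately---and is exactly symmetric to your own cotype argument. This is what the paper does. Your $\T^N$ detour is unnecessary, and in any case unjustified: Remark~\ref{RemarkDimensionTrans} explicitly leaves open whether the HL type constant is dimension-independent, and Proposition~\ref{prop:LpXPaley} says nothing about dimensional constants.

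\textbf{Step 2, the strict improvement.} The abstract implication you invoke, ``$K$-convexity together with cotype $q_0$ forces type $p>q_0'$'', is \emph{false}: take $X=\ell^{q_0'}$, which is $K$-convex, has cotype $2$ (hence cotype $q_0$), yet has type exactly $q_0'$ and no better. So your bootstrap cannot close, no matter which version of Maurey--Pisier you appeal to. The paper's argument is different in kind: assuming (via Lemma~\ref{lem:Pisiertype}) that $X$ contains $\ell^{q_0'}_{(2N)^d}$'s $\lambda$-uniformly, it builds for each $N$ an explicit test function $f:\T^d\to\ell^{q_0'}_{(2N)^d}$ of the form $f(t)=\big((|n|+1)^{-d/q_0'}(\log(|n|+1))^{-\alpha/q_0'}e^{2\pi i n\cdot t}\big)_{1\le|n|\le N}$ with $\alpha\in(\tfrac{1}{q_0-1},1)$, and feeds it directly into the HL cotype inequality. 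The $L^{q_0}$-side grows like $(\log N)^{(1-\alpha)/q_0'}$ while the weighted $\ell^{q_0}$-side remains bounded, a contradiction. This exploits the \emph{full} HL cotype estimate, in particular the weight $(|n|+1)^{d(q_0-2)}$ on $\Z^d$; your route discards exactly the information that distinguishes HL cotype $q_0$ from mere cotype $q_0$ and thereby rules out $\ell^{q_0'}$.
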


\begin{remark}
	From the fact that $X$ has HL cotype $q_0$ it does not follow that $X$ has cotype less than $q_0$. This is illustrated by the following example: Let $X = \ell^{p}(\Z^d), \,  2 < p < \infty$. We have that $X$ has HL cotype $p$ (see Example \ref{ex:PaleyCotype2}) and cotype exactly $p$ (see \cite[Proposition 5.10]{GaKaKoTo98}). The same observation applies to type in the assertion \eqref{it:HLimpliestypecotype2} (as an example, we can take $X= \ell^p(\Z^d),\, 1 < p < 2$. See Example \ref{ex:Paley2}).
\end{remark}

For the proof of Theorem \ref{thm:HLimpliestypecotype} we use a result of Pisier. For $p\in [1, \infty]$ and $\lambda\geq 1$, we say that $X$ {\em contains $\ell^p_N$'s $\lambda$-uniformly} if for every integer $N\geq 1$ there exists a bounded linear mapping $T:\ell^p_N\to X$ such that
\[\lambda^{-1} \|a\|_{\ell^p_N}\leq \|Ta\|_X\leq \lambda\|a\|_{\ell^p_N}, \ \ a\in \ell^p_N.\]
The next result follows from \cite[Proposition 4.4, Theorem 4.5 and Corollary 4.7]{Pis1206}:
\begin{lemma}\label{lem:Pisiertype}
Let $p_0\in [1, 2)$ and $\lambda\geq 1$. Then the following assertions are equivalent:
\begin{enumerate}
\item $X$ does not contain $\ell^{p_0}_N$'s $\lambda$-uniformly.
\item There exists $p\in (p_0,2]$ such that $X$ has type $p$.
\end{enumerate}
\end{lemma}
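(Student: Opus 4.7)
The plan is to deduce this lemma from the deeper structural theorem of Maurey--Pisier which identifies the critical type index of a Banach space with the indices of the $\ell^p$ spaces finitely representable in $X$. Recall that $X$ is said to be finitely representable in the $\ell^{p_0}_N$ sense precisely when it contains $\ell^{p_0}_N$'s $\lambda$-uniformly for some $\lambda \geq 1$. The Maurey--Pisier theorem asserts that if $p(X) := \sup\{p \in [1,2] : X \text{ has type } p\}$, then $\ell^{p(X)}$ is finitely representable in $X$, i.e.\ $X$ contains $\ell^{p(X)}_N$'s $(1+\varepsilon)$-uniformly for every $\varepsilon > 0$. This is the heart of the matter; once it is in hand, both implications reduce to bookkeeping about the index $p(X)$.

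For the easy direction \textup{(2)}$\Rightarrow$\textup{(1)}, I would argue by contrapositive. Suppose $X$ contains $\ell^{p_0}_N$'s $\lambda$-uniformly for some fixed $\lambda \geq 1$, and let $T_N : \ell^{p_0}_N \to X$ be the corresponding embeddings. If $X$ had type $p$ for some $p \in (p_0,2]$ with constant $C$, then for any $a_1,\dots,a_N \in \C$, applying the type $p$ inequality to $x_k = T_N(e_k)$ would give
\begin{equation*}
\Big(\E\Big\|\sum_{k=1}^N \varepsilon_k a_k\Big\|_{\ell^{p_0}_N}^2\Big)^{1/2} \leq \lambda \Big(\E\Big\|\sum_{k=1}^N \varepsilon_k T_N(a_k e_k)\Big\|_X^2\Big)^{1/2} \leq C \lambda \Big(\sum_{k=1}^N |a_k|^p \|T_N(e_k)\|_X^p\Big)^{1/p},
\end{equation*}
which after bounding $\|T_N(e_k)\|_X \leq \lambda$ and computing the left-hand side as $\asymp N^{1/p_0}$ (taking $a_k \equiv 1$) forces $N^{1/p_0} \lesssim_{C,\lambda} N^{1/p}$, contradicting $p > p_0$ as $N \to \infty$.

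For the hard direction \textup{(1)}$\Rightarrow$\textup{(2)}, I would argue the contrapositive again: if no $p \in (p_0,2]$ works, then $p(X) \leq p_0$. By the Maurey--Pisier theorem quoted above, for every $\varepsilon>0$, $X$ contains $\ell^{p(X)}_N$'s $(1+\varepsilon)$-uniformly, and since $p(X) \leq p_0$ a short interpolation/inclusion argument on finite-dimensional $\ell^p$ spaces shows that this forces $X$ to contain $\ell^{p_0}_N$'s $\lambda$-uniformly for some (possibly larger) $\lambda \geq 1$ that depends on $p_0$ and $p(X)$ alone. Collecting the contrapositives yields the equivalence of \textup{(1)} and \textup{(2)}.

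The main obstacle is, of course, that the Maurey--Pisier theorem is itself a deep result whose proof uses ultrapowers and the geometric theory of stable type; reproving it here is out of scope, so the honest strategy is to quote \cite[Proposition 4.4, Theorem 4.5 and Corollary 4.7]{Pis1206} and simply verify that the formulation given in the lemma matches after the elementary reductions above. A secondary subtlety is the dependence on the constant $\lambda$: the Maurey--Pisier statement delivers $(1+\varepsilon)$-representability, while the lemma is stated for a generic $\lambda \geq 1$, so a minor observation is needed that allowing a larger $\lambda$ does not change the validity of condition \textup{(1)} (equivalently, one only needs to know that $\ell^{p_0}_N$ is \emph{not} finitely representable in $X$, independently of the distortion).
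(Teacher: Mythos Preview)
The paper does not prove this lemma at all: it simply records it as a consequence of \cite[Proposition 4.4, Theorem 4.5 and Corollary 4.7]{Pis1206}. Since you end up citing exactly the same reference, your proposal is consistent with the paper's treatment; the extra sketch you provide is supplementary rather than an alternative route.

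That said, your sketch of the hard direction has a genuine gap. You reduce \textup{(1)}$\Rightarrow$\textup{(2)} to Maurey--Pisier for the critical index $p(X)$, and then assert that if $p(X)\leq p_0$, a ``short interpolation/inclusion argument on finite-dimensional $\ell^p$ spaces'' upgrades uniform copies of $\ell^{p(X)}_N$ to uniform copies of $\ell^{p_0}_N$ with a constant depending only on $p_0$ and $p(X)$. The naive inclusion $\ell^{p_0}_N\hookrightarrow\ell^{p(X)}_N$ has isomorphism constant $N^{1/p(X)-1/p_0}$, which blows up with $N$, so this cannot work as stated. The correct bridge is that for $1\le r<s\le 2$ the space $\ell^s$ embeds isometrically into $L^r$ via $s$-stable random variables, whence $\ell^{p_0}_N$ embeds $(1+\varepsilon)$-isomorphically into $\ell^{p(X)}_M$ for $M$ large; transitivity of finite representability then gives $\ell^{p_0}_N$'s $\lambda$-uniformly in $X$. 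Alternatively, and more in the spirit of the cited Pisier results, one bypasses $p(X)$ entirely: Pisier proves directly that failure of type $>p_0$ forces $\ell^{p_0}_N$'s $(1+\varepsilon)$-uniformly in $X$, which is the contrapositive of \textup{(1)}$\Rightarrow$\textup{(2)} without any intermediate index.

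A minor point: in your displayed inequality for \textup{(2)}$\Rightarrow$\textup{(1)}, the sum $\sum_k \varepsilon_k a_k$ should read $\sum_k \varepsilon_k a_k e_k$ (with $e_k$ the unit vectors of $\ell^{p_0}_N$); otherwise the norm is scalar-valued and the estimate is meaningless.
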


\begin{proof}[Proof of Theorem \ref{thm:HLimpliestypecotype}]
\eqref{it:HLimpliestypecotype1}: Let us first prove that $X$ has cotype $q_0$. By Corollary \ref{cor:HLCoTrans} applied with $f(t) = \sum_{n=1}^N \varepsilon_n x_n e^{2\pi i t \cdot n}$  we obtain

\begin{align*}
\Big(\sum_{n=1}^N \|x_n\|_X^{q_0}\Big)^{1/q_0} & \leq C \|f\|_{L^{q_0}(\T^d, |\cdot|^{d(q_0-2)};X)}
\\ & \leq C \|f\|_{L^{q_0}(\T^d;X)} =  C \Big\|t\mapsto \sum_{n=1}^N \varepsilon_n x_n e^{2\pi i t \cdot n}\Big\|_{L^{q_0}(\T^d;X)}.
\end{align*}
Taking $L^{q_0}(\Omega)$-norms on both sides and using that for fixed $t\in \T^d$, $(\varepsilon_n e^{2\pi i t \cdot n})_{n=1}^N$ is identically distributed with $(\varepsilon_n)_{n=1}^N$ we find
\[\Big(\sum_{n=1}^N \|x_n\|_X^{q_0}\Big)^{1/q_0} \leq C \Big\|\sum_{n=1}^N \varepsilon_n x_n\Big\|_{L^{q_0}(\Omega;X)}. \]

Next we show that $X$ has type $p$ for some $p\in (q_0',2]$. Assume this is not the case. Then by Lemma \ref{lem:Pisiertype} we know that there exists a $\lambda>1$ such that $X$ contains $\ell^{q_0'}_{(2 N)^d}$'s $\lambda$-uniformly. Fix $N\geq 1$ and let $T:\ell^{q_0'}_{(2 N)^d}\to X$ be such that,
\[\|a\|_{\ell^{q_0'}_{(2 N)^d}}\leq \|Ta\|_X\leq \lambda\|a\|_{\ell^{q_0'}_{(2 N)^d}}, \ \ a\in \ell^{q_0'}_{(2 N)^d}.\]
Let $\alpha\in (\tfrac{1}{q_0-1}, 1)$  and define $f:\T^d\to \ell^{q_0'}_{(2 N)^d}$ by
\[f(t) = ((|n|+1)^{-d/q_0'} (\log(|n|+1))^{-\alpha/q_0'} e^{2\pi i t \cdot n})_{1\leq |n|\leq N},\]
where $|n| = \max\{|n_j|: j\in \{1, \ldots, d\}\}$. Since $\wh{T f}(n) = T (\wh{f}(n))$, one can apply Corollary \ref{cor:HLCoTrans} to get
\begin{align*}
\|f\|_{L^{q_0}(\T^d;\ell^{q_0'}_{(2 N)^d})}^{q_{0}} & \leq \|T f\|_{L^{q_0}(\T^d;X)}^{q_0}
\leq C^{q_0} \sum_{1\leq |n|\leq N} \|\wh{T f}(n)\|_X^{q_0} (|n|+1)^{d(q_0-2)}
\\ & \leq C^{q_0} \lambda^{q_0} \sum_{1\leq |n|\leq N} \|\wh{f}(n)\|^{q_0}_{\ell^{q_0'}_{(2 N)^d}} (|n|+1)^{d(q_0-2)}.
\end{align*}
Elementary calculus shows that
\begin{align*}
\|f\|_{L^{q_0}(\T^d;\ell^{q_0'}_{(2 N)^d})}^{q_0} &=\Big(\sum_{1\leq |n|\leq N} \frac{1}{(|n|+1)^d (\log(|n|+1))^{\alpha}}\Big)^{q_0/q_0'}
\\ & \geq \Big(\int_{B_N\setminus B_1} \frac{1}{(|x|+1)^d(\log(|x|+1))^{\alpha}} \, \ud x\Big)^{q_0/q_0'}
\\ & \eqsim_{\alpha, q_0, d}  \log(N+1)^{\frac{(1-\alpha)q_0}{q_0'}},
\end{align*}
where we used polar coordinates to calculate the integral.

On the other hand, since $\|\wh{f}(n)\|^{q_0}_{\ell^{q_0'}_{(2 N)^d}} (|n|+1)^{d(q_0-2)} = \frac{1}{(|n|+1)^{d} (\log(|n|+1))^{\alpha(q_0-1)}}$, we have
\begin{align*}
\sum_{1\leq|n|\leq N} \|\wh{f}(n)\|^{q_0}_{\ell^{q_0'}_{(2 N)^d}} (|n|+1)^{d(q_0-2)} & =  \sum_{1\leq|n|\leq N} \frac{1}{(|n|+1)^{d} (\log(|n|+1))^{\alpha (q_0-1)}}
\\ & \leq \int_{\R^d} \frac{1}{(|x|+1)^{d} (\log(|x|+1))^{\alpha (q_0-1)}} \, \ud x
<\infty.
\end{align*}
Therefore, $\log(N+1)^{\frac{(1-\alpha)q_0}{q_0'}} \lesssim_{\alpha, q_0, d}  C^{q_0} \lambda^{q_0}$. Choosing $N$ large enough, we obtain a contradiction and this finishes the proof.

\eqref{it:HLimpliestypecotype2}: By Proposition \ref{prop:duality} $X^*$ has HL cotype $p_0'$. Therefore, by \eqref{it:HLimpliestypecotype1} $X^*$ has type $p$ for some $p\in (p_0, 2]$. Now a further duality argument (see \cite[Proposition 7.1.13]{HyNeVeWe2}) yields that $X^{**}$ and hence $X$ has cotype $p'$. To prove that $X$ has type $p_0$ one can use a duality argument which is based on Pisier's K-convexity theorem (see \cite[Proposition 7.4.10 and Theorem 7.4.23]{HyNeVeWe2}). However, it is simpler to use the same argument as in the proof of cotype $q_0$ of \eqref{it:HLimpliestypecotype1}. Indeed, Corollary \ref{cor:HLTrans} yields
\begin{align*}
\Big\|t\mapsto \sum_{n=1}^N \varepsilon_n x_n e^{2\pi i t \cdot n}\Big\|_{L^{p_0}(\T^d;X)}& \leq \Big\|t\mapsto \sum_{n=1}^N \varepsilon_n x_n e^{2\pi i t \cdot n}\Big\|_{L^{p_0}(\T^d,|\cdot|^{-d(2-p_0)};X)}
\\ & \leq C
\Big(\sum_{n=1}^N \|x_n\|_X^{p_0}\Big)^{1/p_0}
\end{align*}
and taking $L^{p_0}(\Omega)$-norms the result follows.
\end{proof}

From Bourgain's result \eqref{B} we obtain that type $p$ implies Fourier type $r$ for some $r\in (1,2]$, we obtain the following consequence of Theorem \ref{thm:HLimpliestypecotype}:
\begin{corollary}\label{cor:FouriertypeHL}
Let $p\in (1, 2)$ and $q\in (2, \infty)$.
\begin{enumerate}[\upshape(i)]
\item\label{it:FouriertypeHL1} If $X$ has HL type $p$, then $X$ has Fourier type $r$ for some $r\in (1,2]$.
If moreover $p\in (\tfrac43,2)$, then $X$ has Fourier type $r$ for some $r>\frac{2p}{4-p}$.
\item\label{it:FouriertypeHL2} If $X$ has HL cotype $q$, then $X$ has Fourier type $r$ for some $r\in (1,2]$.
If moreover $q\in [2,4)$, then $X$ has Fourier type $r$ for some $r>\frac{2q}{3q-4}$.
\end{enumerate}
\end{corollary}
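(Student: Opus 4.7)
The plan is to separate the corollary into its non-quantitative assertions, which are formal consequences of earlier results, and its quantitative refinements, which require a sharper argument.

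For the non-quantitative parts, by Theorem~\ref{thm:HLimpliestypecotype}\eqref{it:HLimpliestypecotype2}, HL type $p$ implies Rademacher type $p$ (hence nontrivial type), and by Bourgain's theorem~\eqref{B} nontrivial type is equivalent to nontrivial Fourier type, yielding Fourier type $r$ for some $r\in(1,2]$. The same chain through Theorem~\ref{thm:HLimpliestypecotype}\eqref{it:HLimpliestypecotype1} gives the corresponding assertion for (ii).

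Next I would observe that the two quantitative parts are equivalent by duality. Substituting $p'=p/(p-1)$ one checks the algebraic identity $\tfrac{2p'}{3p'-4}=\tfrac{2p}{4-p}$; combined with Proposition~\ref{prop:duality}\eqref{DualHL} (HL type $p$ of $X$ is HL cotype $p'$ of $X^\ast$) and Proposition~\ref{prop:duality}\eqref{DualF} together with the identification of Fourier cotype with Fourier type of the conjugate index (so that the Fourier type of $X$ and $X^\ast$ coincide), this shows the two quantitative statements are interchangeable. It therefore suffices to prove, say, (ii).

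For the quantitative part of (ii), the plan is to refine the $\ell^{q'}_N$-non-containment argument used in the proof of Theorem~\ref{thm:HLimpliestypecotype}\eqref{it:HLimpliestypecotype1}. There, the HL-cotype inequality (via Corollary~\ref{cor:HLCoTrans}) is tested against the logarithmic function $f(t)=((|n|+1)^{-d/q'}(\log(|n|+1))^{-\alpha/q'}e^{2\pi i n\cdot t})_{1\leq |n|\leq N}$ valued in $\ell^{q'}_{(2N)^d}$. I would run the same scheme but replace $\ell^{q'}_{(2N)^d}$ with $\ell^s_{(2N)^d}$ for $s<q'$ and re-optimise the exponents $\gamma$ in $|n|^{-\gamma}$ and $\alpha$ in $(\log(|n|+1))^{-\alpha}$. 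This should rule out the uniform embedding of $\ell^s_N$ into $X$ for every $s$ below the critical threshold $\bigl(\tfrac{2q}{3q-4}\bigr)'$. By Lemma~\ref{lem:Pisiertype} the space $X$ then has Rademacher type $r$ for some $r>\tfrac{2q}{3q-4}$, and a quantitative version of Bourgain's theorem (as developed in \cite{Pietsch-Wenzel98}) upgrades this to Fourier type $r$ with the same lower bound.

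The main obstacle will be the parameter optimisation in the modified test function: one must choose $\gamma$ and $\alpha$ so that the $L^q(\T^d;\ell^s_{(2N)^d})$-norm of $f$ grows at a logarithmic rate in $N$ \emph{just below} the logarithmic rate of the weighted sum $\sum_{1\leq|n|\leq N}\|\widehat f(n)\|_{\ell^s_{(2N)^d}}^{q}(|n|+1)^{d(q-2)}$, forcing a contradiction exactly at $s=\bigl(\tfrac{2q}{3q-4}\bigr)'$. Pinning down this critical value, and verifying that the final Bourgain-type step preserves the numerical value of $r$, is the principal calculation behind the specific exponent $\tfrac{2q}{3q-4}$ in the statement.
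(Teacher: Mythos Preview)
The non-quantitative assertions and the duality reduction are correct and match the paper's approach. The gap is in your plan for the quantitative bound.

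Your proposed test-function argument with $\ell^s_N$ in place of $\ell^{q'}_N$ cannot reach the threshold you claim. With $f(t)=((|n|+1)^{-\gamma}e^{2\pi i n\cdot t})_{1\le|n|\le N}$ valued in $\ell^s_N$, the ratio
\[
\|f\|_{L^{q}(\T^d;\ell^s_N)}\Big/\Big(\sum_{1\le|n|\le N}\|\wh f(n)\|_{\ell^s_N}^{q}(|n|+1)^{d(q-2)}\Big)^{1/q}
\]
diverges only when $s<q'$; for $s\ge q'$ no choice of $\gamma$ (nor any logarithmic correction) produces a contradiction. So the scheme recovers exactly type $>q'$, which is already the content of Theorem~\ref{thm:HLimpliestypecotype}\eqref{it:HLimpliestypecotype1}. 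Since $\bigl(\tfrac{2q}{3q-4}\bigr)'=\tfrac{2q}{4-q}>q'$ for $q\in(2,4)$, you cannot rule out $\ell^s_N$ up to that value by this route.

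More fundamentally, there is no ``quantitative Bourgain theorem'' taking Rademacher type $r$ to Fourier type $r$ with the same index; Bourgain's result \eqref{B} is purely qualitative. The paper instead invokes a result of Pietsch \cite[6.1.8.6]{PietschHis}: if $X$ has type $p$ \emph{and} cotype $\tilde q$, then $X$ has Fourier type $r$ whenever $\tfrac1r-\tfrac12>\tfrac1p-\tfrac1{\tilde q}$. The point you are missing is that Theorem~\ref{thm:HLimpliestypecotype}\eqref{it:HLimpliestypecotype1} supplies \emph{both} pieces of information---cotype $q$ \emph{and} type $p$ for some $p>q'$. Feeding these into Pietsch's inequality gives $\tfrac1r-\tfrac12<\tfrac1{q'}-\tfrac1q=1-\tfrac2q$, hence $r>\tfrac{2q}{3q-4}$, which is nontrivial precisely when $q<4$. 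The specific exponent thus comes from combining the two Rademacher invariants through Pietsch's formula, not from sharpening either the test-function step or Bourgain's theorem.
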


\begin{proof}
\eqref{it:FouriertypeHL1}: By Theorem \ref{thm:HLimpliestypecotype} $X$ has type $p$ and cotype $q$ for some $q\in [2,p')$. Therefore, by
\cite[6.1.8.6]{PietschHis} $X$ has Fourier type $r$ whenever $r\in (1,2)$ satisfies $\frac{1}{r} - \frac12 > \frac1p-\frac1{q}$. In particular, this holds for some $r$ with $\frac{1}{r}-\frac{1}{2}<\frac1p-\frac1{p'} = \frac2p - 1$. This does not lead to anything useful except if $p\in (4/3,2]$. Indeed, then we find that $X$ has Fourier type for some $r>\frac{2p}{4-p}$.

\eqref{it:FouriertypeHL2}: This can be proved in the same way or by using duality.
\end{proof}

\subsection{Connections with Pitt's inequality}

As in Theorem \ref{thm:HLimpliestypecotype} one can show that Pitt's inequality implies type and cotype properties:
\begin{theorem}\label{ThmPittImpliesType}
Assume $X$ obeys the Pitt's inequality \eqref{thm:Pitt2intro2} for some $p, q, \beta, \gamma$ satisfying \eqref{thm:Pitt2intro1}. Then
\begin{enumerate}[\upshape(i)]
\item\label{it:FouriertypePitt1} $X$ has type $r$ for any $r\in (1, 2]\cap (1,p)$ such that $\beta<\frac{d}{r} - \frac{d}{p}$.
\item\label{it:FouriertypePitt2} $X$ has cotype $s$ for any $s\in [2, \infty)\cap (q, \infty)$ such that $\gamma<\frac{d}{q} -\frac{d}{s}$.
\end{enumerate}
\end{theorem}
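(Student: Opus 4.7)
The proof will follow the pattern of Theorem \ref{thm:HLimpliestypecotype}, using the periodic version Corollary \ref{cor:Pitt2Periodic} of Pitt's inequality together with Rademacher randomization. The key new observation is that, after sorting the input vectors by norm and assigning them to the indices of smallest modulus in $\Z^d$, the weights $(|n_j|+1)^{\beta p}$ and $(|n_j|+1)^{-\gamma q}$ turn the relevant sums into sequence Lorentz norms with indices $p_1 = (1/p + \beta/d)^{-1}$ and $p_2 = (1/q - \gamma/d)^{-1}$; the thresholds $\beta < d/r - d/p$ and $\gamma < d/q - d/s$ in the hypothesis are then exactly equivalent to $r < p_1$ and $s > p_2$, which are precisely the conditions under which the relevant Lorentz inclusions hold.

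For \eqref{it:FouriertypePitt2}, I plug $f(t) = \sum_n \varepsilon_n x_n e^{2\pi \ui n \cdot t}$ into \eqref{cor:Pitt2Periodic**}. Since $|\varepsilon_n|=1$, the left-hand side is deterministic and equals $(\sum_n \|x_n\|_X^q (|n|+1)^{-\gamma q})^{1/q}$. Taking the $L^p(\Omega)$-norm, applying Fubini, and using the invariance of Steinhaus variables under multiplication by unimodular constants (so $\varepsilon_n e^{2\pi \ui n \cdot t}$ is equidistributed with $\varepsilon_n$ for each fixed $t$) yields
\[
\Big(\sum_n \|x_n\|_X^q (|n|+1)^{-\gamma q}\Big)^{1/q} \leq C \Big\|\sum_n \varepsilon_n x_n\Big\|_{L^p(\Omega;X)},
\]
the constant absorbing the finite factor $(\int_{\T^d} |t|^{\beta p} \, \ud t)^{1/p}$. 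For arbitrary $x_1,\dots,x_N \in X$, sort so that $\|x_1\|_X \geq \dots \geq \|x_N\|_X$ and assign them injectively to the $N$ lattice points in $\Z^d$ of smallest modulus; then $(|n_j|+1)^{-\gamma q} \asymp j^{-\gamma q/d}$, and the Rademacher sum on the right is unaffected by the relabelling. The left-hand side is then, up to constants, the Lorentz norm $\|(\|x_j\|_X)_{j\geq 1}\|_{\ell^{p_2,q}(\N)}$, and the sequence Lorentz embedding $\ell^{p_2,q}(\N) \hookrightarrow \ell^s(\N)$ (valid because $s>p_2$) combined with Kahane--Khintchine produces the cotype $s$ inequality.

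For \eqref{it:FouriertypePitt1}, I apply \eqref{cor:Pitt2Periodic*} to $g(t) = \sum_n \varepsilon_n x_n e^{2\pi \ui n \cdot t}$. Since $|t|^{-\gamma q}$ is bounded below by a positive constant on $\T^d$, one has $\|g\|_{L^q(\T^d;X)} \lesssim \|g\|_{L^q(\T^d, |\cdot|^{-\gamma q};X)} \leq C (\sum_n \|x_n\|_X^p (|n|+1)^{\beta p})^{1/p}$. Taking $L^q(\Omega)$-norms and repeating the Fubini/Steinhaus trick reduces the left side to $\|\sum_n \varepsilon_n x_n\|_{L^q(\Omega;X)}$. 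The same sort-and-assign procedure identifies the right side (up to constants) with $\|(\|x_j\|_X)_{j\geq 1}\|_{\ell^{p_1,p}(\N)}$, and the inclusion $\ell^r(\N) \hookrightarrow \ell^{p_1,p}(\N)$ (valid because $r<p_1$) together with Kahane--Khintchine yields type $r$. The principal step is therefore the Lorentz-norm identification after rearrangement and the embedding of sequence Lorentz spaces; beyond this the argument is a standard randomization, with the rotational invariance of complex (Steinhaus) Rademachers—used to decouple the $\T^d$-integration from the probability integration—being the essential technical property.
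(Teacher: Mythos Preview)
Your proof is correct and takes a genuinely different route from the paper's. For \eqref{it:FouriertypePitt1} the paper argues by contradiction via Pisier's finite-representability criterion (Lemma \ref{lem:Pisiertype}): assuming $X$ fails type $r$, it contains $\ell^r_{(2N)^d}$'s uniformly, and the test function $f(t) = ((|n|+1)^{-d/r} e^{2\pi i n\cdot t})_{1\leq |n|\leq N}$ plugged into the periodic Pitt inequality produces a left side growing like $\log N$ against a right side that stays bounded (precisely because $\beta < d/r - d/p$). For \eqref{it:FouriertypePitt2} the paper dualizes the periodic Pitt inequality and reduces to \eqref{it:FouriertypePitt1} applied to $X^*$. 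Your approach is instead direct and symmetric between the two parts: the Steinhaus randomization together with Fubini collapses the $\T^d$-integral to a numerical constant, and the freedom to assign the sorted vectors to lattice points of increasing modulus converts the weighted $\ell^p$ and $\ell^q$ sums into sequence Lorentz norms with indices $p_1=(1/p+\beta/d)^{-1}$ and $p_2=(1/q-\gamma/d)^{-1}$; the hypotheses on $\beta,\gamma$ then become exactly the conditions $r<p_1$ and $s>p_2$ needed for the Lorentz inclusions $\ell^r\hookrightarrow\ell^{p_1,p}$ and $\ell^{p_2,q}\hookrightarrow\ell^s$. This bypasses both Pisier's lemma and the duality step, at the cost of the sort-and-assign bookkeeping. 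One minor correction: the periodic inequalities you invoke should be attributed to Proposition \ref{prop:WeightedTrans} rather than Corollary \ref{cor:Pitt2Periodic}, since the latter is stated under a Fourier-type hypothesis on $X$ that is not part of your assumption; Proposition \ref{prop:WeightedTrans} provides the transference unconditionally.
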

\begin{proof}
\eqref{it:FouriertypePitt1}: Assume $X$ does not have type $r$. Then by Lemma \ref{lem:Pisiertype} we know that there exists a $\lambda>1$ such that $X$ contains $\ell^{r}_{(2 N)^d}$'s $\lambda$-uniformly. Fix $N\geq 1$ and let $T:\ell^{r}_{(2 N)^d}\to X$ be such that,
\[\|a\|_{\ell^{r}_{(2 N)^d}}\leq \|Ta\|_X\leq \lambda\|a\|_{\ell^{r}_{(2 N)^d}}, \ \ a\in \ell^{r}_{(2 N)^d}.\]
Let $f:\T^d\to \ell^{r}_{(2 N)^d}$ be given by
\[f(t) = ((|n|+1)^{-d/r} e^{2\pi i n\cdot t})_{1\leq |n|\leq N},\]
where $|n| = \max\{|n_j|: j\in \{1, \ldots, d\}\}$. Then by Proposition \ref{prop:WeightedTrans}
\begin{align*}
\|f\|_{L^{q}(\T^d,|\cdot|^{-\gamma q};\ell^{r}_{(2 N)^d})} &\leq \|T f\|_{L^{q}(\T^d,|\cdot|^{-\gamma q};X)}
\leq C \Big(\sum_{1 \leq |n| \leq N} \|\wh{T f}(n)\|_X^{p} (|n|+1)^{\beta p}\Big)^{1/p}
\\ & \leq C \lambda \Big(\sum_{1 \leq |n| \leq N} \|\wh{f}(n)\|^{p}_{\ell^{r}_{(2 N)^d}} (|n|+1)^{\beta p}\Big)^{1/p}.
\end{align*}
Note that
\begin{align*}
\|f\|_{L^{q}(\T^d,|\cdot|^{-\gamma q};\ell^{r}_{(2 N)^d})} &\geq \|f\|_{L^{q}(\T^d;\ell^{r}_{(2 N)^d})}
= \Big(\sum_{1\leq |n|\leq N} \frac{1}{(|n|+1)^d }\Big)^{1/r}=K_N
\end{align*}
and $K_N$ is unbounded for $N\to \infty$.
On the other hand,
\begin{align*}
\Big(\sum_{1\leq |n|\leq N} \|\wh{f}(n)\|^{p}_{\ell^{r}_{(2 N)^d}} (|n|+1)^{\beta p}\Big)^{1/p} = \Big(\sum_{1\leq |n|\leq N} (|n|+1)^{\beta p-\frac{dp}{r}}\Big)^{1/p}<\infty
\end{align*}
because $\beta < d/r - d/p$. Therefore, $K_N \lesssim_{\beta,p,r,d}  C\lambda$. Choosing $N$ large enough, we obtain the required contradiction.

\eqref{it:FouriertypePitt2}: By Proposition \ref{prop:WeightedTrans} $\F:L^p(\T^d, |\cdot|^{\beta p};X)\to \ell^q(\Z^d,(|n| + 1)^{-\gamma q};X)$ is bounded. By a duality argument we find that $\F:\ell^{q'}(\Z^d,(|n| + 1)^{\gamma q'};X^*)\to L^{p'}(\T^d, |\cdot|^{-\beta p'};X^*)$ is bounded. Therefore, by \eqref{it:FouriertypePitt1} we find that $X^*$ has type $r\in (1, 2]\cap (1,q')$ if $\gamma<\frac{d}{r} - \frac{d}{q'} = \frac{d}{q}-\frac{d}{r'}$.
By duality we obtain $X$ has cotype $s = r'$.
\end{proof}

As a consequence of Theorems \ref{thm:Pitt2intro}, \ref{ThmPittImpliesType} and \eqref{B}, we see that Banach spaces with a nontrivial Fourier type can be characterized in terms of Pitt's inequalities:
\begin{corollary}
	For every Banach space $X$ the following are equivalent
	\begin{enumerate}[\upshape(i)]
		\item \label{it:FType} $X$ has nontrivial Fourier type,
		\item \label{it:PittType} $X$ obeys the Pitt's inequality \eqref{thm:Pitt2intro2} for some $p, q, \beta, \gamma$ satisfying \eqref{thm:Pitt2intro1}.
	\end{enumerate}
\end{corollary}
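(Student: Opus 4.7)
The plan is to prove the equivalence by combining the sufficient direction from Theorem \ref{thm:Pitt2intro} with the partial converse furnished by Theorem \ref{ThmPittImpliesType}, closing the loop through Bourgain's theorem recorded in \eqref{B}.

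For \eqref{it:FType} $\Rightarrow$ \eqref{it:PittType}, I would simply instantiate Theorem \ref{thm:Pitt2intro}. Assuming $X$ has Fourier type $p_0 \in (1, 2]$, it suffices to exhibit one admissible quadruple. A convenient choice is $p = q = p_0$ together with any $\gamma$ strictly between $d \max\{0,\, 2/p_0 - 1\}$ and $d/p_0$ and the forced value $\beta = \gamma - d(1 - 2/p_0)$; such a $\gamma$ exists because $d\max\{0,\,2/p_0 - 1\} < d/p_0$ whenever $p_0 > 1$. Theorem \ref{thm:Pitt2intro} then delivers \eqref{thm:Pitt2intro2}.

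For the converse \eqref{it:PittType} $\Rightarrow$ \eqref{it:FType}, the plan is to extract nontrivial Rademacher type via Theorem \ref{ThmPittImpliesType}\eqref{it:FouriertypePitt1}, and then upgrade it to nontrivial Fourier type through Bourgain's theorem \eqref{B}. To apply \eqref{it:FouriertypePitt1} I need some $r \in (1, 2] \cap (1, p)$ with $\beta < d/r - d/p$, equivalently $r < d/(\beta + d/p)$. The crucial arithmetic observation is that the conditions $\gamma < d/q$ and $\beta - \gamma = d(1 - 1/p - 1/q)$ built into \eqref{thm:Pitt2intro1} force $\beta < d/p' = d - d/p$, so that $\beta + d/p < d$ and the upper bound on $r$ strictly exceeds $1$. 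Since $p > 1$, a valid $r > 1$ can thus be chosen, giving $X$ nontrivial type; Bourgain's theorem then instantly yields nontrivial Fourier type.

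The only subtlety — and hardly a real obstacle — is this parameter book-keeping ensuring that \eqref{thm:Pitt2intro1} automatically guarantees $\beta < d/p'$, which is precisely what opens up room for $r > 1$ in Theorem \ref{ThmPittImpliesType}\eqref{it:FouriertypePitt1}. Once this is verified, both directions reduce to direct appeals to results already established in the paper.
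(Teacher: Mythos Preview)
The proposal is correct and follows the same route the paper indicates: combine Theorem~\ref{thm:Pitt2intro} for \eqref{it:FType}~$\Rightarrow$~\eqref{it:PittType} with Theorem~\ref{ThmPittImpliesType}\eqref{it:FouriertypePitt1} and Bourgain's result~\eqref{B} for the converse. Your parameter check that \eqref{thm:Pitt2intro1} forces $\beta < d/p'$, and hence leaves room for some $r>1$ in Theorem~\ref{ThmPittImpliesType}\eqref{it:FouriertypePitt1}, is exactly the small computation needed to make the citation chain rigorous.
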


\subsection{Banach lattices}

In this section we shall deal with Banach lattices of functions. We start by recalling some basic facts about  the notions of $p$-convexity and $q$-concavity. For further details we refer the reader to \cite{Lindenstrauss-Tzafriri79}.

Let $1 \leq p, q \leq \infty$. We say that $X$ is \emph{$p$-convex} if there exists a constant $C \geq 0$ such that for all finite choices $f_1, \ldots, f_m \in X$,
\begin{equation*}
	\left\| \left(\sum_{n=1}^m |f_n|^p \right)^{1/p}\right\|_X \leq C \left(\sum_{n=1}^m \|f_n\|_X^p \right)^{1/p}, \quad \text{if} \quad p < \infty,
\end{equation*}
or
\begin{equation*}
	\left\|\sup_{1 \leq n \leq m} |f_n| \right\|_X \leq C \sup_{1 \leq n \leq m} \|f_n\|_X, \quad \text{if} \quad p =\infty.
\end{equation*}
We say that $X$ is \emph{$q$-convex} if there exists a constant $C \geq 0$ such that for all finite choices $f_1, \ldots, f_m \in X$,
\begin{equation*}
	 \left(\sum_{n=1}^m \|f_n\|_X^q \right)^{1/q} \leq C \left\| \left(\sum_{n=1}^m |f_n|^q \right)^{1/q}\right\|_X, \quad \text{if} \quad q < \infty,
\end{equation*}
or
\begin{equation*}
	 \sup_{1 \leq n \leq m} \|f_n\|_X  \leq C \left\|\sup_{1 \leq n \leq m} |f_n| \right\|_X, \quad \text{if} \quad q =\infty.
\end{equation*}

If $X$ is of type $p \in (1, 2]$ (respectively, cotype $q \in [2, \infty)$) then $X$ is $r$-convex for every $1 < r < p$ (respectively, $r$-concave for every $q < r < \infty$). See \cite[Corollary 1.f.9]{Lindenstrauss-Tzafriri79}.

Let $(S, \Sigma, \mu)$ be a measure space and $1 \leq p < \infty$. Then, $L^p(S)$ is $p$-convex and $p$-concave.

The following result was obtained in \cite[Proposition 2.2]{GTK96}.

\begin{proposition}\label{GKT}
	Assume that $X$ is $p$-convex and $p'$-concave for some $p \in (1,2]$. Then, $X$ is of Fourier type $p$.
\end{proposition}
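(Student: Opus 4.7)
The plan is to sandwich the scalar Hausdorff--Young inequality between $p'$-concavity and $p$-convexity of the lattice $X$. Via Kakutani's representation (or Krivine's functional calculus in the abstract case), I may regard $|f(\xi)|, |\widehat{f}(\xi)|$ pointwise as elements of $X$ whose values at $\omega \in (\Omega,\nu)$ are the scalars $|f(\xi)(\omega)|, |\widehat{f}(\xi)(\omega)|$, with $\widehat{f}(\xi)(\omega) = \widehat{f(\cdot)(\omega)}(\xi)$ for $f \in \Sch(\R^d;X)$ by Fubini.

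First I would apply the integral version of $p'$-concavity in the variable $\xi$:
\begin{equation*}
\|\widehat{f}\|_{L^{p'}(\R^d;X)} = \Bigl(\int_{\R^d} \|\widehat{f}(\xi)\|_X^{p'} \, \ud\xi\Bigr)^{1/p'} \leq C\,\Bigl\| \Bigl(\int_{\R^d} |\widehat{f}(\xi)|^{p'} \, \ud\xi\Bigr)^{1/p'}\Bigr\|_X.
\end{equation*}
Next, the scalar Hausdorff--Young inequality applied to $\xi \mapsto f(\xi)(\omega)$ for $\nu$-a.e.\ $\omega$ yields the pointwise bound
\begin{equation*}
\Bigl(\int_{\R^d} |\widehat{f}(\xi)(\omega)|^{p'}\, \ud\xi\Bigr)^{1/p'} \leq C \Bigl(\int_{\R^d} |f(\xi)(\omega)|^p\, \ud\xi\Bigr)^{1/p},
\end{equation*}
and monotonicity of the lattice norm on $X$ lifts this to the corresponding inequality of $X$-norms. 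Finally, the integral form of $p$-convexity produces
\begin{equation*}
\Bigl\|\Bigl(\int_{\R^d} |f(\xi)|^{p}\, \ud\xi\Bigr)^{1/p}\Bigr\|_X \leq C\, \|f\|_{L^p(\R^d;X)},
\end{equation*}
and chaining the three estimates proves the Fourier type $p$ inequality on $\Sch(\R^d;X)$; density of $\Sch(\R^d;X)$ in $L^p(\R^d;X)$ extends it to the whole space.

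The main obstacle is the rigorous passage from the discrete definitions of $p$-convexity and $p'$-concavity to their integral forms, together with the meaning of the lattice-valued expressions $(\int_{\R^d} |f(\xi)|^r\,\ud\xi)^{1/r} \in X$. Both are handled in the standard way by approximating $f$ in $\xi$ with $X$-valued step functions and invoking Krivine's functional calculus for the abstract lattice operations; once this bookkeeping is in place, the argument reduces the vector-valued Hausdorff--Young estimate to the scalar one in an essentially mechanical fashion, and in particular yields the Fourier type constant as a product of the $p$-convexity, Hausdorff--Young, and $p'$-concavity constants.
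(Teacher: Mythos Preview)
Your proposal is correct and follows exactly the approach the paper has in mind: the paper does not give its own proof of this proposition but cites \cite[Proposition 2.2]{GTK96}, and in the discussion of Proposition~\ref{PropNew} it describes that method as reducing the $X$-valued inequality to the scalar case ``making use of the $p$-convexity and $q$-concavity of $X$'' --- precisely your sandwich of the scalar Hausdorff--Young inequality between $p'$-concavity and $p$-convexity. Your remarks on Krivine's functional calculus and step-function approximation correctly address the only technical bookkeeping needed.
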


In the special case of Banach lattices we obtain an interesting characterization of the notions introduced in Definition \ref{def:type}.

\begin{corollary}\label{cor:BLFouriertype}
Assume $X$ is a Banach lattice and let $p_0\in (1,2)$. Then the following are equivalent:
\begin{enumerate}[\upshape(i)]
\item\label{cor:BLFouriertype1} $X$ has Fourier type $p$ for some $p\in (p_0, 2]$.
\item\label{cor:BLFouriertype2} $X$ has Paley type $p_0$ and Paley cotype $p_0'$.
\item\label{cor:BLFouriertype3} $X$ has HL type $p_0$ and HL cotype $p_0'$.
\end{enumerate}
\end{corollary}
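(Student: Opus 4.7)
The implication (i) $\Rightarrow$ (ii) is immediate from Proposition \ref{prop:WeakFourier}\eqref{it:weakPtype}: if $X$ has Fourier type $p\in(p_0,2]$, then $X$ has Paley type $r$ for every $r\in(1,p)$ and Paley cotype $s$ for every $s\in(p',\infty)$; since $p_0<p$ and $p_0'>p'$, the choices $r=p_0$ and $s=p_0'$ are admissible. The implication (ii) $\Rightarrow$ (iii) is an immediate consequence of Proposition \ref{prop:relationsPFHL}. No lattice structure is used in either of these two steps.

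The substantive direction is (iii) $\Rightarrow$ (i). Starting from HL cotype $p_0'$, Theorem \ref{thm:HLimpliestypecotype}\eqref{it:HLimpliestypecotype1} (with $q_0=p_0'$, so $q_0'=p_0$) supplies a Rademacher type $p_1\in(p_0,2]$, and from HL type $p_0$, Theorem \ref{thm:HLimpliestypecotype}\eqref{it:HLimpliestypecotype2} supplies a Rademacher cotype $q_1\in[2,p_0')$. The essential feature is that both improvements are \emph{strict}: $p_1>p_0$ and $q_1<p_0'$. This is what lets us eventually cross the threshold $p_0$ in the Fourier type, and it is obtained in Theorem \ref{thm:HLimpliestypecotype} by ruling out the uniform embedding of $\ell^{p_0}_N$'s (resp.\ $\ell^{p_0'}_N$'s) via Pisier's Lemma \ref{lem:Pisiertype}.

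At this point I would invoke the Banach lattice structure together with the fact recalled at the start of Section \ref{sec:Radtype}: a Banach lattice of type $p_1\in(1,2]$ is $r$-convex for every $r<p_1$, and a Banach lattice of cotype $q_1\in[2,\infty)$ is $s$-concave for every $s>q_1$. Since $p_0<p_1$ and $q_1<p_0'$ (equivalently $q_1'>p_0$), the interval $(p_0,\min\{p_1,q_1'\})$ is nonempty, so I pick any $p$ inside it; note $p\leq 2$ since $p<p_1\leq 2$. Then $r:=p<p_1$ gives $p$-convexity of $X$, while $s:=p'>q_1$ (which is exactly $p<q_1'$) gives $p'$-concavity. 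Proposition \ref{GKT} then delivers Fourier type $p\in(p_0,2]$, which is (i).

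The main obstacle is securing the strict inequalities $p_1>p_0$ and $q_1<p_0'$, i.e.\ the content of Theorem \ref{thm:HLimpliestypecotype}; once they are granted, the passage from Rademacher type/cotype to convexity/concavity in the Banach lattice setting, combined with Proposition \ref{GKT}, makes the conclusion routine, and the choice $p\in(p_0,\min\{p_1,q_1',2\})$ furnishes the required Fourier type strictly above $p_0$.
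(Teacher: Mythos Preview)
Your proof is correct and follows essentially the same approach as the paper. The paper phrases the choice of Fourier type as $p_0+\tfrac12\varepsilon$ after first arranging both type $p_0+\varepsilon$ and cotype $(p_0+\varepsilon)'$, whereas you more explicitly pick $p\in(p_0,\min\{p_1,q_1'\})$; these are the same idea, and both rely on Theorem \ref{thm:HLimpliestypecotype}, the Lindenstrauss--Tzafriri passage from Rademacher type/cotype to convexity/concavity in Banach lattices, and Proposition \ref{GKT}.
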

\begin{proof}

\eqref{cor:BLFouriertype1} $\Rightarrow$\eqref{cor:BLFouriertype2} and \eqref{cor:BLFouriertype2}$\Rightarrow$\eqref{cor:BLFouriertype3} follow from Propositions \ref{prop:WeakFourier} and \ref{prop:relationsPFHL}, respectively, and are true for general Banach spaces.

\eqref{cor:BLFouriertype3}$\Rightarrow$\eqref{cor:BLFouriertype1}: By Theorem \ref{thm:HLimpliestypecotype} $X$ has type $p_0+\varepsilon$ and cotype $(p_0+\varepsilon)'$ for some $\varepsilon>0$. Then, $X$ is $p_0+\tfrac12\varepsilon$-convex and $(p_0+\tfrac12\varepsilon)'$-concave. Now, by Proposition \ref{GKT}, $X$ has Fourier type $p_0+\tfrac12\varepsilon$.
\end{proof}

In the same way as in Proposition \ref{GKT} one can obtain the following improvement of Theorem \ref{thm:Pitt2intro} for Banach lattices.
\begin{proposition}\label{PropNew}
Let $X$ be a Banach lattice which is $p$-convex and $q$-concave with $1<p\leq q<\infty$. Assume
\begin{equation*}
\max\left\{0, d \left(\tfrac{1}{p} + \tfrac{1}{q} - 1\right) \right\} \leq \gamma < \tfrac{d}{q} \quad \text{and} \quad \beta - \gamma = d \Big(1- \tfrac{1}{p} - \tfrac{1}{q}\Big).
\end{equation*}
Then
\begin{equation}\label{PropNew2}
\|\widehat{f}\|_{L^q(\R^d, |\cdot|^{-\gamma q};X)} \leq C \|f\|_{L^p(\R^d, |\cdot|^{\beta p};X)}.
\end{equation}
\end{proposition}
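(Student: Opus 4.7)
The plan is to reduce the claim directly to the scalar Pitt inequality by means of a Marcinkiewicz--Zygmund-type extension for Banach lattices, bypassing the Fourier-type machinery used to prove Theorem \ref{thm:Pitt2intro}. The starting point is the classical scalar Pitt inequality \eqref{PittScalar}, which under exactly the assumption \eqref{PittAssumpScalar} of the statement provides the boundedness of
\[
\F : L^{p}(\R^{d}, |\cdot|^{\beta p}) \to L^{q}(\R^{d}, |\cdot|^{-\gamma q}),
\]
with norm depending only on $p,q,\beta,\gamma,d$. Note that these weighted Lebesgue spaces are honest $L^p$- and $L^q$-spaces, namely with respect to the measures $|x|^{\beta p}\,\ud x$ and $|x|^{-\gamma q}\,\ud x$ on $\R^{d}$.

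The second ingredient is a Krivine-type lattice extension principle: for $1 \leq p \leq q \leq \infty$, any bounded linear operator $T$ between a scalar $L^p$-space and a scalar $L^q$-space admits a bounded extension $T \otimes I_X$ to the corresponding $X$-valued Bochner spaces, whenever $X$ is a Banach lattice that is $p$-convex and $q$-concave, with operator norm controlled by the $p$-convexity and $q$-concavity constants of $X$ times $\|T\|$. This is the Marcinkiewicz--Zygmund theorem for Banach lattices and is a consequence of Krivine's homogeneous functional calculus; see, e.g., \cite{Lindenstrauss-Tzafriri79}. Applying this result to $T=\F$ from the first step yields directly the boundedness of the vector-valued Fourier transform in the desired range, which is \eqref{PropNew2}.

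The only real issue is citing the correct lattice-extension result; once this is in place, neither interpolation nor endpoint analysis is required. In particular, the endpoint case $\gamma = \max\{0, d(\tfrac{1}{p}+\tfrac{1}{q}-1)\}$, which for general Banach spaces of Fourier type $p_0$ (Theorem \ref{thm:Pitt2intro}) requires a delicate case-by-case discussion governed by the position of $p$ relative to the interval $[p_0, p_0']$, is now handled uniformly and for free, because the sharp scalar inequality already covers it. This reflects the fact that for Banach lattices the admissible range in Pitt's inequality is determined entirely by the convexity and concavity of $X$, rather than by the Fourier-type index.
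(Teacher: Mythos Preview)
Your proposal is correct and is essentially the same approach the paper indicates: the paper does not give a detailed proof but refers to the method of \cite[Proposition 2.2]{GTK96}, which is precisely the reduction to the scalar Pitt inequality via the Krivine calculus using $p$-convexity and $q$-concavity of $X$ that you describe. Your write-up makes explicit the operator-extension mechanism (the Marcinkiewicz--Zygmund/Krivine principle for Banach lattices) that the paper leaves implicit in its citation.
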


The proof of this result can be obtained, mutatis mutandis, the method of proof of Proposition \ref{GKT} given in \cite[Proposition 2.2]{GTK96}, which allows to reduce the $X$-valued inequality \eqref{PropNew2} to the well-known scalar case (cf. \eqref{PittScalarintroseries} and \eqref{PittAssumpScalarintro}) making use of the $p$-convexity and $q$-concavity of $X$.

\begin{remark}
	Proposition \ref{PropNew} does not contradict Remark \ref{rem:SharpnessPitt2} because $X = \ell^{p_0'}(\Z^d), \, 1 < p_0 < 2$, is $p$-convex for $p \leq p_0'$, but not $q$-concave for $p \leq q < p_0'$.
\end{remark}

\section{Extreme cases of Hausdorff-Young type inequalities}\label{sec:Paley}

\subsection{Zygmund type inequalities}
The classical Zygmund inequality \cite[p. 158]{Zygmund} asserts that
\begin{equation}\label{ZYG}
 \|(\wh{f}(n))\|_{\ell^{\infty, 1}(\log \ell)^b(\Z)} \leq C \|f\|_{L^1 (\log L)^{b+1}(\T)}, \quad b + 1>0.
\end{equation}
The goal of this section is to study this inequality in the context of vector-valued functions.

Our next result shows that the vector-valued Zygmund inequality (as well as its generalizations to Lorentz-Zygmund spaces and discrete counterparts) characterizes the nontrivial Fourier type of the underlying Banach space. Before we state it, we recall that the space $L^{1,q}(\log L)^{-\frac1q +1} (\log \log L)^{1}(\T^d;X)$ is formed by all $f:\T^d\to X$ such that
	\begin{align*}
		\|f\|_{L^{1,q}(\log L)^{-\frac1q +1} (\log \log L)^{1}(\T^d;X)} \\
		& \hspace{-4cm}= \Big(\int_0^1 (t (1 + |\log t|)^{-\frac1q +1} (1 + \log (1 + |\log t|)) f^\ast(t))^q \,\frac{dt}{t} \Big)^{1/q} < \infty.
	\end{align*}

\begin{theorem}\label{ThmZygmund}
	The following statements are equivalent:
	\begin{enumerate}[\upshape(i)]
	\item\label{ThmZygmund*} $X$ has nontrivial Fourier type,
	\item\label{ThmZygmund1} Let $1 \leq q \leq \infty$ and $b > -1/q$. The Fourier transform is bounded from $L^{1,q}(\log L)^{b+1}(\T^d;X)$ into $\ell^{\infty, q}(\log \ell)^{b}(\Z^d;X)$ is bounded,
	\item\label{ThmZygmund2}   Let $1 \leq q < \infty$. The Fourier transform is bounded from $L^{1,q}(\log L)^{-\frac1q +1} (\log \log L)^{1}(\T^d;X)$ into $\ell^{\infty, q}(\log \ell)^{-1/q}(\Z^d;X)$,
	\item\label{ThmZygmund3} Let $1 \leq q \leq \infty$ and $b < -1/q$. The Fourier transform is bounded from $\ell^{1,q}(\log \ell)^{b + 1}(\Z^d;X)$ into $L^{\infty, q}(\log L)^b(\T^d;X)$.
	\end{enumerate}
\end{theorem}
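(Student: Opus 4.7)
The plan is to prove (i) $\Rightarrow$ (ii), (iii), (iv) via limiting real interpolation and to close the cycle by (any of (ii), (iii), (iv)) $\Rightarrow$ (i) via a uniform $\ell^1_N$-embedding counterexample based on Lemma \ref{lem:Pisiertype} and Bourgain's equivalence \eqref{B} between nontrivial Fourier type and nontrivial Rademacher type.

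For (i) $\Rightarrow$ (ii), I would fix a Fourier type exponent $p_0 \in (1,2]$ of $X$, so that both endpoint estimates
\begin{equation*}
\F : L^{1}(\T^d;X) \to \ell^{\infty}(\Z^d;X) \quad \text{and} \quad \F : L^{p_0}(\T^d;X) \to \ell^{p_0'}(\Z^d;X)
\end{equation*}
hold, and then apply the limiting real interpolation method $(\cdot,\cdot)_{(0,\cdot),q}$ at the left endpoint, using Holmstedt-type identifications such as
\begin{equation*}
(L^{1}(\T^d;X), L^{p_0}(\T^d;X))_{(0,b+1),q} = L^{1,q}(\log L)^{b+1}(\T^d;X)
\end{equation*}
and its analog on the sequence side, to deduce (ii) for every $b > -1/q$. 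Statement (iii) is the borderline case $b \searrow -1/q$, obtained by reiterating the limiting interpolation once more at this second endpoint, which is what produces the additional $(\log \log L)$-weight on the domain. Statement (iv) follows from (ii) by duality, using Proposition \ref{prop:duality}\eqref{DualF} combined with the duality results for weighted and Lorentz vector-valued function/sequence spaces (Lemmas \ref{lem:duality0}, \ref{lem:dualitynew} and \ref{lem:duality3}).

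For the converse direction I would argue by contradiction along the lines of Theorem \ref{thm:HLimpliestypecotype} and Remark \ref{rem:SharpnessPitt2}. Assume (ii) holds, but $X$ has no nontrivial Fourier type; by \eqref{B} and Lemma \ref{lem:Pisiertype} there is $\lambda \geq 1$ and, for each large $N$, a $\lambda$-isomorphism $T : \ell^{1}_{N^d} \to X$. Let $\{t_k\}_{k=1}^{N^d}$ be an $N^{-1}$-separated grid in $\T^d$, let $Q_N = [-\tfrac{1}{2N},\tfrac{1}{2N}]^d$, and set $g = N^d \ind_{Q_N}$, so the translates $g(\cdot - t_k)$ have pairwise disjoint supports tiling $\T^d$. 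Test (ii) on
\begin{equation*}
f_N(t) = \sum_{k = 1}^{N^d} g(t - t_k)\, T(e_k), \quad t \in \T^d.
\end{equation*}
The disjointness of supports together with the $\lambda$-isomorphism gives $\|f_N(t)\|_X \asymp N^d$ on a set of full measure, hence $\|f_N\|_{L^{1,q}(\log L)^{b+1}(\T^d;X)} \asymp N^d$. On the other hand $\widehat{f_N}(n) = \widehat{g}(n) \, T\bigl(\sum_k e^{-2\pi i n \cdot t_k} e_k\bigr)$ and each coordinate of the inner sum has modulus one, so $\|\widehat{f_N}(n)\|_X \asymp N^d |\widehat{g}(n)|$; since $|\widehat{g}(n)| \asymp 1$ for $|n| \lesssim N$, roughly $N^d$ indices $n$ satisfy $\|\widehat{f_N}(n)\|_X \asymp N^d$. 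A direct rearrangement calculation then yields $\|\widehat{f_N}\|_{\ell^{\infty,q}(\log \ell)^b(\Z^d;X)} \asymp N^d (\log N)^{b+1/q}$, so the ratio target/source diverges like $(\log N)^{b+1/q}$ when $b > -1/q$, contradicting (ii). The counterexamples for (iii) and (iv) are analogous, the one for (iv) being the $\T^d \leftrightarrow \Z^d$ mirror image of the above (using instead a concentrated sequence of basis vectors).

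The main obstacles, in my view, are two. First, the passage (ii) $\Rightarrow$ (iii) demands a careful \emph{second} limiting interpolation at the endpoint $b = -1/q$; the logarithmic corrections must align exactly so that only one extra $\log\log L$-factor appears on the domain and the target lands in the precise limiting space $\ell^{\infty,q}(\log \ell)^{-1/q}$. Second, the counterexample above must use genuinely \emph{concentrated} translates rather than a diagonal construction of the form $\sum_k c_k e_k\, e^{2\pi i k\cdot t}$: for $\ell^1$-embeddings the latter gives $\|f(t)\|_X \equiv \sum_k |c_k|$, which matches the target growth and cannot produce any logarithmic gap; the gap $(\log N)^{b+1/q}$ that actually rules out nontrivial Fourier type is only generated by exploiting the rigidity of the $\ell^1$ structure through disjointly supported translates.
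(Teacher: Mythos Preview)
Your overall strategy—limiting interpolation for (i)$\Rightarrow$(ii),(iii),(iv) and $\ell^1_N$-embeddings via Lemma~\ref{lem:Pisiertype} for the converse—matches the paper, but there are differences in the execution and one genuine gap.

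\textbf{Differences.} For (i)$\Rightarrow$(iv) the paper does not dualise (ii); it applies the limiting method $(\cdot,\cdot)_{1,q;b}$ directly to the pair $\F:\ell^1(\Z^d;X)\to L^\infty(\T^d;X)$ and $\F:\ell^{p_0}(\Z^d;X)\to L^{p_0'}(\T^d;X)$, together with Lemma~\ref{lem:reiteration}\eqref{lem:reiteration3} and the sequence Hardy inequality. Your duality route can be made to work, but you must first apply (i)$\Rightarrow$(ii) to $X^*$ (which you do not say explicitly), and you need the endpoint associate-space relation between $L^{1,q}(\log L)^{b+1}$ and $L^{\infty,q'}(\log L)^{-b-1}$. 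For (i)$\Rightarrow$(iii), there is no ``second'' interpolation in the paper: it is the same $(0,q;b)$-interpolation, only at the borderline $b=-1/q$ one replaces the Hardy inequality of Lemma~\ref{LemmaHardy}\eqref{HardyInequal1} by the iterated-log version \eqref{HardyInequal3}, which is exactly what produces the extra $(\log\log L)$-factor on the domain. For (ii)$\Rightarrow$(i), your disjoint-translates construction is a genuine and correct alternative: the paper instead dualises (ii) to obtain (iv) for $X^*$ and then invokes (iv)$\Rightarrow$(i).

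\textbf{The gap.} Your sketch for (iv)$\Rightarrow$(i) does not work as stated. A ``concentrated sequence of basis vectors'', i.e.\ $c_n=T(e_n)$ for $N^d$ indices, gives source norm $\asymp N^d(\log N)^{b+1}$ and target norm $\asymp N^d$, so the ratio is $(\log N)^{-(b+1)}$, which fails to diverge in the range $-1<b<-1/q$ (nonempty when $q>1$). The paper instead uses a \emph{weighted} diagonal construction $f(t)=((|n|+1)^{-d}e^{2\pi i n\cdot t})_{1\le|n|\le N}$ in $\ell^1_{(2N)^d}$: then $\|f(t)\|_{\ell^1}\asymp\log N$ while $\|\wh f(n)\|=(|n|+1)^{-d}$, and one obtains $\|\wh f\|_{\ell^{1,q}(\log\ell)^{b+1}}\lesssim(\log N)^{b+1+1/q}$ against $\|f\|_{L^{\infty,q}(\log L)^b}\asymp\log N$, giving a ratio $(\log N)^{-b-1/q}\to\infty$ for every $b<-1/q$. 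This directly contradicts your closing assertion that diagonal constructions ``cannot produce any logarithmic gap'': that is true for (ii)$\Rightarrow$(i), where the $L^{1,q}(\log L)^{b+1}$-norm of a constant-modulus function picks up no logarithm, but it is false for (iv)$\Rightarrow$(i), where the weighted $\ell^{1,q}$-source norm \emph{does} see the decay of the coefficients. Alternatively you could close (iv)$\Rightarrow$(i) by dualising (iv) to (ii) for $X^*$ and then applying your own translates argument.
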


\begin{remark}
The counterpart of \eqref{ThmZygmund2} for sequences, that is, the limiting case $b=-1/q$ in \eqref{ThmZygmund3}, is obviously not true because $L^{\infty, q}(\log L)^{-1/q}(\T^d;X) = \{0\}$ if $q < \infty$.
\end{remark}

Let us discuss some applications of Theorem \ref{ThmZygmund}.
The quantitative version of the vector-valued Zygmund inequality \eqref{ZYG} was shown by Parcet, Soria and Xu \cite[Theorem 2]{ParcetSoriaXu}. Namely, if $X$ has nontrivial Fourier type and
\begin{equation*}
 	\int_\T \|f(t)\|_X (\log^+ \|f(t)\|_X)^b \, dt \leq \rho
 \end{equation*}
for some $b > 0$, then there exist positive constants $a(\rho, b)$ and $A(\rho, b)$ such that
\begin{equation}\label{ParcetSoriaXu}
	\sum_{n=-\infty}^\infty \text{exp} \Big(-a(\rho,b) \|\wh{f}(n)\|_X^{-\frac{1}{b}} \Big) \leq A(\rho,b).
\end{equation}
Now we establish the extension of \eqref{ParcetSoriaXu} to the more general scale of Lorentz-Zygmund spaces $L^{1,q}(\log L)^b(\T^d;X)$ as well as its discrete counterparts.

\begin{theorem}\label{ThmZygmundExp}
	Assume that $X$ has nontrivial Fourier type.
	\begin{enumerate}[\upshape(i)]
	\item \label{ThmZygmundExp1} If
\begin{equation*}
 	\|f\|_{L^{1,q}(\log L)^{b+1}(\T^d;X)} \leq \rho
 \end{equation*}
for some $1 \leq q \leq \infty$ and $b  + 1/q > 0$, then
\begin{equation}\label{CorZyg1}
	\sum_{n \in \Z^d} \emph{exp} \Big(-a(\rho,b, q) \|\wh{f}(n)\|_X^{-\frac{1}{b + \frac{1}{q}}} \Big) \leq A(\rho,b, q)
\end{equation}
for some positive constants $a(\rho, b, q)$ and $A(\rho, b, q)$.
\item \label{ThmZygmundExp2} If
\begin{equation*}
	\|(c_n) \|_{\ell^{1,q}(\log \ell)^{b+1}(\Z^d;X)} \leq \rho
\end{equation*}
for some $1 \leq q \leq \infty$ and $b + 1/q < 0$, then $(c_n)$ is the sequence of Fourier coefficients of a function $f$ such that
\begin{equation*}
	\int_{\T^d} \emph{exp} \Big(a(\rho,b, q) \|f(t)\|_X^{-\frac{1}{b + \frac{1}{q}}} \Big) \, \ud t  \leq A(\rho,b, q)
\end{equation*}
for some positive constants $a(\rho, b, q)$ and $A(\rho, b, q)$.
	\end{enumerate}
\end{theorem}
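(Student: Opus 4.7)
The plan is to deduce both parts of Theorem \ref{ThmZygmundExp} as direct corollaries of Theorem \ref{ThmZygmund}: the strategy is to convert the relevant Lorentz--Zygmund mapping property of $\F$ into a pointwise control on the non-increasing rearrangement, and then read off the exponential integrability by an elementary calculus estimate.

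For part \eqref{ThmZygmundExp1}, I would first apply Theorem \ref{ThmZygmund}\eqref{ThmZygmund1} (which is available precisely because $b+1/q>0$) to obtain $\|\wh{f}\|_{\ell^{\infty,q}(\log \ell)^{b}(\Z^d;X)} \leq C\rho$. Since $\wh{f}^\ast$ is non-increasing, a Chebyshev-type argument together with the elementary asymptotic $\sum_{k=1}^N (1+\log k)^{bq}/k \eqsim (1+\log N)^{bq+1}$ (valid as $bq+1>0$) yields
\[
\wh{f}^\ast(N) \leq C'\rho\,(1+\log N)^{-(b+1/q)}, \qquad N \geq 1
\]
(the case $q=\infty$ is immediate from the weighted supremum). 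Writing $\alpha := b+1/q>0$ and passing to the rearrangement in the exponential sum gives
\[
\sum_{n \in \Z^d}\exp\!\bigl(-a\,\|\wh{f}(n)\|_X^{-1/\alpha}\bigr) \leq \sum_{k\geq 1}\exp\!\bigl(-a(C'\rho)^{-1/\alpha}(1+\log k)\bigr),
\]
which is finite as soon as $a(C'\rho)^{-1/\alpha}>1$; choosing, for instance, $a(\rho,b,q):=2(C'\rho)^{1/\alpha}$ bounds the right-hand side by a constant multiple of $\sum_k k^{-2}$ and furnishes the required $A(\rho,b,q)$.

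Part \eqref{ThmZygmundExp2} is the continuous analogue. I would apply Theorem \ref{ThmZygmund}\eqref{ThmZygmund3} (applicable since $b+1/q<0$) to the sequence $(c_n)$ to produce a function $f$ on $\T^d$ with Fourier coefficients $(c_n)$ and $\|f\|_{L^{\infty,q}(\log L)^{b}(\T^d;X)}\leq C\rho$. Using monotonicity of $f^\ast$ together with $\int_0^s (1+|\log t|)^{bq}\,\ud t/t \eqsim (1+|\log s|)^{bq+1}$ (which holds since $bq+1<0$), one deduces $f^\ast(s) \leq C'\rho\,(1+|\log s|)^{\alpha}$ with $\alpha := -(b+1/q)>0$. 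Rearrangement then gives
\[
\int_{\T^d}\exp\!\bigl(a\,\|f(t)\|_X^{1/\alpha}\bigr)\,\ud t = \int_0^1\exp\!\bigl(a\,f^\ast(s)^{1/\alpha}\bigr)\,\ud s \leq \int_0^1 \exp\!\bigl(a(C'\rho)^{1/\alpha}(1+|\log s|)\bigr)\,\ud s,
\]
which is finite whenever $a(C'\rho)^{1/\alpha}<1$; taking $a(\rho,b,q):=\tfrac{1}{2}(C'\rho)^{-1/\alpha}$ works.

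The substantive analytic work is entirely absorbed into Theorem \ref{ThmZygmund}, whose proof rests on limiting interpolation techniques; once that is in hand the present statement reduces to the rearrangement and calculus computations sketched above, the only care needed being the verification that the constants $a$ and $A$ depend solely on $(\rho,b,q)$. Specialising part \eqref{ThmZygmundExp1} to $q=1$ recovers the Parcet--Soria--Xu inequality \eqref{ParcetSoriaXu}, so the novelty lies in the uniform treatment of the additional Lorentz parameter $q$ and its sequence-side counterpart.
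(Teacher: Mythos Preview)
Your proposal is correct and follows essentially the same route as the paper. The paper's proof invokes Theorem \ref{ThmZygmund}\eqref{ThmZygmund1} (respectively \eqref{ThmZygmund3}), then appeals to the known embedding $\ell^{\infty,q}(\log \ell)^{b}\hookrightarrow \ell^{\infty}(\log \ell)^{b+1/q}$ and the Orlicz-type characterization of the latter space (both quoted from Bennett--Rudnick); your Chebyshev step and the subsequent calculus computation are exactly the explicit content of those two citations, so the arguments coincide up to packaging.
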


Our previous result recovers \eqref{ParcetSoriaXu}. Indeed, take into account that
\begin{equation*}
	\|f\|_{L^{1}(\log L)^{b+1}(\T^d;X)} \asymp \int_{\T^d} \|f(t)\|_X (\log^+ \|f(t)\|_X)^{b+1} \, \ud t
\end{equation*}
and
\begin{equation*}
	\|(c_n) \|_{\ell^{1}(\log \ell)^{b+1}(\Z^d;X)} \asymp \sum_{n \in \Z^d} \|c_n\|_X \left(\log^{+} \left(\frac{1}{\|c_n\|_X}\right) \right)^{b+1}
\end{equation*}
(see \cite[Theorems D and D']{BennettRudnick}), writing down Theorem \ref{ThmZygmundExp} with $q=1$ we obtain the following

\begin{corollary}\label{CorZyg}
	Assume that $X$ has non trivial Fourier type.
	\begin{enumerate}[\upshape(i)]
	\item If
\begin{equation*}
 	\int_{\T^d} \|f(t)\|_X (\log^+ \|f(t)\|_X)^b \, \ud t \leq \rho
 \end{equation*}
for some $b > 0$, then
\begin{equation*}
	\sum_{n \in \Z^d} \emph{exp} \Big(-a(\rho,b) \|\wh{f}(n)\|_X^{-\frac{1}{b}} \Big) \leq A(\rho,b)
\end{equation*}
for some positive constants $a(\rho, b)$ and $A(\rho, b)$.
\item If
\begin{equation*}
	\sum_{n \in \Z^d} \|c_n\|_X \left(\log^{+} \left(\frac{1}{\|c_n\|_X}\right) \right)^{-b} \leq \rho
\end{equation*}
for some $b > 0$, then $(c_n)$ is the sequence of Fourier coefficients of a function $f$ such that
\begin{equation*}
	\int_{\T^d} \emph{exp} \Big(a(\rho,b) \|f(t)\|_X^{\frac{1}{b}} \Big) \, \ud t  \leq A(\rho,b)
\end{equation*}
for some positive constants $a(\rho, b)$ and $A(\rho, b)$.
\end{enumerate}
\end{corollary}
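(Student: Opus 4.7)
The plan is to derive Corollary \ref{CorZyg} as a direct specialization of Theorem \ref{ThmZygmundExp} to $q=1$, combined with the two Lorentz--Zygmund equivalences of Bennett and Rudnick that are recorded in the paragraph immediately preceding the corollary. All the analytic content sits in Theorem \ref{ThmZygmundExp}; what remains is a reindexing of parameters and a rescaling of constants, so I do not foresee any genuinely new difficulty.

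For part (i), I would apply Theorem \ref{ThmZygmundExp}(i) with $q=1$ after a shift, taking the theorem's parameter to be $b-1$ in place of $b$. The sign condition then reads $(b-1)+\tfrac{1}{1}=b>0$, matching the hypothesis of the corollary. The hypothesis of the theorem becomes
\begin{equation*}
\|f\|_{L^{1,1}(\log L)^{b}(\T^d;X)}=\|f\|_{L^{1}(\log L)^{b}(\T^d;X)}\leq \rho',
\end{equation*}
and the first Bennett--Rudnick equivalence identifies this, up to multiplicative constants, with the assumed bound $\int_{\T^d}\|f(t)\|_X(\log^+\|f(t)\|_X)^{b}\,\ud t\leq \rho$. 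The conclusion of the theorem then reads
\begin{equation*}
\sum_{n\in\Z^d}\exp\bigl(-a\|\wh{f}(n)\|_X^{-1/b}\bigr)\leq A,
\end{equation*}
which is exactly \eqref{CorZyg1} at $q=1$, i.e. the assertion of the corollary.

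For part (ii) I would proceed symmetrically, applying Theorem \ref{ThmZygmundExp}(ii) with $q=1$ and the theorem's parameter set to $-b-1$. The sign condition becomes $(-b-1)+1=-b<0$ since $b>0$, so the theorem applies. Its hypothesis is $\|(c_n)\|_{\ell^{1}(\log \ell)^{-b}(\Z^d;X)}\leq \rho'$, which by the second Bennett--Rudnick equivalence matches the assumption $\sum_n\|c_n\|_X\bigl(\log^+(1/\|c_n\|_X)\bigr)^{-b}\leq \rho$ of the corollary. The conclusion reduces to
\begin{equation*}
\int_{\T^d}\exp\bigl(a\|f(t)\|_X^{1/b}\bigr)\,\ud t\leq A,
\end{equation*}
as required.

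The only bookkeeping point is that the multiplicative constants appearing in the Bennett--Rudnick equivalences must be absorbed: one replaces $\rho$ by a suitable constant multiple $\rho'$ before invoking Theorem \ref{ThmZygmundExp}, and consequently the constants $a=a(\rho,b)$ and $A=A(\rho,b)$ in the exponential estimates depend on $\rho$ (through $\rho'$) and on $b$. Since Theorem \ref{ThmZygmundExp} has already been proved, this is the end of the argument; no step is a serious obstacle.
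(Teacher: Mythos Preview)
Your proposal is correct and is essentially identical to the paper's own derivation: the paper states that Corollary~\ref{CorZyg} follows by writing down Theorem~\ref{ThmZygmundExp} with $q=1$ and invoking the Bennett--Rudnick equivalences displayed just before the corollary, which is precisely your argument (including the parameter shifts $b\mapsto b-1$ and $b\mapsto -b-1$).
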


The proofs of Theorems \ref{ThmZygmund} and \ref{ThmZygmundExp} will be given in Sections \ref{ProofThmZygmund} and \ref{ProofThmZygmundExp}, respectively.

\subsection{A Bochkarev type inequality}

In the scalar setting it is well known that the Fourier transform does not map Lorentz spaces $L^{2,q}(\T^d)$ to $\ell^{2,q}(\Z^d)$ with $q \neq 2$. This obstacle has been circumvented by Bochkarev \cite{Bochkarev97, Bochkarev} involving logarithmic decays of the sequence of Fourier coefficients. More precisely, if $q > 2$ then
\begin{equation}\label{Bochkarev}
	\wh{f}^\ast (n) \leq C n^{-1/2} (1 + \log n)^{1/2 - 1/q} \|f\|_{L^{2,q}(\T)}, \quad n \in \N.
\end{equation}

The aim of this section is to obtain the vector-valued counterparts of Bochkarev's inequality \eqref{Bochkarev}. Let us state our problem in more detail. Let $X$ be a Banach space of Fourier type $p_0 \in (1, 2]$. Let $1 < p < p_0$ and $1 \leq q \leq \infty$. By \eqref{Konig},
\begin{equation}\label{Boch1}
\F : L^{p, q}(\T^d;X) \to \ell^{p', q}(\Z^d;X).
\end{equation}
In analogy to the scalar case ($p_0 = 2$), one can show that \eqref{Boch1} does not hold true in general if $p=p_0$ and $q > p_0$. Indeed, take $X = \ell^{p_0'}(\Z^d)$ and follow a similar argument to that given in Example \ref{ex:Paley}. However, we will show in Corollary \ref{cor:Bochkarev} below that the vector-valued counterpart of the Bochkarev's inequality \eqref{Bochkarev} holds true. In fact, we will obtain a sharper estimate in terms of Lorentz-Zygmund norms which reads as follows.

\begin{theorem}\label{thm:PaleyExtreme}
	Let $X$ be a Banach space of Fourier type $p_0 \in (1,2]$. Let $1 \leq q \leq \infty$ and $b < -1/q$. Then,
	\begin{equation}\label{thm:PaleyExtreme1new}
		\F : L^{p_0, q}(\log L)^{b + 1/\min\{p_0, q\}} (\T^d;X) \to \ell^{p_0',q}(\log \ell)^{b + 1/\max\{p_0', q\}}(\Z^d;X).
	\end{equation}
\end{theorem}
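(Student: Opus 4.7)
The strategy is to combine the two basic boundedness estimates
\[
\F : L^{p_0}(\T^d; X) \to \ell^{p_0'}(\Z^d; X) \quad \text{and} \quad \F : L^1(\T^d; X) \to \ell^\infty(\Z^d; X),
\]
the first being the Fourier type $p_0$ hypothesis and the second elementary, through a limiting real interpolation at the $L^{p_0}$/$\ell^{p_0'}$ endpoint. Ordinary Lions--Peetre interpolation already yields $\F : L^{p,r} \to \ell^{p',r}$ for $1 < p < p_0$ and all $r\in[1,\infty]$ (see \eqref{Konig}); the sharp endpoint $p = p_0$ is only recoverable at the cost of a logarithmic correction, which is exactly what the theorem quantifies.

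The natural tool is the limiting $K$-functor $(\cdot,\cdot)_{(1,q;b)}$ developed in Section \ref{SectionLimInt}, whose quasi-norm is
\[
\|f\|_{(A_0,A_1)_{(1,q;b)}} = \Big(\int_0^1 \bigl(t^{-1}(1+|\log t|)^b\, K(t,f;A_0,A_1)\bigr)^q\,\tfrac{dt}{t}\Big)^{1/q},
\]
which is nontrivial precisely in the regime $b<-1/q$ (matching the hypothesis of the theorem). This functor commutes with bounded linear operators between the two pairs, so the problem reduces to the two endpoint identifications
\begin{align*}
(L^1(\T^d;X),\,L^{p_0}(\T^d;X))_{(1,q;b)} &= L^{p_0,q}(\log L)^{b+1/\min\{p_0,q\}}(\T^d;X),\\
(\ell^\infty(\Z^d;X),\,\ell^{p_0'}(\Z^d;X))_{(1,q;b)} &= \ell^{p_0',q}(\log \ell)^{b+1/\max\{p_0',q\}}(\Z^d;X).
\end{align*}
I would prove these by substituting the classical Calder\'on--Peetre formula for the $K$-functional of an $L^p$-couple,
\[
K(t,f;L^1,L^{p_0})\asymp \int_0^{t^{p_0'}} f^*(s)\,ds + t\,\Big(\int_{t^{p_0'}}^\infty f^*(s)^{p_0}\,ds\Big)^{1/p_0},
\]
together with its $\ell^p$-analogue, into the defining integral, then performing the change of variable $s=t^{p_0'}$ and invoking a Hardy-type inequality; the Bochner-valued case reduces to the scalar statement applied to the rearrangement of $\|f(\cdot)\|_X$.

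The main technical obstacle is verifying the precise logarithmic exponent shift: $1/\min\{p_0,q\}$ on the domain versus $1/\max\{p_0',q\}$ on the codomain. The asymmetry arises because on the function side the dominant contribution to the $K$-functional near $t=0$ is the Hardy-type term $\int_0^{t^{p_0'}} f^*$, whose $L^q$-integrability against the logarithmic weight trades the parameters $p_0$ and $q$ via Hardy's inequality and thereby produces the $\min$; on the sequence side the inclusion $\ell^{p_0'}\subset \ell^\infty$ runs in the opposite direction and the controlling term is the tail $(\sum_{k\geq t^{p_0'}}\ldots)^{1/p_0'}$, which instead produces the $\max$. Verifying this cleanly requires splitting into the three regimes $q\leq p_0$, $p_0\leq q\leq p_0'$, and $q\geq p_0'$ and handling each separately; once the two Lorentz--Zygmund identifications are in place, the interpolation property applied to the endpoint bounds above yields the theorem immediately.
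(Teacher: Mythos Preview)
Your overall strategy---interpolate the endpoint bounds $\F:L^{p_0}\to\ell^{p_0'}$ and $\F:L^1\to\ell^\infty$ through the limiting functor $(\cdot,\cdot)_{1,q;b}$, then identify the resulting spaces as Lorentz--Zygmund spaces---is exactly the paper's approach. The one inaccuracy is your claim that the two ``endpoint identifications'' are \emph{equalities}. In fact they are only one-sided embeddings: by Lemma~\ref{lem:reiteration}\eqref{lem:reiteration1},
\[
(X_0,X_1)_{\theta,q;\,b+1/\min\{p,q\}}\hookrightarrow (X_0,(X_0,X_1)_{\theta,p})_{1,q;b}\hookrightarrow (X_0,X_1)_{\theta,q;\,b+1/\max\{p,q\}},
\]
and both embeddings are generically strict when $p\neq q$. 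On the domain side (with $p=p_0$) you need the \emph{left} embedding, on the target side (with $p=p_0'$) the \emph{right} one; these go in the correct directions and deliver the theorem, but neither is an equality. Your proposed direct $K$-functional computation via Hardy's inequality would, if carried out, reproduce precisely these embeddings---the loss of sharpness in Hardy's inequality is what prevents equality---so the case-splitting you anticipate is real but the outcome is an inclusion, not an identity. The paper bypasses the explicit computation by simply citing Lemma~\ref{lem:reiteration}\eqref{lem:reiteration1}.
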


We postpone the proof of this theorem to Section \ref{Section7.2}, and meanwhile point out some comments and consequences.

\begin{remark}
Theorem \ref{thm:PaleyExtreme} is no longer true under weaker geometric assumptions on $X$, that is to say, given any $r < p_0$ there exist a Banach space $X$ having Fourier type $r$ and a function $f \in L^{p_0,q}(\log L)^{b + 1/\min\{p_0,q\}} (\T^d;X)$ such that $\F f \not \in \ell^{p_0',q}(\log \ell)^{b + 1/\max\{p_0',q\}}(\Z^d;X).$
	Let $r < p_0$ and $X =\ell^{r'}(\Z^d)$. Define
	\begin{equation*}
		f(t) = (|n|^{-\varepsilon} e^{2 \pi i n \cdot t} \one_{n\neq 0}), \quad t \in \T^d,
	\end{equation*}
	where $d/r' < \varepsilon < d/p_0'$. We have
	\begin{equation*}
		\|f\|_{L^{p_0, q}(\log L)^{b + 1/\min\{p_0,q\}} (\T^d;X)} \asymp \left(\sum_{n\neq 0} |n|^{-\varepsilon r'}\right)^{1/r'} < \infty.
	\end{equation*}
	On the other hand, since $\|\widehat{f}(n)\|_X = |n|^{-\varepsilon}, \, n \neq 0,$ we have $(\|\wh{f}(n)\|_X^\ast)_{n \in \N} = (n^{-\varepsilon/d} |B_1|^{\varepsilon/d})_{n \in \N}$, and thus 	\begin{align*}
		\|(\widehat{f}(n))\|_{ \ell^{p_0',q}(\log \ell)^{b + 1/\max\{p_0',q\}}(\Z^d;X)} & \\
		& \hspace{-4cm}\asymp \left(\sum_{n=1}^\infty n^{(-\varepsilon/d + 1/p_0') q} (1 + \log n)^{(b + 1/\max\{p_0',q\}) q} \frac{1}{n}\right)^{1/q} = \infty.
	\end{align*}
	
\end{remark}

\begin{remark}
Theorem \ref{thm:PaleyExtreme} is sharp in the following sense: In general, \eqref{thm:PaleyExtreme1new} does not hold if $b \geq -1/q$. Let $X = \ell^{p_0'}(\Z^d), \, 1 < p_0 \leq 2,$ and $q < p_0'$.  Define
		\begin{equation*}
		f(t) = (a_n e^{2 \pi i n \cdot t}), \quad t \in \T^d,
	\end{equation*}
	for some $(a_n)_{n \in \Z^d}$ to be chosen. If $b=-1/q$ we put
		\begin{equation*}
	 a_n = |n|^{-d/p_0'} (1 + \log |n|)^{-1/p_0'} (1 + \log (1 + \log |n|))^{-\delta} \one_{n \neq 0}
	 \end{equation*}
	 where $1/p_0'< \delta < 1/q$. Then,
	 \begin{align*}
	 	\|f\|_{L^{p_0,q}(\log L)^{-1/q + 1/\min\{p_0,q\}}(\T^d;X)} & \\
		& \hspace{-4cm}\asymp \left(\sum_{n \neq 0} (1 + \log (1 + \log |n|))^{-\delta p_0'} \frac{1}{|n|^d (1 + \log |n|)}\right)^{1/p_0'} < \infty
	 \end{align*}
	 since $\delta > 1/p_0'$.
	 Further, elementary computations yield that
	 \begin{equation*}
	 	\|(\widehat{f}(n))\|_{\ell^{p_0',q} (\log \ell)^{-1/q + 1/p_0'}(\Z^d; X)}^{q} \asymp \sum_{n=1}^\infty  (1 + \log (1 + \log n))^{-\delta q} \frac{1}{n (1 + \log n)} = \infty
	 \end{equation*}
	 because $\delta < 1/q$. Whence, this example illustrates that \eqref{thm:PaleyExtreme1new} does not hold true necessarily if $b = -1/q$.
	
	 The case $b > -1/q$ can be treated analogously by taking the sequence
	 	\begin{equation*}
	 a_n =  |n|^{-d/p_0'} (1 + \log n)^{-\varepsilon} \one_{n \neq 0}
	 \end{equation*}
	 where $1/p_0' < \varepsilon < b + 1/q + 1/p_0'$. Further details are left to the reader.

\end{remark}

\begin{remark}\label{RemarkBochSeq}
	The discrete counterpart of Theorem \ref{thm:PaleyExtreme} reads as follows. 	Let $X$ be a Banach space of Fourier type $p_0 \in (1,2]$. Let $1 \leq q \leq \infty$ and $b < -1/q$. Then,
	\begin{equation*}
		\F : \ell^{p_0, q}(\log \ell)^{b + 1/\min\{p_0, q\}} (\Z^d;X) \to L^{p_0',q}(\log L)^{b + 1/\max\{p_0', q\}}(\T^d;X).
	\end{equation*}
\end{remark}

Writing down Theorem \ref{thm:PaleyExtreme} with $q > p_0$ and $b = -1/p_0$, we obtain the following

\begin{corollary}[\bf{Vector-valued Bochkarev's inequality}]\label{cor:Bochkarev}
	Let $X$ be a Banach space of Fourier type $p_0 \in (1, 2]$. Let $q > p_0$. Then,
	\begin{equation*}
		\F : L^{p_0,q} (\T^d;X) \to \ell^{p_0',q}(\log \ell)^{-1/p_0 + 1/\max\{p_0', q\}}(\Z^d;X).
	\end{equation*}
	In particular,
	\begin{equation}\label{VectorBochkarev}
	\wh{f}^\ast (n) \leq C n^{-1/p_0'} (1 + \log n)^{1/p_0 - 1/\max\{p_0',q\}} \|f\|_{L^{p_0,q}(\T^d;X)}, \quad n \in \N.
\end{equation}
\end{corollary}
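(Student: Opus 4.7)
The statement is essentially a direct specialization of Theorem \ref{thm:PaleyExtreme}, so the plan is to verify the parameter substitution and then deduce the pointwise estimate from the Lorentz--Zygmund norm bound.

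First I would apply Theorem \ref{thm:PaleyExtreme} with the choice $b = -1/p_0$. The hypothesis $b < -1/q$ of the theorem becomes $-1/p_0 < -1/q$, which is exactly $q > p_0$, the assumption of the corollary. Since $q > p_0$ we also have $\min\{p_0, q\} = p_0$, so that the logarithmic exponent in the source space vanishes:
\[
b + \frac{1}{\min\{p_0, q\}} = -\frac{1}{p_0} + \frac{1}{p_0} = 0,
\]
and consequently $L^{p_0, q}(\log L)^{b + 1/\min\{p_0, q\}}(\T^d; X) = L^{p_0, q}(\T^d; X)$. The target logarithmic exponent is $b + 1/\max\{p_0', q\} = -1/p_0 + 1/\max\{p_0', q\}$, which matches the statement exactly. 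This yields the first conclusion, namely the boundedness of $\F$ from $L^{p_0, q}(\T^d; X)$ into $\ell^{p_0', q}(\log \ell)^{-1/p_0 + 1/\max\{p_0', q\}}(\Z^d; X)$.

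To pass from this to the pointwise estimate \eqref{VectorBochkarev}, I would use the elementary embedding of a Lorentz--Zygmund sequence space into its weak analogue. More precisely, for any $p \in (1, \infty)$, any real $\alpha$, and any $(a_n) \subseteq X$, truncating the defining sum at index $n$ and exploiting monotonicity of the rearrangement ($a_k^* \geq a_n^*$ for $k \leq n$) gives
\[
\|(a_n)\|_{\ell^{p, q}(\log \ell)^{\alpha}(\Z^d; X)}^q \geq (a_n^*)^q \sum_{k=1}^{n} k^{q/p - 1}(1+\log k)^{\alpha q} \gtrsim_{p, q, \alpha} \bigl(n^{1/p}(1+\log n)^{\alpha} a_n^*\bigr)^q,
\]
so that $a_n^* \lesssim n^{-1/p}(1+\log n)^{-\alpha}\,\|(a_n)\|_{\ell^{p, q}(\log \ell)^{\alpha}(\Z^d; X)}$. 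Applying this with $p = p_0'$ and $\alpha = -1/p_0 + 1/\max\{p_0', q\}$ to the sequence $(\wh{f}(n))$ and combining with the boundedness obtained in the previous step produces \eqref{VectorBochkarev}.

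There is essentially no obstacle here: the only points requiring attention are the straightforward verification that $b = -1/p_0$ satisfies the hypothesis of Theorem \ref{thm:PaleyExtreme} when $q > p_0$, and the standard passage from the Lorentz--Zygmund norm to the pointwise estimate on $\wh{f}^*(n)$ via the monotonicity argument above. All the genuine work is concentrated in Theorem \ref{thm:PaleyExtreme} itself.
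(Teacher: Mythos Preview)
Your proposal is correct and follows exactly the approach indicated in the paper, which simply states that the corollary is obtained by writing down Theorem \ref{thm:PaleyExtreme} with $q > p_0$ and $b = -1/p_0$. Your additional verification of the pointwise estimate \eqref{VectorBochkarev} via the monotonicity of the rearrangement is the standard elementary argument and fills in a detail the paper leaves implicit.
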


\begin{remark}\
\begin{enumerate}[\upshape(i)]
\item In the special case $X=\C$ (or more generally, $X$ is a Hilbert space), the inequality \eqref{VectorBochkarev} coincides with \eqref{Bochkarev}.
\item We will see in Sections \ref{Section7.2} that \eqref{VectorBochkarev} can be strengthened under additional geometric assumptions on $X$.
\end{enumerate}
\end{remark}

\subsection{Limiting interpolation methods and Hardy type inequalities}\label{SectionLimInt}
In this section, for the convenience of the reader, we recall the logarithmic interpolation methods and collect some interpolation formulas and Hardy type inequalities which will play a key role in the proofs of Theorems \ref{ThmZygmund} and \ref{thm:PaleyExtreme}.

Let $(X_0,X_1)$ be an ordered couple of Banach spaces, that is, $X_1 \hookrightarrow X_0$. The real interpolation method can be defined through the \emph{Peetre's $K$-functional}. For $f \in X_0$ and $t > 0$,
\begin{equation*}
	K(t, f) = K(t, f; X_0, X_1) = \inf_{g \in X_1} (\|f - g\|_{X_0} + t \|g\|_{X_1}).
\end{equation*}
It is plain to see that
\begin{equation}\label{KFunctSym}
K(t, f; X_0, X_1) = t K(t^{-1}, f ; X_1, X_0).
\end{equation}

Let $0 < \theta < 1, \, -\infty < b < \infty$, and $1 \leq q \leq \infty$. The \emph{logarithmic interpolation space} $(X_0, X_1)_{\theta, q; b}$ is the set of all $f \in X_0$ for which
\begin{equation}\label{LogIntSpace}
	\|f\|_{(X_0, X_1)_{\theta, q; b}} = \left(\int_0^\infty (t^{-\theta} (1 + |\log t|)^b K(t,f))^q \frac{dt}{t} \right)^{1/q} < \infty
\end{equation}
(with appropriate modifications if $q=\infty$). This method satisfies the interpolation property for bounded linear operators. See \cite{EvansOpic, EvansOpicPick} and the references given there. In particular, if $b=0$ then we recover the classical interpolation space $(X_0, X_1)_{\theta, q}$ (cf. Section \ref{SectionIB}).

It turns out that the Lorentz-Zygmund spaces $L^{p,q}(\log L)^b(S;X)$ (see Section \ref{Sec: FunctionSpaces}) can be generated from Lebesgue spaces applying logarithmically perturbed interpolation methods. More precisely, the following extension of Lemma \ref{lem:interpolationLebesgue} holds.

\begin{lemma}[{\cite[Corollary 8.4]{EvansOpic}}]\label{lem:LogInterpolationLebesgue}
Let $1 \leq p_0, p_1 \leq \infty, p_0 \neq p_1, 1 \leq q \leq \infty, 0 < \theta < 1, -\infty < b < \infty$, and $1/p = (1-\theta)/p_0 + \theta/p_1$. Then,
	\begin{equation*}
		(L^{p_0}(S;X), L^{p_1}(S;X))_{\theta, q;b} = L^{p,q}(\log L)^b(S;X).
	\end{equation*}
\end{lemma}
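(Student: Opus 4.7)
The plan is to follow the classical Holmstedt strategy: identify the $K$-functional for the couple $(L^{p_0}(S;X), L^{p_1}(S;X))$ explicitly in terms of the non-increasing rearrangement $f^*$, then feed this formula into the definition \eqref{LogIntSpace} and reduce the remaining expression to the Lorentz--Zygmund norm by a change of variables and weighted Hardy inequalities with logarithmic perturbations.

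First I would establish a vector-valued Holmstedt identity. Assume without loss of generality that $1 \le p_0 < p_1 \le \infty$ and set $1/r = 1/p_0 - 1/p_1$. Because $\|g\|_{L^p(S;X)} = \|s \mapsto g^*(s)\|_{L^p(0,\mu(S))}$ for each $p \in [1,\infty]$, the scalar Holmstedt formula (see, e.g., \cite[Section 5.2]{Bennett-Sharpley88}) applied to $f^*$ yields
\begin{equation*}
K(t, f; L^{p_0}(S;X), L^{p_1}(S;X)) \asymp \Big(\int_0^{t^{r}} f^*(s)^{p_0}\, ds\Big)^{1/p_0} + t \Big(\int_{t^{r}}^{\mu(S)} f^*(s)^{p_1}\, ds\Big)^{1/p_1},
\end{equation*}
with the obvious modification when $p_1 = \infty$. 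The upper bound would be obtained by testing the infimum in the $K$-functional against the natural truncation $f = f\,\one_{\{\|f\|_X > f^*(t^{r})\}} + f\,\one_{\{\|f\|_X \le f^*(t^{r})\}}$; the lower bound, by a rearrangement argument that reduces matters to the scalar case.

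Second, I would substitute the above expression into \eqref{LogIntSpace}, perform the change of variables $u = t^{r}$, and use the identities $\tfrac{1}{p_0}-\tfrac{1}{p} = \tfrac{\theta}{r}$ and $\tfrac{1}{p}-\tfrac{1}{p_1} = \tfrac{1-\theta}{r}$ to see that $\|f\|_{(L^{p_0}, L^{p_1})_{\theta,q;b}}^q$ is comparable to the sum
\begin{align*}
&\int_0^{\mu(S)} u^{-q\theta/r}(1+|\log u|)^{bq}\Big(\int_0^{u} f^*(s)^{p_0}\,ds\Big)^{q/p_0}\frac{du}{u} \\
&\quad + \int_0^{\mu(S)} u^{q(1-\theta)/r}(1+|\log u|)^{bq}\Big(\int_{u}^{\mu(S)} f^*(s)^{p_1}\,ds\Big)^{q/p_1}\frac{du}{u}.
\end{align*}
Applying the weighted Hardy inequalities with logarithmic weights (valid because the power exponents $\theta/r$ and $(1-\theta)/r$ are strictly positive) to these monotone integrands delivers the upper bound $\lesssim \|f\|_{L^{p,q}(\log L)^b(S;X)}^q$. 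For the reverse estimate, it suffices to retain only the value of the inner integrand at the lower endpoint in the first term (respectively the upper endpoint in the second), and then invoke the converse direction of the weighted Hardy inequality, which is available for monotone $f^*$.

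The main technical obstacle will be setting up the weighted Hardy inequalities with logarithmic perturbations uniformly over all admissible $b \in \R$ (positive and negative) and across the endpoint cases $q=1$ and $q=\infty$, where the standard integral manipulations degenerate. The hypothesis $0<\theta<1$ is essential here, as it keeps the power weights on both sides of the cutoff $u$ strictly separated from the critical zero exponent, which is exactly what licenses the logarithmically weighted Hardy estimates in the form found, e.g., in \cite[Chapter 1]{EvansOpic}.
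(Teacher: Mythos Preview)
The paper does not prove this lemma; it is quoted as \cite[Corollary 8.4]{EvansOpic} and used as a black box. So there is no ``paper's own proof'' to compare against.

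Your outline is the standard route to this result and is essentially how it is established in the cited reference: the vector-valued Holmstedt formula reduces the $K$-functional to rearrangement integrals, and the change of variables $u=t^r$ together with weighted Hardy inequalities (legitimate precisely because $0<\theta<1$ keeps the power weights away from the critical exponent, so the logarithmic factor $(1+|\log u|)^{bq}$ is absorbed for every $b\in\R$) gives the two-sided estimate. One point you should make explicit: when $q<p_0$ or $q<p_1$ the outer exponent $q/p_i$ drops below $1$, and the usual Hardy inequality is stated for exponents $\ge 1$; here you must invoke the version valid for non-increasing functions (or equivalently use that $\int_0^u f^*(s)^{p_i}\,ds \asymp \sum_{2^k\le u} 2^k f^*(2^k)^{p_i}$ and run a discrete Hardy argument), which is where the monotonicity of $f^*$ is genuinely needed rather than merely convenient.
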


The finer tuning given by logarithmic weights in \eqref{LogIntSpace} allows us to consider the extreme cases $\theta = 0, 1$ in $(X_0, X_1)_{\theta, q; b}$. We first note that
\begin{equation*}
	\|f\|_{(X_0, X_1)_{\theta, q; b}} \eqsim \left(\int_0^1 (t^{-\theta} (1 + |\log t|)^b K(t,f))^q \frac{dt}{t} \right)^{1/q}, \quad 0 < \theta < 1.
\end{equation*}
This follows easily from the fact that $K(t, f) \eqsim \|f\|_{X_0}, \, t > 1$, together with monotonicity properties of $K(t,f)$.
Now we can define the limiting interpolation space $(X_0, X_1)_{\theta, q; b}, \, \theta = 0, 1$, as the collection of all $f \in X_0$ such that
\begin{equation}\label{LogIntSpace*}
	\|f\|_{(X_0, X_1)_{\theta, q; b}} = \left(\int_0^1 (t^{-\theta} (1 + |\log t|)^b K(t,f))^q \frac{dt}{t} \right)^{1/q} < \infty.
\end{equation}
We shall assume that $b \geq -1/q \, (b > 0 \text{ if } q = \infty)$ if $\theta = 0$ and $b < -1/q \, (b \leq 0 \text{ if } q= \infty)$ if $\theta = 1$. Otherwise, $(X_0, X_1)_{0,q; b} = X_0, \, b < -1/q \, (b \leq 0 \text{ if } q=\infty)$ and $(X_0,X_1)_{1,q;b} = \{0\}, \, b \geq -1/q \, (b > 0 \text{ if } q=\infty)$. For further details on limiting interpolation spaces we refer the reader to \cite{EvansOpic, CFKU, EvansOpicPick} and the references therein.

Next we recall some reiteration formulas involving the interpolation methods considered above.

\begin{lemma}[\bf{Reiteration formulas for limiting interpolation methods}, \cite{EvansOpic, EvansOpicPick}]\label{lem:reiteration}
	Let $(X_0,X_1)$ be an ordered Banach couple, that is, $X_1 \hookrightarrow X_0$. Let $0 < \theta < 1, 1 \leq p, q \leq \infty$.
	\begin{enumerate}[\upshape(i)]
	\item \label{lem:reiteration1} If $b < -1/q$ then
	\begin{equation*}
		(X_0,X_1)_{\theta,q;b + 1/\min\{p,q\}} \hookrightarrow (X_0, (X_0, X_1)_{\theta,p})_{1,q;b} \hookrightarrow (X_0,X_1)_{\theta,q;b + 1/\max\{p,q\}}.
	\end{equation*}
	\item \label{lem:reiteration2} If $b \geq -1/q \, (b > 0  \text{ if }  q=\infty)$ then
	\begin{equation*}
		(X_0, (X_0, X_1)_{\theta, p})_{0,q;b} = (X_0, X_1)_{0,q;b}.
	\end{equation*}
	\item\label{lem:reiteration3} If $b < -1/q \, (b \leq 0 \text{ if } q=\infty)$ then
	\begin{equation*}
		((X_0, X_1)_{\theta,p}, X_1)_{1,q;b} = (X_0, X_1)_{1,q;b}.
	\end{equation*}
	\end{enumerate}
\end{lemma}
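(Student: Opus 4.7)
The plan is to combine a Holmstedt-type $K$-functional formula (adapted to the ordered couple $X_1 \hookrightarrow X_0$) with Hardy-type inequalities carrying logarithmic weights. This is the standard machinery for reiteration in the theory of limiting/logarithmic real interpolation as developed in \cite{EvansOpic, EvansOpicPick}.

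Write $Y_\theta = (X_0, X_1)_{\theta, p}$. First I would establish a Holmstedt-type formula expressing $K(t, f; X_0, Y_\theta)$ in terms of $K(s, f; X_0, X_1)$: for $0 < t < 1$,
$$K(t, f; X_0, Y_\theta) \asymp \left(\int_0^{t^{1/\theta}} \big(s^{-\theta} K(s, f; X_0, X_1)\big)^p \frac{ds}{s}\right)^{1/p},$$
with the usual supremum modification if $p = \infty$; the upper bound comes from an optimal decomposition in the $K$-method, and the lower bound is standard. A companion formula, derived via the symmetry $K(t, f; A, B) = t K(t^{-1}, f; B, A)$ recorded in \eqref{KFunctSym}, controls $K(t, f; Y_\theta, X_1)$ and will be used for part (iii).

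Next, I would insert this expression into the definition \eqref{LogIntSpace*} of the outer limiting space and change variables $u = t^{1/\theta}$. For part (i) this converts $\|f\|_{(X_0, Y_\theta)_{1, q; b}}$ into a weighted $L^q(du/u)$-norm on $(0, 1)$, with logarithmic weight $(1 + |\log u|)^{bq}$, applied to the iterated Hardy average
$$\Phi(u) = \left(\int_0^u \big(s^{-\theta} K(s, f; X_0, X_1)\big)^p \frac{ds}{s}\right)^{1/p},$$
whereas the target norm $\|f\|_{(X_0, X_1)_{\theta, q; b'}}$ has the analogous form with $s^{-\theta} K(s, f)$ directly in place of $\Phi(u)$ and with a shifted logarithmic exponent $b'$. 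Hardy inequalities with logarithmic weights (cf.\ \cite{EvansOpic}) then compare the iterated norm to the single-integral norm: since $\Phi$ is an increasing $L^p$-mean, these inequalities yield, precisely when $b < -1/q$ (so that the relevant integrals converge near $u = 0$), the two-sided sandwich
$$\|f\|_{(X_0, X_1)_{\theta, q; b + 1/\min\{p, q\}}} \lesssim \|f\|_{(X_0, Y_\theta)_{1, q; b}} \lesssim \|f\|_{(X_0, X_1)_{\theta, q; b + 1/\max\{p, q\}}},$$
the two endpoints corresponding to applying Hardy with $p$ larger or smaller than $q$ (via Minkowski/Jensen interchanges, respectively). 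This establishes (i). For (ii), the range $b \geq -1/q$ and $\theta = 0$ supports an exact Hardy identity which eliminates the intermediate parameter $p$ and produces the stated equality. Part (iii) follows from (i), or rather its analogue on the pair $(Y_\theta, X_1)$, together with the $K$-functional symmetry just invoked.

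The main obstacle lies in the bookkeeping of the Hardy inequalities: the asymmetric shifts by $1/\min\{p, q\}$ versus $1/\max\{p, q\}$ in (i) correspond to two genuinely different Hardy inequalities whose sharp constants depend on the sign of $p - q$, and the threshold $b = -1/q$ is precisely what separates the sandwich regime of (i) from the equality regimes of (ii) and (iii). Carrying out this computation carefully, while standard, accounts for the bulk of the technical work.
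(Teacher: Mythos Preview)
The paper does not prove this lemma: it is stated with a citation to \cite{EvansOpic, EvansOpicPick} and used as a black box in Sections~\ref{ProofThmZygmund} and~\ref{Section7.2}. So there is no ``paper's own proof'' to compare against.

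That said, your outline is exactly the strategy employed in the cited references: a Holmstedt-type estimate for $K(t,f;X_0,(X_0,X_1)_{\theta,p})$ in terms of $K(s,f;X_0,X_1)$, followed by a change of variables and Hardy-type inequalities with logarithmic weights to pass between the iterated and single-integral norms. The asymmetric shifts $1/\min\{p,q\}$ and $1/\max\{p,q\}$ in~(i) do arise precisely from the two directions of the weighted Hardy inequality (depending on whether one goes from an $L^p$-average to an $L^q$-norm or vice versa), and the threshold $b=-1/q$ is indeed what governs convergence of the outer integral near $t=0$ (for $\theta=1$) or makes the space nontrivial (for $\theta=0$). Your sketch is faithful to the original arguments and would reconstruct the lemma if carried out in full.
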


Our next result collects some Hardy type inequalities involving logarithmic weights that are useful in later considerations. For more information, we refer to the monograph \cite{OpicKufner}.

\begin{lemma}[\textbf{Hardy type inequalities for functions}]\label{LemmaHardy}
		Let $1 \leq q \leq \infty$. Let $\psi$ be a non-negative measurable function on $(0,1)$.
		\begin{enumerate}[\upshape(i)]
		\item \label{HardyInequal1} If $b + 1/q > 0$ then
	\begin{equation*}
		\left(\int_0^1 \left((1 - \log t)^b \int_0^t \psi(s) ds \right)^q \frac{dt}{t}\right)^{1/q} \lesssim \left(\int_0^1 (t (1-\log t)^{b + 1} \psi(t))^q \frac{dt}{t}\right)^{1/q}.
	\end{equation*}
	\item \label{HardyInequal3} If $b + 1/q>0$ then
	\begin{align*}
		\left(\int_0^1 \left((1+\log(1 - \log t))^b \int_0^t \psi(s) ds \right)^q \frac{dt}{t(1-\log t)}\right)^{1/q} \\
		& \hspace{-7cm}\lesssim \left(\int_0^1 (t (1-\log t) (1+\log(1-\log t))^{b + 1} \psi(t))^q \frac{dt}{t (1-\log t)}\right)^{1/q}.
	\end{align*}
	\end{enumerate}
\end{lemma}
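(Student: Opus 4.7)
The plan is to reduce both inequalities to a single weighted Hardy inequality on $(0,\infty)$ and then invoke the classical Muckenhoupt criterion.

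\textbf{Step 1: Reduce (i) to a one-sided weighted Hardy inequality.} For (i), I would apply the change of variables $u = -\log t$, so that $dt/t = -du$ and $(0,1)\to(0,\infty)$, and set $\phi(u) = e^{-u}\psi(e^{-u})$. A further substitution $s = e^{-v}$ shows that
\[
\int_0^{e^{-u}} \psi(s)\,ds \;=\; \int_u^\infty e^{-v}\psi(e^{-v})\,dv \;=\; \int_u^\infty \phi(v)\,dv \;=:\; T\phi(u),
\]
and $1-\log t = 1+u$. The inequality (i) then becomes the weighted Hardy-type estimate
\begin{equation}\label{mu:star}
\left(\int_0^\infty ((1+u)^b\, T\phi(u))^q\,du\right)^{1/q} \;\lesssim\; \left(\int_0^\infty ((1+u)^{b+1}\phi(u))^q\,du\right)^{1/q}.
\end{equation}

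\textbf{Step 2: Reduce (ii) to the same inequality.} For (ii), the same change $u=-\log t$ converts the measure $dt/(t(1-\log t))$ into $du/(1+u)$ and turns the weight $(1+\log(1-\log t))^b$ into $(1+\log(1+u))^b$. Now set $w = \log(1+u)$, so that $dw = du/(1+u)$ and $1+u = e^w$, and define $\chi(w) = e^w \phi(e^w-1)$. A Fubini computation (substituting $v = e^y-1$) yields $\int_{e^w-1}^\infty \phi(v)\,dv = \int_w^\infty \chi(y)\,dy$. Both sides of (ii) then take exactly the form of \eqref{mu:star} with $\phi$ replaced by $\chi$. Hence it suffices to prove \eqref{mu:star}.

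\textbf{Step 3: Muckenhoupt's criterion.} For $1 < q < \infty$, the classical weighted Hardy theorem asserts that \eqref{mu:star} holds (with weights $v_0(u)=(1+u)^{bq}$, $v_1(u)=(1+u)^{(b+1)q}$) if and only if
\[
A \;=\; \sup_{r>0}\left(\int_0^r (1+u)^{bq}\,du\right)^{1/q}\left(\int_r^\infty (1+u)^{-(b+1)q'}\,du\right)^{1/q'} \;<\; \infty.
\]
The hypothesis $b + 1/q > 0$ is precisely what one needs: it gives $bq + 1 > 0$ (integrability of the first weight at infinity if $b\geq 0$, of the first factor's behaviour near $0$) and $(b+1)q' > 1$ (integrability of the second at $\infty$). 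A direct computation then yields, for large $r$, the first factor $\asymp (1+r)^{b+1/q}$ and the second $\asymp (1+r)^{-(b+1/q)}$, whose product is $\asymp 1$; for small $r$ both factors remain bounded. Thus $A<\infty$ and \eqref{mu:star} follows.

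\textbf{Step 4: Endpoints.} The cases $q=1$ and $q=\infty$ are handled by the corresponding endpoint Hardy criteria and are in fact easier. For $q=\infty$ (so $b>0$), if $M := \sup_t t(1-\log t)^{b+1}\psi(t) < \infty$ then $\psi(s) \leq M\, s^{-1}(1-\log s)^{-b-1}$ and a direct substitution $u=1-\log s$ gives $\int_0^t \psi(s)\,ds \leq (M/b)(1-\log t)^{-b}$, yielding the inequality. The case $q=1$ is analogous.

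The only real obstacle is bookkeeping: the nested logarithms in (ii) make the reduction to \eqref{mu:star} delicate, but once the two changes of variables are executed carefully the problem reduces cleanly to a classical weighted Hardy inequality whose sharp threshold $b+1/q>0$ matches the hypothesis exactly.
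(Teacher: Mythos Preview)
Your argument is correct. The paper does not supply its own proof of this lemma; it simply records the inequalities as known and refers the reader to the Opic--Kufner monograph on Hardy-type inequalities. Your reduction via the substitutions $u=-\log t$ (and then $w=\log(1+u)$ for part (ii)) to the dual Hardy operator $T\phi(u)=\int_u^\infty\phi$, followed by an application of Muckenhoupt's criterion, is precisely the standard way these logarithmic Hardy inequalities are established in that literature, and your computation that the Muckenhoupt supremum is finite exactly when $b+1/q>0$ is right. The endpoint $q=1$ is indeed just Fubini, and your $q=\infty$ argument is fine. One minor cosmetic point: in Step 3 the parenthetical remark about ``integrability of the first weight at infinity if $b\ge 0$'' is garbled (the relevant condition is $bq+1>0$ for the growth of $\int_0^r(1+u)^{bq}\,du$ and $(b+1)q'>1$ for convergence of the second integral at $\infty$, both equivalent to $b+1/q>0$), but the computation that follows it is correct.
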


\begin{lemma}[\textbf{Hardy type inequality for sequences}]\label{LemmaHardySeq}
	Let $1 \leq q \leq \infty, \, b + 1/q < 0$, and $c_n \geq 0, \, n \in \N$. Then
	\begin{equation*}
		\left(\sum_{n=1}^\infty \left((1 + \log n)^b \sum_{k=1}^n c_k \right)^q \frac{1}{n} \right)^{1/q} \lesssim \left(\sum_{n=1}^\infty (n (1 + \log n)^{b+1} c_n)^q \frac{1}{n} \right)^{1/q}.
	\end{equation*}
\end{lemma}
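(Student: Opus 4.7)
The plan is to prove the inequality directly by discrete summation by parts followed by H\"older's inequality, in the spirit of Hardy's classical argument. Set $S_n=\sum_{k=1}^{n}c_k$ (with $S_0=0$), $w_n=n^{-1}(1+\log n)^{bq}$, and the tail $W_n=\sum_{k=n}^{\infty}w_k$. The crucial asymptotic is
\[
W_n\asymp (1+\log n)^{bq+1},
\]
which holds because the hypothesis $b+1/q<0$ gives $bq+1<0$, so that $W_n$ converges at the claimed rate. By monotone convergence I may assume that only finitely many $c_n$ are nonzero, so all sums below terminate.

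First I would write, using $w_n=W_n-W_{n+1}$ and the fact that $W_{N+1}S_N^q\to 0$ (from the finite support of $(c_n)$ and $W_n\to 0$),
\[
\sum_{n=1}^{\infty}w_nS_n^q=\sum_{n=1}^{\infty}W_n\bigl(S_n^q-S_{n-1}^q\bigr)\leq q\sum_{n=1}^{\infty}W_n c_n S_n^{q-1},
\]
where the inequality uses the elementary convexity bound $a^q-b^q\leq q\,a^{q-1}(a-b)$ for $a\geq b\geq 0$ and $q\geq 1$, applied with $a=S_n$ and $b=S_{n-1}$.

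Next, for $1<q<\infty$, I would apply H\"older with the factorization
\[
W_n c_n S_n^{q-1}=\bigl(w_n S_n^q\bigr)^{(q-1)/q}\cdot \bigl(W_n^q c_n^q w_n^{-(q-1)}\bigr)^{1/q},
\]
which yields $\sum w_nS_n^q\leq q(\sum w_nS_n^q)^{(q-1)/q}(\sum W_n^q c_n^q w_n^{-(q-1)})^{1/q}$. Cancelling the common factor gives $\sum w_nS_n^q\leq q^q\sum W_n^q w_n^{-(q-1)}c_n^q$, and a direct arithmetic check using the asymptotic for $W_n$ shows
\[
W_n^q w_n^{-(q-1)}\asymp n^{q-1}(1+\log n)^{(bq+1)q-bq(q-1)}=n^{q-1}(1+\log n)^{(b+1)q},
\]
which is exactly the weight on the right-hand side of the lemma. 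The endpoints $q=1$ and $q=\infty$ would be handled directly: for $q=1$, a Fubini swap gives $\sum_n n^{-1}(1+\log n)^b S_n=\sum_k c_k W_k\asymp \sum_k c_k (1+\log k)^{b+1}$; for $q=\infty$, $S_n\leq \bigl(\sup_k k(1+\log k)^{b+1}c_k\bigr)\sum_{k=1}^{n}k^{-1}(1+\log k)^{-(b+1)}\asymp (1+\log n)^{-b}\sup_k k(1+\log k)^{b+1}c_k$.

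The main obstacle I anticipate is making the algebraic cancellation of the powers of $(1+\log n)$ transparent, namely verifying that the interplay between $w_n$ and its tail $W_n$ reproduces exactly the exponent $(b+1)q$ on the right-hand side. This cancellation is precisely what the hypothesis $b+1/q<0$ enforces, simultaneously guaranteeing the summability of $W_n$ and the correct scaling of the exponent after H\"older.
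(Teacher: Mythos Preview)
Your argument is correct and follows the classical Hardy approach via Abel summation and H\"older's inequality. The paper itself does not give a proof of this lemma; it is stated without proof, with a reference to the monograph of Opic and Kufner on Hardy-type inequalities. Your self-contained argument supplies exactly what the paper omits, and the weight computation $W_n^q w_n^{-(q-1)}\asymp n^{q-1}(1+\log n)^{(b+1)q}$ is checked correctly.
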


\subsection{Proof of Theorem \ref{ThmZygmund}}\label{ProofThmZygmund}
\eqref{ThmZygmund*} $\Rightarrow$ \eqref{ThmZygmund1}, \eqref{ThmZygmund2}: By assumption, there exists $p \in (1, 2]$ such that $\F : L^p(\T^d;X) \to \ell^{p'}(\Z^d;X)$. On the other hand, $\F : L^1(\T^d;X) \to \ell^\infty(\Z^d;X)$. Let $1 \leq q \leq \infty$ and $b \geq -1/q$ if $q < \infty \, (b > 0 \text{ if } q=\infty)$. Applying the limiting interpolation method \eqref{LogIntSpace*} with $\theta = 0$, we derive
\begin{equation}\label{ProofThmZygmund1}
	\F : ( L^1(\T^d;X) , L^p(\T^d;X))_{0,q;b}  \to (\ell^\infty(\Z^d;X), \ell^{p'}(\Z^d;X))_{0,q;b}.
\end{equation}
In view of Lemmas \ref{lem:LogInterpolationLebesgue} and \ref{lem:reiteration}\eqref{lem:reiteration2}, we have
\begin{align}
	 ( L^1(\T^d;X) , L^p(\T^d;X))_{0,q;b} & = (L^1(\T^d;X), (L^1(\T^d;X), L^\infty(\T^d;X))_{1/p',p})_{0,q;b} \nonumber\\
	 & = (L^1(\T^d;X), L^\infty(\T^d;X))_{0,q;b}. \label{ProofThmZygmund2}
\end{align}
This, together with the well-known formula (cf. \cite[Chapter 5, Theorem 1.6, page 298]{Bennett-Sharpley88} and \cite[Section 1.18.6, (9), page 133]{Triebel95})
\begin{equation}\label{ProofThmZygmund3}
	K(t, f ; L^1(\T^d;X), L^\infty(\T^d;X)) = \int_0^t f^\ast(s) \, ds,
\end{equation}
yields that
\begin{align*}
	\|f\|_{( L^1(\T^d;X) , L^p(\T^d;X))_{0,q;b} } & \asymp \|f\|_{(L^1(\T^d;X), L^\infty(\T^d;X))_{0,q;b}} \\
	& \hspace{-2cm}= \left(\int_0^1  \left((1-\log t)^{b}\int_0^t f^\ast(s) \, ds \right)^q \frac{dt}{t} \right)^{1/q}.
\end{align*}
Now we distinguish two cases. If $b > -1/q$ then we can apply \ref{LemmaHardy}\eqref{HardyInequal1} to get
\begin{equation}\label{ProofThmZygmund4}
\|f\|_{( L^1(\T^d;X) , L^p(\T^d;X))_{0,q;b} } \lesssim \|f\|_{L^{1,q}(\log L)^{b+1}(\T^d;X)}.
\end{equation}
If $b=-1/q$ then we invoke Lemma \ref{LemmaHardy}\eqref{HardyInequal3} to obtain
\begin{equation}\label{ProofThmZygmund5}
\|f\|_{( L^1(\T^d;X) , L^p(\T^d;X))_{0,q;b} } \lesssim \|f\|_{L^{1,q}(\log L)^{-1/q+1} (\log \log L)(\T^d;X)}.
\end{equation}

On the other hand, similarly as \eqref{ProofThmZygmund2}, we can prove
\begin{equation*}
	(\ell^\infty(\Z^d;X), \ell^{p'}(\Z^d;X))_{0,q;b} = (\ell^\infty(\Z^d;X), \ell^{1}(\Z^d;X))_{0,q;b}.
\end{equation*}
This formula, \eqref{ProofThmZygmund3}, which also holds when we replace $\T^d$ by $\Z^d$, \eqref{KFunctSym} and monotonicity properties of $K$-functionals imply that
\begin{align}
	\|(c_n)\|_{(\ell^\infty(\Z^d;X), \ell^{p'}(\Z^d;X))_{0,q;b}} & \asymp \|(c_n)\|_{(\ell^\infty(\Z^d;X), \ell^{1}(\Z^d;X))_{0,q;b}} \nonumber \\
	& \hspace{-2cm}\asymp \left(\sum_{i=1}^\infty (i^{-1} (1+ \log i)^{b}  K(i, (c_n);  \ell^{1}(\Z^d;X),  \ell^{\infty}(\Z^d;X))^q \frac{1}{i} \right)^{1/q} \nonumber \\
	& \hspace{-2cm} \asymp \left(\sum_{i=1}^\infty \left(i^{-1} (1+ \log i)^{b}  \sum_{j=1}^{i} c_j^\ast\right)^q \frac{1}{i} \right)^{1/q} \nonumber \\
	& \hspace{-2cm} \gtrsim \left( \sum_{i=1}^\infty ((1 + \log i)^b c_i^\ast)^q \frac{1}{i}\right)^{1/q} = \|(c_n)\|_{\ell^{\infty,q} (\log \ell)^b (\Z^d;X)}. \label{ProofThmZygmund6}
\end{align}

Finally, the desired assertions follow from \eqref{ProofThmZygmund1}, \eqref{ProofThmZygmund4}, \eqref{ProofThmZygmund5} and \eqref{ProofThmZygmund6}.

\eqref{ThmZygmund*} $\Rightarrow$ \eqref{ThmZygmund3}: By assumption, $\F : \ell^p(\Z^d;X) \to L^{p'}(\T^d;X)$ for some $p \in (1,2]$. Applying the limiting interpolation method \eqref{LogIntSpace*} with $\theta = 1$, we have
\begin{equation}\label{ProofThmZygmund7}
	\F : ( \ell^p(\Z^d;X),  \ell^1(\Z^d;X))_{1, q; b} \to (L^{p'}(\T^d;X), L^\infty(\T^d;X))_{1,q;b}.
\end{equation}

In virtue of Lemmas \ref{lem:LogInterpolationLebesgue} and \ref{lem:reiteration}\eqref{lem:reiteration3},
\begin{align*}
	 ( \ell^p(\Z^d;X),  \ell^1(\Z^d;X))_{1, q; b}& = ((\ell^\infty(\Z^d;X), \ell^1(\Z^d;X))_{1/p,p}, \ell^1(\Z^d;X))_{1, q; b} \\
	 &=  ( \ell^\infty(\Z^d;X),  \ell^1(\Z^d;X))_{1, q; b},
\end{align*}
and thus, by \eqref{KFunctSym}, \eqref{ProofThmZygmund3} and Lemma \ref{LemmaHardySeq},
\begin{align}
	\|(c_n)\|_{ ( \ell^p(\Z^d;X),  \ell^1(\Z^d;X))_{1, q; b}} & \asymp \|(c_n)\|_{ ( \ell^\infty(\Z^d;X),  \ell^1(\Z^d;X))_{1, q; b}} \nonumber \\
	& \hspace{-3cm}\asymp \left(\sum_{i=1}^\infty \left( (1 + \log i)^b \sum_{j=1}^{i} c_j^\ast  \right)^q \frac{1}{i} \right)^{1/q} \nonumber\\
	& \hspace{-3cm} \lesssim \left(\sum_{i=1}^\infty (i (1 + \log i)^{b + 1} c_i^\ast)^q \frac{1}{i} \right)^{1/q} = \|(c_n)\|_{\ell^{1,q}(\log \ell)^{b+1}(\Z^d;X)}. \label{ProofThmZygmund8}
\end{align}

On the other hand, applying again Lemmas \ref{lem:LogInterpolationLebesgue} and \ref{lem:reiteration}\eqref{lem:reiteration3},
\begin{align*}
	(L^{p'}(\T^d;X), L^\infty(\T^d;X))_{1,q;b} &= ((L^1(\T^d;X), L^\infty(\T^d;X))_{1/p, p'}, L^\infty(\T^d;X))_{1,q;b} \\
	& = (L^1(\T^d;X), L^\infty(\T^d;X))_{1,q;b}
\end{align*}
and so, by \eqref{ProofThmZygmund3},
\begin{align}
	\|f\|_{(L^{p'}(\T^d;X), L^\infty(\T^d;X))_{1,q;b} } & \asymp \|f\|_{(L^1(\T^d;X), L^\infty(\T^d;X))_{1,q;b} } \nonumber \\
	& \hspace{-3cm}= \left(\int_0^1 \left( t^{-1} (1- \log t)^b \int_0^t f^\ast(s) \, ds\right)^q \frac{dt}{t} \right)^{1/q} \nonumber \\
	& \hspace{-3cm}\gtrsim \left(\int_0^1 ((1-\log t)^b f^\ast(t))^q \frac{dt}{t} \right)^{1/q} = \|f\|_{L^{\infty,q}(\log L)^b(\T^d;X)}. \label{ProofThmZygmund9}
\end{align}

According to \eqref{ProofThmZygmund7}-\eqref{ProofThmZygmund9},
\begin{equation*}
	\F : \ell^{1,q}(\log \ell)^{b+1}(\Z^d;X) \to L^{\infty,q}(\log L)^b(\T^d;X).
\end{equation*}

\eqref{ThmZygmund3} $\Rightarrow$ \eqref{ThmZygmund*}: Assume $\F:\ell^{1,q}(\log \ell)^{b + 1}(\Z^d;X)\to L^{\infty, q}(\log L)^b(\T^d;X)$ is bounded for some $b<-1/q$. We show that $X$ has nontrivial Fourier type. By  \eqref{B} it suffices to show that $X$ has nontrivial type. As in the proof of Theorem \ref{thm:HLimpliestypecotype} it is enough to construct a suitable example in the space $\ell^1_{(2N)^d}$ with $N\geq 1$.
Define $f:\T^d\to \ell^{1}_{(2N)^d}$ by
\[f(t) = ((|n|+1)^{-d} e^{2\pi i t \cdot n})_{1\leq |n|\leq N},\]
where $|n| = \max\{|n_j|: j\in \{1, \ldots, d\}\}$. Then
\[\|f(t)\|_{\ell^1_{(2N)^d}} \eqsim_d \log(N+1),\]
and therefore,
\begin{equation}\label{eq:growthLHS}
\|f\|_{L^{\infty, q}(\log L)^b(\T^d;X)} \eqsim_{b,d,q} \log(N+1)
\end{equation}
because $b < -1/q$. On the other hand, $\|\wh{f}(n)\|_{\ell^1_{(2N)^d}} = (|n|+1)^{-d}$ if $1\leq |n|\leq N$ and zero otherwise. Therefore
\begin{align*}
\|\wh{f}\|_{\ell^{1,q}(\log \ell)^{b + 1}(\Z^d;X)}  & \eqsim_{b,d,q} \Big(\sum_{n=1}^{ (2 N)^d} (1 + \log n)^{(b+1)q} \frac{1}{n}\Big)^{1/q} \\
 &\lesssim_{b,d, q} \left\{\begin{array}{lcl}
                                (\log(N+1))^{b+1+\frac1q} & \text{ if } & b + 1/q > -1, \\
                                             &              &              \\
                                (\log (\log (N + 1)))^{1/q} & \text{ if } & b + 1/q = -1, \\
                                & & \\
                                C & \text{ if } & b + 1/q < -1.
                                \end{array}
                        \right.
\end{align*}
Since $b<-1/q$ the asymptotic behaviour of this sequence is smaller than \eqref{eq:growthLHS}. This yields the required result.

\eqref{ThmZygmund1} $\Rightarrow$ \eqref{ThmZygmund*}: Assume that
\begin{equation}\label{DualFourierlL}
	\F : L^{1,q}(\log L)^{b+1}(\T^d;X) \to \ell^{\infty, q}(\log \ell)^{b}(\Z^d;X)
\end{equation}
for some $b > -1/q$. Then, a duality argument allows us to obtain
\begin{equation}\label{DualFourierlL*}
	\F : \ell^{1, q'}(\log \ell)^{-b}(\Z^d;X^\ast)\to L^{\infty,q'}(\log L)^{-b-1}(\T^d;X^\ast).
\end{equation}
Indeed, let $y = (y_m)$ be a sequence of elements of $X^\ast$ and let $g (t)= \sum_{m \in \Z^d} \wh{g}(m) e^{2 \pi i m \cdot t} \in L^{1,q}(\log L)^{b+1}(\T^d;X)$. It follows from the discrete version of the Hardy-Littlewood inequality \eqref{HL} (see \cite[p. 43]{Bennett-Sharpley88}), H\"older's inequality and \eqref{DualFourierlL} that
\begin{align*}
	\int_{\T^d} \Big|\Big\lb g(t), \sum_{m \in \Z^d} y_m e^{2 \pi i m\cdot t} \Big\rb \Big| \, \ud t  & = \sum_{m \in \Z^d} |\lb \wh{g}(m), y_m \rb|  \leq \sum_{m \in \Z^d} \|\wh{g}(m)\|_X \|y_m\|_{X^\ast}  \\
	& \hspace{-3cm} \leq \sum_{n=1}^\infty \|\wh{g}(n)\|_X^\ast \|y_n\|_{X^\ast}^\ast \leq \|\F g\|_{\ell^{\infty, q}(\log \ell)^b(\Z^d; X)} \|y\|_{\ell^{1,q'}(\log \ell)^{-b}(\Z^d;X^\ast)} \\
	& \hspace{-3cm} \lesssim \|g\|_{ L^{1,q}(\log L)^{b+1}(\T^d;X)} \|y\|_{\ell^{1,q'}(\log \ell)^{-b}(\Z^d;X^\ast)}.
\end{align*}
Taking now the supremum over all $g \in L^{1,q}(\log L)^{b+1}(\T^d;X)$ with $\|g\|_{L^{1,q}(\log L)^{b+1}(\T^d;X)} \leq 1$, we arrive at
\begin{equation*}
	\sup_{\|g\|_{L^{1,q}(\log L)^{b+1}(\T^d;X)} \leq 1} \int_{\T^d} \Big|\Big\lb g(t), \sum_{m \in \Z^d} y_m e^{2 \pi i m\cdot t} \Big\rb \Big| \, \ud t \lesssim  \|y\|_{\ell^{1,q'}(\log \ell)^{-b}(\Z^d;X^\ast)}.
\end{equation*}
Note that the left-hand side term is equivalent to $\left\|  \sum_{m \in \Z^d} y_m e^{2 \pi i m\cdot t}  \right\|_{L^{\infty,q'}(\log L)^{-b-1}(\T^d;X^\ast)}$ because $L^{1,q}(\log L)^{b+1}(\T^d;X)$ is norming for $L^{\infty,q'}(\log L)^{-b-1}(\T^d;X^\ast)$. This latter fact is well known in the scalar-valued case (see \cite[Theorem 6.2]{OpicPick}) and can be easily generalized to the vector-valued setting by following a similar argument as that given in the proof of Lemma \ref{lem:dualitynew} which takes into account the density of simple functions in Lorentz-Zygmund spaces (see \cite[Section 2.3.2]{CarroRaposoSoria}). Hence, we have shown \eqref{DualFourierlL*}.

Finally, since \eqref{ThmZygmund3} $\Rightarrow$ \eqref{ThmZygmund*} holds, we derive that $X^\ast$ has nontrivial Fourier type, or equivalently, $X$ has nontrivial Fourier type (see Proposition \ref{prop:duality}\eqref{DualF}).

\eqref{ThmZygmund2} $\Rightarrow$ \eqref{ThmZygmund*}: Suppose that
\begin{equation*}
	\F : L^{1,q}(\log L)^{-1/q+1} (\log \log L)^1(\T^d;X) \to \ell^{\infty, q}(\log \ell)^{-1/q}(\Z^d;X).
\end{equation*}
Then, applying duality in a similar fashion as in the proof of the implication \eqref{ThmZygmund1} $\Rightarrow$ \eqref{ThmZygmund*} given above, one can show that
\begin{equation}\label{FlLInfty}
	\F : \ell^{1, q'}(\log \ell)^{1/q}(\Z^d;X^\ast)\to L^{\infty,q'}(\log L)^{1/q-1} (\log \log L)^{-1}(\T^d;X^\ast).
\end{equation}
Next we prove that \eqref{FlLInfty} implies that $X^\ast$ has nontrivial type and hence, $X$ has nontrivial Fourier type. Indeed, define $f:\T^d\to \ell^{1}_{(2N)^d}$ by
\[f(t) = ((1 + |n|)^{-d}  (1 + \log |n|)^{-1} e^{2\pi i t \cdot n})_{1\leq |n|\leq N},\]
where $|n| = \max\{|n_j|: j\in \{1, \ldots, d\}\}$. Elementary computations yield that
\[\|f(t)\|_{\ell^1_{(2N)^d}} \eqsim_d \log (1 + \log N),\]
and so
\begin{equation}\label{eq:growthLHSnew}
\|f\|_{L^{\infty,q'}(\log L)^{1/q-1} (\log \log L)^{-1}(\T^d;\ell^{1}_{(2N)^d})} \eqsim_{d,q} \log (1 + \log N)
\end{equation}
because $\int_0^1 (1 + \log (1-\log t))^{-q'} \frac{dt}{t (1 - \log t)} < \infty$. On the other hand, since $\|\wh{f}(n)\|_{\ell^1_{(2N)^d}} = (1+ |n|)^{-d} (1 + \log |n|)^{-1} \one_{1 \leq |n| \leq N}$, we have
\begin{equation*}
\|\wh{f}\|_{\ell^{1,q'}(\log \ell)^{1/q}(\Z^d;\ell^{1}_{(2N)^d})}   \eqsim_{d,q} \Big(\sum_{n=1}^{ (2 N)^d}  \frac{1}{n (1 + \log n)}\Big)^{1/q'} \eqsim_{d,q} (\log (1 +\log N))^{1/q'}.
\end{equation*}
Then, according to \eqref{FlLInfty} and \eqref{eq:growthLHSnew}, we arrive at
\begin{equation*}
	\log (1 + \log N) \lesssim (\log (1 +\log N))^{1/q'}
\end{equation*}
which is not true. This allows us to conclude that $X^\ast$ does not contain $\ell^1_{(2N)^d}$'s uniformly and, by Lemma \ref{lem:Pisiertype}, $X^\ast$ has nontrivial type.

	\qed

\subsection{Proof of Theorem \ref{ThmZygmundExp}}\label{ProofThmZygmundExp}

 \eqref{ThmZygmundExp1}: By Theorem \ref{ThmZygmund}\eqref{ThmZygmund1}, $\F :  L^{1,q}(\log L)^{b+1}(\T^d;X) \to \ell^{\infty, q}(\log \ell)^{b}(\Z^d;X)$. Hence, since $\ell^{\infty, q}(\log \ell)^{b}(\Z^d;X) \hookrightarrow \ell^\infty (\log \ell)^{b+1/q}(\Z^d;X)$ (see \cite[Theorem 9.5]{BennettRudnick}), we derive
 \begin{equation*}
 	\F :   L^{1,q}(\log L)^{b+1}(\T^d;X) \to \ell^\infty (\log \ell)^{b+1/q}(\Z^d;X)
 \end{equation*}
 which yields \eqref{CorZyg1} because $\ell^\infty (\log \ell)^{b+1/q}(\Z^d;X)$ coincides with space of all vector-valued sequences $(c_n)$ such that
 \begin{equation*}
  \sum_{n \in \Z^d} \text{exp} \Big(- \lambda \|c_n\|_X^{-\frac{1}{b + \frac{1}{q}}} \Big) < \infty
 \end{equation*}
 for some $\lambda > 0$ which depends on $b, q$ and $\|(c_n)\|_{\ell^\infty (\log \ell)^{b+1/q}(\Z^d;X)}$ (see \cite[Theorem D']{BennettRudnick}).

 The proof of \eqref{ThmZygmundExp2} can be done in a similar way. We omit further details. 	\qed

\subsection{Proof of Theorem \ref{thm:PaleyExtreme}}\label{Section7.2}
	Since $X$ has Fourier type $p_0$, it holds that (see Lemma \ref{lem:ClassicalTrans})
	\begin{equation}\label{thm:PaleyExtreme1}
		\F : L^{p_0}(\T^d; X) \to \ell^{p_0'}(\Z^d;X).
	\end{equation}
	On the other hand, we have the trivial estimate
	\begin{equation}\label{thm:PaleyExtreme2}
		\F : L^{1}(\T^d; X) \to \ell^{\infty}(\Z^d;X).
	\end{equation}
	If we interpolate \eqref{thm:PaleyExtreme1} and \eqref{thm:PaleyExtreme2} by applying the interpolation method \eqref{LogIntSpace*} with $\theta = 1$, we get
	\begin{equation}\label{thm:PaleyExtreme3}
		\F : (L^{1}(\T^d; X), L^{p_0}(\T^d; X))_{1, q;b} \to ( \ell^{\infty}(\Z^d;X), \ell^{p_0'}(\Z^d;X))_{1,q;b}.
	\end{equation}
	Concerning the source space in \eqref{thm:PaleyExtreme3}, we invoke Lemma \ref{lem:LogInterpolationLebesgue} and the left-hand side embedding of Lemma \ref{lem:reiteration}\eqref{lem:reiteration1} to establish
	\begin{align}
		 (L^{1}(\T^d; X), L^{p_0}(\T^d; X))_{1, q;b} \nonumber \\
		 & \hspace{-2.5cm}=  (L^{1}(\T^d; X), (L^{1}(\T^d; X), L^{\infty}(\T^d; X))_{1/p_0',p_0})_{1,q;b} \nonumber\\
		 & \hspace{-2.5cm}\hookleftarrow (L^{1}(\T^d; X), L^{\infty}(\T^d; X))_{1/p_0',q;b + 1/\min\{p_0,q\}} \nonumber \\
		 & \hspace{-2.5cm} = L^{p_0,q}(\log L)^{b + 1/\min\{p_0,q\}}(\T^d;X). \label{thm:PaleyExtreme4}
	\end{align}
	As far as the target space, we use the right-hand side embedding of Lemma \ref{lem:reiteration}\eqref{lem:reiteration1} to get
	\begin{align}
		 ( \ell^{\infty}(\Z^d;X), \ell^{p_0'}(\Z^d;X))_{1, q;b} & \nonumber \\
		 & \hspace{-3.5cm}= (\ell^{\infty}(\Z^d;X), (\ell^{\infty}(\Z^d;X), \ell^1(\Z^d;X))_{1/p_0', p_0'})_{1,q;b} \nonumber \\
		 & \hspace{-3.5cm} \hookrightarrow (\ell^{\infty}(\Z^d;X), \ell^1(\Z^d;X))_{1/p_0', q; b+1/\max\{p_0',q\}} \nonumber \\
		 &  \hspace{-3.5cm} = \ell^{p_0',q}(\log \ell)^{b + 1/\max\{p_0',q\}}(\Z^d;X) . \label{thm:PaleyExtreme5}
	\end{align}
	Finally, it follows from \eqref{thm:PaleyExtreme3}, \eqref{thm:PaleyExtreme4} and \eqref{thm:PaleyExtreme5} that
	\begin{equation*}
		\F : L^{p_0,q}(\log L)^{b + 1/\min\{p_0,q\}}(\T^d;X) \to \ell^{p_0',q}(\log \ell)^{b + 1/\max\{p_0',q\}}(\Z^d;X) .
	\end{equation*}
\qed

Theorem \ref{thm:PaleyExtreme} can be improved under additional geometric assumptions on $X$. Indeed, the same method of proof of Theorem \ref{thm:PaleyExtreme} allows us to show the following
 \begin{theorem}\label{ThmFinalRem1}
	Let $X$ be a Banach space of Paley type $p_0 \in (1,2]$. Let $1 \leq q \leq \infty$ and $b < -1/q$. Then,
	\begin{equation}\label{thm:PaleyExtreme1*new}
		\F : L^{p_0, q}(\log L)^{b + 1/\min\{p_0, q\}} (\T^d;X) \to \ell^{p_0',q}(\log \ell)^{b + 1/\max\{p_0, q\}}(\Z^d;X).
	\end{equation}
	In particular, if $q > p_0$ then
		\begin{equation*}
	\wh{f}^\ast (n) \leq C n^{-1/p_0'} (1 + \log n)^{1/p_0 - 1/q} \|f\|_{L^{p_0,q}(\T^d;X)}, \quad n \in \N.
\end{equation*}
\end{theorem}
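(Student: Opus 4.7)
My plan is to mimic the proof of Theorem \ref{thm:PaleyExtreme} essentially verbatim, with the single change that the base endpoint estimate at $p_0$ is upgraded from the Hausdorff--Young estimate $\F:L^{p_0}(\T^d;X)\to \ell^{p_0'}(\Z^d;X)$ to the Paley estimate
\begin{equation*}
\F : L^{p_0}(\T^d; X) \to \ell^{p_0',p_0}(\Z^d;X),
\end{equation*}
which is available by hypothesis thanks to Proposition \ref{prop:PTrans}. The trivial endpoint $\F:L^1(\T^d;X)\to \ell^\infty(\Z^d;X)$ is retained. Applying the limiting interpolation method \eqref{LogIntSpace*} with $\theta=1$ to this pair then yields
\begin{equation*}
\F:(L^{1}(\T^d;X),L^{p_0}(\T^d;X))_{1,q;b}\to (\ell^{\infty}(\Z^d;X),\ell^{p_0',p_0}(\Z^d;X))_{1,q;b}.
\end{equation*}

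Next I would identify the two interpolation spaces. The source space is unchanged from the proof of Theorem \ref{thm:PaleyExtreme}: writing $L^{p_0}=(L^1,L^\infty)_{1/p_0',p_0}$ via Lemma \ref{lem:interpolationLebesgue} and invoking the left-hand embedding of Lemma \ref{lem:reiteration}(i) gives
\begin{equation*}
L^{p_0,q}(\log L)^{b+1/\min\{p_0,q\}}(\T^d;X)\hookrightarrow (L^{1}(\T^d;X),L^{p_0}(\T^d;X))_{1,q;b}.
\end{equation*}
For the target, the key difference is that now $\ell^{p_0',p_0}=(\ell^\infty,\ell^1)_{1/p_0',p_0}$ by Lemma \ref{lem:interpolationLebesgue}, with the second interpolation parameter equal to $p_0$ rather than $p_0'$. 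Hence the right-hand embedding of Lemma \ref{lem:reiteration}(i) produces
\begin{align*}
(\ell^{\infty}(\Z^d;X),\ell^{p_0',p_0}(\Z^d;X))_{1,q;b}
&=(\ell^{\infty}(\Z^d;X),(\ell^{\infty}(\Z^d;X),\ell^{1}(\Z^d;X))_{1/p_0',p_0})_{1,q;b}\\
&\hookrightarrow (\ell^{\infty}(\Z^d;X),\ell^{1}(\Z^d;X))_{1/p_0',q;\,b+1/\max\{p_0,q\}}\\
&=\ell^{p_0',q}(\log\ell)^{b+1/\max\{p_0,q\}}(\Z^d;X),
\end{align*}
which is strictly smaller than the corresponding target in Theorem \ref{thm:PaleyExtreme} since $p_0\leq p_0'$. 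Combining these embeddings with the above interpolated boundedness delivers \eqref{thm:PaleyExtreme1*new}.

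For the particular pointwise statement, I would specialize to $b=-1/p_0$ and $q>p_0$. The source exponent becomes $b+1/\min\{p_0,q\}=0$, so the source is $L^{p_0,q}(\T^d;X)$, while the target exponent becomes $-1/p_0+1/q$. The bound then follows from the elementary Lorentz--Zygmund embedding
\begin{equation*}
\ell^{p_0',q}(\log\ell)^{c}(\Z^d;X)\hookrightarrow \ell^{p_0',\infty}(\log\ell)^{c}(\Z^d;X),
\end{equation*}
obtained by splitting the defining series over dyadic blocks and using monotonicity of the rearrangement, which gives $\widehat{f}^\ast(n)\lesssim n^{-1/p_0'}(1+\log n)^{-c}\|f\|_{L^{p_0,q}(\T^d;X)}$ with $-c=1/p_0-1/q$.

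There is no real obstacle; the whole argument is a transparent variant of the proof of Theorem \ref{thm:PaleyExtreme}. The only point requiring any care is to notice that Paley type changes precisely the second interpolation index in the representation of the $\ell^{p_0',p_0}$-space, and consequently the exponent $1/\max\{p_0',q\}$ appearing in Theorem \ref{thm:PaleyExtreme} is replaced by $1/\max\{p_0,q\}$ here, which is a genuine strengthening whenever $q<p_0'$.
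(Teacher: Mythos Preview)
Your proposal is correct and follows exactly the approach indicated by the paper, which merely says that ``the same method of proof of Theorem \ref{thm:PaleyExtreme} allows us to show'' the result. You have correctly identified the single modification: the Paley type assumption (via Proposition \ref{prop:PTrans}) upgrades the endpoint to $\F:L^{p_0}(\T^d;X)\to \ell^{p_0',p_0}(\Z^d;X)$, so that in the target-side reiteration the inner space is $(\ell^\infty,\ell^1)_{1/p_0',p_0}$ with second index $p_0$ rather than $p_0'$, which is precisely what produces $1/\max\{p_0,q\}$ in place of $1/\max\{p_0',q\}$.
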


Note that $\ell^{p_0',q}(\log \ell)^{b + 1/\max\{p_0, q\}}(\Z^d;X) \hookrightarrow \ell^{p_0',q}(\log \ell)^{b + 1/\max\{p_0', q\}}(\Z^d;X)$ and $\ell^{p_0',q}(\log \ell)^{b + 1/\max\{p_0, q\}}(\Z^d;X) \neq \ell^{p_0',q}(\log \ell)^{b + 1/\max\{p_0', q\}}(\Z^d;X)$ if $q < p_0'$ and $p_0 \neq 2$. In particular, \eqref{thm:PaleyExtreme1*new} strengthens \eqref{thm:PaleyExtreme1new} for $X = \ell^{p_0}(\Z^d), \, 1 <p_0 < 2$ (see Example \ref{ex:Paley2}).

The same comment also applies to Remark \ref{RemarkBochSeq}. Namely, we have the following
 \begin{theorem}\label{ThmFinalRem2}
	Let $X$ be a Banach space of Paley type $p_0 \in (1,2]$. Let $1 \leq q \leq \infty$ and $b < -1/q$. Then,
		\begin{equation*}
		\F : \ell^{p_0, q}(\log \ell)^{b + 1/\min\{p_0, q\}} (\Z^d;X) \to L^{p_0',q}(\log L)^{b + 1/\max\{p_0, q\}}(\T^d;X).
	\end{equation*}
\end{theorem}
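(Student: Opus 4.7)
The plan is to mirror the argument given in Section \ref{Section7.2} for Theorem \ref{thm:PaleyExtreme}, with the crucial change that the starting endpoint is now the Paley-type estimate on sequences (via Proposition \ref{prop:PTrans}) rather than the plain Hausdorff--Young bound. Explicitly, since $X$ has Paley type $p_0$, Proposition \ref{prop:PTrans} gives the boundedness $\F:\ell^{p_0}(\Z^d;X)\to L^{p_0',p_0}(\T^d;X)$, which I combine with the trivial bound $\F:\ell^{1}(\Z^d;X)\to L^{\infty}(\T^d;X)$.

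I then apply the limiting real interpolation functor $(\,\cdot\,,\,\cdot\,)_{1,q;b}$ with $b<-1/q$ (which is where the hypothesis on $b$ is used, to ensure the scale is nontrivial), obtaining
\begin{equation*}
\F:(\ell^{1}(\Z^d;X),\ell^{p_0}(\Z^d;X))_{1,q;b}\to (L^{\infty}(\T^d;X),L^{p_0',p_0}(\T^d;X))_{1,q;b}.
\end{equation*}
For the domain, Lemma \ref{lem:LogInterpolationLebesgue} identifies $\ell^{p_0}(\Z^d;X)=(\ell^{1}(\Z^d;X),\ell^{\infty}(\Z^d;X))_{1/p_0',p_0}$, and then the left-hand embedding in Lemma \ref{lem:reiteration}\eqref{lem:reiteration1} gives
\begin{equation*}
\ell^{p_0,q}(\log\ell)^{b+1/\min\{p_0,q\}}(\Z^d;X)=(\ell^{1}(\Z^d;X),\ell^{\infty}(\Z^d;X))_{1/p_0',q;b+1/\min\{p_0,q\}}\hookrightarrow (\ell^{1}(\Z^d;X),\ell^{p_0}(\Z^d;X))_{1,q;b}.
\end{equation*}

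For the target, the key arithmetic point is that the Paley endpoint $L^{p_0',p_0}(\T^d;X)$ is the Lorentz space with \emph{inner} index $p_0$ (not $p_0'$). Writing $L^{p_0',p_0}(\T^d;X)=(L^{\infty}(\T^d;X),L^{1}(\T^d;X))_{1/p_0',p_0}$ by Lemma \ref{lem:LogInterpolationLebesgue} and then applying the right-hand embedding in Lemma \ref{lem:reiteration}\eqref{lem:reiteration1}, I obtain
\begin{equation*}
(L^{\infty}(\T^d;X),L^{p_0',p_0}(\T^d;X))_{1,q;b}\hookrightarrow (L^{\infty}(\T^d;X),L^{1}(\T^d;X))_{1/p_0',q;b+1/\max\{p_0,q\}}=L^{p_0',q}(\log L)^{b+1/\max\{p_0,q\}}(\T^d;X).
\end{equation*}
Combining the three displays yields the desired boundedness. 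I do not anticipate a genuine obstacle: the whole argument is a bookkeeping of reiteration indices, and the $\max\{p_0,q\}$ in place of $\max\{p_0',q\}$ is precisely the improvement produced by the inner Lorentz index $p_0$ of the Paley endpoint, which is exactly the reason Paley type is a strictly stronger hypothesis than Fourier type here.
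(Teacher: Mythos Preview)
Your overall strategy---combining the Paley endpoint $\F:\ell^{p_0}(\Z^d;X)\to L^{p_0',p_0}(\T^d;X)$ from Proposition~\ref{prop:PTrans} with the trivial $\F:\ell^1\to L^\infty$, applying limiting interpolation, and then invoking Lemma~\ref{lem:reiteration}\eqref{lem:reiteration1}---is exactly the intended route, and your observation that the inner Lorentz index $p_0$ of the Paley endpoint is what produces $\max\{p_0,q\}$ (rather than $\max\{p_0',q\}$) is the whole point of the theorem.

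There is, however, a systematic ordering slip that makes the argument as written not go through. The limiting spaces $(\cdot,\cdot)_{1,q;b}$ in Section~\ref{SectionLimInt} and Lemma~\ref{lem:reiteration} are defined for ordered couples $X_1\hookrightarrow X_0$. On $\Z^d$ one has $\ell^1\hookrightarrow\ell^{p_0}\hookrightarrow\ell^\infty$, so your couples $(\ell^1,\ell^{p_0})$ and $(\ell^1,\ell^\infty)$ are ordered the wrong way; likewise on $\T^d$ one has $L^\infty\hookrightarrow L^{p_0',p_0}\hookrightarrow L^1$, so $(L^\infty,L^{p_0',p_0})$ and $(L^\infty,L^1)$ are also reversed. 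Consequently the spaces $(\ell^1,\ell^{p_0})_{1,q;b}$ and $(L^\infty,L^{p_0',p_0})_{1,q;b}$ are not covered by the paper's definition, and Lemma~\ref{lem:reiteration}\eqref{lem:reiteration1} does not apply as you invoke it. (Compare with the proof of Theorem~\ref{thm:PaleyExtreme}: there the domain is $\T^d$ and the target is $\Z^d$, so the embeddings $L^{p_0}\hookrightarrow L^1$ and $\ell^{p_0'}\hookrightarrow\ell^\infty$ do give the correct ordering.)

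The fix is not deep: swap the couples to their correctly ordered form $(\ell^{p_0},\ell^1)$ and $(L^{p_0',p_0},L^\infty)$, and use the companion reiteration formula for $((X_0,X_1)_{\theta,p},X_1)_{0,q;\cdot}$ from the references \cite{EvansOpic,EvansOpicPick} (dual to Lemma~\ref{lem:reiteration}\eqref{lem:reiteration1}), or alternatively work within the generalized framework of Section~\ref{SectionAdditional2} for non-ordered couples. The numerics you wrote down then survive unchanged.
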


 \subsection{Other orthonormal systems}\label{SectionAdditional}

Both Fourier type (see Definition \ref{def:type}) and (Rademacher) type (see \eqref{eq:type}) are special cases of a more general notion of type introduced by Garc\'ia-Cuerva et al. \cite[Section 7]{GaKaKoTo98}. We start by recalling this notion.

Let $(S,  \Sigma, \mu)$ be a measure space, and let $\mu (S) = 1$. Let $\Phi = (\varphi_n)_{n \in \N}$ be an orthonormal system of scalar functions on $S$. Suppose further that $\Phi$ is \emph{uniformly bounded}, that is,
\begin{equation*}
	|\varphi_n (s)| \leq C \quad \text{for all} \quad n \in \N \quad \text{and} \quad s \in S.
\end{equation*}

For $f \in L^1(S;X)$ and $n \in \N$, the \emph{$n$-th Fourier coefficient of $f$} is given by
\begin{equation*}
	c_n(f) = \int_S f(s) \overline{\varphi_n(s)} \, d\mu(s).
\end{equation*}

 Let $p \in [1,2]$. We say that $X$ has \emph{$\Phi$-type $p$} if there exists a positive constant $C$ such that
 \begin{equation*}
 	\left( \int_S \left\|\sum_{k=1}^n \varphi_k(s) x_k \right\|_X^{p'} \, d \mu(s)\right)^{1/p'} \leq C \left(\sum_{k=1}^n \|x_k\|_X^p \right)^{1/p}
 \end{equation*}
 for all $x_1, \ldots, x_n \in X$, and $X$ has \emph{strong $\Phi-$cotype $p'$} if
 \begin{equation*}
 	\left(\sum_{n=1}^\infty \|c_n(f)\|_X^{p'} \right)^{1/p'} \leq C \left(\int_S \|f(s)\|_X^p \, d \mu(s) \right)^{1/p}.
 \end{equation*}
In general, there is no relation between these notions for different $\Phi$. Clearly, we have that $X$ has $\Phi$-type 1 and strong $\Phi$-cotype $\infty$. The space $X$ has \emph{non trivial $\Phi$-type} (respectively, \emph{non trivial strong $\Phi$-cotype}) if there exists $p \in (1,2]$ such that $X$ has $\Phi$-type $p$ (respectively, strong $\Phi$-cotype $p'$).

It turns out that the method of proof of all results given in this section can be extended, with obvious modifications, to arbitrary orthonormal systems. In particular, the counterparts of Theorems \ref{ThmZygmund} and \ref{ThmZygmundExp} for the notion of $\Phi$-type read as

\begin{theorem}
	Assume that $X$ has non trivial $\Phi$-type. Let $1 \leq q \leq \infty$ and $b < -1/q$. If $(c_n) \in \ell^{1,q}(\log \ell)^{b + 1}(\N;X)$ then $(c_n)$ is the sequence of Fourier coefficients of a function $f \in L^{\infty, q}(\log L)^b(S;X)$.
\end{theorem}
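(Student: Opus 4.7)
The plan is to mimic the argument \eqref{ThmZygmund*} $\Rightarrow$ \eqref{ThmZygmund3} of Theorem \ref{ThmZygmund}, with the Fourier transform replaced by the coefficient-to-function operator
\[
T\colon (c_n)_{n\in\N} \longmapsto \sum_{n\in\N} c_n \varphi_n,
\]
initially defined on finitely supported $X$-valued sequences and extended by density. By the nontrivial $\Phi$-type hypothesis there exists $p\in(1,2]$ such that $X$ has $\Phi$-type $p$, which is precisely the boundedness $T\colon \ell^p(\N;X)\to L^{p'}(S;X)$. The uniform bound $|\varphi_n|\leq C$ together with $\mu(S)=1$ also gives the trivial endpoint $T\colon \ell^1(\N;X)\to L^\infty(S;X)$. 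Both Banach couples are ordered, since $\ell^1(\N;X)\hookrightarrow \ell^p(\N;X)$ and $L^\infty(S;X)\hookrightarrow L^{p'}(S;X)$, so the limiting interpolation method \eqref{LogIntSpace*} applies.

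Applying the functor $(\,\cdot\,,\,\cdot\,)_{1,q;b}$ with $b<-1/q$ I would obtain
\[
T\colon (\ell^p(\N;X),\ell^1(\N;X))_{1,q;b}\longrightarrow (L^{p'}(S;X),L^\infty(S;X))_{1,q;b}.
\]
Exactly as in \eqref{ProofThmZygmund7}--\eqref{ProofThmZygmund9}, the reiteration formula Lemma \ref{lem:reiteration}\eqref{lem:reiteration3} combined with Lemma \ref{lem:LogInterpolationLebesgue} collapses the source to $(\ell^\infty(\N;X),\ell^1(\N;X))_{1,q;b}$ and the target to $(L^1(S;X),L^\infty(S;X))_{1,q;b}$. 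Both of these limiting spaces can be described via the $K$-functional identity $K(t,f;L^1,L^\infty)=\int_0^t f^\ast(s)\,\ud s$, which is valid both for the counting measure on $\N$ and for $(S,\mu)$. The discrete Hardy inequality in Lemma \ref{LemmaHardySeq}, precisely the place where the hypothesis $b+1/q<0$ is used, then yields
\[
\ell^{1,q}(\log\ell)^{b+1}(\N;X)\hookrightarrow (\ell^\infty(\N;X),\ell^1(\N;X))_{1,q;b},
\]
while the elementary bound $\int_0^t f^\ast(s)\,\ud s \geq t f^\ast(t)$ (as in \eqref{ProofThmZygmund9}) gives
\[
(L^1(S;X),L^\infty(S;X))_{1,q;b}\hookrightarrow L^{\infty,q}(\log L)^b(S;X).
\]

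Composition with $T$ then delivers the desired mapping $T\colon \ell^{1,q}(\log\ell)^{b+1}(\N;X)\to L^{\infty,q}(\log L)^b(S;X)$. It remains to check that $f:=T((c_n))$ genuinely has Fourier coefficients $(c_n)$: this is immediate for finitely supported sequences by orthonormality of $\Phi$, and extends to the whole source space by continuity of the coefficient functionals on $L^1(S;X)$, noting that $L^{\infty,q}(\log L)^b(S;X)\hookrightarrow L^1(S;X)$ since $\mu(S)=1$ and $b<-1/q$. The argument uses no feature of the trigonometric system beyond orthonormality, uniform boundedness and nontrivial $\Phi$-type, so no new ingredient is required; the only mildly delicate point is verifying that the reiteration and Hardy steps go through at the endpoint $\theta=1$ exactly as in Theorem \ref{ThmZygmund}\eqref{ThmZygmund3}, which presents no essential difficulty.
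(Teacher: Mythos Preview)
Your proposal is correct and follows essentially the same approach as the paper, which explicitly states that the proofs in Section~\ref{SectionAdditional} are obtained from those of Theorem~\ref{ThmZygmund} ``with obvious modifications'' to arbitrary uniformly bounded orthonormal systems. You have accurately identified all the ingredients: the two endpoint mapping properties of $T$, the limiting interpolation at $\theta=1$, the reiteration step via Lemma~\ref{lem:reiteration}\eqref{lem:reiteration3}, and the Hardy inequality of Lemma~\ref{LemmaHardySeq}; your additional verification that the resulting $f$ indeed has $(c_n)$ as its Fourier coefficients is a welcome detail that the paper leaves implicit.
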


\begin{theorem}
	Assume that $X$ has non trivial $\Phi$-type.
 If
\begin{equation*}
	\|(c_n) \|_{\ell^{1,q}(\log \ell)^{b+1}(\N;X)} \leq \rho
\end{equation*}
for some $1 \leq q \leq \infty$ and $b + 1/q < 0$, then $(c_n)$ is the sequence of Fourier coefficients of a function $f$ such that
\begin{equation*}
	\int_{S} \emph{exp} \Big(a(\rho,b, q) \|f(s)\|_X^{-\frac{1}{b + \frac{1}{q}}} \Big) \, d \mu(s)  \leq A(\rho,b, q)
\end{equation*}
for some positive constants $a(\rho, b, q)$ and $A(\rho, b, q)$.
\end{theorem}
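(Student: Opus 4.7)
The plan is to mimic the proof of Theorem \ref{ThmZygmundExp}\eqref{ThmZygmundExp2}, which in turn reduces to Theorem \ref{ThmZygmund}\eqref{ThmZygmund3} plus the Orlicz characterization of the exponential Zygmund space. The only thing to verify is that the interpolation argument behind the ``$X$ nontrivial Fourier type $\Rightarrow$ $\F:\ell^{1,q}(\log\ell)^{b+1}(\Z^d;X)\to L^{\infty,q}(\log L)^b(\T^d;X)$'' step (the implication \eqref{ThmZygmund*} $\Rightarrow$ \eqref{ThmZygmund3} of Theorem \ref{ThmZygmund}) carries over verbatim to the general orthonormal system setting.

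First I would fix $p_0\in(1,2]$ such that $X$ has $\Phi$-type $p_0$, and set $\F_\Phi(c_n) := \sum_n c_n\varphi_n$. By definition of $\Phi$-type we have boundedness $\F_\Phi:\ell^{p_0}(\N;X)\to L^{p_0'}(S;X)$, and by uniform boundedness of $\Phi$ we have the trivial estimate $\F_\Phi:\ell^1(\N;X)\to L^\infty(S;X)$. Applying the limiting interpolation functor $(\cdot,\cdot)_{1,q;b}$ to these two estimates (as in the proof of \eqref{ThmZygmund*} $\Rightarrow$ \eqref{ThmZygmund3}), reiteration (Lemma \ref{lem:reiteration}\eqref{lem:reiteration3}) together with Lemma \ref{lem:LogInterpolationLebesgue} and the $K$-functional identification \eqref{ProofThmZygmund3} yield, exactly as in \eqref{ProofThmZygmund8}--\eqref{ProofThmZygmund9},
\begin{equation*}
\F_\Phi:\ell^{1,q}(\log\ell)^{b+1}(\N;X)\to L^{\infty,q}(\log L)^b(S;X),
\end{equation*}
with the norm bound depending only on $p_0,q,b$ and $C$. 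Note the $K$-functional formula for $(L^1(S;X),L^\infty(S;X))$ and the corresponding Hardy inequality (Lemma \ref{LemmaHardy}) hold on any $\sigma$-finite measure space, so the trigonometric nature of $\Phi$ is never used.

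Next I would invoke the continuous embedding
\begin{equation*}
L^{\infty,q}(\log L)^b(S;X)\hookrightarrow L^\infty(\log L)^{b+1/q}(S;X),
\end{equation*}
which follows from \cite[Theorem 9.5]{BennettRudnick} and holds since $b+1/q<0$. Combining with the previous step, we get that if $\|(c_n)\|_{\ell^{1,q}(\log\ell)^{b+1}(\N;X)}\le \rho$ then the function $f=\F_\Phi(c_n)$ satisfies $\|f\|_{L^\infty(\log L)^{b+1/q}(S;X)} \le C(\rho,b,q)$.

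Finally, by \cite[Theorem D']{BennettRudnick} applied to the scalar function $s\mapsto \|f(s)\|_X$, the space $L^\infty(\log L)^{b+1/q}(S;X)$ coincides with the exponential Orlicz-type space of all $f$ for which there exists $\lambda>0$ with
\begin{equation*}
\int_S \exp\!\Big(\lambda\,\|f(s)\|_X^{-\tfrac{1}{b+1/q}}\Big)\,\ud\mu(s)<\infty,
\end{equation*}
and a quantitative form of this equivalence converts the bound on $\|f\|_{L^\infty(\log L)^{b+1/q}(S;X)}$ into constants $a(\rho,b,q),A(\rho,b,q)>0$ with the desired integral estimate. The main (and essentially only) obstacle is Step~1: confirming that each ingredient in the proof of \eqref{ThmZygmund*}$\Rightarrow$\eqref{ThmZygmund3} (the interpolation identity, the $K$-functional formula, and the Hardy inequality) is in fact independent of the specific orthonormal system. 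Once this is in place the rest is routine.
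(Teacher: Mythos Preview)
Your proposal is correct and follows exactly the approach the paper indicates: the paper explicitly states that ``the method of proof of all results given in this section can be extended, with obvious modifications, to arbitrary orthonormal systems,'' and your sketch carries out precisely those modifications to the proof of Theorem~\ref{ThmZygmundExp}\eqref{ThmZygmundExp2} via the implication \eqref{ThmZygmund*}$\Rightarrow$\eqref{ThmZygmund3}. One minor correction: in Step~1 the Hardy-type inequality used on the source side is Lemma~\ref{LemmaHardySeq} (for sequences), not Lemma~\ref{LemmaHardy}; the target-side estimate \eqref{ProofThmZygmund9} needs only the trivial bound $\int_0^t f^*\ge t f^*(t)$ and the $K$-functional formula \eqref{ProofThmZygmund3}, which indeed holds on any $\sigma$-finite $(S,\mu)$.
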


\begin{corollary}\label{Cor7.19}
	Assume that $X$ has non trivial $\Phi$-type. If
\begin{equation*}
	\sum_{n =1}^\infty \|c_n\|_X \left(\log^{+} \left(\frac{1}{\|c_n\|_X}\right) \right)^{-b} \leq \rho
\end{equation*}
for some $b > 0$, then $(c_n)$ is the sequence of Fourier coefficients of a function $f$ such that
\begin{equation*}
	\int_{S} \emph{exp} \Big(a(\rho,b) \|f(t)\|_X^{\frac{1}{b}} \Big) \, d \mu(s)  \leq A(\rho,b)
\end{equation*}
for some positive constants $a(\rho, b)$ and $A(\rho, b)$.
\end{corollary}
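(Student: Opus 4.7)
The plan is to obtain Corollary \ref{Cor7.19} as a direct specialization, to $q=1$ and after a trivial reparametrization, of the preceding theorem (the $\Phi$-type analogue of Theorem \ref{ThmZygmundExp}\eqref{ThmZygmundExp2}). The only ingredients needed beyond that theorem are the equivalence, in the scale of Lorentz--Zygmund sequence spaces, between the quasi-norm and an elementary Orlicz-type expression.

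First, I would set $\tilde b = -b-1$, so that the hypothesis $\tilde b + 1/q < 0$ of the preceding theorem holds at $q=1$, since $b > 0$. By \cite[Theorems D and D']{BennettRudnick}, applied to the scalar sequence $(\|c_n\|_X)_{n\in\N}$, one has the equivalence
\begin{equation*}
\|(c_n)\|_{\ell^{1,1}(\log \ell)^{\tilde b+1}(\N;X)} \eqsim \sum_{n=1}^\infty \|c_n\|_X \left(\log^{+}\!\left(\tfrac{1}{\|c_n\|_X}\right)\right)^{\tilde b+1} = \sum_{n=1}^\infty \|c_n\|_X \left(\log^{+}\!\left(\tfrac{1}{\|c_n\|_X}\right)\right)^{-b},
\end{equation*}
so the hypothesis of the corollary translates into $\|(c_n)\|_{\ell^{1,1}(\log \ell)^{\tilde b+1}(\N;X)} \lesssim \rho$.

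Next, I would invoke the preceding theorem with $q=1$ and parameter $\tilde b$: it supplies a function $f$ on $S$ whose Fourier coefficients with respect to $\Phi$ are precisely $(c_n)$ and which satisfies
\begin{equation*}
\int_S \exp\!\bigl(a(\rho,\tilde b,1)\,\|f(s)\|_X^{-1/(\tilde b+1)}\bigr)\, d\mu(s) \leq A(\rho,\tilde b,1).
\end{equation*}
Since $-1/(\tilde b+1) = 1/b$, this is the desired estimate with $a(\rho,b) := a(\rho,-b-1,1)$ and $A(\rho,b) := A(\rho,-b-1,1)$.

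The main (and essentially only) non-routine point lies not in this corollary itself but in the preceding theorem, which is the $\Phi$-type variant of Theorem \ref{ThmZygmundExp}\eqref{ThmZygmundExp2}; however, the proof of Section \ref{ProofThmZygmundExp} transfers verbatim, because the only properties of $\F$ used there are the trivial bound $L^1(S;X)\to\ell^\infty(\N;X)$ (which holds for any uniformly bounded orthonormal system by the definition of $c_n$) and the nontrivial bound $L^{p}(S;X)\to\ell^{p'}(\N;X)$ (which is now supplied by the assumption of nontrivial $\Phi$-type in place of nontrivial Fourier type). The limiting-interpolation reiteration formulas of Lemma \ref{lem:reiteration} and the Hardy-type estimate of Lemma \ref{LemmaHardySeq} are applied unchanged.
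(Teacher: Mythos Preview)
Your argument is correct and matches the paper's approach: Corollary~\ref{Cor7.19} is obtained from the preceding theorem exactly as Corollary~\ref{CorZyg} is obtained from Theorem~\ref{ThmZygmundExp}, namely by specializing to $q=1$ and invoking the Orlicz--norm equivalence from \cite[Theorems D and D']{BennettRudnick}.

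One small correction in your final paragraph: the direction of the Fourier map is reversed. Nontrivial $\Phi$-type gives the bound $\ell^p(\N;X)\to L^{p'}(S;X)$ (sequences to functions), and the trivial estimate used alongside it is $\ell^1(\N;X)\to L^\infty(S;X)$ (which holds because $\Phi$ is uniformly bounded). You wrote these as $L^1(S;X)\to\ell^\infty(\N;X)$ and $L^p(S;X)\to\ell^{p'}(\N;X)$, which would correspond to strong $\Phi$-cotype, not $\Phi$-type. Since the corollary produces a function from a coefficient sequence, the sequences-to-functions direction is the one needed, and this is precisely what $\Phi$-type supplies.
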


The corresponding assertions for strong $\Phi$-cotype also hold.

\begin{theorem}
	Assume that $X$ has non trivial strong $\Phi$-cotype. Then
		\begin{enumerate}[\upshape(i)]
	\item Let $1 \leq q \leq \infty$ and $b > -1/q$. If $f \in L^{1,q}(\log L)^{b+1}(S;X)$ then $(c_n(f)) \in \ell^{\infty, q}(\log \ell)^{b}(\N;X)$.
	\item  Let $1 \leq q < \infty$. If $f \in L^{1,q}(\log L)^{-1/q +1} (\log \log L)^{1}(S;X)$ then $(c_n(f)) \in \ell^{\infty, q}(\log \ell)^{-1/q}(\N;X)$.
	\end{enumerate}
\end{theorem}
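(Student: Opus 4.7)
The plan is to follow, essentially verbatim, the argument used for the implication \eqref{ThmZygmund*} $\Rightarrow$ \eqref{ThmZygmund1}, \eqref{ThmZygmund2} of Theorem \ref{ThmZygmund}, replacing the Fourier transform on $\T^d$ by the Fourier coefficient map $T:f\mapsto (c_n(f))$ associated with the uniformly bounded orthonormal system $\Phi=(\varphi_n)$. The two required endpoint estimates are already at hand: the hypothesis that $X$ has nontrivial strong $\Phi$-cotype gives $T:L^p(S;X)\to \ell^{p'}(\N;X)$ for some $p\in(1,2]$, while the uniform boundedness $|\varphi_n|\leq C$ together with the definition of $c_n(f)$ yields the trivial endpoint $T:L^1(S;X)\to \ell^\infty(\N;X)$.

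First I would apply the limiting real interpolation functor $(\cdot,\cdot)_{0,q;b}$ to these two endpoint estimates to obtain
\begin{equation*}
T:(L^1(S;X),L^p(S;X))_{0,q;b}\to(\ell^\infty(\N;X),\ell^{p'}(\N;X))_{0,q;b}.
\end{equation*}
On the source side I would reiterate exactly as in \eqref{ProofThmZygmund2}, using Lemmas \ref{lem:LogInterpolationLebesgue} and \ref{lem:reiteration}\eqref{lem:reiteration2}, to identify the space with $(L^1(S;X),L^\infty(S;X))_{0,q;b}$; its norm is then expressed via the K-functional \eqref{ProofThmZygmund3}, which is valid for any $\sigma$-finite measure space and in particular for $(S,\Sigma,\mu)$. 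Applying Lemma \ref{LemmaHardy}\eqref{HardyInequal1} when $b>-1/q$, respectively Lemma \ref{LemmaHardy}\eqref{HardyInequal3} when $b=-1/q$ (this gives part (ii)), bounds this K-functional norm by the Lorentz--Zygmund norm $\|f\|_{L^{1,q}(\log L)^{b+1}(S;X)}$, respectively $\|f\|_{L^{1,q}(\log L)^{-1/q+1}(\log\log L)(S;X)}$.

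For the target side I would run the analogue of \eqref{ProofThmZygmund6}: reiteration reduces $(\ell^\infty(\N;X),\ell^{p'}(\N;X))_{0,q;b}$ to $(\ell^\infty(\N;X),\ell^1(\N;X))_{0,q;b}$, and the K-functional for this couple is (a constant multiple of) $i^{-1}\sum_{j\leq i}c_j^*$. Bounding this sum from below by its last term $c_i^*$ and using monotonicity of the rearrangement gives the embedding into $\ell^{\infty,q}(\log\ell)^b(\N;X)$, which concludes both (i) and (ii).

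The only point that requires a moment's care is the $K$-functional identity for $(L^1(S;X),L^\infty(S;X))$ on a general $\sigma$-finite measure space; this is classical in the scalar case (cf.\ \eqref{ProofThmZygmund3}) and lifts to the vector-valued setting by applying it to $\|f(\cdot)\|_X$, so no substantive new work is needed. The same remark applies to the density of simple functions required to reduce to the defining inequality of strong $\Phi$-cotype. Thus I expect no genuine obstacle: the argument is a direct adaptation of the proof of Theorem \ref{ThmZygmund}, with $\T^d$ replaced by $S$ and $\Z^d$ replaced by $\N$, and with the two endpoint mapping properties of $\F$ replaced by the two endpoint mapping properties of $T$.
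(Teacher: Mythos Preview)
Your proposal is correct and follows exactly the approach the paper intends: the paper does not give a separate proof of this theorem, but explicitly states that ``the method of proof of all results given in this section can be extended, with obvious modifications, to arbitrary orthonormal systems,'' referring back to the proof of the implication \eqref{ThmZygmund*} $\Rightarrow$ \eqref{ThmZygmund1}, \eqref{ThmZygmund2} of Theorem~\ref{ThmZygmund}. You have correctly identified the two endpoint bounds (strong $\Phi$-cotype for the nontrivial endpoint, uniform boundedness of $\Phi$ for the trivial one) and carried out the limiting interpolation, reiteration, and Hardy-inequality steps exactly as in Section~\ref{ProofThmZygmund}; your remark that the $K$-functional identity~\eqref{ProofThmZygmund3} holds on the probability space $(S,\Sigma,\mu)$ is the only point needing mention, and it is indeed routine.
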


\begin{theorem}
		Assume that $X$ has non trivial strong $\Phi$-cotype. If
\begin{equation*}
 	\|f\|_{L^{1,q}(\log L)^{b+1}(S;X)} \leq \rho
 \end{equation*}
for some $1 \leq q \leq \infty$ and $b  + 1/q > 0$, then
\begin{equation*}
	\sum_{n =1}^\infty \emph{exp} \Big(-a(\rho,b, q) \|c_n(f)\|_X^{-\frac{1}{b + \frac{1}{q}}} \Big) \leq A(\rho,b, q)
\end{equation*}
for some positive constants $a(\rho, b, q)$ and $A(\rho, b, q)$.
\end{theorem}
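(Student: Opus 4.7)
The plan is to mirror the argument already carried out in the proof of Theorem~\ref{ThmZygmundExp}\eqref{ThmZygmundExp1}, but feeding it with the $\Phi$-version of the Zygmund/Lorentz--Zygmund mapping estimate rather than the classical Fourier one. The three ingredients are: a Lorentz--Zygmund boundedness statement for the coefficient operator $f\mapsto(c_n(f))$, an embedding between Lorentz--Zygmund and Zygmund sequence spaces, and the Bennett--Rudnick identification of a Zygmund sequence space with an Orlicz-type exponential space.

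First, I would invoke the preceding theorem (the strong $\Phi$-cotype analogue of Theorem~\ref{ThmZygmund}\eqref{ThmZygmund1}), which is stated two items above the target statement and whose proof is claimed to work \emph{mutatis mutandis} as in the Fourier transform case. This gives
\begin{equation*}
 f\longmapsto (c_n(f)) \ : \ L^{1,q}(\log L)^{b+1}(S;X) \longrightarrow \ell^{\infty,q}(\log \ell)^{b}(\N;X),
\end{equation*}
valid for $1\leq q\leq\infty$ and $b+1/q>0$ whenever $X$ has nontrivial strong $\Phi$-cotype. In particular, if $\|f\|_{L^{1,q}(\log L)^{b+1}(S;X)}\leq \rho$, then $\|(c_n(f))\|_{\ell^{\infty,q}(\log\ell)^{b}(\N;X)}\leq C\rho$, where $C$ depends only on $b,q$ and the strong $\Phi$-cotype constant of $X$.

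Next, I would quote the embedding $\ell^{\infty,q}(\log\ell)^{b}(\N;X)\hookrightarrow \ell^{\infty}(\log\ell)^{b+1/q}(\N;X)$ (Bennett--Rudnick, \cite[Theorem~9.5]{BennettRudnick}), which requires precisely the hypothesis $b+1/q>0$ to give a nontrivial conclusion. This yields a bound
\begin{equation*}
 \sup_{n\geq 1}\, (1+\log n)^{b+1/q}\, c^{\ast}_{n}(f) \ \leq\ C'\rho,
\end{equation*}
with $C'=C'(b,q,X)$; here $c^{\ast}_{n}(f)$ denotes the nonincreasing rearrangement of $(\|c_n(f)\|_X)_{n\geq 1}$. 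Finally, I would apply the characterization of $\ell^{\infty}(\log\ell)^{b+1/q}(\N;X)$ as an exponential Orlicz-type space (essentially \cite[Theorem~D$'$]{BennettRudnick}): a sequence $(d_n)$ belongs to this space with quasi-norm $\leq M$ if and only if there exist constants $a=a(M,b,q)>0$ and $A=A(M,b,q)<\infty$ such that $\sum_{n}\exp\bigl(-a\|d_n\|_X^{-1/(b+1/q)}\bigr)\leq A$. Applying this with $M=C'\rho$ to $d_n=c_n(f)$ gives the desired conclusion, with $a(\rho,b,q)$ and $A(\rho,b,q)$ depending only on the stated parameters (and the strong $\Phi$-cotype of $X$, which is fixed).

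The only genuinely nontrivial step is the first one, namely verifying that the proof of Theorem~\ref{ThmZygmund}\eqref{ThmZygmund1} transfers to general uniformly bounded orthonormal systems under strong $\Phi$-cotype. The Fourier-case argument relied on (a) the trivial endpoint $L^{1}\to\ell^{\infty}$ estimate, which is immediate from uniform boundedness of $\Phi$ since $\|c_n(f)\|_X\leq C\|f\|_{L^1(S;X)}$; (b) the nontrivial endpoint $L^{p_0}(S;X)\to\ell^{p_0'}(\N;X)$, which is exactly the definition of strong $\Phi$-cotype $p_0'$; and (c) limiting real interpolation together with the Hardy inequalities of Lemma~\ref{LemmaHardy}, which are purely scalar and need no modification. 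These combine via Lemmas~\ref{lem:LogInterpolationLebesgue} and \ref{lem:reiteration}\eqref{lem:reiteration2} exactly as in the proof of \eqref{ThmZygmund1}, so no new machinery is required.
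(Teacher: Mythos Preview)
Your proposal is correct and follows essentially the same approach as the paper: the paper does not give an explicit proof here but declares that the method of Theorem~\ref{ThmZygmundExp}\eqref{ThmZygmundExp1} transfers \emph{mutatis mutandis}, and you have correctly identified and spelled out that method (the $\Phi$-analogue of Theorem~\ref{ThmZygmund}\eqref{ThmZygmund1}, then the Bennett--Rudnick embedding $\ell^{\infty,q}(\log\ell)^b\hookrightarrow\ell^\infty(\log\ell)^{b+1/q}$, then the Orlicz identification \cite[Theorem~D$'$]{BennettRudnick}). Your final paragraph explaining why the interpolation argument behind the first step goes through for general uniformly bounded $\Phi$ under strong $\Phi$-cotype is also accurate and matches the paper's implicit reasoning.
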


\begin{corollary}
	Assume that $X$ has non trivial strong $\Phi$-cotype. If
	\begin{equation*}
 	\int_{S} \|f(t)\|_X (\log^+ \|f(s)\|_X)^b \, d\mu(s) \leq \rho
 \end{equation*}
for some $b > 0$, then
\begin{equation*}
	\sum_{n =1}^\infty \emph{exp} \Big(-a(\rho,b) \|c_n(f)\|_X^{-\frac{1}{b}} \Big) \leq A(\rho,b)
\end{equation*}
for some positive constants $a(\rho, b)$ and $A(\rho, b)$.
\end{corollary}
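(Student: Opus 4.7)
The plan is to derive this corollary from the immediately preceding theorem (the strong $\Phi$-cotype analogue of Theorem \ref{ThmZygmundExp}(i)) by specializing to $q=1$ and performing a parameter shift, in exact parallel to the way Corollary \ref{CorZyg}(i) is deduced from Theorem \ref{ThmZygmundExp}(i).

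First I would invoke the preceding theorem with $q=1$ and parameter $b' := b-1$. Since $b>0$ one has $b' + 1/q = b > 0$, so the hypothesis $b' + 1/q > 0$ is satisfied. The theorem then asserts that whenever $\|f\|_{L^{1,1}(\log L)^{b}(S;X)} = \|f\|_{L^1(\log L)^{b}(S;X)} \leq \rho'$, one has
\begin{equation*}
\sum_{n=1}^\infty \exp\Big(-a(\rho',b)\,\|c_n(f)\|_X^{-1/b}\Big) \leq A(\rho',b).
\end{equation*}
Next I would apply the norm equivalence
\begin{equation*}
\|f\|_{L^1(\log L)^{b}(S;X)} \asymp \int_S \|f(s)\|_X \bigl(\log^+ \|f(s)\|_X\bigr)^{b}\, d\mu(s),
\end{equation*}
which is the vector-valued reformulation of \cite[Theorems D and D']{BennettRudnick} already used in the derivation of Corollary \ref{CorZyg}. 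This equivalence holds on any $\sigma$-finite measure space because both sides are determined by the nonincreasing rearrangement of $s\mapsto \|f(s)\|_X$. Combining the two ingredients, the hypothesis $\int_S \|f\|_X(\log^+\|f\|_X)^{b}\, d\mu \leq \rho$ yields $\|f\|_{L^1(\log L)^{b}(S;X)} \leq C_b \rho$, and the desired conclusion follows on setting $a(\rho,b) := a(C_b\rho,b)$ and $A(\rho,b) := A(C_b\rho,b)$.

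Since the argument consists merely of a parameter substitution followed by a standard norm equivalence, no substantive obstacle arises. The one point meriting an explicit check is that the Bennett--Rudnick equivalence passes verbatim from $\T^d$ to the general probability space $(S,\Sigma,\mu)$ considered here; this is automatic via the equimeasurability of $s\mapsto \|f(s)\|_X$ with $f^\ast$, which reduces both sides to identical integrals on $(0,1)$.
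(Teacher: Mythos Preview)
Your proposal is correct and follows exactly the approach indicated by the paper: the corollary is obtained from the preceding theorem by specializing to $q=1$ (with the parameter shift $b'=b-1$) and invoking the Bennett--Rudnick norm equivalence, precisely as Corollary \ref{CorZyg}(i) is deduced from Theorem \ref{ThmZygmundExp}(i). The paper does not spell out this proof separately, merely noting that the arguments of the section extend ``with obvious modifications'' to arbitrary orthonormal systems; your remark that the norm equivalence transfers to the general probability space $(S,\Sigma,\mu)$ via equimeasurability with $f^\ast$ is the one detail worth making explicit, and you have done so.
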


Similarly, the vector-valued Bochkarev inequalities given in Theorem \ref{thm:PaleyExtreme} and Remark \ref{RemarkBochSeq} can be extended to arbitrary orthonormal systems.

\begin{theorem}\label{ThmFinalRem3}
	Let $X$ be a Banach space of strong $\Phi$-cotype $p_0' \in [2, \infty)$. Let $1 \leq q \leq \infty$ and $b < -1/q$. Then,
	\begin{equation*}
		\F : L^{p_0, q}(\log L)^{b + 1/\min\{p_0, q\}} (S;X) \to \ell^{p_0',q}(\log \ell)^{b + 1/\max\{p_0', q\}}(\N;X).
	\end{equation*}
		In particular, if $q > p_0$ then
	\begin{equation*}
 c_n(f)^\ast \leq C n^{-1/p_0'} (1 + \log n)^{1/p_0 - 1/\max\{p_0',q\}} \|f\|_{L^{p_0,q}(S;X)}, \quad n \in \N.
\end{equation*}
\end{theorem}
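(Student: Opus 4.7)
The strategy is to imitate the proof of Theorem \ref{thm:PaleyExtreme} verbatim, replacing the Fourier transform on $\T^d$ by the orthonormal coefficient map $\F_\Phi f = (c_n(f))_{n\in\N}$ on $(S,\Sigma,\mu)$. The proof of Theorem \ref{thm:PaleyExtreme} only used two boundedness estimates (Hausdorff--Young in the form of Fourier type plus the trivial $L^1\to \ell^\infty$ bound), and then limiting interpolation with reiteration. Both ingredients are available in the present setting.

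First, the assumption that $X$ has strong $\Phi$-cotype $p_0'$ gives, by definition,
\begin{equation*}
\F_\Phi : L^{p_0}(S;X) \to \ell^{p_0'}(\N;X).
\end{equation*}
Second, since $\Phi = (\varphi_n)$ is uniformly bounded, say $\|\varphi_n\|_\infty \leq M$, one has the trivial estimate
\begin{equation*}
\|c_n(f)\|_X \leq M \|f\|_{L^1(S;X)}, \quad n \in \N,
\end{equation*}
and therefore $\F_\Phi : L^{1}(S;X) \to \ell^{\infty}(\N;X)$.

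Next I would apply the limiting interpolation method \eqref{LogIntSpace*} with $\theta = 1$ to these two endpoint estimates, obtaining
\begin{equation*}
\F_\Phi : (L^{1}(S; X), L^{p_0}(S; X))_{1,q;b} \to (\ell^{\infty}(\N;X), \ell^{p_0'}(\N;X))_{1,q;b}
\end{equation*}
for every $1 \leq q \leq \infty$ and $b < -1/q$. To identify the source space, one rewrites $L^{p_0}(S;X) = (L^1(S;X), L^\infty(S;X))_{1/p_0',p_0}$ via Lemma \ref{lem:LogInterpolationLebesgue}, invokes the left embedding in Lemma \ref{lem:reiteration}\eqref{lem:reiteration1}, and applies Lemma \ref{lem:LogInterpolationLebesgue} once more to get
\begin{equation*}
(L^{1}(S; X), L^{p_0}(S; X))_{1,q;b} \hookleftarrow L^{p_0,q}(\log L)^{b + 1/\min\{p_0,q\}}(S;X).
\end{equation*}
An entirely analogous manipulation with the target couple, this time using the right embedding in Lemma \ref{lem:reiteration}\eqref{lem:reiteration1}, yields
\begin{equation*}
(\ell^{\infty}(\N;X), \ell^{p_0'}(\N;X))_{1,q;b} \hookrightarrow \ell^{p_0',q}(\log \ell)^{b+1/\max\{p_0',q\}}(\N;X),
\end{equation*}
and combining these three displays is exactly the claimed mapping property.

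For the \emph{In particular} clause, take $b = -1/\min\{p_0,q\} = -1/p_0$, which is $<-1/q$ precisely because $q>p_0$; then the source becomes $L^{p_0,q}(S;X)$ (the logarithmic exponent vanishes) and the target is $\ell^{p_0',q}(\log \ell)^{-1/p_0+1/\max\{p_0',q\}}(\N;X)$. The pointwise estimate on $c_n(f)^\ast$ is extracted from the standard embedding of the Lorentz--Zygmund space into its weak analogue, namely $t^{1/p_0'}(1+\log t)^{1/p_0 - 1/\max\{p_0',q\}} c_n(f)^\ast \lesssim \|\F_\Phi f\|_{\ell^{p_0',q}(\log \ell)^{-1/p_0+1/\max\{p_0',q\}}}$ by monotonicity of the rearrangement. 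The only mildly delicate point in the whole scheme is checking that the two reiteration embeddings from Lemma \ref{lem:reiteration}\eqref{lem:reiteration1} assemble with the correct logarithmic exponents on source and target sides; since one needs the $+1/\min$ on the source (lower bound) and $+1/\max$ on the target (upper bound), the pair of embeddings must be oriented in opposite directions, which is precisely what Lemma \ref{lem:reiteration}\eqref{lem:reiteration1} provides.
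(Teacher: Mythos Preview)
Your proposal is correct and follows exactly the approach the paper intends: the paper does not give a separate proof of Theorem~\ref{ThmFinalRem3} but simply states that the proof of Theorem~\ref{thm:PaleyExtreme} extends verbatim to arbitrary uniformly bounded orthonormal systems, which is precisely what you carry out. One small slip: in your final display the logarithmic exponent on the left-hand side should be $-1/p_0+1/\max\{p_0',q\}$ (matching the target space), not its negative; with that correction the rearrangement gives the stated pointwise bound.
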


\begin{theorem}\label{ThmFinalRem4}
	Let $X$ be a Banach space of $\Phi$-type $p_0 \in (1,2]$. Let $1 \leq q \leq \infty$ and $b < -1/q$. Then,
	\begin{equation*}
		\F : \ell^{p_0, q}(\log \ell)^{b + 1/\min\{p_0, q\}} (\N;X) \to L^{p_0',q}(\log L)^{b + 1/\max\{p_0', q\}}(S;X).
	\end{equation*}
\end{theorem}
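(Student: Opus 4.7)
The proof will mimic the one of Theorem \ref{thm:PaleyExtreme}, adapted to the synthesis direction $\N \to S$. The starting point is a pair of endpoint estimates for $\F$: from the assumption that $X$ has $\Phi$-type $p_0$, one has
$$\F : \ell^{p_0}(\N;X) \to L^{p_0'}(S;X),$$
and from the uniform bound $|\varphi_n(s)| \leq C$, one trivially has
$$\F : \ell^{1}(\N;X) \to L^{\infty}(S;X).$$

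Next, I would interpolate these two bounds via the $\theta = 1$ limiting real interpolation method \eqref{LogIntSpace*} at the given parameter $b$, $b < -1/q$, applied to the ordered couples $(\ell^{p_0}(\N;X), \ell^1(\N;X))$ (ordered by $\ell^1 \hookrightarrow \ell^{p_0}$) and $(L^{p_0'}(S;X), L^\infty(S;X))$ (ordered by $L^\infty \hookrightarrow L^{p_0'}$, since $\mu(S) = 1$). The interpolation property then yields
$$\F : (\ell^{p_0}(\N;X), \ell^1(\N;X))_{1, q; b} \to (L^{p_0'}(S;X), L^\infty(S;X))_{1, q; b}.$$

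Finally, I would invoke Lemma \ref{lem:reiteration}\eqref{lem:reiteration1} together with Lemma \ref{lem:LogInterpolationLebesgue} to embed the source Lorentz-Zygmund space $\ell^{p_0, q}(\log \ell)^{b + 1/\min\{p_0, q\}}(\N;X)$ into the source limiting interpolation space, and to embed the target limiting interpolation space into $L^{p_0', q}(\log L)^{b + 1/\max\{p_0', q\}}(S;X)$. Concretely, on the source one writes the middle space as $\ell^{p_0} = (\ell^\infty, \ell^1)_{1/p_0, p_0}$ and applies the left embedding of Lemma \ref{lem:reiteration}\eqref{lem:reiteration1} with $X_0 = \ell^\infty$, $X_1 = \ell^1$, $\theta = 1/p_0$, $p = p_0$; dually, on the target one writes $L^{p_0'} = (L^1, L^\infty)_{1/p_0, p_0'}$ and applies the right embedding of the same lemma with $X_0 = L^1$, $X_1 = L^\infty$, $\theta = 1/p_0$, $p = p_0'$, producing the Lorentz-Zygmund target space with the prescribed logarithmic exponent $b + 1/\max\{p_0', q\}$.

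The main obstacle will be reconciling the outer couples that arise in the two stages: the interpolation of $\F$ naturally lives on the couples $(\ell^{p_0}, \ell^1)$ and $(L^{p_0'}, L^\infty)$ (those for which both endpoint estimates of $\F$ are available), whereas the reiteration embedding of the Lorentz-Zygmund spaces involves the couples $(\ell^\infty, \ell^{p_0})$ and $(L^1, L^{p_0'})$. This asymmetry reflects the fact that on $\N$ with counting measure the middle Lebesgue space $\ell^{p_0}$ sits above $\ell^1$, in contrast with the $\T^d$ setting of Theorem \ref{thm:PaleyExtreme} where $L^{p_0}$ sits below $L^1$. I would bridge the two pictures with Lemma \ref{lem:reiteration}\eqref{lem:reiteration3}, which for $b < -1/q$ gives the identifications $(\ell^{p_0}, \ell^1)_{1, q; b} = (\ell^\infty, \ell^1)_{1, q; b}$ and $(L^{p_0'}, L^\infty)_{1, q; b} = (L^1, L^\infty)_{1, q; b}$, and then close the chain by a direct $K$-functional comparison relating these limiting spaces to the ones appearing in the reiteration embeddings.
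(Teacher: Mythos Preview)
Your endpoint estimates and the idea of applying a limiting interpolation are exactly right, but the choice $\theta=1$ with the ordered couple $(\ell^{p_0},\ell^1)$ is fatal. On $\N$ the inclusion is $\ell^1\hookrightarrow\ell^{p_0}$, so $(\ell^{p_0},\ell^1)_{1,q;b}$ is a space \emph{close to $\ell^1$}; indeed by Lemma~\ref{lem:reiteration}\eqref{lem:reiteration3} it equals $(\ell^\infty,\ell^1)_{1,q;b}$, which (cf.\ \eqref{ProofThmZygmund8}) is essentially $\ell^{1,q}(\log\ell)^{b+1}(\N;X)$. The source you are aiming for, $\ell^{p_0,q}(\log\ell)^{b+1/\min\{p_0,q\}}(\N;X)$, is close to $\ell^{p_0}$ and strictly larger: the sequence $c_n=n^{-1/p_0}(1+\log n)^{-\alpha}$ (for suitable $\alpha$) lies in it but not in any space near $\ell^1$. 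Hence the embedding you need on the domain side simply does not exist. Your proposed bridge does not help: from $\ell^1\hookrightarrow\ell^{p_0}$ one gets $K(t,\cdot;\ell^\infty,\ell^{p_0})\le K(t,\cdot;\ell^\infty,\ell^1)$, so the only $K$-functional comparison available is $(\ell^\infty,\ell^1)_{1,q;b}\hookrightarrow(\ell^\infty,\ell^{p_0})_{1,q;b}$, the \emph{reverse} of what you would need to pass from Lemma~\ref{lem:reiteration}\eqref{lem:reiteration1} into the $\F$-interpolation space.

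The correct ``obvious modification'' of the proof of Theorem~\ref{thm:PaleyExtreme} is to switch from $\theta=1$ to $\theta=0$. With the same ordered couples $(\ell^{p_0},\ell^1)$ and $(L^{p_0'},L^\infty)$, the limiting method $(\cdot,\cdot)_{0,q;b'}$ produces spaces close to the \emph{first} components $\ell^{p_0}$ and $L^{p_0'}$, which is exactly what is wanted. One then identifies the source and target via the $\theta=0$ counterpart of Lemma~\ref{lem:reiteration}\eqref{lem:reiteration1}, i.e.\ the reiteration embeddings for $((X_0,X_1)_{\theta,p},X_1)_{0,q;b'}$; these are contained in the same references \cite{EvansOpic,EvansOpicPick} from which Lemma~\ref{lem:reiteration} is taken, though the paper only stated the three cases it needed explicitly. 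The target side (which you handled correctly in spirit) and the source side then both go through without any mismatch of couples.
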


In analogy to Theorems \ref{ThmFinalRem1} and \ref{ThmFinalRem2}, one can improve the target spaces of the Fourier inequalities given in Theorems \ref{ThmFinalRem3} and \ref{ThmFinalRem4} under additional geometric assumptions on $X$. Namely, following \cite{GaKaKo01}, we say that $X$ has \emph{Paley $\Phi$-type $p \in (1,2]$} if there exists a positive constant $C$ such that
 \begin{equation*}
 	\|f\|_{L^{p',p}(S;X)} \leq C \left(\sum_{k=1}^n \|x_k\|_X^p \right)^{1/p}
 \end{equation*}
 for all $x_1, \ldots, x_n \in X$, and where $f (s) = \sum_{k=1}^n \varphi_k(s) x_k$. The space $X$ is of \emph{strong Paley $\Phi-$cotype $p'$} if
 \begin{equation*}
 	\|(c_n(f))\|_{\ell^{p',p}(\N;X)} \leq C \left(\int_S \|f(s)\|_X^p \, d \mu(s) \right)^{1/p}.
 \end{equation*}

 \begin{theorem}
	Let $X$ be a Banach space of strong Paley $\Phi$-cotype $p_0' \in [2, \infty)$. Let $1 \leq q \leq \infty$ and $b < -1/q$. Then,
	\begin{equation*}
		\F : L^{p_0, q}(\log L)^{b + 1/\min\{p_0, q\}} (S;X) \to \ell^{p_0',q}(\log \ell)^{b + 1/\max\{p_0, q\}}(\N;X).
	\end{equation*}
		In particular, if $q > p_0$ then
	\begin{equation*}
 c_n(f)^\ast \leq C n^{-1/p_0'} (1 + \log n)^{1/p_0 - 1/q} \|f\|_{L^{p_0,q}(S;X)}, \quad n \in \N.
\end{equation*}
\end{theorem}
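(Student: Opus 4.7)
The plan is to mirror the proof of Theorem \ref{ThmFinalRem1}, exchanging Paley type on the classical Fourier transform for strong Paley $\Phi$-cotype on the $\Phi$-coefficient map. The hypothesis on $X$ translates directly into the endpoint estimate
\begin{equation*}
\F : L^{p_0}(S;X) \to \ell^{p_0', p_0}(\N;X),
\end{equation*}
which together with the trivial bound $\F:L^1(S;X)\to \ell^\infty(\N;X)$ is the input for the limiting interpolation method with $\theta=1$. Applying $(\cdot,\cdot)_{1,q;b}$ (with $b<-1/q$) to this pair and using its interpolation property produces
\begin{equation*}
\F : (L^1(S;X),\, L^{p_0}(S;X))_{1,q;b} \to (\ell^\infty(\N;X),\, \ell^{p_0',p_0}(\N;X))_{1,q;b}.
\end{equation*}

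To identify the source as a Lorentz-Zygmund space, rewrite $L^{p_0}(S;X)=(L^1(S;X),L^\infty(S;X))_{1/p_0', p_0}$ via Lemma \ref{lem:LogInterpolationLebesgue}, and then apply the left embedding of the reiteration Lemma \ref{lem:reiteration}(i) to obtain
\begin{equation*}
L^{p_0,q}(\log L)^{b+1/\min\{p_0,q\}}(S;X) = (L^1,L^\infty)_{1/p_0',q;b+1/\min\{p_0,q\}} \hookrightarrow (L^1,L^{p_0})_{1,q;b}.
\end{equation*}
For the target, the crucial gain coming from strong Paley $\Phi$-cotype (as opposed to ordinary $\Phi$-cotype) is that $\ell^{p_0',p_0}(\N;X)=(\ell^\infty(\N;X),\ell^1(\N;X))_{1/p_0',p_0}$ is a Lorentz sequence space with \emph{inner index $p_0$} rather than $p_0'$. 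Feeding this into the right embedding of Lemma \ref{lem:reiteration}(i) gives
\begin{equation*}
(\ell^\infty,\ell^{p_0',p_0})_{1,q;b} \hookrightarrow (\ell^\infty,\ell^1)_{1/p_0',q;b+1/\max\{p_0,q\}} = \ell^{p_0',q}(\log \ell)^{b+1/\max\{p_0,q\}}(\N;X),
\end{equation*}
where $\max\{p_0,q\}$ (not $\max\{p_0',q\}$) appears precisely because of this finer Lorentz target. Chaining the three displayed bounded maps completes the proof of the main mapping property.

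For the ``in particular'' pointwise inequality, I would specialize $b=-1/p_0$, which satisfies $b<-1/q$ exactly because $q>p_0$. With this choice the source collapses to $L^{p_0,q}(S;X)$ since $b+1/\min\{p_0,q\}=0$, and the target becomes $\ell^{p_0',q}(\log \ell)^{-1/p_0+1/q}(\N;X)$. The stated pointwise bound then follows by the routine observation that if $(a_n)\in \ell^{p_0',q}(\log \ell)^c(\N;X)$, restricting the defining quasi-norm to $k\in[n,2n]$ and using monotonicity of the rearrangement forces $a_n^*\lesssim n^{-1/p_0'}(1+\log n)^{-c}$ times the norm, applied with $c=-1/p_0+1/q$. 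The main point throughout is simply that the reiteration and interpolation formulas recalled in Section \ref{SectionLimInt} are stated for abstract ordered Banach couples, so they transfer verbatim from the Fourier-transform setting to the $\Phi$-coefficient setting; there is no genuinely new analytic obstacle beyond correctly identifying the sharpened endpoint map furnished by the strong Paley $\Phi$-cotype hypothesis.
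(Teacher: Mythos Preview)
Your proof is correct and follows essentially the same approach as the paper. The paper does not write out a separate proof for this theorem but indicates that the method of proof of Theorem~\ref{thm:PaleyExtreme} (as already adapted in Theorem~\ref{ThmFinalRem1}) carries over with obvious modifications to the $\Phi$-setting; your proposal does exactly this, correctly identifying that strong Paley $\Phi$-cotype furnishes the sharper endpoint $\F:L^{p_0}(S;X)\to\ell^{p_0',p_0}(\N;X)$ and that the inner index $p_0$ in the target Lorentz space is what produces $\max\{p_0,q\}$ rather than $\max\{p_0',q\}$ via Lemma~\ref{lem:reiteration}\eqref{lem:reiteration1}.
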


\begin{theorem}
	Let $X$ be a Banach space of Paley $\Phi$-type $p_0 \in (1,2]$. Let $1 \leq q \leq \infty$ and $b < -1/q$. Then,
	\begin{equation*}
		\F : \ell^{p_0, q}(\log \ell)^{b + 1/\min\{p_0, q\}} (\N;X) \to L^{p_0',q}(\log L)^{b + 1/\max\{p_0, q\}}(S;X).
	\end{equation*}
\end{theorem}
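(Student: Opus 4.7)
The plan is to mirror the proof of Theorem~\ref{thm:PaleyExtreme}, now with the couples $(\ell^1, \ell^{p_0})$ and $(L^\infty, L^{p_0', p_0})$ replacing $(L^1, L^{p_0})$ and $(\ell^\infty, \ell^{p_0'})$. The two ingredient estimates are the Paley $\Phi$-type hypothesis $\F : \ell^{p_0}(\N;X) \to L^{p_0', p_0}(S;X)$ together with the trivial bound $\F : \ell^1(\N;X) \to L^\infty(S;X)$, which follows from the uniform boundedness of the system $\Phi$.

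First I would apply the limiting real interpolation method \eqref{LogIntSpace*} at $\theta = 1$ to these two endpoint estimates, obtaining
\begin{equation*}
\F : (\ell^1, \ell^{p_0})_{1, q; b} \to (L^\infty, L^{p_0', p_0})_{1, q; b}, \qquad b < -1/q.
\end{equation*}
Second, inserting the base-couple representations $\ell^{p_0} = (\ell^\infty, \ell^1)_{1/p_0, p_0}$ and $L^{p_0', p_0} = (L^1, L^\infty)_{1/p_0, p_0}$ into the non-trivial slots and invoking a reiteration in the spirit of Lemma~\ref{lem:reiteration}(i), one should derive the sandwich embeddings
\begin{align*}
\ell^{p_0, q}(\log \ell)^{b + 1/\min\{p_0, q\}}(\N;X) &\hookrightarrow (\ell^1, \ell^{p_0})_{1, q; b}, \\
(L^\infty, L^{p_0', p_0})_{1, q; b} &\hookrightarrow L^{p_0', q}(\log L)^{b + 1/\max\{p_0, q\}}(S;X).
\end{align*}
Concatenating these inclusions with the displayed interpolated boundedness of $\F$ yields the claimed conclusion.

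The principal technical point is the reiteration step: in contrast with Theorem~\ref{thm:PaleyExtreme}, the non-trivial endpoints $\ell^{p_0}$ and $L^{p_0', p_0}$ are now the \emph{larger} members of the outer couples, so the pairs $(\ell^1, \ell^{p_0})$ and $(L^\infty, L^{p_0', p_0})$ are ``reversed'' relative to the ordered convention $X_1 \hookrightarrow X_0$ underlying Lemma~\ref{lem:reiteration}. A clean workaround is to first observe that Paley $\Phi$-type $p_0$ implies Paley $\Phi$-type $p_1$ for every $p_1 \in (1, p_0)$ (by Lorentz real interpolation of $\F : \ell^{p_0} \to L^{p_0', p_0}$ with $\F : \ell^1 \to L^\infty$, exactly as in Proposition~\ref{prop:WeakFourier}) and then to interpolate the two \emph{non-trivial} estimates $\F : \ell^{p_i}(\N;X) \to L^{p_i', p_i}(S;X)$ ($i = 0, 1$) limitingly at $\theta = 1$; after writing each endpoint via the base couples with inner interpolation parameters $1/p_i$, a Holmstedt-type reiteration at the $1/p_0$-side combined with Lemma~\ref{lem:reiteration}(i) furnishes the required Lorentz--Zygmund sandwiching directly, bypassing the reversed-order issue.
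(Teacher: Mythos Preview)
Your overall strategy --- interpolate the Paley $\Phi$-type estimate $\F:\ell^{p_0}\to L^{p_0',p_0}$ against the trivial bound $\F:\ell^1\to L^\infty$ via the limiting real method and then identify the resulting spaces by reiteration --- is exactly the paper's intended approach, and you are right to flag that the ordering is reversed relative to Theorem~\ref{thm:PaleyExtreme}: on $\N$ and on a probability space the nontrivial endpoints $\ell^{p_0}$ and $L^{p_0',p_0}$ are the \emph{larger} members of their couples.

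However, your workaround is off. First, as written the couple $(\ell^1,\ell^{p_0})$ violates the convention $X_1\hookrightarrow X_0$ underlying \eqref{LogIntSpace*}; if one nevertheless inserts it into that formula, the space $(\ell^1,\ell^{p_0})_{1,q;b}$ collapses, since $K(t,f;\ell^1,\ell^{p_0})\le t\|f\|_{\ell^{p_0}}$ makes the integral over $(0,1)$ trivially controlled by $\|f\|_{\ell^{p_0}}$ and no logarithmic refinement survives. Second, your two-exponent fix does not avoid the problem: in the properly ordered couple $(\ell^{p_0},\ell^{p_1})$ with $p_1<p_0$, interpolating at $\theta=1$ lands near $\ell^{p_1}$ and $L^{p_1',p_1}$, not near $\ell^{p_0}$ and $L^{p_0',p_0}$ as required.

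The clean fix is much simpler. Order the couples correctly as $(\ell^{p_0},\ell^1)$ and $(L^{p_0',p_0},L^\infty)$ and apply the limiting method at $\theta=0$. The reiteration step then needs the ``fourth case'' companion to Lemma~\ref{lem:reiteration}\eqref{lem:reiteration1}, namely the sandwich for $((X_0,X_1)_{\theta,p},X_1)_{0,q;\cdot}$, which is contained in the same references \cite{EvansOpic,EvansOpicPick} but was not quoted explicitly in Lemma~\ref{lem:reiteration}. Writing $\ell^{p_0}=(\ell^\infty,\ell^1)_{1/p_0,p_0}$ and $L^{p_0',p_0}=(L^1,L^\infty)_{1/p_0,p_0}$ and applying that formula gives the required Lorentz--Zygmund spaces directly; the inner fine index $p_0$ (rather than $p_0'$) is precisely what produces the improved target exponent $b+1/\max\{p_0,q\}$. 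No auxiliary exponent $p_1$ is needed.
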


\subsection{Limiting interpolation for the Fourier transform on $\R^d$\label{SectionAdditional2}}
Many of the results of this section can also be formulated for the Fourier transform on $\R^d$ with the help of the so-called generalized Lorentz-Zygmund spaces. For $\mathbb{A} = (\alpha_0, \alpha_\infty) \in \R^2$, we put
\begin{equation*}
	  \ell^{\mathbb{A}}(t) = \left\{\begin{array}{lcl}
                                (1 + |\log t|)^{\alpha_0} & \text{ for } & t \in (0,1], \\
                                             &              &              \\
                                (1 + |\log t|)^{\alpha_\infty} & \text{ for } & t \in
                                (1,\infty).
                                \end{array}
                        \right.
\end{equation*}
Let $(S, \Sigma, \mu)$ be a measure space. For $1 \leq p \leq \infty, 1 \leq q \leq \infty$ and $\mathbb{A} \in \R^2$, the \emph{generalized Lorentz-Zygmund space} $L^{p,q;\mathbb{A}}(S;X)$ is formed by all $\mu$-strongly measurable functions $f : S \to X$ for which
\begin{equation*}
	\|f\|_{L^{p,q;\mathbb{A}}(S;X)} = \Big(\int_0^{\mu(S)} (t^{1/p} \ell^{\mathbb{A}}(t)   f^\ast(t))^q \,\frac{dt}{t} \Big)^{1/q} < \infty
\end{equation*}
(with the usual modification if $q=\infty$). Obviously, if $\alpha_0 = \alpha_\infty = \alpha$ then $L^{p,q;\mathbb{A}}(S;X) = L^{p,q}(\log L)^{\alpha}(S;X)$ (see \eqref{DefLZ}). Note that the use of different logarithmic powers near 0 and near $\infty$ is only useful if $\mu(S) = \infty$. Otherwise, $L^{p,q;\mathbb{A}}(S;X) = L^{p,q}(\log L)^{\alpha_0}(S;X)$. For detail study of the spaces $L^{p,q;\mathbb{A}}(S;X)$ we refer to \cite{OpicPick}.

Using logarithmic weights $\ell^{\mathbb{A}}(t)$, one can introduce limiting interpolation spaces for general Banach couples, not necessarily ordered (see Section \ref{SectionLimInt}). Namely, if $0 \leq \theta \leq 1, \mathbb{A} = (\alpha_0, \alpha_\infty) \in \R^2$, and $1 \leq q \leq \infty$. The interpolation space $(X_0, X_1)_{\theta, q; \mathbb{A}}$ is the set of all $f \in X_0 + X_1$ for which
\begin{equation*}
	\|f\|_{(X_0, X_1)_{\theta, q; \mathbb{A}}} = \left(\int_0^\infty (t^{-\theta} \ell^{\mathbb{A}}(t) K(t,f))^q \frac{dt}{t} \right)^{1/q} < \infty.
\end{equation*}
See \cite{EvansOpic, EvansOpicPick}. In particular, working with ordered couples $X_1 \hookrightarrow X_0$ and under suitable choices of the logarithmic parameters, the spaces $(X_0, X_1)_{\theta, q; \mathbb{A}}$ with $\theta = 0, 1$ coincide with those given in \eqref{LogIntSpace*} (see \cite[Proposition 1]{EdmundsOpic}).

We shall use the following notation. If $\mathbb{A} = (\alpha_0, \alpha_\infty) \in \R^2$ and $\alpha \in \R$ then $\mathbb{A} + \alpha = \mathbb{A} + (\alpha, \alpha) = (\alpha_0 + \alpha, \alpha_\infty + \alpha)$.

Using the counterparts of the reiteration formulas given in Lemma \ref{lem:reiteration} for the interpolation method $(\theta, q; \mathbb{A})$ (see \cite{EvansOpic, EvansOpicPick}), one can easily adapt our methods applied in Sections \ref{ProofThmZygmund} and \ref{Section7.2} to show the corresponding results to Theorems \ref{ThmZygmund} and \ref{thm:PaleyExtreme} for the Fourier transform on $\R^d$. They read as follows.

 \begin{theorem}
	Let $X$ be a Banach space of Fourier type $p_0 \in (1,2]$. Let $1 \leq q < \infty$ and $\mathbb{A} = (\alpha_0, \alpha_\infty) \in \R^2$ be such that $\alpha_0 + 1/q < 0 < \alpha_\infty + 1/q$. Then,
	\begin{equation*}
		\F : L^{p_0, q;\mathbb{A} + 1/\min\{p_0, q\}} (\R^d;X) \to L^{p_0',q;\mathbb{A} + 1/\max\{p_0', q\}}(\R^d;X).
	\end{equation*}
\end{theorem}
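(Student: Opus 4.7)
The plan is to transpose the proof of Theorem \ref{thm:PaleyExtreme} from $\T^d$ to $\R^d$, replacing the ordered-couple limiting interpolation $(\,\cdot\,,\,\cdot\,)_{1,q;b}$ used there with the non-ordered limiting real interpolation method $(\,\cdot\,,\,\cdot\,)_{1,q;\mathbb{A}}$ featuring the two-sided logarithmic weight $\ell^{\mathbb{A}}(t)$. The hypothesis $\alpha_0+1/q<0<\alpha_\infty+1/q$ is precisely what ensures that this method produces a nontrivial space on the incomparable couple $(L^1(\R^d;X), L^{p_0}(\R^d;X))$ (and dually on the target): the left weight controls the behavior of $K(t,f)$ near $t=0$, where the $L^{\infty}$-side dominates, while the right weight controls the behavior near $t=\infty$, where the $L^1$-side dominates.

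Concretely, I would start from the two endpoint mappings
\begin{equation*}
\F : L^{p_0}(\R^d;X) \to L^{p_0'}(\R^d;X), \qquad \F : L^{1}(\R^d;X) \to L^{\infty}(\R^d;X),
\end{equation*}
the first coming from the assumption that $X$ has Fourier type $p_0$ and the second being the trivial Riemann--Lebesgue bound. Applying the limiting real interpolation functor $(\,\cdot\,,\,\cdot\,)_{1, q; \mathbb{A}}$ to both pairs (using that this functor satisfies the interpolation property for bounded linear operators on arbitrary Banach couples, see \cite{EvansOpic, EvansOpicPick}) yields
\begin{equation*}
\F : (L^{1}(\R^d;X), L^{p_0}(\R^d;X))_{1, q;\mathbb{A}} \to (L^{\infty}(\R^d;X), L^{p_0'}(\R^d;X))_{1,q;\mathbb{A}}.
\end{equation*}

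Next I would reduce both sides to generalized Lorentz-Zygmund spaces via the non-ordered analogues of the reiteration formulas in Lemma \ref{lem:reiteration}(i). On the source, writing $L^{p_0}(\R^d;X) = (L^1(\R^d;X), L^\infty(\R^d;X))_{1/p_0', p_0}$ and reiterating gives
\begin{equation*}
L^{p_0,q;\mathbb{A}+1/\min\{p_0,q\}}(\R^d;X) \hookrightarrow (L^1(\R^d;X), L^{p_0}(\R^d;X))_{1,q;\mathbb{A}},
\end{equation*}
while on the target, writing $L^{p_0'}(\R^d;X) = (L^\infty(\R^d;X), L^1(\R^d;X))_{1/p_0', p_0'}$ and reiterating gives
\begin{equation*}
(L^\infty(\R^d;X), L^{p_0'}(\R^d;X))_{1,q;\mathbb{A}} \hookrightarrow L^{p_0',q;\mathbb{A}+1/\max\{p_0',q\}}(\R^d;X).
\end{equation*}
Combining these three inclusions yields the claimed boundedness.

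The main obstacle is the careful bookkeeping of the logarithmic indices in the reiteration step on the non-ordered couple, i.e. verifying that the shift $+1/\min\{p_0,q\}$ (respectively $+1/\max\{p_0',q\}$) applies \emph{uniformly} to both components of $\mathbb{A}=(\alpha_0,\alpha_\infty)$. This amounts to checking that the $K$-functional $K(t,f;L^1,L^\infty) = \int_0^t f^*(s)\,ds$ produces, after the Holmstedt-type reiteration argument, the generalized Lorentz-Zygmund norm with the same bilateral logarithmic exponents $\mathbb{A}+\text{const}$, where the constant is dictated by the Hardy-type inequality applied separately on $(0,1)$ and $(1,\infty)$; the strict inequalities $\alpha_0+1/q<0$ and $\alpha_\infty+1/q>0$ are exactly what makes the relevant Hardy inequalities hold on both half-axes and ensure the non-triviality of the resulting spaces, as established in \cite{EvansOpic, EvansOpicPick, OpicPick}.
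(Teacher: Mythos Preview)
Your proposal is correct and follows essentially the same route the paper indicates: interpolate the two endpoint bounds $\F:L^{p_0}\to L^{p_0'}$ and $\F:L^1\to L^\infty$ with the limiting functor $(\,\cdot\,,\,\cdot\,)_{1,q;\mathbb{A}}$, then invoke the non-ordered analogues of the reiteration embeddings in Lemma~\ref{lem:reiteration}\eqref{lem:reiteration1} (from \cite{EvansOpic, EvansOpicPick}) to rewrite source and target as generalized Lorentz--Zygmund spaces. Your remark that the two sign conditions $\alpha_0+1/q<0<\alpha_\infty+1/q$ are exactly what is needed for the Hardy-type estimates on $(0,1)$ and $(1,\infty)$ separately, and hence for the non-triviality of the limiting spaces on the incomparable couple, is the right way to account for the passage from $\T^d$ to $\R^d$.
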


\begin{theorem}
	Assume that $X$ has non trivial Fourier type. Let $1 \leq q < \infty$ and $\mathbb{A} = (\alpha_0, \alpha_\infty) \in \R^2$ be such that $\alpha_0 + 1/q < 0 <\alpha_\infty + 1/q$. Then,
	\begin{equation*}
		\F : L^{1,q;\mathbb{A} + 1}(\R^d;X) \to L^{\infty,q;\mathbb{A}}(\R^d;X).
	\end{equation*}
\end{theorem}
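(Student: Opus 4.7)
The plan is to mimic the argument of Theorem \ref{ThmZygmund}\eqref{ThmZygmund1} (the transition $\T^d \to \ell$), replacing the finite-measure logarithmic interpolation $(0,q;b)$ by its two-parameter counterpart $(0,q;\mathbb{A})$ appropriate for $\R^d$, where $\alpha_0$ governs the behavior of the $K$-functional near $t=0$ and $\alpha_\infty$ governs it near $t=\infty$.

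First, using that $X$ has nontrivial Fourier type, pick $p_0\in(1,2]$ such that $\F:L^{p_0}(\R^d;X)\to L^{p_0'}(\R^d;X)$ boundedly, and combine with the trivial endpoint $\F:L^1(\R^d;X)\to L^\infty(\R^d;X)$. Applying the generalized limiting real interpolation method $(0,q;\mathbb{A})$ (which is admissible precisely because $\alpha_0+1/q<0<\alpha_\infty+1/q$ and makes both integrals near $0$ and $\infty$ behave correctly) to these two endpoints yields
\begin{equation*}
\F: (L^1(\R^d;X),L^{p_0}(\R^d;X))_{0,q;\mathbb{A}} \;\longrightarrow\; (L^\infty(\R^d;X), L^{p_0'}(\R^d;X))_{0,q;\mathbb{A}}.
\end{equation*}

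Second, invoke the two-parameter reiteration formulas that extend Lemma \ref{lem:reiteration}\eqref{lem:reiteration2} (available in \cite{EvansOpic,EvansOpicPick}) together with Lemma \ref{lem:LogInterpolationLebesgue} to collapse the intermediate Lebesgue space on each side, giving
\begin{equation*}
(L^1(\R^d;X),L^{p_0}(\R^d;X))_{0,q;\mathbb{A}} = (L^1(\R^d;X),L^\infty(\R^d;X))_{0,q;\mathbb{A}},
\end{equation*}
and analogously for the target. Then insert the explicit $K$-functional $K(t,f;L^1,L^\infty)=\int_0^t f^*(s)\,ds$ to rewrite the norms in terms of the non-increasing rearrangement. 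On the target side, the elementary lower bound $K(t,f)\geq t f^*(t)$ produces exactly the generalized Lorentz-Zygmund norm $\|f\|_{L^{\infty,q;\mathbb{A}}(\R^d;X)}$. On the source side, a two-sided Hardy inequality (a $\mathbb{A}$-weighted version of Lemma \ref{LemmaHardy}\eqref{HardyInequal1}, legitimate near $t=0$ thanks to $\alpha_0+1/q<0$ and near $t=\infty$ thanks to $\alpha_\infty+1/q>0$) controls $\int_0^t f^*(s)\,ds$ by $t f^*(t)$ after shifting the weight by $+1$, giving
\begin{equation*}
\|f\|_{(L^1,L^\infty)_{0,q;\mathbb{A}}} \lesssim \|f\|_{L^{1,q;\mathbb{A}+1}(\R^d;X)}.
\end{equation*}
Chaining these three ingredients produces the asserted boundedness.

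The main obstacle is the bookkeeping for the two-parameter weight $\mathbb{A}=(\alpha_0,\alpha_\infty)$: one must verify that both the reiteration formula and the Hardy-type inequality respect the split behavior at $0$ and $\infty$, and that the sign conditions $\alpha_0+1/q<0$ and $\alpha_\infty+1/q>0$ are respectively the \emph{convergence} condition for the target weight near $t=0$ (so that $L^{\infty,q;\mathbb{A}}$ is nontrivial) and the \emph{Hardy-admissibility} condition on the source side near $t=\infty$. Once these asymmetric roles are sorted out, the remaining computations are formally identical to those carried out in the proof of Theorem \ref{ThmZygmund}\eqref{ThmZygmund1}.
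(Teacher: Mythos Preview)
Your outline (limiting interpolation of the two endpoints, reiteration to the couple $(L^1,L^\infty)$, explicit $K$-functional, then Hardy on the source and the trivial lower bound on the target) is exactly the scheme the paper has in mind. However, the concrete choice of interpolation parameters is wrong, and this is not just bookkeeping.

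You apply the functor $(0,q;\mathbb{A})$ with the given $\mathbb{A}=(\alpha_0,\alpha_\infty)$ satisfying $\alpha_\infty+1/q>0$. But for any nonzero $f\in L^1(\R^d;X)+L^{p_0}(\R^d;X)$ the $K$-functional is nondecreasing, so $K(t,f;L^1,L^{p_0})\geq K(1,f)>0$ for all $t\geq 1$, and hence
\[
\int_1^\infty \bigl((1+\log t)^{\alpha_\infty}K(t,f)\bigr)^q\,\frac{dt}{t}=+\infty.
\]
Thus $(L^1,L^{p_0})_{0,q;\mathbb{A}}=\{0\}$ and your interpolation step is vacuous. The admissibility conditions are the opposite of what you assert: for the $(0,q;\mathbb{B})$ functor on a non-ordered couple one needs $\beta_\infty+1/q<0$ for nontriviality (convergence at $t=\infty$), while $\beta_0+1/q>0$ is what makes the Hardy inequality near $t=0$ available.

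There is a second, related point you have not addressed. On the target side one has $K(t,g;L^\infty,L^1)=t\int_0^{1/t}g^*$, and the substitution $t\mapsto 1/t$ interchanges the two logarithmic exponents; consequently $(L^\infty,L^{p_0'})_{0,q;\mathbb{B}}$ embeds into $L^{\infty,q;\tilde{\mathbb{B}}}$ with $\tilde{\mathbb{B}}=(\beta_\infty,\beta_0)$, not into $L^{\infty,q;\mathbb{B}}$. This swap is precisely the new feature of the $\R^d$ setting that has no analogue in the ordered $\T^d/\Z^d$ proofs, and your sketch does not account for it. In short, the ``asymmetric roles'' you mention in your last paragraph are not merely a matter of checking sign conditions but actually force a different index $\mathbb{B}\neq\mathbb{A}$ in the interpolation step; you should rework the argument with this swap built in and verify that the source and target identifications then match the stated spaces.
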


\bibliographystyle{alpha-sort}
\bibliography{BibliografieFourier}

\begin{thebibliography}{GCKKT98}

\bibitem[AK06]{Albiac-Kalton06}
F.~Albiac and N.~J. Kalton.
\newblock {\em Topics in {B}anach space theory}, volume 233 of {\em Graduate
  Texts in Mathematics}.
\newblock Springer, New York, 2006.

\bibitem[ABHN11]{ArBaHiNe}
W.~Arendt, C.J.K. Batty, M.~Hieber, and F.~Neubrander.
\newblock {\em Vector-valued {L}aplace transforms and {C}auchy problems},
  volume~96 of {\em Monographs in Mathematics}.
\newblock Birkh\"{a}user/Springer Basel AG, Basel, second edition, 2011.

\bibitem[BKS09]{BarzaKolyadaSoria}
S.~Barza, V.I. Kolyada, and J.~Soria.
\newblock Sharp constants related to the triangle inequality in {L}orentz
  spaces.
\newblock {\em Trans. Amer. Math. Soc.}, 361(10):5555--5574, 2009.

\bibitem[Bec75]{Beckner75}
W.~Beckner.
\newblock Inequalities in {F}ourier analysis.
\newblock {\em Ann. of Math.}, 102:159--182, 1975.

\bibitem[Bec95]{Beck95}
W.~Beckner.
\newblock Pitt's inequality and the uncertainty principle.
\newblock {\em Proc. Amer. Math. Soc.}, 123(6):1897--1905, 1995.

\bibitem[Bec08]{Beckner08}
W.~Beckner.
\newblock Pitt's inequality with sharp convolution estimates.
\newblock {\em Proc. Amer. Math. Soc.}, 136(5):1871--1885, 2008.

\bibitem[Bec12]{Beck12}
W.~Beckner.
\newblock Pitt's inequality and the fractional {L}aplacian: sharp error
  estimates.
\newblock {\em Forum Math.}, 24(1):177--209, 2012.

\bibitem[BH03]{Benedetto-Heinig03}
J.~Benedetto and H.P. Heinig.
\newblock Weighted {F}ourier inequalities: new proofs and generalizations.
\newblock {\em J. Fourier Anal. Appl.}, 9(1):1--37, 2003.

\bibitem[BHJ87]{BHJ}
J.J. Benedetto, H.P. Heinig, and R.~Johnson.
\newblock Fourier inequalities with {$A_p$}-weights.
\newblock In {\em General inequalities, 5 ({O}berwolfach, 1986)}, volume~80 of
  {\em Internat. Schriftenreihe Numer. Math.}, pages 217--232. Birkh\"{a}user,
  Basel, 1987.

\bibitem[BR80]{BennettRudnick}
C.~Bennett and K.~Rudnick.
\newblock On {L}orentz-{Z}ygmund spaces.
\newblock {\em Dissertationes Math.}, 175:67 pp., 1980.

\bibitem[BS88]{Bennett-Sharpley88}
C.~Bennett and R.~Sharpley.
\newblock {\em Interpolation of operators}, volume 129 of {\em Pure and Applied
  Mathematics}.
\newblock Academic Press, Inc., Boston, MA, 1988.

\bibitem[BP91]{BlascoPelczynski}
O.~Blasco and A.~Pelczynski.
\newblock Theorem of {H}ardy and {P}aley for vector-valued analytic functions
  and related classes of banach spaces.
\newblock {\em Trans. Amer. Math. Soc.}, 323:335--369, 1991.

\bibitem[Boc97]{Bochkarev97}
S.V. Bochkarev.
\newblock Hausdorff-{Y}oung-{R}iesz theorem in {L}orentz spaces and
  multiplicative inequalities.
\newblock {\em Proc. Steklov Inst. Math.}, 219:96--107, 1997.

\bibitem[Boc98]{Bochkarev}
S.V. Bochkarev.
\newblock Estimates for the {F}ourier coefficients of functions in {L}orentz
  spaces.
\newblock {\em Dokl. Akad. Nauk}, 360(6):730--733, 1998.

\bibitem[Bou88]{Bourg88}
J.~Bourgain.
\newblock Vector-valued {H}ausdorff-{Y}oung inequalities and applications.
\newblock In {\em Geometric aspects of functional analysis (1986/87)}, volume
  1317 of {\em Lecture Notes in Math.}, pages 239--249. Springer, Berlin, 1988.

\bibitem[BK05]{BuKim}
S.~Bu and J.-M. Kim.
\newblock Operator-valued {F}ourier multiplier theorems on {T}riebel spaces.
\newblock {\em Acta Math. Sci. Ser. B (Engl. Ed.)}, 25(4):599--609, 2005.

\bibitem[Cal66]{Calderon}
A.P. Calder\'on.
\newblock Spaces between ${L}^1$ and ${L}^\infty$ and the theorem of
  {M}arcinkiewicz.
\newblock {\em Studia Math.}, 26:273--299, 1966.

\bibitem[CRS07]{CarroRaposoSoria}
M.J. Carro, J.A. Raposo, and J.~Soria.
\newblock Recent developments in the theory of {L}orentz spaces and weighted
  inequalities.
\newblock {\em Mem. {A}mer. {M}ath. {S}oc.}, 187:128 pp., 2007.

\bibitem[CT03]{ChTom}
R.~Chill and Y.~Tomilov.
\newblock Stability of {$C_0$}-semigroups and geometry of {B}anach spaces.
\newblock {\em Math. Proc. Cambridge Philos. Soc.}, 135(3):493--511, 2003.

\bibitem[CFCKU09]{CFKU}
F.~Cobos, L.M. Fern\'{a}ndez-Cabrera, T.~K\"{u}hn, and T.~Ullrich.
\newblock On an extreme class of real interpolation spaces.
\newblock {\em J. Funct. Anal.}, 256(7):2321--2366, 2009.

\bibitem[CS88]{CwikelSagher}
M.~Cwikel and Y.~Sagher.
\newblock An extension of {F}ourier type to quasi-{B}anach spaces.
\newblock In {\em Function spaces and applications}, volume 1302 of {\em
  Lecture Notes in Math.}, pages 171--176. Springer, Berlin, 1988.

\bibitem[DCGT17]{DeCarliGorbachevTikhonov}
L.~De~Carli, D.~Gorbachev, and S.~Tikhonov.
\newblock Pitt inequalities and restriction theorems for the {F}ourier
  transform.
\newblock {\em Rev. Mat. Iberoam.}, 33(3):789--808, 2017.

\bibitem[DJT95]{DiJaTo95}
J.~Diestel, H.~Jarchow, and A.~Tonge.
\newblock {\em Absolutely summing operators}, volume~43 of {\em Cambridge
  Studies in Advanced Mathematics}.
\newblock Cambridge University Press, Cambridge, 1995.

\bibitem[EE04]{EdmundsEvans}
D.E. Edmunds and W.D. Evans.
\newblock {\em Hardy operators, function spaces and embeddings}.
\newblock Springer, Berlin, 2004.

\bibitem[EO14]{EdmundsOpic}
D.E. Edmunds and B.~Opic.
\newblock Limiting variants of {K}rasnosel'ski{$\breve{\text{\i}}$}'s compact
  interpolation theorem.
\newblock {\em J. Funct. Anal.}, 266:3265--3285, 2014.

\bibitem[Eil01]{Eil}
S.~Eilertsen.
\newblock On weighted fractional integral inequalities.
\newblock {\em J. Funct. Anal.}, 185(1):342--366, 2001.

\bibitem[EO00]{EvansOpic}
W.D. Evans and B.~Opic.
\newblock Real interpolation with logarithmic functors and reiteration.
\newblock {\em Canad. J. Math.}, 52:920--960, 2000.

\bibitem[EOP02]{EvansOpicPick}
W.D. Evans, B.~Opic, and L.~Pick.
\newblock Real interpolation with logarithmic functors.
\newblock {\em J. Inequal. Appl.}, 7:187--269, 2002.

\bibitem[GCKK01]{GaKaKo01}
J.~Garcia-Cuerva, K.S. Kazarian, and V.I. Kolyada.
\newblock Paley type inequalities for orthogonal series with vector-valued
  coefficients.
\newblock {\em Acta Math. Hungar.}, 90(1-2):151--183, 2001.

\bibitem[GCKKT98]{GaKaKoTo98}
J.~Garc{\'{\i}}a-Cuerva, K.S. Kazaryan, V.I. Kolyada, and J.L. Torrea.
\newblock The {H}ausdorff-{Y}oung inequality with vector-valued coefficients
  and applications.
\newblock {\em Uspekhi Mat. Nauk}, 53(3(321)):3--84, 1998.

\bibitem[GCTK96]{GTK96}
J.~Garc{\'{\i}}a-Cuerva, J.L. Torrea, and K.S. Kazarian.
\newblock On the {F}ourier type of {B}anach lattices.
\newblock In {\em Interaction between functional analysis, harmonic analysis,
  and probability ({C}olumbia, {MO}, 1994)}, volume 175 of {\em Lecture Notes
  in Pure and Appl. Math.}, pages 169--179. Dekker, New York, 1996.

\bibitem[GW03]{Girardi-Weis03}
M.~Girardi and L.~Weis.
\newblock Operator-valued {F}ourier multiplier theorems on {$L_p(X)$} and
  geometry of {B}anach spaces.
\newblock {\em J. Funct. Anal.}, 204(2):320--354, 2003.

\bibitem[GIT16]{GorbachevIvanovTikhonov}
D.~Gorbachev, V.~Ivanov, and S.~Tikhonov.
\newblock Pitt's inequalities and uncertainty principle for generalized
  {F}ourier transform.
\newblock {\em Int. Math. Res. Not.}, 23:7179--7200, 2016.

\bibitem[Gra09]{Grafakos09}
L.~Grafakos.
\newblock {\em Modern {F}ourier analysis}, volume 250 of {\em Graduate Texts in
  Mathematics}.
\newblock Springer, New York, second edition, 2009.

\bibitem[Hei84]{Heinig84}
H.P. Heinig.
\newblock Weighted norm inequalities for classes of operators.
\newblock {\em Indiana Univ. Math. J.}, 33(4):573--582, 1984.

\bibitem[Her77]{Herbst}
I.W. Herbst.
\newblock Spectral theory of the operator {$(p^{2}+m^{2})^{1/2}-Ze^{2}/r$}.
\newblock {\em Comm. Math. Phys.}, 53(3):285--294, 1977.

\bibitem[Her68]{Herz}
C.S. Herz.
\newblock Lipschitz spaces and {B}ernstein's theorem on absolutely convergent
  {F}ourier transforms.
\newblock {\em J. Math. Mech.}, 18:283--324, 1968.

\bibitem[Hyt04]{Hyt04}
T.~Hyt\"{o}nen.
\newblock Fourier embeddings and {M}ihlin-type multiplier theorems.
\newblock {\em Math. Nachr.}, 274/275:74--103, 2004.

\bibitem[HL13]{HytonenLacey}
T.~Hyt\"{o}nen and M.T. Lacey.
\newblock Pointwise convergence of vector-valued {F}ourier series.
\newblock {\em Math. Ann.}, 357(4):1329--1361, 2013.

\bibitem[HNVW16]{HyNeVeWe16}
T.~Hyt\"onen, J.~van Neerven, M.~Veraar, and L.~Weis.
\newblock {\em Analysis in {B}anach Spaces. {V}olume {I}: {M}artingales and
  {L}ittlewood-{P}aley Theory}, volume~63 of {\em Ergebnisse der Mathematik und
  ihrer Grenzgebiete (3)}.
\newblock Springer, 2016.

\bibitem[HNVW17]{HyNeVeWe2}
T.~Hyt\"onen, J.~van Neerven, M.~Veraar, and L.~Weis.
\newblock {\em {Analysis in Banach Spaces. Volume II: Probabilistic Methods and
  Operator Theory.}}, volume~67 of {\em Ergebnisse der Mathematik und ihrer
  Grenzgebiete (3)}.
\newblock Springer, 2017.

\bibitem[Kol23]{Kolmogorov23}
A.N. Kolmogorov.
\newblock Une s{\'{e}}rie de {F}ourier-{L}ebesque divergente presque partout.
\newblock {\em Fund. Math.}, 4:324--328, 1923.

\bibitem[K{\"o}n91]{Konig91}
H.~K{\"o}nig.
\newblock On the {F}ourier-coefficients of vector-valued functions.
\newblock {\em Math. Nachr.}, 152:215--227, 1991.

\bibitem[Kwa72]{Kwa}
S.~Kwapie{\'n}.
\newblock Isomorphic characterizations of inner product spaces by orthogonal
  series with vector valued coefficients.
\newblock {\em Studia Math.}, 44:583--595, 1972.
\newblock Collection of articles honoring the completion by Antoni Zygmund of
  50 years of scientific activity, VI.

\bibitem[LT12]{LifTik}
E.~Liflyand and S.~Tikhonov.
\newblock Two-sided weighted {F}ourier inequalities.
\newblock {\em Ann. Sc. Norm. Super. Pisa Cl. Sci. (5)}, 11(2):341--362, 2012.

\bibitem[LV18]{LinVerLp}
N.~Lindemulder and M.C. Veraar.
\newblock {The heat equation with rough boundary conditions and holomorphic
  functional calculus}.
\newblock https://arxiv.org/pdf/1805.10213.pdf, 2018.

\bibitem[LT79]{Lindenstrauss-Tzafriri79}
J.~Lindenstrauss and L.~Tzafriri.
\newblock {\em Classical {B}anach spaces. {II}}, volume~97 of {\em Ergebnisse
  der Mathematik und ihrer Grenzgebiete}.
\newblock Springer-Verlag, Berlin-New York, 1979.
\newblock Function spaces.

\bibitem[O'N63]{ONeil63}
R.~O'Neil.
\newblock Convolution operators and {$L(p,\,q)$} spaces.
\newblock {\em Duke Math. J.}, 30:129--142, 1963.

\bibitem[OK90]{OpicKufner}
B.~Opic and A.~Kufner.
\newblock {\em Hardy-type inequalities}.
\newblock Longman Sci {\&} Tech, Harlow, 1990.

\bibitem[OP99]{OpicPick}
B.~Opic and L.~Pick.
\newblock On generalized {L}orentz-{Z}ygmund spaces.
\newblock {\em Math. Inequal. Appl.}, 2:391--467, 1999.

\bibitem[PSX13]{ParcetSoriaXu}
J.~Parcet, F.~Soria, and Q.~Xu.
\newblock On the growth of vector-valued {F}ourier series.
\newblock {\em Indiana Univ. Math. J.}, 62(6):1765--1784, 2013.

\bibitem[Pee69]{Pee69}
J.~Peetre.
\newblock Sur la transformation de {F}ourier des fonctions \`a valeurs
  vectorielles.
\newblock {\em Rend. Sem. Mat. Univ. Padova}, 42:15--26, 1969.

\bibitem[Pie07]{PietschHis}
A.~Pietsch.
\newblock {\em History of {B}anach spaces and linear operators}.
\newblock Birkh\"auser Boston, Inc., Boston, MA, 2007.

\bibitem[PW98]{Pietsch-Wenzel98}
A.~Pietsch and J.~Wenzel.
\newblock {\em Orthonormal systems and {B}anach space geometry}, volume~70 of
  {\em Encyclopedia of Mathematics and its Applications}.
\newblock Cambridge University Press, Cambridge, 1998.

\bibitem[Pis82]{Pis82}
G.~Pisier.
\newblock Holomorphic semigroups and the geometry of {B}anach spaces.
\newblock {\em Ann. of Math. (2)}, 115(2):375--392, 1982.

\bibitem[Pis86]{Pis1206}
G.~Pisier.
\newblock Probabilistic methods in the geometry of {B}anach spaces.
\newblock In {\em Probability and analysis ({V}arenna, 1985)}, volume 1206 of
  {\em Lecture Notes in Math.}, pages 167--241. Springer, Berlin, 1986.

\bibitem[Pis16]{Pis16}
G.~Pisier.
\newblock {\em Martingales in {B}anach spaces}, volume 155 of {\em Cambridge
  Studies in Advanced Mathematics}.
\newblock Cambridge University Press, Cambridge, 2016.

\bibitem[Pit37]{Pitt37}
H.R. Pitt.
\newblock Theorems on {F}ourier series and power series.
\newblock {\em Duke Math. J.}, 3(4):747--755, 1937.

\bibitem[RV17]{Rozendaal-Veraar17a}
J.~Rozendaal and M.~Veraar.
\newblock Fourier {M}ultiplier {T}heorems {I}nvolving {T}ype and {C}otype.
\newblock {\em J. Fourier Anal. Appl.}, pages 1--37, 2017.

\bibitem[RV18]{Rozendaal-Veraar17c}
J.~Rozendaal and M.~Veraar.
\newblock Stability theory for semigroups using {$(L^p,L^q)$} {F}ourier
  multipliers.
\newblock {\em J. Funct. Anal.}, 275(10):2845--2894, 2018.

\bibitem[RdF86]{RubioDeFrancia}
J.L. Rubio~de {F}rancia.
\newblock Martingale and integral transforms of {B}anach space valued
  functions.
\newblock In {\em Probability and {B}anach spaces ({Z}aragoza, 1985)}, volume
  1221 of {\em Lecture Notes in Math.}, pages 195--222. Springer, Berlin, 1986.

\bibitem[Sin03]{Sinnamon}
G.~Sinnamon.
\newblock The {F}ourier transform in weighted {L}orentz spaces.
\newblock {\em Publ. Math.}, 47:3--29, 2003.

\bibitem[Ste56]{Stein56}
E.M. Stein.
\newblock Interpolation of linear operators.
\newblock {\em Trans. Amer. Math. Soc.}, 83:482--492, 1956.

\bibitem[SW90]{StroWhe}
J.-O. Str\"{o}mberg and R.L. Wheeden.
\newblock Weighted norm estimates for the {F}ourier transform with a pair of
  weights.
\newblock {\em Trans. Amer. Math. Soc.}, 318(1):355--372, 1990.

\bibitem[SW06]{SuWe}
J.~Su\'{a}rez and L.~Weis.
\newblock Interpolation of {B}anach spaces by the {$\gamma$}-method.
\newblock In {\em Methods in {B}anach space theory}, volume 337 of {\em London
  Math. Soc. Lecture Note Ser.}, pages 293--306. Cambridge Univ. Press,
  Cambridge, 2006.

\bibitem[Tri95]{Triebel95}
H.~Triebel.
\newblock {\em Interpolation theory, function spaces, differential operators}.
\newblock Johann Ambrosius Barth, Heidelberg, second edition, 1995.

\bibitem[WW96]{WeWro}
L.~Weis and V.~Wrobel.
\newblock Asymptotic behavior of {$C_0$}-semigroups in {B}anach spaces.
\newblock {\em Proc. Amer. Math. Soc.}, 124(12):3663--3671, 1996.

\bibitem[Yaf99]{Yaf}
D.~Yafaev.
\newblock Sharp constants in the {H}ardy-{R}ellich inequalities.
\newblock {\em J. Funct. Anal.}, 168(1):121--144, 1999.

\bibitem[Zyg59]{Zygmund}
A.~Zygmund.
\newblock {\em Trigonometric Series}, volume~II of {\em 2nd Edition}.
\newblock Cambridge University Press, New York, 1959.

\end{thebibliography}

\end{document}